\definecolor{lightblue}{rgb}{0.22,0.45,0.70}
\definecolor{lightgreen}{rgb}{0.22,0.50,0.25}
\definecolor{darkred}{rgb}{0.82,0.15,0.20}
\definecolor{darkblue}{rgb}{0.82,0.15,0.12}
\newcommand*{\Scale}[2][4]{\scalebox{#1}{$#2$}} 
\renewenvironment{proof}{\noindent{\it Proof.}}{\hfill$\square$}
\numberwithin{equation}{section}
\numberwithin{figure}{section}
\numberwithin{table}{section}
\numberwithin{lemma}{section}
\numberwithin{corollary}{section}
\numberwithin{theorem}{section}
\numberwithin{remark}{section}
\numberwithin{proposition}{section}
\newcommand{\vertiii}[1]{{\left\vert\kern-0.25ex\left\vert\kern-0.25ex\left\vert #1 
		\right\vert\kern-0.25ex\right\vert\kern-0.25ex\right\vert}}
\begin{document}
	\titlerunning{Stabilized VEM for generalized Oseen equation}   
	\title{A unified stabilized virtual element method for the generalized Oseen equation: stability and robustness}    
	\authorrunning{Mishra, Natarajan}
	\author{Sudheer Mishra \and Natarajan E }
	\institute{
		Natarajan E \at
		Department of Mathematics, Indian Institute of Space Science and Technology, Thiruvananthapuram, 695547, India \\
		\email{thanndavam@iist.ac.in}.\\
		Sudheer Mishra \at Department of Mathematics, Indian Institute of Space Science and Technology, Thiruvananthapuram, 695547, India\\
		\email{sudheermishra.20@res.iist.ac.in}.
		\and
	}
	\date{}
	\maketitle
	
\begin{abstract}
		In this thesis, we investigate a novel local projection based stabilized conforming virtual element method for the generalized Oseen problem using equal-order element pairs on general polygonal meshes. To ensure the stability, particularly in the presence of convection-dominated regimes and the utilization of equal-order element pairs, we introduce local projections based stabilization techniques. We demonstrate the discrete inf-sup condition in the energy norm. Moreover, the stability of the proposed method also guarantees the stability properties for the Brinkman equation and the Stokes equation without introducing any additional conditions. Furthermore, we derive an optimal error estimates in the energy norm that underline the uniform convergence in the energy norm for the generalized Oseen problem with small diffusion. In addition, the error estimates remain valid and uniform for the Brinkman equation and the Stokes equation. Additionally, the convergence study shows that the proposed method is quasi-robust with respect to parameters. The proposed method offers several advantages, including simplicity in construction, easier implementation compared to residual-based stabilization techniques, and avoiding coupling between element pairs. We validate our theoretical findings through a series of numerical experiments, including diffusion-dominated and convection-dominated regimes.
	
\end{abstract}

	\keywords{ Stabilized virtual elements \and Oseen equation \and Convection-dominated regime \and Inf-sup condition \and General polygons \\}
	\subclass{ 65N12, 65N30}

	\section{Introduction}
 \label{sec-1}
The iterative solutions of the Navier-Stokes equation give rise to the subproblem of solving the Oseen equation. The standard Galerkin finite element method provides two challenges: one is convection-dominated, and the other is to satisfy the discrete inf-sup (LBB) condition for the
velocity and pressure approximation spaces.

The streamline upwind Petrov-Galerkin method \cite{mfem29} and the pressure-stabilized Petrov-Galerkin (PSPG) method \cite{mfem28} treat both problems in one framework. Along with this, the element-wise stabilization of the divergence constraint \cite{vemnew1}. To guarantee the consistency of these stabilization methods, the weak formulation needs to be modified significantly by adding several terms to it. The stabilization can be reduced by avoiding the PSPG term \cite{vemnew2}; however, this introduces a strong coupling between velocity and pressure, showing the importance of the PSPG term. To reduce this strong coupling, several approaches are discussed in the literature, like introducing symmetric stabilization terms; look into \cite{vemnew3,vemnew4,vemnew5}.

In recent years, there have been several approaches to extend the classical finite element methods whose computational grids are of triangular/tetrahedral elements to polytopal finite elements involving polygonal/polyhedral elements \cite{vemnew6,vemnew7,vemnew8,vemnew9,vemnew10}. Virtual element method (VEM) \cite{vem1} is the most promising polytopal approach among these. Studies on VEM have been broadly discussed for Stokes equations, such as \cite{vemnew11,vemnew12,vemnew14}. Furthermore, VEM has also been developed in various forms to address several mixed problems, including the conforming and non-conforming VEM for {Navier-Stokes/Oseen \cite{mvem9,mvem10,adak2023nonconforming}}; the divergence-free VEM for Brinkman \cite{mvem12} and coupled Navier-Stokes-Heat \cite{mvem3}; the weak divergence-free VEM for coupled Stokes-Darcy \cite{mvem2}. The construction of conforming/non-conforming/divergence-free methods primarily relies on the construction of finite spaces that approximate the velocity and pressure fields. Conventionally, the velocity vector field is approximated by a virtual element space, while the pressure field is approximated using classical FEM to ensure the discrete inf-sup condition. This raises the question of whether the pressure field can be effectively approximated through virtual element space. If so, would such an approach ensure stability? When we approximate the pressure field using the virtual element space, instability arises due to the violation of the discrete inf-sup condition. Thus, developing stable methods using the fact that velocity and pressure fields are approximated by $H^1$-conforming virtual spaces becomes an active and challenging issue.

In this work, we propose and analyze a novel stabilized VEM for the Oseen problem on general polygonal meshes, showing the validity of the proposed method for different cases, including convection-dominated regimes and diffusion-dominated regimes. The proposed stabilized VEM formulation is derived from \cite{mfem31,vem28m,vem08}. In \cite{mvem14}, the authors proposed a VEM least-squares stabilization for the Navier-Stokes equation, where the stability of the method is shown by performing numerical experiments without establishing the stability or error estimates. In this method, the stabilization term consists of several terms, including second-order derivatives to ensure consistency, which can lead to enormous coupling. In contrast, the proposed VEM does not involve any higher-order derivative terms and provides separate stabilization terms to capture the small scales of velocity and pressure on the element level. Moreover, these terms disregard the cross products between pressure and velocity derivatives to relax the coupling. In \cite{mvem16}, the authors propose a div-free stabilized VEM for the Oseen equation using SUPG-like terms of the vorticity equation, internal jumps, and local VEM stabilization terms, where the computation of second-order derivative terms becomes necessary. In \cite{vem028}, the authors have proposed a local projection-based stabilized virtual element method for the Stokes problem, where the error estimates become quasi-uniform as the diffusion parameter tends to zero. Further, a stabilized VEM for the Oseen problem using the local projection stabilization method has been developed in \cite{mvem15}, where the stability analysis and the error estimates are restrictive. {In contrast, we developed a new variant of the local projection stabilization technique where the stability and error analysis are not restrictive, which guarantees the stability properties and uniform convergence estimates for Stokes/Brinkman problems without introducing any additional assumptions. We further highlight that the authors discussed a local projection based stabilized VEM for unsteady Navier-Stokes problem in \cite{li2024stabilized}, where the velocity stabilizing term (namely $S_1(\cdot,\cdot)$ on page 1787) becomes zero on triangular meshes for VEM order $k=1$, whereas the proposed method would be effective for this case which can be seen mathematically by using the basic of VEM \cite{vem1}.} More recently, in \cite{mvem21}, a stabilized virtual element framework has been investigated for the coupled Stokes-Darcy equation, where the error estimates depend on the diffusion parameter. More precisely, the error estimates in \cite{vem028,mvem21} are not valid for the small diffusion.

In this thesis, we introduce a new discrete bilinear form corresponding to the convective term to establish a {quasi-robust} method following \cite{vem04,mvem16}. The main contributions of the current paper are the following:
\begin{itemize}[label=\textbullet] 
	\item The well-posedness of the stabilized virtual element problem is shown over a mesh-dependent energy norm.
	\item The well-posedness theorem ensures the well-posedness of Brinkman and Stokes equations without imposing additional
	conditions. Hence, the well-posedness is robust with respect to the convective flow field and reaction parameter.
	\item The error estimates are uniform in the energy norm with respect to the diffusion parameter for the considered problem. In addition, the convergence estimates remain valid and uniform for the Brinkman and Stokes equations. Furthermore, the convergence study is  {quasi-robust} with respect to parameters.
	\item The proposed method offers numerous advantages, including simplicity in construction, {easier implementation compared to residual-based stabilization methods}, reduced computational time, and suitability for parallel computing.
	\item Numerical results validate the convergence estimates.
\end{itemize}

The remaining structure of the paper is given as follows: In Section \ref{sec-2}, we present the Oseen equation, illustrating that its weak formulation yields a unique solution. Section \ref{sec-3} focuses on the basics of VEM and outlines the construction of the stabilized virtual element problem for the Oseen equation. In Section \ref{sec-4}, we establish the stability result and the convergence estimates in the energy norm.  Section \ref{sec-5} presents our numerical findings, validating the theoretical aspects. Finally, Section \ref{sec-6} comes with concluding remarks.

\subsection{Notations}
Throughout the paper, {we will follow the standard notations from \cite{boffi_mixed}.} In brief, let $\Omega\subset\mathbb{R}^2$ be a bounded open domain with Lipschitz boundary $\partial \Omega$. We denote the usual $L^2$-inner product and norm by $(\cdot,\cdot)_\Omega$ and $\|\cdot\|_{0,\Omega}$, respectively. Additionally, let $E $ be any measure subset of $\Omega$, we employ $H^s(E)$ as the standard Sobolev space equipped with its usual $H^s(E)$-norm and semi-norms denoted by $\|\cdot\|_{s,E}$ and $|\cdot|_{s,E}$,  respectively. The constant $C$ is positive, independent of the decomposition of $\Omega$, and relies on the stability constants of the VEM. It may depend on the mesh regularity constant and $|\Omega|$. Bold font is frequently used to represent vectors or tensors. The Poincar$\acute{\text{e}}$ constant is denoted by $C_P$.
\section{The generalized Oseen equation }
\label{sec-2}
In this section, we introduce the generalized Oseen equation along with the well-posedness of its primal formulation.

The stationary generalized Oseen equation on a polygonal simply connected domain $\Omega \subset \mathbb{R}^2$ is given as follows:
\begin{align}
	\begin{cases}
		\text{Find $ (\mathbf{u},p)$  such that} \\
		- \mu \Delta \mathbf{u} + (\nabla \mathbf{u})  \boldsymbol{\mathcal{B}} + \gamma \mathbf{u} + \nabla p  = \mathbf{f}  \quad  &\text{in} \quad \Omega, \\
		\quad \nabla \cdot \mathbf{u} = 0  \quad  &\text{in} \quad \Omega, \\
		\quad  \mathbf{u} = \mathbf{g}  \quad  &\text{on} \quad \partial\Omega, 
	\end{cases}
	\label{modeleq}
\end{align}
{where the constants $0<\mu \leq 1 $ and $\gamma>0$} are the viscosity/diffusive coefficient and the reaction coefficient, respectively. The element pair $(\mathbf{u}, p)$ represents the velocity vector field and the scalar pressure field. The convective velocity field is denoted by $ \boldsymbol{\mathcal{B}}\in [W^{1,\infty}(\Omega)]^2$ with $\nabla\cdot  \boldsymbol{\mathcal{B}}=0$, a.e. on $\Omega$. The load $\mathbf{f} \in [L^2(E)]^2$ is the source/sink term. For the sake of simplicity, we assume a homogeneous Dirichlet boundary condition, i.e., $\mathbf{g}=\mathbf{0}$. {Additionally, the non-homogeneous Dirichlet boundary conditions can be employed following \cite[Chapter 2]{gatica2014simple}, and the Neumann boundary can be treated by using the trace inequality. }

\subsection{Weak formulation}
We now introduce the spaces for the velocity vector field and the pressure field as follows:
\begin{center}
	$\mathbf{V} := [H^1_0(\Omega)]^2, \qquad  Q:= L^2_0(\Omega)= \big\{
	z \in L^2(\Omega): \small{\int}_{\Omega} z \, d \Omega=0 \big \},  \qquad\qquad \qquad\qquad \qquad \qquad \qquad$
\end{center}
endowed with their natural norms. Furthermore, we define the following bilinear forms for all $\mathbf{u}, \mathbf{w}\in \mathbf{V}$ and $ q \in Q$,
\begin{itemize}[label=\textbullet] 
	\item $a(\cdot,\cdot):\mathbf{V} \times \mathbf{V}  \rightarrow \mathbb R$ \qquad $a(\mathbf{u},\mathbf{w}):= \mu \int_{\Omega} \nabla \mathbf{u}: \nabla \mathbf{w}\, d\Omega$, {where $A:B$ denotes the contraction of two tensors $A$ and $B$, see \cite{boffi_mixed}};
	\item $b(\cdot,\cdot):\mathbf{V} \times Q  \rightarrow \mathbb R$ \qquad $b(\mathbf{w},q):= \int_{\Omega} (\nabla \cdot \mathbf{w}) q\, d\Omega$;
	\item $c( \cdot,\cdot): \mathbf{V} \times \mathbf{V} \rightarrow \mathbb R$ \qquad $c(\mathbf{u},\mathbf{w}):=  \int_\Omega (\nabla \mathbf{u})  \boldsymbol{\mathcal{B}} \cdot \mathbf{w} \, d\Omega$;   
	\item $d(\cdot,\cdot): \mathbf{V} \times \mathbf{V}  \rightarrow \mathbb R$ \qquad $d(\mathbf{u},\mathbf{w}):= \gamma \int_{\Omega} \mathbf{u} \cdot  \mathbf{w}\, d\Omega$.
\end{itemize} 

As we assume that $\nabla\cdot\boldsymbol{\mathcal{B}}=0$, employing the integration by parts, we can easily conclude that the continuous form $c(\cdot, \cdot)$ satisfies the skew-symmetric property, i.e.
\begin{align}
	c(\mathbf{u},\mathbf{w})= -c(\mathbf{w},\mathbf{u}) \qquad \text{\, for all \,\,} \mathbf{u},\mathbf{w} \in \mathbf{V}.
\end{align}
Following the skew-symmetric property of $c(\cdot,\cdot)$, we define a bilinear form corresponding to the convective term as follows
\begin{align}
	c^{skew}(\mathbf{u},\mathbf{w}) = \dfrac{1}{2} \Big[c(\mathbf{u},\mathbf{w}) -c(\mathbf{w},\mathbf{u}) \Big] \qquad \text{\, for all \,\,} \mathbf{u},\mathbf{w} \in \mathbf{V}.
\end{align}
Hereafter, we introduce the bilinear form $ A : (\mathbf{V} \times Q) \times (\mathbf{V} \times Q) \rightarrow \mathbb{R}$ defined by
\begin{equation}
	A[(\mathbf{u}, p), (\mathbf{w},q) ]:= a(\mathbf{u}, \mathbf{w})- b(\mathbf{w}, p) + b(\mathbf{u}, q) + c^{skew}(\mathbf{u}, \mathbf{w}) + d(\mathbf{u}, \mathbf{w}) \qquad \text{for all} \,\, (\mathbf{u}, p), \, (\mathbf{w}, q) \in \mathbf{V} \times Q.
\end{equation}

The primal formulation for the problem \eqref{modeleq} reads as follows:
\begin{align}
	\begin{cases}
		\text{Find}\,\, (\mathbf{u}, p) \in \mathbf{V} \times Q,\,\, \text{such that} \\
		A[(\mathbf{u}, p), (\mathbf{w}, q)]= (\mathbf{f}, \mathbf{w}) \quad \, \text{for all \,\,} (\mathbf{w}, q) \in \mathbf{V} \times Q.
	\end{cases}
	\label{discf}
\end{align}

We emphasize that the form $A[\cdot,\cdot]$ is continuous and preserves the coercivity property, as the inf-sup condition for $b(\cdot,\cdot)$ is discussed by Ladyzhenskaya-Babu$\check{\text{s}}$ka-Brezzi (LBB) in \cite{mfem22}. Consequently, problem \eqref{modeleq} possesses a unique solution $(\mathbf{u},p) \in \mathbf{V} \times Q$.

\section{ The virtual element framework}
\label{sec-3}
This section focuses on developing a VEM computable stabilized discrete scheme for the Oseen problem \eqref{modeleq}. To achieve this, we begin by introducing the standard mesh assumptions and polynomial projectors, followed by defining the global virtual element space.\newline
\textbf{(A1) Mesh assumptions:}
\label{vemspace}
Let E be a general polygon with an area $|E|$ and diameter $h_E$; $\mathbf{n}^{E}$ denotes the unit outward normal vector to the boundary $\partial E$. Further, we represent a sequence of the decompositions of $\Omega$ into simple polygonal elements $E$ by $\{\Omega_h\}_{h>0}$; the maximum diameter is defined by $h:= \sup_{E\in \Omega_h} h_E$. We adopt the following standard mesh regularity assumptions for any $E \in \{\Omega_h\}_h$  
\begin{itemize}[label=\textbullet] 
	\item $E$ is considered as star-shaped with respect to a ball $D_E$ of radius $ \geq \theta h_E$,
	\item  each  edge $e$ of E has finite length $ \geq \theta h_E$,
\end{itemize}
where the positive constant $\theta$ is known as the mesh regularity constant.

\subsection{The projectors}
Let $\mathcal{O}$ be any geometric objects in $\mathbb{R}^d$ {(for $d=2 \,\,\text{or}\,\, 3$)} with diameter $h_{\mathcal{O}}$ and barycenter $\mathbf{x}_\mathcal{O}$. For $k \in \mathbb N \cup \{0\}$, we define the following set of monomials $\mathcal{M}_{k}(\mathcal{O})$  of degree $\leq k$, given as follows
\begin{equation}
	\mathcal{M}_{k}(\mathcal{O}):= \Big \{ m_\beta \, |\, m_\beta = \Big(\dfrac{\mathbf{x}- \mathbf{x}_\mathcal{O}}{h_\mathcal{O}}\Big)^{\boldsymbol{\beta}} \,\, \text{with} \,\, \boldsymbol{\beta} \in \mathbb{N}^d \,\, \text{such that} \,\, |\boldsymbol{\beta}| \leq k \Big \},
\end{equation}
where $\boldsymbol{\beta}=(\beta_1, \beta_2,..., \beta_d)$ is a multi-index set such that $\mathbf{x}^{\boldsymbol{\beta}}:= x_1^{\beta_1}x_2^{\beta_2}...x_d^{\beta_d}$ with $|\boldsymbol{\beta}|:= \beta_1 + \beta_2 + ...+\beta_d$. Furthermore, we also introduce a restriction of the monomials $\mathcal{M}_{k}(\mathcal{O})$ by  $\mathcal{M}^\ast_{k}(\mathcal{O})$ such that
\begin{equation}
	\mathcal{M}^\ast_{k}(\mathcal{O}):= \Big \{ m_\beta\, |\, m_\beta = \Big(\dfrac{\mathbf{x}- \mathbf{x}_\mathcal{O}}{h_\mathcal{O}}\Big)^{\boldsymbol{\beta}}, \,\, \boldsymbol{\beta} \in \mathbb{N}^d\,\, \text{such that} \,\, |\boldsymbol{\beta}| = k \Big \}.
\end{equation}
Let $\mathbb{P}_k(E)$ be the polynomial space of degree $\leq k$ on $E$, and  $\mathbb{P}_{-1}=\{ 0\}$. Notably, the monomial space $\mathcal{M}_{k}(E)$ represents a basis for $\mathbb{P}_k(E)$. 
{ Hereafter, we define the broken Sobolev and polynomial spaces as follows:
	\begin{itemize}[label=\textbullet] 
		\item  $W^s_k(\Omega_h) := \Big\{ v \in L^2(\Omega) \,\, \text{such that}\,\, v_{|E} \in W^s_k(E) \,\, \text{for all }\,\, E \in \Omega_h \Big\}$, where $s \in \mathbb R^{+}$;
		\item $\mathbb P_k(\Omega_h) := \Big\{ p \in L^2(\Omega) \,\, \text{such that}\,\, p_{|E} \in \mathbb P_k(E) \,\, \text{for all }\,\, E \in \Omega_h \Big\}$.
\end{itemize}}

For any polygonal element $E \in \Omega_h$, we introduce the following polynomial projection operators: 
\begin{itemize}[label=\textbullet] 
	\item The $\mathbf{H^1}$\textbf{-energy projection} operator $\Pi^{\nabla,E}_k: H^1(E) \rightarrow \mathbb{P}_k(E)$, defined by
	\begin{align}
		\begin{cases}
			\int_E \nabla (v - \Pi^{\nabla,E}_k v) \cdot \nabla m_k \, dE = 0 \qquad \, \text{for all}\, \, v \in H^1(E)\,\, \text{and} \,\, m_k \in \mathbb{P}_k(E),\\
			\int_{\partial E} (v - \Pi^{\nabla,E}_k v) \, ds=0.
		\end{cases}
		\label{H1proj}
	\end{align}
	For vector-valued functions, the extension is given by $\boldsymbol{\Pi}^{\nabla,E}_k: [H^1(E)]^2 \rightarrow [\mathbb{P}_k(E)]^2$ such that
	\begin{align}
		\begin{cases}
			\int_E \nabla (\mathbf{v} - \Pi^{\nabla,E}_k \mathbf{v}) : \nabla \mathbf{m}_k \, dE = 0 \qquad \, \text{for all}\, \, \mathbf{v} \in [H^1(E)]^2\,\, \text{and} \,\, \mathbf{m}_k \in [\mathbb{P}_k(E)]^2,\\
			\int_{\partial E} (\mathbf{v} - \Pi^{\nabla,E}_k \mathbf{v}) \, ds=\mathbf{0}.
		\end{cases}
		\label{H1proj2}
	\end{align}
	\item  The $\mathbf{L^2}$\textbf{-projection} operator $\Pi^{0,E}_k: L^2(E) \rightarrow \mathbb{P}_k(E)$, defined as follows
	\begin{align}
		\int_E (v - \Pi^{0,E}_k v) m_k \, dE = 0 \qquad \, \text{for all}\, \, v \in L^2(E)\,\, \text{and} \,\, m_k \in \mathbb{P}_k(E). \label{l2proj}
	\end{align}
	For vector-valued functions, the extension is given by $\boldsymbol{\Pi}^{0,E}_k: [L^2(E)]^2 \rightarrow [\mathbb{P}_k(E)]^2$ such that
	\begin{align}
		\int_E (\mathbf{v}- \Pi^{0,E}_k \mathbf{v}) \cdot \mathbf{m}_k \, dE = 0 \qquad \, \text{for all}\, \, \mathbf{v} \in [L^2(E)]^2\,\, \text{and} \,\, \mathbf{m}_k \in [\mathbb{P}_k(E)]^2. \label{l2proj2}
	\end{align}
\end{itemize} 
\textbf{(P0)} The $L^2$-projection operator is continuous with respect to $L^q$-norm for any $q\geq 2$ (see \cite{mishra2025equal}). In short, for any $v\in L^2(E)$, we have
\begin{align}
	\|\Pi^{0,E}_{k} v\|_{L^q(E)} \leq C \|v\|_{L^q(E)} \qquad \,\, \text{for all }\,\, q\geq 2.
\end{align}
\subsection{Virtual element spaces for the velocity and pressure fields } 
Hereafter, we introduce the virtual element approximations for the spaces $\mathbf{V}$ and $Q$ following \cite{vem22,vem25}. {We define the finite-dimensional local virtual element space $V^k_h(E)$ as follows
	\begin{align}
		V^k_h(E):= \Big\{ v \in H^1(E) &\cap C^0( \partial{E}) \,\, \text{such that}, \nonumber \\ &(i) \,\,\Delta v \in \mathbb P_k(E), \nonumber \\ &(ii)\, \,  v_{|e} \in \mathbb P_k(e) \,\, \forall \,\, e \in \partial E,  \nonumber \\  &(iii) \,\, \big(v-\Pi_k^{\nabla,E}v, m_\alpha \big)_{E}=0, \, \forall \, m_\alpha \in \mathcal{M}^\ast_{k-1}(E) \cup \mathcal{M}^\ast_{k}(E) \Big\}. \nonumber
\end{align}}

Let us introduce the sets of linear operators known as degrees of freedom, $\mathbf {DF}_1(v)$, $ \mathbf {DF}_2(v)$ and $ \mathbf {DF}_3(v)$ associated with any element $v \in V^k_h(E)$:
\begin{itemize}[label=\textbullet] 
	\item $\mathbf{DF}_1(v):$ the values of $v$ evaluated at each vertices of $E$;
	\item $\mathbf{DF}_2(v):$ the values of $v$ evaluated at $(k-1)$ internal Gauss-Lobatto quadrature nodes on each edge $e \in \partial E$;
	\item $\mathbf{DF}_3(v):$ the internal moments of $v$ up to order $k-2$,
	\begin{align*}
		\frac{1}{|E|} \, \int_E v m_\alpha\, dE \qquad \, \text{for all}\,\, m_\alpha \in \mathcal{M}_{k-2}(E).
	\end{align*}
	
\end{itemize}
Therefore, the global virtual element space $V^k_h$ can be defined by gluing the local virtual space $V^k_h(E)$ with the associated degrees of freedom:
\begin{align}
	V^k_h := \Big\{v \in H^1(\Omega) \, \, \text{such that}\, \, v_{|E} \in V^k_h(E) \,\, \text{for all}\,\, E \in \Omega_h \Big\}. \label{wj}
\end{align}
We now introduce the global discrete velocity space $\mathbf{V}^k_{h}$ and pressure space $Q^k_h$ as follows:
\begin{align}
	\mathbf{V}^k_{h} &:= \Big\{ \mathbf{v} \in \mathbf{V} \,\, \text{such that}\,\, \mathbf{v}_{|E} \in [V^k_h(E)]^2 \,\, \text{for all}\,\, E \in \Omega_h \Big\}, \label{dspace-1} \\
	Q^k_{h} &:= \Big\{ q \in Q \,\, \text{such that}\,\, q_{|E} \in V^k_h(E) \,\, \text{for all}\,\, E \in \Omega_h \Big\}. \label{dspace-3}
\end{align} 
{Furthermore, the local contribution of these spaces for any element $E\in \Omega_h$ can be defined by
	\begin{align}
		\mathbf{V}^k_{h}(E) &:= \Big\{ \mathbf{v} \in \mathbf{V} \,\, \text{such that}\,\, \mathbf{v}_{|E} \in [V^k_h(E)]^2 \Big\}, \label{ldspace-1} \\
		Q^k_{h}(E) &:= \Big\{ q \in Q \,\, \text{such that}\,\, q_{|E} \in V^k_h(E) \Big\}. \label{ldspace-3}
\end{align}}

\begin{remark}
	The local velocity space $\mathbf{V}^k_{h}(E)$ preserves the following property: \newline
	\textbf{(P1) } Polynomial inclusion: $[\mathbb{P}_k(E)]^2 \subseteq \mathbf{V}^k_{h}(E)$. \newline
	\textbf{(P2)} The choice of $\mathbf{DF}$ allows us to compute the following operators:
	\begin{align*}
		\boldsymbol{\Pi}^{\nabla,E}_k: \mathbf{V}^k_{h}(E) \rightarrow [\mathbb{P}_k(E)]^2, \qquad\boldsymbol{\Pi}^{0,E}_k: \mathbf{V}^k_{h}(E) \rightarrow [\mathbb{P}_k(E)]^2, \qquad  
		\boldsymbol{\Pi}^{0,E}_{k-1}: \nabla \mathbf{V}^k_{h}(E) \rightarrow [\mathbb{P}_{k-1}(E)]^{2 \times 2}.
	\end{align*}
\end{remark}

\begin{remark} \label{prop_qh}
	The local pressure space $Q^k_{h}(E)$ preserves the following property: 
	\begin{itemize}[label=\textbullet]
		\item Polynomial inclusion: $[\mathbb{P}_k(E)] \subseteq {Q}^k_{h}(E)$. 
		\item The choice of $\mathbf{DF}$ allows us to compute the following operators:
		\begin{align*}
			&{\Pi}^{\nabla,E}_k: {Q}^k_{h}(E) \rightarrow \mathbb{P}_k(E), &&{\Pi}^{0,E}_k: {Q}^k_{h}(E) \rightarrow \mathbb{P}_k(E), 
			&\boldsymbol{\Pi}^{0,E}_{k}: \nabla {Q}^k_{h}(E) \rightarrow [\mathbb{P}_{k}(E)]^{2 }.
		\end{align*}
	\end{itemize}
	We emphasize that we will use the same bold symbols/projection operators for both velocity and pressure spaces.
\end{remark}

\subsection{Virtual element forms}
We now discuss the next step of the proposed method, which involves the construction of a discrete version of the bilinear form $A[\cdot,\cdot]$ and the load term. For any  $(\mathbf{w}_h, q_h),(\mathbf{z}_h, s_h) \in \mathbf{V}^k_{h} \times Q^k_h$, the discrete bilinear form $A[(\mathbf{w}_h, q_h), (\mathbf{z}_h, s_h)]$ or the load $(\mathbf{f}, \mathbf{w}_h)$ can not be computable as they are not in closed form. Therefore, we will introduce the virtual element approximation of the continuous form and load term, which are computable in the sense of VEM employing \textbf{DF}'s.
\begin{itemize}[label=\textbullet]
	\item  Following VEM literature \cite{vem22,vem25,vem26}, the local discrete bilinear forms corresponding to continuous forms are given as follows:
	\begin{align}
		a^E_{h}(\mathbf{w}_h, \mathbf{z}_h) :&=  \mu \int_E \boldsymbol{\Pi}^{0,E}_{k-1} \nabla \mathbf{w}_h:	\boldsymbol{\Pi}^{0,E}_{k-1}\nabla \mathbf{z}_h\, dE + \mu S^E_ \nabla \Big((\mathbf{I}-\boldsymbol{\Pi}^{\nabla,E}_k) \mathbf{w}_h,(\mathbf{I}-\boldsymbol{\Pi}^{\nabla,E}_k) \mathbf{z}_h \Big)  \label{form-a}\\ 
		b^E_{h}(\mathbf{z}_h, q_h) :&= \int_E \Pi^{0,E}_{k-1} \nabla \cdot \mathbf{z}_h \Pi^{0,E}_{k} q_h\, dE \label{form-b} \\
		c^E_h(\mathbf{w}_h, \mathbf{z}_h):&= \int_{E}  (\nabla \boldsymbol{\Pi}^{0,E}_{k} \mathbf{w}_h) \boldsymbol{\mathcal{B}} \cdot \boldsymbol{\Pi}^{0,E}_{k} \mathbf{z}_h\, dE + \int_{\partial E}  \boldsymbol{\mathcal{B}} \cdot \mathbf{n}^E (\boldsymbol{I}-\boldsymbol{\Pi}^{0,E}_{k}) \mathbf{w}_h \cdot \boldsymbol{\Pi}^{0,E}_{k} \mathbf{z}_h\, ds \label{form-c1} \\
		c^{skew, E}_h(\mathbf{w}_h, \mathbf{z}_h):&= \dfrac{1}{2} \Big[c^E_h(\mathbf{w}_h, \mathbf{z}_h) - 	c^E_h(\mathbf{z}_h, \mathbf{w}_h) \Big] \label{form-c} \\
		d^E_{h}(\mathbf{w}_h, \mathbf{z}_h) :&= \gamma \int_E  \boldsymbol{\Pi}^{0,E}_{k} \mathbf{w}_h \cdot	\boldsymbol{\Pi}^{0,E}_{k} \mathbf{z}_h\, dE + \gamma S^E_0\Big((\mathbf{I}-\boldsymbol{\Pi}^{0,E}_k) \mathbf{w}_h,(\mathbf{I}-\boldsymbol{\Pi}^{0,E}_k) \mathbf{z}_h \Big)  \label{form-d}.
	\end{align}
	We remark that the discrete form $c^E_h(\cdot, \cdot)$ has an additional term on the boundary compared to the classical approximation for the convective term \cite{vem26,mvem9}. Recently, the discrete from $c^E_h(\cdot, \cdot)$ was first introduced by LB da {Veiga et al.} in \cite{vem04} in order to establish a fully robust method for the convection-diffusion problem. Later, this applies to the Oseen problem in \cite{mvem16}. For the completeness of the thesis, we will discuss the discrete convective form introduced in \cite{vem26,mvem9} in Section \ref{sec:5b}.   
	
	Moreover, the global bilinear forms are given by
	\begin{align*}
		a_{h}(\mathbf{w}_h,\mathbf{z}_h) &= \sum_{E \in \Omega_{h}} a^E_{h}(\mathbf{w}_h,\mathbf{z}_h), & & b_{h}(\mathbf{z}_h,q_h) = \sum_{E \in \Omega_{h}} b^{E}_{h}(\mathbf{z}_h,q_h), \nonumber \\
		c^{skew}_h(\mathbf{w}_h,\mathbf{z}_h) &= \sum_{E \in \Omega_{h}} c^{skew,E}_h(\mathbf{w}_h,\mathbf{z}_h), &&	d_{h}(\mathbf{w}_h,\mathbf{z}_h) = \sum_{E \in \Omega_{h}} d^E_{h}(\mathbf{w}_h,\mathbf{z}_h).
	\end{align*}
	Additionally, we will follow the same notation for the continuous case. In short, the forms $a^E(\cdot, \cdot)$, $b^E(\cdot,\cdot)$, $c^{skew,E}(\cdot, \cdot)$ and $d^E(\cdot,\cdot)$ represent the local contribution of the continuous forms $a(\cdot, \cdot)$, $b(\cdot,\cdot)$, $c^{skew}(\cdot, \cdot)$ and $d(\cdot,\cdot)$ on an element $E \in \Omega_h$, respectively.
	\item For each $E \in \Omega_h$, the local VEM stabilization terms $S^E_\nabla(\cdot,\cdot), S^E_0(\cdot, \cdot) : \mathbf{V}^k_{h}(E) \times \mathbf{V}^k_{h}(E) \rightarrow \mathbb{R}$ are symmetric positive definite bilinear forms such that they satisfy
	\begin{align}
		\lambda_{1\ast} \ \|\nabla \mathbf{z}_h\|_{0,E}^2 \leq S^E_\nabla(\mathbf{z}_h, \mathbf{z}_h)	 \leq
		\lambda_1^\ast  \|\nabla \mathbf{z}_h\|_{0,E}^2 \qquad   \,\, \text{for all} \, \, \mathbf{z}_h \in \mathbf{V}^k_{h}(E) \cap ker(\boldsymbol{\Pi}^{\nabla,E}_{k}), \label{vem-a} \\
		\lambda_{2\ast} \ \|\mathbf{z}_h\|_{0,E}^2 \leq S^E_0(\mathbf{z}_h, \mathbf{z}_h)	 \leq
		\lambda_2^\ast  \| \mathbf{z}_h\|_{0,E}^2 \qquad  \,\, \text{for all} \, \, \mathbf{z}_h \in \mathbf{V}^k_{h}(E) \cap ker(\boldsymbol{\Pi}^{0,E}_{k}), \label{vem-b}
	\end{align}
	where $\lambda_{1\ast} \leq\lambda_1^\ast$ and $\lambda_{2\ast} \leq\lambda_2^\ast$ are positive constants independent of $h$ {\cite{vem22}}.
	
	\item The discrete form $a^E_{h}(\cdot, \cdot)$ has the following properties:\newline
	\textbf{(P3)} Polynomial consistency: for any $\mathbf{z}_h \in \mathbf{V}^k_{h}(E)$, it holds that
	\begin{align}
		a^E_{h}(\mathbf{z}_h, \mathbf{m}_\alpha) = a^E(\mathbf{z}_h, \mathbf{m}_\alpha) \qquad \,\, \text{for all} \, \, \mathbf{m}_\alpha \in [\mathbb{P}_k(E)]^2.
	\end{align}
	\textbf{(P4)} Stability: there exist two positive constants $\lambda_1$ and $\lambda_2$ independent of $h$, such that
	\begin{align}
		\lambda_1 a^E(\mathbf{z}_h, \mathbf{z}_h) \leq a^E_{h}(\mathbf{z}_h, \mathbf{z}_h) \leq \lambda_2 a^E(\mathbf{z}_h, \mathbf{z}_h)\qquad \,\, \text{for all} \,\, \mathbf{z}_h \in \mathbf{V}^k_{h}(E).
	\end{align}
	\textbf{(P5)} Continuity: for any $\mathbf{w}_h, \mathbf{z}_h \in \mathbf{V}^k_{h}(E)$, we have
	\begin{align}
		a^E_{h}(\mathbf{w}_h, \mathbf{z}_h) \leq C \mu \max \{1, \lambda_1^\ast\} |\mathbf{w}_h|_{1,E} | \mathbf{z}_h|_{1,E}.
	\end{align}
	
	\item The discrete form $d^E_{h}(\cdot, \cdot)$ satisfies the following properties:\newline
	\textbf{(P6)} Polynomial consistency: for all $\mathbf{z}_h \in \mathbf{V}^k_{h}(E)$, we infer
	\begin{align}
		d^E_{h}(\mathbf{z}_h, \mathbf{m}_\alpha) = d^E(\mathbf{z}_h, \mathbf{m}_\alpha) \qquad  \,\, \text{for all} \, \, \mathbf{m}_\alpha \in [\mathbb{P}_k(E)]^2.
	\end{align}
	\textbf{(P7)} Stability: there exist two positive constants $\xi_1$ and $\xi_2$ independent of the mesh size, satisfying
	\begin{align}
		\xi_1 d^E(\mathbf{z}_h, \mathbf{z}_h) \leq d^E_{h}(\mathbf{z}_h, \mathbf{z}_h) \leq \xi_2 d^E(\mathbf{z}_h, \mathbf{z}_h)\qquad \text{for all} \,\, \mathbf{z}_h \in \mathbf{V}^k_{h}(E).
	\end{align}
	\textbf{(P8)} Continuity: for every $\mathbf{w}_h, \mathbf{z}_h \in \mathbf{V}^k_{h}(E)$, it gives
	\begin{align}
		d^E_{h}(\mathbf{w}_h, \mathbf{z}_h) \leq C \gamma \max \{1, \lambda_2^\ast\} \|\mathbf{w}_h\|_{E} \| \mathbf{z}_h\|_{E}.
	\end{align}
	
	\item \textbf{Discrete load term}: $(\mathbf{f}_h, \mathbf{z}_h):= \sum \limits_{E \in \Omega_{h}} (\mathbf{f}, \boldsymbol{\Pi}^{0,E}_k\mathbf{z}_h) \qquad \text{for all} \,\,\mathbf{z}_h \in \mathbf{V}^k_{h} $.
\end{itemize}

\begin{remark}
	The classical virtual element (VE) problem for the Oseen equation is given as follows:
	\begin{align}
		\begin{cases}
			\text{Find}\,\, (\mathbf{u}_h, p_h) \in \mathbf{V}^k_h \times Q^k_h,\,\, \text{such that} \\
			A_h[(\mathbf{u}_h, p_h), (\mathbf{w}_h, q_h)]= (\mathbf{f}_h, \mathbf{w}_h) \quad  \,\, \text{for all} \, \, (\mathbf{w}_h, q_h) \in \mathbf{V}^k_h \times Q^k_h,
		\end{cases}
		\label{std_vem}
	\end{align}
	where $A_h[(\mathbf{u}_h, p_h), (\mathbf{w}_h,q_h) ]:= a_h(\mathbf{u}_h, \mathbf{w}_h)- b_h(\mathbf{w}_h, p_h) + b_h(\mathbf{u}_h, q_h) + c^{skew}_h(\mathbf{u}_h, \mathbf{w}_h) + d_h(\mathbf{u}_h, \mathbf{w}_h)$. The VE problem \eqref{std_vem} exhibits non-physical oscillations whenever problem \eqref{std_vem} is convection-dominated or equal-order element pairs are employed. In such cases, a stabilized form of the VE problem \eqref{std_vem} is required in order to circumvent these issues.
\end{remark}

\subsection{ Stabilized virtual element problem }
\label{sec-3.4}
In this section, we will address the instability of velocity and pressure fields that disrupt {solution's accuracy} by causing spurious or non-physical oscillations near boundary layers. The velocity field becomes unstable {in the presence} of convection-dominated regimes, whereas the pressure instability often arises due to the equal-order element pairs framework. To address these issues, we introduce the following stabilization terms:

\begin{itemize}[label=\textbullet]
	\item Velocity field stabilizing bilinear form
	\begin{align*}
		\mathcal{L}_{1,h}(\mathbf{w}_h,\mathbf{z}_h) &:= \sum_{E \in \Omega_h}\tau_{1,E}  \mathcal{B}_E^2 S^E_\nabla\Big((\mathbf{I}-\boldsymbol{\Pi}^{\nabla,E}_{k-1}) \mathbf{w}_h,(\mathbf{I}-\boldsymbol{\Pi}^{\nabla,E}_{k-1}) \mathbf{z}_h \Big)\Big],
	\end{align*}
	where  $\mathcal{B}_E=\|\boldsymbol{\mathcal{B}}\|_{0,\infty,E}$ for all $E \in \Omega_{h}$. {Furthermore, using the norm equivalence discussed in \cite{vem28}, we obtain
		\begin{align}
			\|\nabla(\mathbf{I}-\boldsymbol{\Pi}^{\nabla,E}_{k-1}) \mathbf{w}_h\|_{0,E} \simeq h^{-1}_E	\|(\mathbf{I}-\boldsymbol{\Pi}^{0,E}_{k-1}) \mathbf{w}_h\|_{0,E},
		\end{align}
		this allows us to take projection onto the polynomial space $[\mathbb{P}_{k-1}(E)]^2$. Another advantage of this term is that it will be effective for the VEM order $k=1$ on triangular meshes, whereas the velocity stabilizing term $S_1(\cdot,\cdot)$ discussed in \cite{li2024stabilized} vanishes on triangular meshes with VEM order $k=1$.}
	\item The mass equation stabilizing term
	\begin{align*}
		\mathcal{L}_{2,h}(\mathbf{w}_h,\mathbf{z}_h) &:= \sum_{E \in \Omega_h}\tau_{2,E} \Big[\Big( \Pi^{0,E}_{k-1}\nabla \cdot \mathbf{w}_h, \Pi^{0,E}_{k-1} \nabla \cdot \mathbf{z}_h \Big)  + S^E_\nabla\Big((\mathbf{I}-\boldsymbol{\Pi}^{\nabla,E}_k) \mathbf{w}_h,(\mathbf{I}-\boldsymbol{\Pi}^{\nabla,E}_k) \mathbf{z}_h \Big)\Big]. 
	\end{align*}
	\item Pressure stabilizing term
	\begin{align*}
		\mathcal{L}_{3,h}(p_h,q_h) := \sum_{E \in \Omega_h}\tau_{3,E} \Big[\Big( \widehat{\mathbf{r}}_h(\nabla p_h), \widehat{\mathbf{r}}_h(\nabla q_h) \Big) + S^E_p\Big((I-\Pi^{\nabla,E}_{k-1}) p_h, (I-\Pi^{\nabla,E}_{k-1}) q_h \Big)\Big], 
	\end{align*}
	where $\widehat{\mathbf{r}}_h := \boldsymbol{\Pi}^{0,E}_k - \boldsymbol{\Pi}^{0,E}_{k-1}$, and $\tau_i$ are stabilization parameters for $i=1,2$ and $3$. 
	Additionally, the bilinear form $S^E_p(\cdot,\cdot)$ satisfies the following  
	\begin{align}
		\lambda_{3\ast} \ \|\nabla q_h\|_{0,E}^2 \leq S^E_p(q_h, q_h) \leq
		\lambda_3^\ast  \|\nabla q_h\|_{0,E}^2 \qquad  \,\, \text{for all}\,\, q_h \in Q^k_{h}(E) \cap ker({\Pi}^{\nabla,E}_{k-1}), \label{vem-c}
	\end{align}
	where the positive constants $\lambda_{3\ast} \leq \lambda_3^\ast$ independent of the mesh size.
\end{itemize} 
\begin{remark} \label{rem_tau}
	Recalling \cite{smfem1,mishra2025equal,mvem21}, we introduce the following definition of the stabilization parameters $\tau_{i,E}$, as follows
	\begin{align}
		\tau_{1,E} \sim h_E, \qquad \tau_{2,E} \sim \mathcal{O}(1), \qquad \tau_{3,E} \sim h^2_E.
	\end{align}  
	Moreover, we will use the above definitions throughout this work.
\end{remark}

We now set $\mathcal{L}_{h}[(\mathbf{w}_h, p_h), (\mathbf{z}_h,q_h)]:=\mathcal{L}_{1,h}(\mathbf{w}_h,\mathbf{z}_h) + \mathcal{L}_{2,h}(\mathbf{w}_h,\mathbf{z}_h)+ \mathcal{L}_{3,h}(p_h, q_h)$. Therefore, the stabilized VE problem for the Oseen equation is given as follows:
\begin{align}
	\begin{cases}
		\text{Find}\,\, (\mathbf{u}_h, p_h) \in \mathbf{V}^k_h \times Q^k_h,\,\, \text{such that} \\
		A_h[(\mathbf{u}_h, p_h), (\mathbf{w}_h, q_h)] + \mathcal{L}_{h}[(\mathbf{u}_h, p_h), (\mathbf{w}_h,q_h)] = (\mathbf{f}_h, \mathbf{w}_h) \quad  \,\, \text{for all} \, \, (\mathbf{w}_h, q_h) \in \mathbf{V}^k_h \times Q^k_h.
	\end{cases}
	\label{nvem}
\end{align}

\begin{remark}
	The problem \eqref{nvem} can be interpreted as the extension of the classic VE problem through the inclusion of element-wise local projection-based stabilizing terms {\cite{vem028,fem11}}. The formulation of these stabilization terms does not involve the second-order derivative terms like SUPG-stabilizing/least-square stabilizing methods.
\end{remark}

\section{Theoretical analysis} \label{sec-4}
This section delineates the establishment of the existence and uniqueness of the stabilized discrete solution. Additionally, we derive the optimal error estimate in the energy norm.
\subsection{Preliminary results}
In this section, we recall the following results under the assumption \textbf{(A1)}:
\begin{lemma}(Polynomial approximation \cite{scott})
	\label{lemmaproj1}
	For any sufficiently smooth functions $z \in H^s(E)$ with $E\in \Omega_h$, we have the following
	\begin{align}
		\|z -\Pi^{\nabla}_k z\|_{l,E} &\leq C h^{s-l}_E |z|_{s,E} \quad s, l \in \mathbb{N}, \,\,\, s \geq 1,\,\, \, l \leq s \leq k+1,\label{eqproj1}	 \\
		\|z - \Pi^{0}_k \,z \|_{l,E} &\leq C h^{s-l}_E |z|_{s,E} \quad s, l \in \mathbb{N},\,\,\, l \leq s \leq k+1.
	\end{align}
\end{lemma}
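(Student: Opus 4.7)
The plan is to reduce both estimates to the classical Bramble--Hilbert/Dupont--Scott framework applied on star-shaped domains, which is exactly the setting supplied by the mesh regularity assumption \textbf{(A1)}. The first step is to fix an element $E \in \Omega_h$ and, using that $E$ is star-shaped with respect to a ball $D_E$ of radius $\geq \theta h_E$, invoke the averaged Taylor polynomial construction to produce a polynomial $p_\ast \in \mathbb{P}_k(E)$ satisfying the classical estimate $\|z - p_\ast\|_{l,E} \leq C h^{s-l}_E |z|_{s,E}$ for $0 \leq l \leq s \leq k+1$, with $C$ depending only on the shape regularity constant $\theta$ and on $s,k$. This is the single ingredient that makes the polygonal framework work, and it is why assumption \textbf{(A1)} is needed.

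For the $L^2$-projection estimate the plan is as follows. For $l=0$ I would simply use the best-approximation property of $\Pi^{0}_k$ in the $L^2(E)$-norm together with the Dupont--Scott bound on $p_\ast$ to conclude $\|z - \Pi^{0}_k z\|_{0,E} \leq \|z - p_\ast\|_{0,E} \leq C h^{s}_E |z|_{s,E}$. For $l \geq 1$ I would split
\begin{equation*}
z - \Pi^{0}_k z = (z - p_\ast) - \Pi^{0}_k(z - p_\ast),
\end{equation*}
bound the first term directly by Dupont--Scott, and treat the second term by applying a polynomial inverse inequality (scaled by $h_E^{-l}$) to $\Pi^{0}_k(z - p_\ast) \in \mathbb{P}_k(E)$ followed by $L^2$-stability of $\Pi^{0}_k$; both steps contribute the correct powers of $h_E$ and give $\|z - \Pi^{0}_k z\|_{l,E} \leq C h^{s-l}_E |z|_{s,E}$.

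For the $\mathbf{H}^1$-projection estimate I would follow the same pattern but adapted to the definition \eqref{H1proj}. Applying $\Pi^{\nabla}_k$ preserves gradients of polynomials of degree $\leq k$, so writing $z - \Pi^{\nabla}_k z = (z - p_\ast) - \Pi^{\nabla}_k(z - p_\ast)$, the $H^1$-seminorm of the second term is controlled by its own seminorm via the $H^1$-orthogonality in \eqref{H1proj}, giving $|z - \Pi^{\nabla}_k z|_{1,E} \leq |z - p_\ast|_{1,E} \leq C h^{s-1}_E |z|_{s,E}$. To pass from seminorm to full norm I would invoke the boundary mean-value condition $\int_{\partial E}(z - \Pi^{\nabla}_k z)\,ds = 0$, which combined with a scaled Poincaré-type inequality on the star-shaped element gives $\|z - \Pi^{\nabla}_k z\|_{0,E} \leq C h_E |z - \Pi^{\nabla}_k z|_{1,E} \leq C h^{s}_E |z|_{s,E}$. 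Intermediate values of $l$ would again be handled by the split plus a polynomial inverse inequality.

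The main technical obstacle is tracking that all constants are uniform across the polygonal decomposition. On affine triangular meshes one relies on a reference-element pullback, which is unavailable for general polygons; instead one must verify scaled versions of the Poincaré, trace and inverse inequalities directly on star-shaped $E$ with constants depending only on $\theta$. Once these scaled inequalities are in hand (they are the standard workhorses of VEM analysis and are the content of the citation \cite{scott}), the estimates above go through without further difficulty.
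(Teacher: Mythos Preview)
The paper does not actually prove this lemma; it is stated as a known result with a citation to \cite{scott} and no argument is given. Your proof proposal is correct and is precisely the standard route one finds in that reference: averaged Taylor polynomial on a star-shaped domain to produce the comparison polynomial $p_\ast$, best-approximation plus inverse inequalities for $\Pi^0_k$, and $H^1$-seminorm orthogonality together with the boundary-mean Poincar\'e inequality for $\Pi^\nabla_k$.
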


\begin{lemma}(Interpolation approximation {\cite{cangiani2017posteriori}})\label{lemmaproj2}
	Let the assumption \textbf{(A1)} be true. Then for any $z \in  H^{s+1}(E)$, there exists $z_I \in  V^k_h(E)$ with $E\in \Omega_h$ satisfying
	\begin{equation}
		\| z - z_I\|_{0,E} + h |z - z_I|_{1,E} \leq C h^{s+1}\, \|z \|_{s+1,E},\quad 0 \leq s\leq k, \label{projlll2}
	\end{equation}
	where $C$ is a positive constant depending only on $k$ and the mesh regularity constant $\theta$.
\end{lemma}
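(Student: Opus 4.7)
The plan is to define $z_I \in V^k_h(E)$ as the canonical virtual element interpolant of $z$, namely the unique element of $V^k_h(E)$ whose three families of degrees of freedom $\mathbf{DF}_1$, $\mathbf{DF}_2$, $\mathbf{DF}_3$ coincide with those of $z$ (for the borderline case $s=0$, a Cl\'ement- or Scott--Zhang-type quasi-interpolant built from local $L^2$-averages is substituted, but the subsequent argument is unchanged). Unisolvence of the degrees of freedom on $V^k_h(E)$ yields the existence and uniqueness of $z_I$, and the polynomial inclusion $\mathbb{P}_k(E) \subset V^k_h(E)$ together with unisolvence forces the \emph{polynomial preservation} property: $\pi_I = \pi$ for every $\pi \in \mathbb{P}_k(E)$.

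With polynomial preservation in hand, one selects $\pi \in \mathbb{P}_k(E)$ to be the Scott--Dupont polynomial approximant of $z$ from Lemma~\ref{lemmaproj1} and employs the standard decomposition
\begin{equation*}
z - z_I \;=\; (z-\pi) \;-\; (z-\pi)_I .
\end{equation*}
The triangle inequality then reduces the claim to controlling both $\|z-\pi\|_{0,E} + h_E|z-\pi|_{1,E}$ and $\|(z-\pi)_I\|_{0,E} + h_E|(z-\pi)_I|_{1,E}$ by $C h_E^{s+1}\|z\|_{s+1,E}$. The first piece is immediate from Lemma~\ref{lemmaproj1}. Everything therefore hinges on a uniform interpolation stability estimate of the form
\begin{equation*}
\|w_I\|_{0,E} + h_E|w_I|_{1,E} \;\leq\; C\bigl(\|w\|_{0,E} + h_E|w|_{1,E} + h_E^{1/2}\|w\|_{0,\partial E} + h_E^{3/2}\|\nabla w\|_{0,\partial E}\bigr),
\end{equation*}
applied to $w = z - \pi$; invoking the trace inequality on the star-shaped polygon $E$ and Lemma~\ref{lemmaproj1} once more then closes the argument.

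The principal obstacle is establishing this stability bound uniformly over the mesh. Because $z_I$ is only implicitly defined through the interior condition $\Delta z_I \in \mathbb{P}_k(E)$ together with its piecewise polynomial boundary trace and the enhanced internal moments against $\mathcal{M}^\ast_{k-1}(E)\cup\mathcal{M}^\ast_{k}(E)$, no explicit closed-form estimate is available. The standard resolution is a scaling argument: normalize $E$ by mapping to a reference polygon $\widehat{E}$ of unit diameter (legitimate because the star-shapedness hypothesis in \textbf{(A1)} keeps the rescaled elements within a compact family parametrized by $\theta$), observe that on each such $\widehat{E}$ the interpolation operator acts boundedly between finite-dimensional spaces by equivalence of norms on the quotient $V^k_h(\widehat{E})/\mathbb{P}_k(\widehat{E})$, and finally transport the inequality back to $E$ with the correct powers of $h_E$. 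The resulting constant $C$ then depends only on $k$ (through the dimension of the polynomial spaces involved) and on $\theta$ (through the compactness of the reference family), in accordance with the claim.
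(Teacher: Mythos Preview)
The paper does not supply its own proof of this lemma; it is stated with a citation to \cite{cangiani2017posteriori} and used as a black-box approximation result throughout the error analysis. There is therefore nothing in the paper to compare your argument against directly.

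Your outline is the standard one and is correct in spirit: define $z_I$ as the canonical interpolant matching the degrees of freedom, exploit the polynomial inclusion $\mathbb{P}_k(E)\subset V^k_h(E)$ together with unisolvence to obtain polynomial preservation, insert a Bramble--Hilbert polynomial $\pi$, and reduce to a uniform stability bound for the interpolation operator. The one place that is genuinely delicate is your final scaling/compactness step. Unlike classical FEM there is no reference element for general polygons, and although assumption \textbf{(A1)} bounds the number of edges (via the minimum edge-length condition) and fixes a uniform star-shapedness ratio, the ``compact family of reference polygons'' you invoke is not a ready-made object: one must specify a topology on polygon shapes and verify that the interpolation map varies continuously with the shape before a compactness argument yields a uniform constant. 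The proofs in the cited reference, and in related works such as Brenner--Sung or Chen--Huang, typically avoid this by sub-triangulating $E$ from the star center into shape-regular triangles, controlling the virtual interpolant explicitly through its boundary trace and internal moments, and applying standard polynomial approximation on each sub-triangle. Your heuristic is the right intuition, but to make it rigorous you would need either to carry out that explicit sub-triangulation argument or to set up the compactness framework carefully.
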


\begin{lemma}
	(Inverse inequality for virtual elements \cite{vem28}) \label{inverse}
	For any  ${z}_h \in V^k_h(E)$ with $E \in \Omega_{h}$, there exists a positive constant $C$ independent of the decomposition of $\Omega$ such that
	\begin{align}
		|{z}_h|_{1,E} \leq C h^{-1}_E \|{z}_h\|_{0,E}.
	\end{align}
\end{lemma}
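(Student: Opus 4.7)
The plan is to reduce the inequality to a reference-element estimate via the standard scaling argument. Under the mesh regularity assumption \textbf{(A1)}, I would introduce an affine map sending $E$ to a reference polygon $\hat E$ of unit diameter that still lies in the family of star-shaped polygons satisfying (A1) with the same regularity constant $\theta$. The usual change-of-variables formulas give, in two dimensions,
\[
|z_h|_{1,E} \simeq |\hat z_h|_{1,\hat E}, \qquad \|z_h\|_{0,E} \simeq h_E\, \|\hat z_h\|_{0,\hat E},
\]
so the lemma reduces to proving $|\hat z_h|_{1,\hat E} \leq C\,\|\hat z_h\|_{0,\hat E}$ uniformly in $\hat z_h \in V^k_h(\hat E)$ and in the shape of $\hat E$, with $C$ depending only on $k$ and $\theta$.

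On the reference element I would decompose $\hat z_h = p + w$ where $p := \Pi_k^{\nabla,\hat E}\hat z_h \in \mathbb P_k(\hat E)$ and $w := \hat z_h - p \in \ker \Pi_k^{\nabla,\hat E}$. The polynomial piece is handled by the classical polynomial inverse inequality on star-shaped domains, which yields $|p|_{1,\hat E} \leq C\,\|p\|_{0,\hat E}$ with $C$ depending only on $k$ and $\theta$. For the remainder, integration by parts gives
\[
|w|_{1,\hat E}^2 \,=\, -(\Delta \hat z_h,\, w)_{\hat E} \,+\, (\partial_n w,\, w)_{\partial \hat E},
\]
in which $\Delta \hat z_h \in \mathbb P_k(\hat E)$ by the defining property (i) of $V^k_h$. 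Cauchy--Schwarz together with the polynomial bound $\|\Delta \hat z_h\|_{0,\hat E} \leq C\,\|\hat z_h\|_{0,\hat E}$, which follows from the enhanced moment condition (iii) identifying $\Delta \hat z_h$ through the internal degrees of freedom $\mathbf{DF}_3$, controls the volume term. The boundary term is treated by exploiting the edge-polynomial structure $\hat z_h|_e \in \mathbb P_k(e)$: a one-dimensional polynomial inverse inequality on each edge, combined with a trace estimate on $\hat E$, yields the required control, with constants uniform thanks to the minimum edge length $\geq \theta h_{\hat E}$ ensured by \textbf{(A1)}. Summing the two pieces and using the triangle inequality gives the reference estimate, and the scaling step then produces the lemma.

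The main obstacle is ensuring the uniformity of the constants across the entire admissible class of reference polygons. The bound $\|\Delta \hat z_h\|_{0,\hat E} \leq C\,\|\hat z_h\|_{0,\hat E}$ is a finite-dimensional norm-equivalence on $V^k_h(\hat E)$, but since the shape of $\hat E$ varies, a naive compactness argument does not immediately yield a shape-independent constant. Establishing it rigorously requires either an explicit basis-dependent bookkeeping exploiting properties \textbf{(P1)}--\textbf{(P2)} and the star-shape assumption, or a homogeneity argument over the compact family of normalized star-shaped polygons; this is precisely where the proof in \cite{vem28} carries out the technical work.
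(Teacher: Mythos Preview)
The paper does not supply its own proof of this lemma; it is stated with a citation to \cite{vem28} and used as a black box. So there is no in-paper argument to compare against, and your proposal should be judged on its own merits.

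Your outline has a genuine gap in the treatment of the boundary term. After integration by parts you obtain $(\partial_n w, w)_{\partial\hat E}$ and propose to control it by ``a one-dimensional polynomial inverse inequality on each edge.'' But while $w|_e = (\hat z_h - p)|_e \in \mathbb P_k(e)$, the normal derivative $\partial_n w|_e$ is \emph{not} a polynomial on the edge for a generic virtual function: only the tangential trace of $\hat z_h$ is prescribed to be polynomial by property (ii), not the normal flux. Hence no one-dimensional polynomial inverse estimate applies to $\partial_n w$, and the boundary term cannot be bounded the way you describe. A related slip: you write $-(\Delta\hat z_h, w)$ where integration by parts gives $-(\Delta w, w)$; the missing piece $(\Delta p, w)$ does not vanish in general, since condition (iii) only enforces orthogonality of $w$ to monomials of degree $k-1$ and $k$, not to $\mathbb P_{k-2}\ni\Delta p$.

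Your bound $\|\Delta\hat z_h\|_{0,\hat E}\le C\|\hat z_h\|_{0,\hat E}$ is also not justified by the moment condition you invoke: the internal DOFs $\mathbf{DF}_3$ carry moments only up to degree $k-2$, whereas $\Delta\hat z_h\in\mathbb P_k(\hat E)$. On a fixed $\hat E$ this is a finite-dimensional norm equivalence, but as you yourself note, obtaining it uniformly over all admissible polygon shapes is exactly the hard part, and your sketch does not address it. The arguments in the cited reference proceed instead through a DOF-based norm equivalence (or, equivalently, a sub-triangulation of $E$ using the star-shape assumption), which sidesteps both the normal-flux issue and the need for a direct Laplacian bound; if you want a self-contained proof, that is the route to follow.
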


\begin{lemma}(Trace inequality \cite{scott})\label{trace} 
	For any ${z} \in H^1(E)$ with $E \in \Omega_{h}$, we have the following result
	\begin{align}
		\|{z} \|^2_{0,\partial E} \leq C \big[h^{-1}_E \|{z}\|^2_{0,E} + h_E | z|^2_{1,E} \big],
	\end{align}
	where the constant $C$ is independent of the mesh size.
\end{lemma}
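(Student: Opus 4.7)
The plan is to prove this classical scaled trace inequality by a divergence-theorem argument that exploits the star-shapedness assumption in \textbf{(A1)}. A pull-back to a reference element, which is the standard tool on simplices, is not convenient on a general polygon, so I would instead bake the scaling directly into an integration-by-parts identity using a carefully chosen vector field whose normal component on $\partial E$ is bounded below by a multiple of $h_E$.

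First, I would fix the center $\mathbf{x}_E$ of the ball $D_E$ with respect to which $E$ is star-shaped and introduce the vector field $\boldsymbol{\beta}(\mathbf{x}) := \mathbf{x} - \mathbf{x}_E$ on $E$. Star-shapedness with respect to the ball $D_E$ of radius $\geq \theta h_E$ (rather than merely a point) yields the geometric inequality $\boldsymbol{\beta}\cdot\mathbf{n}^E \geq \theta h_E$ almost everywhere on $\partial E$: for any $\mathbf{y}\in D_E$ the segment from $\mathbf{y}$ to any $\mathbf{x}\in\partial E$ lies in $\overline E$, which forces $(\mathbf{x}-\mathbf{y})\cdot\mathbf{n}^E(\mathbf{x})\geq 0$, and then taking $\mathbf{y} = \mathbf{x}_E + \theta h_E \mathbf{n}^E(\mathbf{x})\in D_E$ gives the claim. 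Meanwhile, $|\boldsymbol{\beta}|\leq h_E$ on $E$ and $\nabla\cdot\boldsymbol{\beta} = 2$.

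Second, assuming first $z\in C^\infty(\overline E)$ and then extending to $H^1(E)$ by density, the divergence theorem applied to $z^2\boldsymbol{\beta}$ gives
\begin{align*}
\int_{\partial E} z^2\,(\boldsymbol{\beta}\cdot\mathbf{n}^E)\,ds \;=\; \int_E \bigl(2\, z\,\nabla z \cdot \boldsymbol{\beta} + 2\, z^2\bigr)\,d\mathbf{x}.
\end{align*}
Bounding the left-hand side from below by $\theta h_E \|z\|^2_{0,\partial E}$, estimating the right-hand side via Cauchy--Schwarz together with $|\boldsymbol{\beta}|\leq h_E$, and then applying Young's inequality $2h_E\|z\|_{0,E}|z|_{1,E}\leq h_E^2 |z|^2_{1,E} + \|z\|^2_{0,E}$ to absorb the mixed term yields $\theta h_E\|z\|^2_{0,\partial E} \leq h_E^2 |z|^2_{1,E} + 3\|z\|^2_{0,E}$. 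Dividing by $\theta h_E$ produces the claimed bound with a constant $C$ depending only on the mesh regularity parameter $\theta$, hence independent of the mesh size.

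The only non-routine step is the geometric inequality $\boldsymbol{\beta}\cdot\mathbf{n}^E\geq \theta h_E$ a.e. on $\partial E$; this is precisely where the assumption of star-shapedness with respect to a \emph{ball} (not just a point) is indispensable, since on a polygonal Lipschitz element $\mathbf{n}^E$ is only defined off the vertex set. The remainder is a density argument plus a one-line application of the divergence theorem, and both Young's and Cauchy--Schwarz are applied in their standard forms.
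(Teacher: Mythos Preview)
Your proof is correct. The paper does not actually prove this lemma; it simply states it with a citation to \cite{scott}, so there is no in-paper argument to compare against. What you have written is the standard divergence-theorem proof for star-shaped domains, and all steps check out: the key geometric lower bound $\boldsymbol{\beta}\cdot\mathbf{n}^E\geq\theta h_E$ is exactly where the star-shapedness with respect to a \emph{ball} from assumption \textbf{(A1)} enters, and your justification of it via the point $\mathbf{y}=\mathbf{x}_E+\theta h_E\,\mathbf{n}^E(\mathbf{x})\in\overline{D_E}$ is clean. The resulting constant $C=3/\theta$ depends only on the mesh regularity parameter, as required.

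One minor remark: the density of $C^\infty(\overline{E})$ in $H^1(E)$ that you invoke at the end is itself a consequence of the star-shapedness (or more generally the Lipschitz regularity) of $E$, so the argument is self-contained under \textbf{(A1)}.
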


\begin{remark} \label{estimate}
	For any $E \in \Omega_h$ and for any {${z} \in H^1(E)$}, employing the orthogonality of the projectors, we can derive the following results:
	\begin{align}
		\|(\mathbf{I}-\boldsymbol{\Pi}^{0,E}_{k-1})\nabla {z} \|_{0,E} & \leq \| \nabla ({I}- {\Pi}^{\nabla,E}_{k}){z}\|_{0,E} \leq  \| \nabla (I- {\Pi}^{\nabla,E}_{k-1}) {z}\|_{0,E}, \label{est-a}\\
		\|\nabla{\Pi}^{\nabla,E}_k {z} \|_{0,E} & \leq  \| \nabla {z}\|_{0,E}.
	\end{align}
\end{remark}

\begin{lemma} \label{estimate2}
	For any  ${z}_h \in V^k_h(E)$, it holds the following:
	\begin{align}
		\| \nabla \Pi^{0,E}_k z_h\|_{0,E} &\leq C\|\nabla z_h\|_{0,E}, \label{est1}\\
		\| \nabla ({I}- {\Pi}^{0,E}_{k}){z}_h\|_{0,E} &\leq C \| \nabla (I- {\Pi}^{\nabla,E}_{k}) {z}_h\|_{0,E}. \label{est3}
		\intertext{Additionally, for any $\mathbf{z}_h \in \mathbf{V}^k_h(E)$, we infer}
		\|({I}-{\Pi}^{0,E}_{k-1}) \nabla \cdot \mathbf{z}_h \|_{0,E} & \leq  C \| \nabla (
		\mathbf{I}- \boldsymbol{\Pi}^{\nabla,E}_{k}) \mathbf{z}_h\|_{0,E}. \label{est2}
	\end{align}
\end{lemma}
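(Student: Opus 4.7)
All three estimates follow the same three-ingredient strategy: (i) decompose the left-hand side by inserting $\pm \Pi^{\nabla,E}_k$, (ii) exploit $L^2$-best approximation optimality of $\Pi^{0,E}_{k}$ (or $\Pi^{0,E}_{k-1}$) together with the virtual element inverse inequality of Lemma \ref{inverse}, and (iii) use a Poincar\'e-type inequality on $E$, which is available because $\int_{\partial E}(z_h - \Pi^{\nabla,E}_k z_h)\,ds = 0$ by construction \eqref{H1proj}. The star-shaped mesh assumption \textbf{(A1)} ensures all involved constants are independent of $h_E$.

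\textbf{Estimate \eqref{est1}.} I would write
$\nabla \Pi^{0,E}_k z_h = \nabla \Pi^{\nabla,E}_k z_h + \nabla\bigl(\Pi^{0,E}_k z_h - \Pi^{\nabla,E}_k z_h\bigr)$.
The first term is bounded by $\|\nabla z_h\|_{0,E}$ using Remark \ref{estimate}. The correction lies in $\mathbb{P}_k(E)\subset V^k_h(E)$, so Lemma \ref{inverse} gives $\|\nabla(\Pi^{0,E}_k z_h - \Pi^{\nabla,E}_k z_h)\|_{0,E}\leq C h_E^{-1}\|\Pi^{0,E}_k z_h - \Pi^{\nabla,E}_k z_h\|_{0,E}$. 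Inserting $\pm z_h$, the $L^2$-best approximation property of $\Pi^{0,E}_k$ yields $\|\Pi^{0,E}_k z_h - \Pi^{\nabla,E}_k z_h\|_{0,E}\leq 2\|z_h - \Pi^{\nabla,E}_k z_h\|_{0,E}$. Finally, the Poincar\'e inequality on $E$ applied to $z_h - \Pi^{\nabla,E}_k z_h$ (whose boundary mean vanishes) yields the factor $h_E$ that cancels the $h_E^{-1}$, and Remark \ref{estimate} once more bounds the gradient by $\|\nabla z_h\|_{0,E}$.

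\textbf{Estimate \eqref{est3}.} I would use the same decomposition $(I - \Pi^{0,E}_k) z_h = (I - \Pi^{\nabla,E}_k) z_h + (\Pi^{\nabla,E}_k z_h - \Pi^{0,E}_k z_h)$. The second piece is a polynomial and is handled verbatim as in \eqref{est1}: inverse inequality, $L^2$-optimality, and Poincar\'e converts $h_E^{-1}\|(I-\Pi^{\nabla,E}_k)z_h\|_{0,E}$ into $\|\nabla(I-\Pi^{\nabla,E}_k)z_h\|_{0,E}$. The first piece is already of the right form, so the bound follows.

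\textbf{Estimate \eqref{est2}.} The key observation is that $\nabla \cdot \boldsymbol{\Pi}^{\nabla,E}_k \mathbf{z}_h \in \mathbb{P}_{k-1}(E)$ is reproduced by $\Pi^{0,E}_{k-1}$, so
$(I - \Pi^{0,E}_{k-1})\,\nabla \cdot \mathbf{z}_h = (I - \Pi^{0,E}_{k-1})\,\nabla \cdot (\mathbf{z}_h - \boldsymbol{\Pi}^{\nabla,E}_k \mathbf{z}_h)$.
The $L^2$-contractivity of $\Pi^{0,E}_{k-1}$ together with the pointwise inequality $|\nabla \cdot \mathbf{v}| \leq \sqrt{2}\,|\nabla \mathbf{v}|$ finishes this estimate in a single line.

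The only nontrivial point is \eqref{est1}--\eqref{est3}, where the interplay between the inverse inequality (which costs $h_E^{-1}$) and the Poincar\'e inequality (which returns the matching $h_E$) must be tracked with constants uniform in $h_E$; this is exactly what the mesh regularity \textbf{(A1)} and the boundary-mean normalization of $\Pi^{\nabla,E}_k$ supply, so I do not anticipate a genuine obstacle.
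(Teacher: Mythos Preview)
Your proposal is correct and close in spirit to the paper's argument, though the mechanics differ slightly for \eqref{est1} and \eqref{est3}. The paper applies the inverse inequality (Lemma~\ref{inverse}) directly to the full virtual function $(I-\Pi^{0,E}_k)z_h\in V^k_h(E)$, then uses the algebraic identity $(I-\Pi^{0,E}_k)=(I-\Pi^{0,E}_k)(I-\Pi^{\nabla,E}_k)$ and the polynomial approximation bound of Lemma~\ref{lemmaproj1} to recover the factor $h_E$. You instead split off the polynomial correction $\Pi^{0,E}_k z_h-\Pi^{\nabla,E}_k z_h$, apply the inverse inequality only to that polynomial, and recover $h_E$ via a Poincar\'e inequality justified by the boundary-mean normalization of $\Pi^{\nabla,E}_k$. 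Both routes are valid; the paper's is a shade more streamlined since it avoids invoking the boundary-mean property explicitly, while yours makes the role of $\Pi^{\nabla,E}_k$ more transparent. For \eqref{est2} your argument and the paper's are essentially the same: both hinge on $\nabla\cdot\boldsymbol{\Pi}^{\nabla,E}_k\mathbf{z}_h\in\mathbb{P}_{k-1}(E)$ being reproduced by $\Pi^{0,E}_{k-1}$, the paper phrasing this via orthogonality and Cauchy--Schwarz rather than your direct identity-plus-contractivity line.
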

\begin{proof}
	Concerning \eqref{est1},  we use Lemmas \ref{inverse} and \ref{lemmaproj1}:
	\begin{align}
		\| \nabla \Pi^{0,E}_k z_h\|_{0,E} &\leq C \big(\| \nabla z_h\|_{0,E} + \|\nabla(I-\Pi^{0,E}_k) z_h \|_{0,E}\big) \nonumber \\
		& \leq C \big(\| \nabla z_h\|_{0,E} + h^{-1}_E \|(I-\Pi^{0,E}_k) z_h \|_{0,E}\big) \nonumber \\
		& \leq C \big(\| \nabla z_h\|_{0,E} + h^{-1}_E h_E \| \nabla z_h \|_{0,E}\big) \nonumber \\ & \leq C \|\nabla z_h\|_{0,E}. \nonumber
	\end{align}
	Concerning \eqref{est3}, we proceed as follows
	\begin{align}
		\| \nabla ({I}- {\Pi}^{0,E}_{k}){z}_h\|_{0,E} &\leq C h^{-1}_E  \|({I}- {\Pi}^{0,E}_{k}){z}_h\|_{0,E} \nonumber \\
		& \leq C h^{-1}_E  \|({I}- {\Pi}^{0,E}_{k})({I}- {\Pi}^{\nabla,E}_{k}){z}_h\|_{0,E} \nonumber \\
		& \leq C h^{-1}_E h_E \| \nabla({I}- {\Pi}^{\nabla,E}_{k}){z}_h\|_{0,E}. \nonumber
	\end{align}
	Using the definition of the projection operator, we infer
	\begin{align}
		\|( I- \Pi^{0,E}_{k-1}) \nabla \cdot \mathbf{z}_h \|^2_{0,E} &= \big( (I- \Pi^{0,E}_{k-1}) \nabla \cdot \mathbf{z}_h, (I- \Pi^{0,E}_{k-1}) \nabla \cdot \mathbf{z}_h  \big)  \nonumber \\
		& =\big( (I- \Pi^{0,E}_{k-1}) \nabla \cdot \mathbf{z}_h,  \nabla \cdot \mathbf{z}_h  \big)  \nonumber \\
		& =\big( (I- \Pi^{0,E}_{k-1}) \nabla \cdot \mathbf{z}_h,  \nabla \cdot \mathbf{z}_h - \nabla \cdot \boldsymbol{\Pi}^{\nabla, E}_k \mathbf{z}_h  \big)  \nonumber \\
		& \leq  \|(I- \Pi^{0,E}_{k-1}) \nabla \cdot \mathbf{z}_h\|_{0,E}  \|\nabla \cdot (\mathbf{z}_h -  \boldsymbol{\Pi}^{\nabla, E}_k \mathbf{z}_h) \|_{0,E}  \nonumber \\
		& \leq C \|(I- \Pi^{0,E}_{k-1}) \nabla \cdot \mathbf{z}_h\|_{0,E}  \|\nabla (\mathbf{z}_h -  \boldsymbol{\Pi}^{\nabla, E}_k \mathbf{z}_h) \|_{0,E}. \nonumber
	\end{align}
	Thus, the result \eqref{est2} can be easily obtained from the above analysis.
\end{proof}
Additionally, applying the Lemma \ref{trace} and Lemma \ref{inverse}, we observe that for any $\mathbf{z}_h \in \mathbf{V}^k_h$
\begin{align}
	\| (\mathbf{I}- \boldsymbol{\Pi}^{0,E}_k) \mathbf{z}_h\|_{0,\partial E}& \leq C \big[ h^{-1/2}_E\| (\mathbf{I}- \boldsymbol{\Pi}^{0,E}_k) \mathbf{z}_h\|_{0, E} + h^{1/2}_E \| \nabla(\mathbf{I}- \boldsymbol{\Pi}^{0,E}_k) \mathbf{z}_h\|_{0,E}\big] \nonumber \\
	& \leq C  h^{-1/2}_E \| (\mathbf{I}- \boldsymbol{\Pi}^{0,E}_k) \mathbf{z}_h\|_{0, E}  \nonumber \\
	& \leq C  h^{1/2}_E \| \nabla \mathbf{z}_h\|_{0, E}. \label{bd1}
\end{align}
\subsection{Weak inf-sup condition}
Hereafter, we introduce the following result, which can be easily derived using  \cite[Thereom 3.1]{vem2} and \cite[Lemma 4.3]{vem028}:
\begin{lemma} \label{Beiga_1}
	Under the mesh regularity assumption \textbf{(A1)}, there exists a positive constant $\Theta_0$ independent of h, such that for any $k\geq 2$, there holds that
	\begin{align}
		\sup \limits_{\mathbf{0} \neq \mathbf{u}_h \in \mathbf{V}_{h}} \dfrac{b_{h}(\mathbf{u}_h, p_h)}{\|\mathbf{u}_h\|_{1,\Omega}} \geq \Theta_0 \|\Pi^{\nabla}_{k-1} p_h\|_{0,\Omega}  \quad \,\, \text{for all} \, \, p_h \in Q^k_h. \label{inf-sup}
	\end{align}
\end{lemma}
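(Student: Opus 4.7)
The approach is a Fortin-type argument that descends from the continuous Ladyzhenskaya--Babu\v{s}ka--Brezzi (LBB) inf-sup on the Stokes pair $(\mathbf{V},Q)$ to the discrete pair $(\mathbf{V}^k_h, Q^k_h)$ through the canonical VEM interpolant of a suitable velocity test function. The role of the hypothesis $k\geq 2$ is to guarantee the existence of the internal DoFs of $\mathbf{V}^k_h(E)$ that read moments against $\mathcal{M}_{k-2}(E)$; these moments are the crux of a Fortin-type identity on $\nabla\cdot\mathbf{v}_I$ that is the heart of the argument.

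Fix $p_h \in Q^k_h$ and set $p^\ast := \Pi^{\nabla}_{k-1} p_h \in \mathbb{P}_{k-1}(\Omega_h)$. Since $p_h$ has zero global mean but $p^\ast$ need not, I pass to $\hat p := p^\ast - \bar p^\ast$ with $\bar p^\ast := |\Omega|^{-1}(p^\ast,1)_\Omega$; the identity $\bar p^\ast = |\Omega|^{-1}(p^\ast - p_h, 1)_\Omega$ combined with Lemma~\ref{lemmaproj1} shows that $|\bar p^\ast|\,|\Omega|^{1/2}$ is a higher-order correction. The continuous LBB on $\Omega$ then yields $\mathbf{v} \in [H^1_0(\Omega)]^2$ with $-\nabla\cdot\mathbf{v} = \hat p$ and $\|\mathbf{v}\|_{1,\Omega} \leq \beta_0^{-1}\|\hat p\|_{0,\Omega} \leq C\|p^\ast\|_{0,\Omega}$. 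Take $\mathbf{v}_I \in \mathbf{V}^k_h$ to be the canonical componentwise VEM interpolant of $\mathbf{v}$, which is $H^1$-stable as a standard consequence of Lemma~\ref{lemmaproj2}. The key Fortin identity
\begin{equation*}
\int_E \nabla\cdot(\mathbf{v}_I - \mathbf{v})\,m\,dE = 0 \qquad \text{for all } m \in \mathbb{P}_{k-1}(E),\ E \in \Omega_h,
\end{equation*}
is obtained by integration by parts: the vertex and edge DoFs force $\int_{\partial E}(\mathbf{v}_I-\mathbf{v})\cdot\mathbf{n}\,m\,ds = 0$, while the internal DoFs up to $\mathcal{M}_{k-2}(E)$ force $\int_E(\mathbf{v}_I-\mathbf{v})\cdot\nabla m\,dE = 0$, since $\nabla m \in [\mathbb{P}_{k-2}(E)]^2$.

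Using $\Pi^{0,E}_{k-1}\circ\Pi^{0,E}_k = \Pi^{0,E}_{k-1}$ and self-adjointness of the $L^2$-projections, the definition \eqref{form-b} reduces $b_h(\mathbf{v}_I, p_h)$ to $\sum_E(\nabla\cdot\mathbf{v}_I, \Pi^{0,E}_{k-1}p_h)_E$, which the Fortin identity converts into $\sum_E(\nabla\cdot\mathbf{v}, \Pi^{0,E}_{k-1}p_h)_E$. Decomposing $\Pi^{0,E}_{k-1}p_h = p^\ast + (\Pi^{0,E}_{k-1} - \Pi^{\nabla,E}_{k-1})p_h$, the dominant contribution is $-(\nabla\cdot\mathbf{v}, p^\ast)_\Omega = (\hat p, p^\ast)_\Omega = \|\hat p\|_{0,\Omega}^2$, which is comparable to $\|p^\ast\|_{0,\Omega}^2$ up to the mean correction. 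The main obstacle is controlling the residual from the commutator $(\Pi^{0,E}_{k-1} - \Pi^{\nabla,E}_{k-1})p_h$: since both projectors approximate $p_h$ on $E$ but in different norms, the bound must be established element-by-element by inserting $p_h$ via the triangle inequality and invoking Lemma~\ref{lemmaproj1} together with Remark~\ref{estimate}, so that this residual is dominated by the leading $\|p^\ast\|_{0,\Omega}^2$ term. Finally, choosing $\mathbf{u}_h := -\mathbf{v}_I$ in the supremum and using $\|\mathbf{v}_I\|_{1,\Omega} \leq C\|p^\ast\|_{0,\Omega}$ delivers the required lower bound with an $h$-independent constant $\Theta_0 > 0$.
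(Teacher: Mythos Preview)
Your overall strategy---a Fortin-type descent from the continuous LBB through a VEM interpolant---is a natural route to this lemma (the paper itself defers the proof to cited references). The gap is in the Fortin identity for the boundary term. You assert that ``the vertex and edge DoFs force $\int_{\partial E}(\mathbf{v}_I-\mathbf{v})\cdot\mathbf{n}\,m\,ds = 0$,'' but the edge degrees of freedom here ($\mathbf{DF}_1$, $\mathbf{DF}_2$) are \emph{point values} at the $k+1$ Gauss--Lobatto nodes, not edge moments. Thus $\mathbf{v}_I|_e$ is the Lagrange interpolant of $\mathbf{v}|_e$ at those nodes; although the associated quadrature is exact on $\mathbb{P}_{2k-1}(e)$, the integrand $(\mathbf{v}\cdot\mathbf{n})\,m$ is not a polynomial (one only has $\mathbf{v}\in[H^1(\Omega)]^2$ from LBB), so $\int_e(\mathbf{v}_I-\mathbf{v})\cdot\mathbf{n}\,m\,ds$ equals the generically nonzero quadrature error and carries no extra power of $h_E$. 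A working Fortin operator must be \emph{built} to satisfy the edge-moment conditions (and then shown $H^1$-stable); the canonical DoF interpolant does not do this.

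There is a second, independent gap. Even granting the Fortin identity, your handling of the commutator $(\Pi^{0,E}_{k-1}-\Pi^{\nabla,E}_{k-1})p_h$ and of the mean correction $\bar p^\ast$ requires that $\|(I-\Pi^\nabla_{k-1})p_h\|_{0,\Omega}$ be absorbed by $\|\Pi^\nabla_{k-1}p_h\|_{0,\Omega}$. Lemma~\ref{lemmaproj1} and Remark~\ref{estimate} only bound such residuals by Sobolev seminorms of $p_h$, not by $\|\Pi^\nabla_{k-1}p_h\|_{0,\Omega}$; since $p_h$ can lie largely in $\ker\Pi^\nabla_{k-1}$ while $|p_h|_{1,\Omega}$ is large, the residual is not dominated by the leading $\|p^\ast\|_{0,\Omega}^2$ term. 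One partial remedy is to build $\mathbf{v}$ from $\Pi^0_{k-1}p_h$ instead (which already has zero mean since $1\in\mathbb{P}_{k-1}$): then $-\nabla\cdot\mathbf{v}$ is piecewise $\mathbb{P}_{k-1}$ and the commutator vanishes identically, but one must then pass from $\|\Pi^0_{k-1}p_h\|_{0,\Omega}$ to $\|\Pi^\nabla_{k-1}p_h\|_{0,\Omega}$ separately.
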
 
The weak inf-sup condition for the discrete bilinear form $b_h(\cdot, \cdot)$ is given as follows: 
\begin{theorem}(weak inf-sup condition {\cite{smfem1,vem028})} \label{infsup} 
	Under the mesh regularity assumption \textbf{(A1)} {with quasi-uniform mesh}, there exist positive constants $\Theta_1$ and $\Theta_2$ independent of h, satisfying the following
	\begin{align}
		\sup \limits_{\mathbf{0} \neq \mathbf{u}_h \in \mathbf{V}^k_{h}} \dfrac{b_{h}(\mathbf{u}_h, p_h)}{\|\mathbf{u}_h\|_{1,\Omega}} \geq \Theta_1 \|p_h\|_{0,\Omega} - \Theta_2 [\mathcal{L}_{3,h}(p_h,p_h)]^{\frac{1}{2}} \quad \, \, \text{for all}\,\, p_h \in Q^k_h.  \label{inf-sup1}
	\end{align}
\end{theorem}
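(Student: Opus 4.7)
The plan is to decompose the pressure on each element as $p_h = \Pi^{\nabla,E}_{k-1} p_h + (I-\Pi^{\nabla,E}_{k-1}) p_h$, apply the classical inf-sup of Lemma \ref{Beiga_1} to the polynomial part $\Pi^\nabla_{k-1} p_h$, and show that the ``non-polynomial'' remainder $(I-\Pi^\nabla_{k-1})p_h$ is entirely absorbed by the pressure stabilizer $\mathcal{L}_{3,h}(p_h,p_h)^{1/2}$. This split is natural because Lemma \ref{Beiga_1} only produces $\|\Pi^\nabla_{k-1}p_h\|_{0,\Omega}$ on the right, whereas the target is $\|p_h\|_{0,\Omega}$; the ``weakness'' in the inf-sup is exactly this discrepancy.

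First, I would bound the remainder elementwise. Set $\phi_h := (I-\Pi^{\nabla,E}_{k-1})p_h$. The second condition in \eqref{H1proj} gives $\int_{\partial E} \phi_h\,ds = 0$, which combined with a Poincar\'e-type inequality on the star-shaped element $E$ (assumption \textbf{(A1)}) yields $\|\phi_h\|_{0,E} \leq C h_E \|\nabla \phi_h\|_{0,E}$. Since $\phi_h \in \ker(\Pi^{\nabla,E}_{k-1})$, the lower bound in \eqref{vem-c} gives $\|\nabla \phi_h\|_{0,E}^{2} \leq \lambda_{3\ast}^{-1} S^E_p(\phi_h,\phi_h)$. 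Using $\tau_{3,E}\sim h_E^{2}$ from Remark \ref{rem_tau} and summing over $E\in \Omega_h$, the two bounds combine to
\begin{equation*}
\|(I-\Pi^\nabla_{k-1})p_h\|_{0,\Omega}^{2} \;\leq\; C\,\mathcal{L}_{3,h}(p_h,p_h).
\end{equation*}

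Next, I would apply Lemma \ref{Beiga_1} and split the numerator as $b_h(\mathbf{u}_h,\Pi^\nabla_{k-1}p_h) = b_h(\mathbf{u}_h,p_h) - b_h(\mathbf{u}_h,(I-\Pi^\nabla_{k-1})p_h)$. By Cauchy--Schwarz and the $L^2$-boundedness property \textbf{(P0)} of the projectors appearing in the definition \eqref{form-b} of $b_h^E$, the cross term satisfies
\begin{equation*}
|b_h(\mathbf{u}_h,(I-\Pi^\nabla_{k-1})p_h)| \;\leq\; C\,\|\mathbf{u}_h\|_{1,\Omega}\,\|(I-\Pi^\nabla_{k-1})p_h\|_{0,\Omega} \;\leq\; C\,\|\mathbf{u}_h\|_{1,\Omega}\,[\mathcal{L}_{3,h}(p_h,p_h)]^{1/2}.
\end{equation*}
Dividing by $\|\mathbf{u}_h\|_{1,\Omega}$ and taking the sup, Lemma \ref{Beiga_1} then gives
\begin{equation*}
\Theta_0\,\|\Pi^\nabla_{k-1} p_h\|_{0,\Omega} \;\leq\; \sup_{\mathbf{0}\neq \mathbf{u}_h\in \mathbf{V}^k_h}\frac{b_h(\mathbf{u}_h,p_h)}{\|\mathbf{u}_h\|_{1,\Omega}} \;+\; C\,[\mathcal{L}_{3,h}(p_h,p_h)]^{1/2}.
\end{equation*}
Finally, I would use the triangle inequality $\|p_h\|_{0,\Omega} \leq \|\Pi^\nabla_{k-1}p_h\|_{0,\Omega} + \|(I-\Pi^\nabla_{k-1})p_h\|_{0,\Omega}$, insert the two bounds obtained above, and rearrange to obtain \eqref{inf-sup1} with $\Theta_1$ and $\Theta_2$ depending only on $\Theta_0$, $\lambda_{3\ast}$ and the mesh constants.

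The main obstacle is the localized Poincar\'e inequality on a general star-shaped polygon for a function with zero boundary mean: this is the only step that genuinely uses assumption \textbf{(A1)}, and it is the mechanism that converts the $H^1$-like control from the stabilization bound \eqref{vem-c} into an $L^2$-control with the correct $h_E$-scaling needed to match $\tau_{3,E}\sim h_E^{2}$. The quasi-uniformity hypothesis enters here (and in invoking Lemma \ref{Beiga_1}) to ensure that the implicit constants assembled from each element remain $h$-independent; the rest of the argument is a standard consistency-style manipulation of the cross term in $b_h$.
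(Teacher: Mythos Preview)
Your argument is clean and essentially correct for $k\geq 2$, but it has a genuine gap for $k=1$: Lemma \ref{Beiga_1}, on which your entire reduction rests, is stated only for $k\geq 2$. When $k=1$ the projection $\Pi^\nabla_{k-1}=\Pi^\nabla_0$ maps onto piecewise constants, and the pair $(\mathbf{V}^1_h,\mathbb{P}_0(\Omega_h))$ is \emph{not} known to satisfy a discrete inf-sup of the form \eqref{inf-sup}. So your splitting $p_h=\Pi^\nabla_0 p_h+(I-\Pi^\nabla_0)p_h$ cannot be launched from Lemma \ref{Beiga_1} in the lowest-order case, and the theorem must hold for all $k\geq 1$ (the numerical sections use $k=1$ throughout).

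The paper avoids this obstruction by taking a different starting point: rather than the discrete inf-sup of Lemma \ref{Beiga_1}, it begins from the \emph{continuous} inf-sup, which already gives the full $\|p_h\|_{0,\Omega}$ on the right. One then tests $b_h$ against the virtual interpolant $\mathbf{v}_I$ of the continuous supremizer $\mathbf{v}$, integrates by parts, and shows that the consistency error $b_h(\mathbf{v}_I,p_h)-(\nabla\cdot\mathbf{v},p_h)$ is controlled by $\|(I-\Pi^\nabla_{k-1})p_h\|_{0,\Omega}+h\|\nabla p_h\|_{0,\Omega}$. For $k=1$ one has $\nabla p_h=\nabla(I-\Pi^\nabla_0)p_h$ identically, so both terms are immediately absorbed by $\mathcal{L}_{3,h}^{1/2}$ via the same Poincar\'e/\eqref{vem-c} mechanism you used; Lemma \ref{Beiga_1} is never invoked. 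For $k\geq 2$ the paper splits $h\|\nabla p_h\|_{0,\Omega}$ via an inverse inequality into $\|\Pi^\nabla_{k-1}p_h\|_{0,\Omega}$ (then absorbed by Lemma \ref{Beiga_1}) plus $h\|\nabla(I-\Pi^\nabla_{k-1})p_h\|_{0,\Omega}$.

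For $k\geq 2$ your route is actually shorter and more transparent than the paper's --- you go directly from Lemma \ref{Beiga_1} and a single cross-term bound, whereas the paper passes through the continuous supremizer, an interpolant, and an inverse-inequality split. The price you pay for this economy is precisely the loss of the $k=1$ case.
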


\begin{proof}	
	We derive the proof of Theorem \ref{infsup} in the following steps: \newline
	\textbf{Step 1.} Form the continuous inf-sup condition, it is obvious that for any $p_h \in Q_h^k$ there exists $\mathbf{v} \in \mathbf{V}$ such that
	\begin{align}
		\big( \nabla \cdot \mathbf{v}, p_h\big) \geq C_1 \|\mathbf{v} \|_{1,\Omega} \|p_h\|_{0,\Omega}. \label{weak-01}
	\end{align}	
	Let $\mathbf{v}_I \in \mathbf{V}_{h}$ be the virtual element approximation of $\mathbf{v} \in \mathbf{V}$. {Recalling Lemma \ref{lemmaproj2}, we have}
	\begin{align}
		\|\mathbf{v} - \mathbf{v}_I\|_{0,\Omega} \leq C h \|\mathbf{v}\|_{1,\Omega} \qquad \text{and} \quad \,\, \|\mathbf{v}_I\|_{1,\Omega} \leq C \|\mathbf{v}\|_{1,\Omega}. \label{weak-11}
	\end{align}	
	We employ the orthogonality property of the projectors and \eqref{weak-11}, it holds that
	\begin{align}
		\sup \limits_{\mathbf{u}_h \in \mathbf{V}^k_h} \dfrac{b_h(\mathbf{u}_h, p_h)}{\|\mathbf{u}_h\|_{1,\Omega}} & = \sup \limits_{\mathbf{u}_h \in \mathbf{V}^k_h} \dfrac{ \sum_{E \in \Omega_h} \big( \Pi^{0,E}_{k-1} \nabla \cdot \mathbf{u}_h, \Pi^{0,E}_k p_h \big)}{\|\mathbf{u}_h\|_{1,\Omega}} \nonumber \\
		& \geq \dfrac{ \sum_{E \in \Omega_h} \big( \Pi^{0,E}_{k-1} \nabla \cdot \mathbf{v}_I, \Pi^{0,E}_k p_h \big)}{\|\mathbf{v}_I\|_{1,\Omega}} \nonumber \\
		& \geq \dfrac{ \sum_{E \in \Omega_h} \big(  \nabla \cdot \mathbf{v}_I, \Pi^{\nabla,E}_{k-1} p_h \big)}{C \|\mathbf{v}\|_{1,\Omega}} \nonumber \\
		& \geq - \dfrac{| \sum_{E \in \Omega_h} \big(  \nabla \cdot \mathbf{v}_I, p_h -\Pi^{\nabla,E}_{k-1}p_h \big)|}{C \|\mathbf{v}\|_{1,\Omega}} - \dfrac{| \sum_{E \in \Omega_h} \big( \nabla \cdot (\mathbf{v} - \mathbf{v}_I), p_h \big)|}{C \|\mathbf{v}\|_{1,\Omega}} + \dfrac{ \big(  \nabla \cdot \mathbf{v},  p_h \big)}{C \|\mathbf{v}\|_{1,\Omega}} \label{weak-21}.
	\end{align}
	{Recalling the Cauchy-Schwarz inequality and the bound \eqref{weak-11}, we infer
		\begin{align}
			\big| \sum_{E \in \Omega_h} \big(  \nabla \cdot \mathbf{v}_I, p_h -\Pi^{\nabla,E}_{k-1}p_h \big) \big| &\leq  \sum_{E \in \Omega_h} \| p_h -\Pi^{\nabla,E}_{k-1}p_h \|_{0,E} \|\nabla \mathbf{v}_I\|_{0,E} \nonumber \\
			& \leq  \| p_h -\Pi^{\nabla}_{k-1}p_h \|_{0,\Omega} \| \mathbf{v}\|_{1,\Omega}. \label{infsup_a1}
		\end{align}
		Applying integration by parts, we obtain
		\begin{align}
			\big| \sum_{E \in \Omega_h} \big( \nabla \cdot (\mathbf{v} - \mathbf{v}_I), p_h \big) \big| & = \big| -\sum_{E \in \Omega_h} \big( \mathbf{v} - \mathbf{v}_I, \nabla p_h \big) + \sum_{E \in \Omega_h} \int_{\partial E} (\mathbf{v} - \mathbf{v}_I) \cdot \mathbf{n}^E p_h ds  \big| \nonumber
			\intertext{Using the continuity property of $\mathbf{v}, \mathbf{v}_I \in [H^1_0(\Omega)]^2$ and $p_h \in H^1(\Omega)$, the boundary integral vanishes. Concerning the first term, we use \eqref{weak-11}:  }
			\big| \sum_{E \in \Omega_h} \big( \nabla \cdot (\mathbf{v} - \mathbf{v}_I), p_h \big) \big| & \leq \sum_{E \in \Omega_h} \| \mathbf{v} - \mathbf{v}_I\|_{0,E} \|\nabla p_h \|_{0,E} \nonumber \\
			& \leq C h \|\nabla p_h\|_{0,\Omega} \|\mathbf{v}\|_{1,\Omega}. \label{infsup_a2}
		\end{align}
		Combining the bounds \eqref{weak-21}, \eqref{infsup_a1}, and \eqref{infsup_a2}, and using the result \eqref{weak-01}, we obtain
		\begin{align}
			\sup \limits_{\mathbf{u}_h \in \mathbf{V}^k_h} \dfrac{b_h(\mathbf{u}_h, p_h)}{\|\mathbf{u}_h\|_{1,\Omega}} 
			& \geq - C\Big[\| (I- \Pi^{\nabla}_{k-1}) p_h \|_{0,\Omega} + 
			h \| \nabla p_h \|_{0,\Omega}  \Big] + C_1 \| p_h \|_{0,\Omega}. \label{infsup_a3}  
	\end{align}}
	\textbf{Step 2.} Using the Poincar$\acute{\text{e}}$ inequality \cite[Lemma 2.2]{vem28}, we infer
	\begin{align}
		\| (I- \Pi^{\nabla}_{k-1}) p_h \|^2_{0,\Omega} &= \sum_{E \in \Omega_h} \| (I- \Pi^{\nabla,E}_{k-1}) p_h \|^2_{0,E} \leq C \sum_{E \in \Omega_h} h^2_E \| \nabla(I- \Pi^{\nabla,E}_{k-1}) p_h \|^2_{0,E}. \label{w4}
	\end{align}
	{Employing the property of $\Pi^\nabla_0$, we infer
		\begin{align}
			h\|\nabla p_h\|_{0, \Omega} = h \|\nabla(I - \Pi^\nabla_0) p_h\|_{0, \Omega}. \label{infsup_a4}
	\end{align}}
	Substituting \eqref{w4} and \eqref{infsup_a4} in \eqref{infsup_a3}, we obtain for $k=1$
	\begin{align}
		\sup \limits_{\mathbf{u}_h \in \mathbf{V}^k_h} \dfrac{b_h(\mathbf{u}_h, p_h)}{\|\mathbf{u}_h\|_{1,\Omega}} & \geq - C  h \| \nabla (I- \Pi^{\nabla}_{0}) p_h \|_{0,\Omega}  + C_1 \| p_h \|_{0,\Omega}. \label{weak-03}
	\end{align}
	Recalling Lemma \ref{inverse}, it holds that
	\begin{align}
		h^2 \| \nabla p_h \|^2_{0,\Omega} &\leq C \sum_{E \in \Omega_h} \big[h^2 \| \nabla \Pi^{\nabla,E}_{k-1} p_h \|^2_{0,E} + 	h^2 \| \nabla(I-\Pi^{\nabla,E}_{k-1} )p_h\|^2_{0,E} \big] \nonumber \\
		&\leq C \sum_{E \in \Omega_h} \big[ \| \Pi^{\nabla,E}_{k-1} p_h \|^2_{0,E} + 	h^2 \| \nabla(I-\Pi^{\nabla,E}_{k-1} )p_h\|^2_{0,E} \big] \nonumber \\
		&\leq C \big[ \| \Pi^{\nabla}_{k-1} p_h \|^2_{0,\Omega} + 	h^2 \| \nabla(I-\Pi^{\nabla}_{k-1} )p_h\|^2_{0,\Omega} \big]. \label{w5}
	\end{align}
	We now combine \eqref{infsup_a3}, \eqref{w4} and \eqref{w5}, and we conclude that the following holds for $k\geq 2$
	\begin{align}
		\sup \limits_{\mathbf{u}_h \in \mathbf{V}^k_h} \dfrac{b_h(\mathbf{u}_h, p_h)}{\|\mathbf{u}_h\|_{1,\Omega}} 
		&\geq - C \Big[ \|  \Pi^{\nabla}_{k-1} p_h \|_{0,\Omega} + h \| \nabla(I- \Pi^{\nabla}_{k-1}) p_h \|_{0,\Omega}  \Big] + C_1 \| p_h \|_{0,\Omega}. \label{weak-22}
	\end{align}
	Using Lemma \ref{Beiga_1}, it gives
	\begin{align}
		\sup \limits_{\mathbf{u}_h \in \mathbf{V}^k_h} \dfrac{b_h(\mathbf{u}_h, p_h)}{\|\mathbf{u}_h\|_{1,\Omega}} & \geq - C  h \| \nabla (I- \Pi^{\nabla}_{k-1}) p_h \|_{0,\Omega}  +  {\Theta_1} \| p_h \|_{0,\Omega} \qquad \,\, \text{for all} \,\, k\geq 2. \label{weak-04}
	\end{align}	
	\textbf{Step 3.} Applying Remark \ref{estimate} and \eqref{vem-c}, the following hold for any $k \in \mathbb N$
	\begin{align}
		h^2\| \nabla (I- \Pi^{\nabla}_{k-1}) p_h \|^2_{0,\Omega} & \leq	h^2 \sum_{E \in \Omega_h } \| \nabla (I- \Pi^{\nabla,E}_{k-1}) p_h \|^2_{0,E} \nonumber \\
		& \leq h^2 \sum_{E \in \Omega_h} \Big[ \|\boldsymbol{\Pi}^{0,E}_k \nabla p_h- \boldsymbol{\Pi}^{0,E}_{k-1} \nabla p_h\|^2_{0,E} +  \| \nabla p_h- \nabla \Pi^{\nabla,E}_{k-1} p_h\|^2_{0,E} \Big] \nonumber \\
		&\leq \frac{ h^2}{ \min_{E \in \Omega_h}\tau_{3,E}} \sum_{E \in \Omega_h} \Big[ \tau_{3,E} \|\boldsymbol{\Pi}^{0,E}_k \nabla p_h- \boldsymbol{\Pi}^{0,E}_{k-1} \nabla p_h\|^2_{0,E}\, + \nonumber \\ & \qquad \frac{\tau_{3,E}}{\lambda_{3\ast}} S^E_p \big( (I-\Pi^{\nabla,E}_{k-1})p_h,(I-\Pi^{\nabla,E}_{k-1})p_h \big) \Big] \nonumber \\
		& \leq \frac{ h^2}{ \min_{E \in \Omega_h}\tau_{3,E}} \max \Big\{1, \frac{1}{\lambda_{3\ast}} \Big\} \mathcal{L}_{3,h}(p_h,p_h). \label{w6}
		\intertext{Following \cite{smfem1},  we assume  $\tau_{3,E} \sim h^2_E$. Consequently, \eqref{w6} gives}
		h^2 \| \nabla (I- \Pi^{\nabla}_{k-1}) p_h \|^2_{0,\Omega} & \leq C \max \Big\{1, \frac{1}{\lambda_{3\ast}} \Big\} \mathcal{L}_{3,h}(p_h,p_h). \label{w7}
	\end{align}
	Finally, combining the bounds \eqref{weak-03}, \eqref{weak-04} and \eqref{w7}, we easily obtain the result \eqref{inf-sup1}. 
\end{proof}

\subsection{Stability}
{To establish the well-posedness and error estimates for the stabilized VE problem \eqref{nvem}, we introduce a mesh-dependent energy norm  
	\begin{align}
		\vertiii{(\mathbf{z}_h, q_h)}^2 =  \mu \|\nabla \mathbf{z}_h\|^2_{0, \Omega} +  \gamma \| \mathbf{z}_h\|^2_{0, \Omega} + \alpha \|q_h\|^2_{0, \Omega} + \mathcal{L}_{1,h}(\mathbf{z}_h,\mathbf{z}_h) + \mathcal{L}_{2,h}(\mathbf{z}_h,\mathbf{z}_h) + \mathcal{L}_{3,h}(q_h,q_h), \label{tnorm}
	\end{align}
	and the constant $\alpha$ is given as follows
	\begin{align}
		\alpha = \begin{cases}
			\min \Big\{ \dfrac{ C_{\lambda_{1\ast}} \mu}{8C^2_{gen} (1 + \mu + \mathcal{B} + \mathcal{B}^2)^2}, \dfrac{C_{\lambda_{2\ast}}}{8 C^2_{gen} \gamma}, \dfrac{ \Theta_1}{\Theta_2^2 \xi_0}  \Big\} \qquad \text{for diffusion-dominated case}, \\ \\
			\dfrac{4 \Theta_1}{2 + 5 \Theta_2 + 4(1+ C_{gen}) (\mu + C_P^2 \gamma + C_{gen})^{1/2} + \frac{8C_P^2 C_{gen}^2 \mathcal{B}^2}{\mu + C_P^2 \gamma}} \qquad \text{for convection-dominated case $\mu \rightarrow 0$}, \nonumber
		\end{cases}
	\end{align}
	where
	\begin{align}
		C_{\lambda1\ast}&:= \min\{ 1, \lambda_{1\ast}\}, &C_{\lambda2\ast} &:= \min\{ 1, \lambda_{2\ast}\},   \nonumber \\ 
		\mathcal{B}&:=\max \limits_{E \in \Omega_{h}}\|\boldsymbol{\mathcal{B}}\|_{0,\infty,E}, & \xi_0&:=\min \Big\{ 1, \dfrac{\Theta_1}{2} \Big\}, & C_{gen}&:= \max \big\{1, \lambda_1^\ast, C, C|\Omega|^{1/2}, \lambda_1^\ast |\Omega|^{1/2}, \lambda_2^\ast C \big\}, \label{reg1}
	\end{align}
	with the positive constant $C$ depends on the inverse inequality constant, the Poincar$\acute{\text{e}}$ constant and the trace inequality constant, and independent of $h$. \newline
	The above definition of $\alpha$ concerning convection-dominated regimes is also applicable for diffusion-dominated regimes. Here we first focus on demonstrating the stability of the discrete problem \eqref{nvem} for the diffusion-dominated case, which can be applicable in developing stability of coupled Navier-Stokes problems \cite{mvem3} or the Boussinesq problem. The well-posedness theorem is given as follows:}

\begin{theorem}{(Well-posedness)} \label{wellposed}
	Under the mesh regularity assumption \textbf{(A1)}, there exists a positive constant $\Theta$ independent of the mesh size $h$, such that for any $(\mathbf{u}_h, p_h) \in \mathbf{V}^k_{h} \times Q^k_h$, there holds that 
	\begin{align}
		\sup \limits_{(\mathbf{0},0) \neq(\mathbf{z}_h, q_h) \in \mathbf{V}^k_{h} \times Q^k_h} \dfrac{(A_{h}+\mathcal{L}_h)[(\mathbf{u}_h, p_h), (\mathbf{z}_h, q_h)]}{\vertiii{(\mathbf{z}_h, q_h)}}  \geq \Theta \vertiii{(\mathbf{u}_h, p_h)}, \label{wellpos-0}
	\end{align}
	Moreover, the stabilized VE problem \eqref{nvem} has a unique solution.
\end{theorem}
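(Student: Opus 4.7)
The plan is the classical coercivity-plus-inf-sup argument for a stabilized saddle-point problem. First I would evaluate the form on the diagonal direction $(\mathbf{z}_h, q_h) = (\mathbf{u}_h, p_h)$. Because $c^{skew}_h$ is skew-symmetric by construction, $c^{skew}_h(\mathbf{u}_h, \mathbf{u}_h) = 0$, and because the two $\pm b_h(\mathbf{u}_h, p_h)$ terms cancel, one obtains
\begin{align*}
(A_h + \mathcal{L}_h)[(\mathbf{u}_h, p_h), (\mathbf{u}_h, p_h)] = a_h(\mathbf{u}_h, \mathbf{u}_h) + d_h(\mathbf{u}_h, \mathbf{u}_h) + \mathcal{L}_{1,h}(\mathbf{u}_h, \mathbf{u}_h) + \mathcal{L}_{2,h}(\mathbf{u}_h, \mathbf{u}_h) + \mathcal{L}_{3,h}(p_h, p_h).
\end{align*}
Using the stability properties (P4) and (P7) together with the positivity of each $\mathcal{L}_{i,h}$, this quantity already controls every component of $\vertiii{(\mathbf{u}_h, p_h)}^2$ except the pressure mass term $\alpha \|p_h\|_{0,\Omega}^2$.

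To recover that last piece I would use the weak inf-sup of Theorem \ref{infsup} to produce $\mathbf{v}_h \in \mathbf{V}^k_h$, normalized so that $\|\mathbf{v}_h\|_{1,\Omega} = \|p_h\|_{0,\Omega}$, with
\begin{equation*}
b_h(\mathbf{v}_h, p_h) \geq \Theta_1 \|p_h\|_{0,\Omega}^2 - \Theta_2 \bigl[\mathcal{L}_{3,h}(p_h, p_h)\bigr]^{1/2} \|p_h\|_{0,\Omega},
\end{equation*}
and then test with the perturbed direction $(\mathbf{u}_h - \delta \mathbf{v}_h, p_h)$ for a small parameter $\delta > 0$ to be tuned. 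The correction $-\delta (A_h + \mathcal{L}_h)[(\mathbf{u}_h, p_h), (\mathbf{v}_h, 0)]$ decomposes into the favourable $\delta b_h(\mathbf{v}_h, p_h)$, which (after the inf-sup bound above) contributes the missing $\alpha \|p_h\|^2$ once $\delta$ is proportional to $\alpha$, plus five cross terms $\delta a_h(\mathbf{u}_h, \mathbf{v}_h)$, $\delta c^{skew}_h(\mathbf{u}_h, \mathbf{v}_h)$, $\delta d_h(\mathbf{u}_h, \mathbf{v}_h)$, $\delta \mathcal{L}_{1,h}(\mathbf{u}_h, \mathbf{v}_h)$, $\delta \mathcal{L}_{2,h}(\mathbf{u}_h, \mathbf{v}_h)$.

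Each cross term is estimated by continuity (P5, P8 for the symmetric forms; Lemma \ref{estimate2} together with the trace-inverse estimate \eqref{bd1} for the boundary part of $c^{skew}_h$; Cauchy-Schwarz on $S^E_\nabla$ for $\mathcal{L}_{1,h}$ and $\mathcal{L}_{2,h}$) and then split via Young's inequality into a piece to be swallowed by the Step-1 coercivity and a residual of the form $(\text{constant}) \|p_h\|_{0,\Omega}^2$ which is absorbed by $\alpha \|p_h\|^2$ provided $\alpha$ respects the bound stated in the theorem. The principal obstacle is the convective cross term $\delta c^{skew}_h(\mathbf{u}_h, \mathbf{v}_h)$: in the diffusion-dominated regime its $\mathcal{B}^2 \|p_h\|^2$-residual is absorbed into $\mu \|\nabla \mathbf{u}_h\|^2$, which explains the first branch of the formula for $\alpha$; in the convection-dominated regime $\mu \to 0$ the $\mu \|\nabla \mathbf{v}_h\|^2$-channel is no longer available, so one must instead route the absorption through $\gamma \|\mathbf{u}_h\|^2$ (using Poincar\'e) and through the SUPG-type stabilization $\mathcal{L}_{1,h}$, producing the $\mathcal{B}^2/(\mu + C_P^2 \gamma)$ factor in the second branch of $\alpha$. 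With $\delta$ calibrated accordingly, the resulting bound reads $(A_h + \mathcal{L}_h)[(\mathbf{u}_h, p_h), (\mathbf{u}_h - \delta \mathbf{v}_h, p_h)] \geq \widetilde\Theta \vertiii{(\mathbf{u}_h, p_h)}^2$; dividing by the elementary upper bound $\vertiii{(\mathbf{u}_h - \delta \mathbf{v}_h, p_h)} \leq C \vertiii{(\mathbf{u}_h, p_h)}$ (obtained from the triangle inequality and $\|\mathbf{v}_h\|_{1,\Omega} = \|p_h\|_{0,\Omega}$) delivers \eqref{wellpos-0}. Existence and uniqueness of the discrete solution of \eqref{nvem} then follow at once: it is a square linear system on the finite-dimensional space $\mathbf{V}^k_h \times Q^k_h$ whose kernel is trivial by the inf-sup bound.
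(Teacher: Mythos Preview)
Your proposal is correct and follows essentially the same architecture as the paper's proof: diagonal coercivity via (P4)/(P7), the weak inf-sup of Theorem~\ref{infsup} to manufacture the auxiliary velocity (the paper calls it $\mathbf{w}_h$ and scales it so that $\|\nabla \mathbf{w}_h\|_{0,\Omega}=\xi_0\|p_h\|_{0,\Omega}$), testing with $(\mathbf{u}_h-\alpha\mathbf{w}_h,p_h)$, bounding the six cross terms individually, and finishing by comparing $\vertiii{(\mathbf{u}_h-\alpha\mathbf{w}_h,p_h)}$ with $\vertiii{(\mathbf{u}_h,p_h)}$. One small clarification: the paper's proof of Theorem~\ref{wellposed} itself only treats the diffusion-dominated branch of $\alpha$ (absorbing the convective cross term into $\mu\|\nabla\mathbf{u}_h\|^2$); the convection-dominated branch $\mu\to0$ you sketch is handled separately in Theorem~\ref{rob_stab}, where the key device is not $\mathcal{L}_{1,h}$ but an integration by parts on $c_h^{skew}$ that converts the bound to $\mathcal{B}\|\mathbf{u}_h\|_{0,\Omega}\|\nabla\mathbf{v}_h\|_{0,\Omega}$ so that $\gamma\|\mathbf{u}_h\|^2$ can absorb it.
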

\begin{proof}
	The proof of Theorem  \ref{wellposed} is accomplished through the following steps: \newline
	\textbf{Step 1.} Replacing $(\mathbf{z}_h, q_h)$ by $(\mathbf{u}_h, p_h)$ in the definition of $A_h[\cdot,\cdot]$ and $\mathcal{L}_h[\cdot,\cdot]$, it gives 
	\begin{align}
		(A_h + \mathcal{L}_h)&[(\mathbf{u}_h, p_h), (\mathbf{u}_h, p_h)]\nonumber \\ &=a_{h}(\mathbf{u}_h,\mathbf{u}_h) + d_{h}(\mathbf{u}_h,\mathbf{u}_h) + \mathcal{L}_{1,h}(\mathbf{u}_h,\mathbf{u}_h)+ \mathcal{L}_{2,h}(\mathbf{u}_h,\mathbf{u}_h)+ \mathcal{L}_{3,h}(p_h,p_h) \nonumber \\ 
		& \geq  \sum_{E \in \Omega_{h}}  \Big( \min\{ 1, \lambda_{1\ast}\} \mu \| \nabla \mathbf{u}_h\|^2_{0, E}  + \min\{ 1, \lambda_{2\ast}\}\gamma \|\mathbf{u}_h\|^2_{0,E} + \mathcal{L}^E_{1,h}(\mathbf{u}_h,\mathbf{u}_h) \nonumber \\ & \qquad + \mathcal{L}^E_{2,h}(\mathbf{u}_h,\mathbf{u}_h)+ \mathcal{L}^E_{3,h}(p_h,p_h) \Big) \nonumber \\ 
		& \geq   \Big( C_{\lambda1\ast} \mu \| \nabla \mathbf{u}_h\|^2_{0, \Omega}  + C_{\lambda2\ast}\gamma \|\mathbf{u}_h\|^2_{0,\Omega} + \mathcal{L}_{1,h}(\mathbf{u}_h,\mathbf{u}_h) + \mathcal{L}_{2,h}(\mathbf{u}_h,\mathbf{u}_h)+ \mathcal{L}_{3,h}(p_h,p_h) \Big). \label{wellpos-1}
	\end{align}
	\textbf{Step 2.} It is evident from Theorem \ref{infsup} that for any discrete pressure $q_h \in Q_h^k \subset Q$  there exists a discrete velocity $\mathbf{r}_h \in \mathbf{V}^k_h$ such that
	\begin{align}
		b_{h}(\mathbf{r}_h, {p_h}) \geq \|\mathbf{r}_h\|_{1,\Omega} \Big(\Theta_1 \|{p_h}\|_{0,\Omega} - \Theta_2 [\mathcal{L}_{3,h}({p_h},{p_h})]^{\frac{1}{2}} \Big). \nonumber 
	\end{align}
	We now choose $\mathbf{w}_h =\dfrac{ \xi_0\|p_h\|_{0,\Omega}}{\| \nabla \mathbf{r}_h\|_{0,\Omega}} \mathbf{r}_h$, then we obtain the following:
	\begin{align}
		\|\nabla \mathbf{w}_h\|_{0,\Omega} = \xi_0 \|p_h\|_{0,\Omega}, \qquad \,\,\text{with} \,\, \xi_0=\min \big\{ 1, \dfrac{\Theta_1}{2} \big\}. \label{wellpos-2}
	\end{align}
	Employing Eq. \eqref{wellpos-2} , we infer 
	\begin{align}
		b_{h}(\mathbf{w}_h, p_h) \geq \xi_0\|p_h\|_{0,\Omega} \Big(\Theta_1 \|p_h\|_{0,\Omega} - \Theta_2 [\mathcal{L}_{3,h}(p_h,p_h)]^{\frac{1}{2}} \Big). \label{wellpos-3}
	\end{align}
	\textbf{Step 3.} Let $\alpha$ be any positive constant. Replacing $(\mathbf{z}_h, q_h)$ by $(-\alpha \mathbf{w}_h, 0)$, we arrive at 
	\begin{align}
		(A_h+\mathcal{L}_h)&[(\mathbf{u}_h, p_h), (-\alpha \mathbf{w}_h, 0)]\nonumber \\
		&= \sum_{E \in \Omega_{h}} \Big(- \alpha a^E_{h}(\mathbf{u}_h,\mathbf{w}_h) + \alpha b^E_{h}(\mathbf{w}_h,p_h) - \alpha c^{skew,E}_{h}(\mathbf{u}_h,\mathbf{w}_h) - \nonumber \\ &\qquad \qquad \alpha d^E_{h}(\mathbf{u}_h,\mathbf{w}_h) -  \alpha \mathcal{L}^E_{1,h}(\mathbf{u}_h,\mathbf{w}_h) - \alpha \mathcal{L}^E_{2,h}(\mathbf{u}_h,\mathbf{w}_h) \Big) \nonumber \\ 
		&=: -s_1 + s_2 - s_3 - s_4 -s_5 -s_6. \label{well_b}
	\end{align}
	$\bullet$ Using the definition of $a^E_h(\cdot,\cdot)$ and \eqref{vem-a}, we obtain 
	\begin{align}
		s_1 &\leq  \sum_{E \in \Omega_h} \max \{ 1, \lambda_1^{\ast} \} \alpha \mu \|\nabla \mathbf{u}_h\|_{0,E}	\|\nabla \mathbf{w}_h\|_{0,E} \nonumber \\
		& \leq   \alpha \max \{ 1, \lambda_1^{\ast} \} \mu \|\nabla \mathbf{u}_h\|_{0,\Omega} \|\nabla \mathbf{w}_h\|_{0,\Omega}. \label{wellpos-4} 
	\end{align}
	$\bullet$ Recalling \eqref{wellpos-3} and the Young inequality, it holds that
	\begin{align}
		s_2 &\geq  \alpha \xi_0 \Big( \Theta_1 \|p_h\|^2_{0,\Omega} -  \Theta_2 \|p_h\|_{0,\Omega} [\mathcal{L}_{3,h}(p_h,p_h)]^{\frac{1}{2}} \Big) \nonumber \\
		& \geq  \alpha \xi_0 \Big( \Theta_1 \|p_h\|^2_{0,\Omega} - \dfrac{\Theta^2_2}{t_1} \|p_h\|^2_{0,\Omega} - {\dfrac{t_1}{4}} \mathcal{L}_{3,h}(p_h,p_h) \Big) \nonumber \\
		& \geq  \alpha \xi_0 \Big( \Big(\Theta_1 - \dfrac{\Theta^2_2}{t_1} \Big) \|p_h\|^2_{0,\Omega} - {\dfrac{t_1}{4}} \mathcal{L}_{3,h}(p_h,p_h) \Big), \label{wellpos-5}
	\end{align}
	where $t_1$ is a positive constant. \newline
	$\bullet$ Employing the stability properties of projection operators, Lemmas \ref{trace} and \ref{estimate2}, and the bound \eqref{bd1}, we get
	\begin{align}
		s_3 &\leq  \alpha \sum_{E \in \Omega_{h}} \Big( \|c^E_h(\mathbf{u}_h, \mathbf{w}_h) \|_{0,E} + 	\|c^E_h(\mathbf{w}_h, \mathbf{u}_h{)}\|_{0,E}  \Big) \nonumber \\
		&\leq  \alpha \sum_{E \in \Omega_h} \Big( {\mathcal{B}}_E \| \nabla \mathbf{u}_h\|_{0,E} \|\mathbf{w}_h \|_{0,E} + {\mathcal{B}}_E \| (\mathbf{I}- \boldsymbol{\Pi}^{0,E}_k) \mathbf{u}_h\|_{0,\partial E} \|\boldsymbol{\Pi}^{0,E}_k\mathbf{w}_h \|_{0,\partial E } \,+ \nonumber \\ & \qquad    {\mathcal{B}}_E \| \nabla \mathbf{w}_h\|_{0,E} \|\mathbf{u}_h \|_{0,E} +  {\mathcal{B}}_E \| (\mathbf{I}- \boldsymbol{\Pi}^{0,E}_k) \mathbf{w}_h\|_{0,\partial E} \|\boldsymbol{\Pi}^{0,E}_k\mathbf{u}_h \|_{0,\partial E } \Big)	\nonumber \\
		& \leq C \alpha \sum_{E \in \Omega_{h}} {\mathcal{B}}_E \Big( \| \nabla \mathbf{u}_h\|_{0,E} \| \mathbf{w}_h \|_{0,E} + h^{1/2}_E \big(h^{-1/2}_E \| \mathbf{w}_h\|_{0,E} + h^{1/2}_E \| \nabla \mathbf{w}_h\|_{0,E}\big) \| \nabla \mathbf{u}_h\|_{0, E} \,+  \nonumber \\ & \qquad \| \nabla \mathbf{w}_h\|_{0,E} \| \mathbf{u}_h \|_{0,E} +  h^{1/2}_E \big(h^{-1/2}_E \| \mathbf{u}_h\|_{0,E} + h^{1/2}_E \| \nabla \mathbf{u}_h\|_{0,E}\big) \|\nabla \mathbf{w}_h\|_{0, E} \Big)\nonumber \\
		& \leq C \alpha \sum_{E \in \Omega_{h}} {\mathcal{B}}_E \Big( \| \nabla \mathbf{u}_h\|_{0,E} \| \mathbf{w}_h \|_{0,E} + h_E  \| \nabla \mathbf{u}_h\|_{0, E} \| \nabla \mathbf{w}_h\|_{0,E} + \| \nabla \mathbf{w}_h\|_{0,E} \| \mathbf{u}_h \|_{0,E} \Big). \nonumber
		\intertext{Applying the H$\ddot{\text{o}}$lder and Poincar$\acute{\text{e}}$ inequalities, we infer}
		s_3& \leq C \alpha \max\{1, |\Omega|^{{1/2}}\}  \mathcal{B}   \| \nabla \mathbf{u}_h\|_{0,\Omega} \|\nabla \mathbf{w}_h \|_{0,\Omega}.  \label{wellpos-6}
	\end{align}
	$\bullet$ We use the bound \eqref{vem-b}, Lemmas \ref{lemmaproj1} and \ref{inverse}, and the Poincar$\acute{\text{e}}$ inequality:
	\begin{align}
		s_4 &\leq  \alpha \max\{1, \lambda_2^\ast\} \sum_{E \in \Omega_{h}} \gamma \Big(  \|  \mathbf{u}_h\|_{0,E} \| \mathbf{w}_h \|_{0,E} +  \| (\mathbf{I}- \boldsymbol{\Pi}^{0,E}_k) \mathbf{u}_h\|_{0, E} \|(\boldsymbol{I}-\boldsymbol{\Pi}^{0,E}_k)\mathbf{w}_h \|_{0,E } \Big) \nonumber \\
		& \leq C \alpha \max\{1, \lambda_2^\ast\} \sum_{E \in \Omega_{h}} \gamma \Big( \|  \mathbf{u}_h\|_{0,E} \| \mathbf{w}_h \|_{0,E} + h^2_E \| \nabla \mathbf{u}_h\|_{0, E} \| \nabla \mathbf{w}_h \|_{0,E } \Big) \nonumber \\
		&\leq C \alpha \max\{1, \lambda_2^\ast\} \gamma   \|  \mathbf{u}_h\|_{0,\Omega} \| \mathbf{w}_h \|_{0,\Omega}\nonumber \\
		&\leq C \alpha \max\{1, \lambda_2^\ast\} \gamma   \|  \mathbf{u}_h\|_{0,\Omega} \| \nabla \mathbf{w}_h \|_{0,\Omega}. \label{wellpos-7}
	\end{align}
	$\bullet$ Employing the fact $\tau_{1,E} \sim h_E$ from Remark \ref{rem_tau}, the stability of projectors and \eqref{vem-a}, it holds that
	\begin{align}
		s_5 & \leq  \alpha \sum_{E \in \Omega_{h}} \tau_{1,E}  \lambda_1^\ast \Big(  {\mathcal{B}}_E^2\| \nabla \mathbf{u}_h\|_{0,E} \|\nabla \mathbf{w}_h\|_{0,E} \Big) \nonumber \\  
		& \leq  \lambda_1^\ast |\Omega|^{1/2} \alpha\mathcal{B}^2 \| \nabla \mathbf{u}_h\|_{0,\Omega} \|\nabla \mathbf{w}_h\|_{0,\Omega}.  \label{wellpos-8}
	\end{align}
	$\bullet$ We apply \eqref{vem-a} and the fact $\tau_{2,E} \sim \mathcal{O}(1)$:
	\begin{align}
		s_6 & \leq \alpha \sum_{E \in \Omega_{h}} \tau_{2,E} \max\{ 1, \lambda_1^\ast\}\Big(\| \nabla \mathbf{u}_h\|_{0,E} \|\nabla \mathbf{w}_h\|_{0,E} \Big) \nonumber \\  
		& \leq \max\{ 1, \lambda_1^\ast\} \alpha \| \nabla \mathbf{u}_h\|_{0,\Omega} \|\nabla \mathbf{w}_h\|_{0,\Omega}.   \label{wellpos-9}
	\end{align}
	Therefore, combining the estimates \eqref{wellpos-4}--\eqref{wellpos-9}, we infer 
	\begin{align}
		(A_h+\mathcal{L}_h)&[(\mathbf{u}_h, p_h), (-\alpha \mathbf{w}_h, 0)] \nonumber \\
		& \geq - \alpha C_{gen}\big(1 + \mu + \mathcal{B} + \mathcal{B}^2 \big) \| \nabla \mathbf{u}_h\|_{0,\Omega} \|\nabla \mathbf{w}_h\|_{0,\Omega} - \alpha C_{gen} \gamma \| \mathbf{u}_h \|_{0,\Omega} \|\nabla \mathbf{w}_h\|_{0,\Omega}  \nonumber \\ & \qquad + \alpha \xi_0 \Big( \Theta_1 -\dfrac{\Theta_2^2}{t_1}\Big) \|p_h\|^2_{0,\Omega}  - {\dfrac{\alpha \xi_0 t_1}{4}} \mathcal{L}_{3,h}(p_h,p_h)  \nonumber \\
		&\geq - \alpha \Big( 4C^2_{gen}\big(1 + \mu + \mathcal{B} + \mathcal{B}^2 \big)^2 \| \nabla \mathbf{u}_h\|^2_{0,\Omega} + \frac{\xi_0^2}{4}\|p_h\|^2_{0,\Omega} \Big) - \alpha \Big(4C^2_{gen} \gamma^2 \| \mathbf{u}_h \|^2_{0,\Omega} + \frac{\xi_0^2}{4}\|p_h\|_{0,\Omega}\Big)  \nonumber \\ & \qquad + \alpha \xi_0 \Big( \Theta_1 -\dfrac{\Theta_2^2}{t_1}\Big) \|p_h\|^2_{0,\Omega}  - \dfrac{\alpha \xi_0 t_1}{4} \mathcal{L}_{3,h}(p_h,p_h)   \qquad \qquad \text{(using \eqref{wellpos-2})} \nonumber \\
		&\geq - \alpha \Big( 4C^2_{gen}\big(1 + \mu + \mathcal{B} + \mathcal{B}^2 \big)^2 \| \nabla \mathbf{u}_h\|^2_{0,\Omega} + 4C^2_{gen} \gamma^2 \| \mathbf{u}_h \|^2_{0,\Omega} + \frac{\xi_0^2}{2}\|p_h\|^2_{0,\Omega} \Big)   \nonumber \\ & \qquad + \alpha \xi_0 \Big( \Theta_1 -\dfrac{\Theta_2^2}{t_1}\Big) \|p_h\|^2_{0,\Omega}  - \dfrac{\alpha \xi_0 t_1}{4} \mathcal{L}_{3,h}(p_h,p_h). \label{wellpos-10}
	\end{align}
	\textbf{Step 4.} {Employing $(\mathbf{z}_h, q_h)=(\mathbf{u}_h - \alpha \mathbf{w}_h, p_h)$ and the estimates \eqref{wellpos-1} and \eqref{wellpos-10}, we arrive at}
	\begin{align}
		(A_h+\mathcal{L}_h)&[(\mathbf{u}_h, p_h), (\mathbf{u}_h-\alpha \mathbf{w}_h, p_h)] \nonumber \\
		&= (A_h+\mathcal{L}_h)[(\mathbf{u}_h, p_h), (\mathbf{u}_h, p_h)] + (A_h+\mathcal{L}_h)[(\mathbf{u}_h, p_h), (-\alpha \mathbf{w}_h, 0)] \nonumber \\
		& \geq  \big( C_{\lambda_{1\ast}}\mu- 4\alpha C^2_{gen}(1 + \mu + \mathcal{B} + \mathcal{B}^2 )^2 \big)\|\nabla \mathbf{u}_h \|^2_{0,\Omega} + \big( C_{\lambda_{2\ast}}\gamma - 4\alpha C^2_{gen} \gamma^2  \big)\| \mathbf{u}_h \|^2_{0,\Omega} \nonumber \\ & \qquad - \dfrac{\alpha \xi^2_0}{2} \|p_h\|^2_{0,\Omega} + \alpha \xi_0 \Big( \Theta_1 -\dfrac{\Theta_2^2}{t_1}\Big) \|p_h\|^2_{0,\Omega}   + \mathcal{L}_{1,h}(\mathbf{u}_h,\mathbf{u}_h) +\mathcal{L}_{2,h}(\mathbf{u}_h,\mathbf{u}_h) \nonumber\\ & \qquad +  {\Big(1- \dfrac{\alpha \xi_0 t_1}{4} \Big)}\mathcal{L}_{3,h}(p_h,p_h).
	\end{align}
	We now choose $t_1= \dfrac{2\Theta_2^2}{\Theta_1}$, it holds that
	\begin{align}
		(A_h+\mathcal{L}_h)&[(\mathbf{u}_h, p_h),(\mathbf{u}_h-\alpha \mathbf{w}_h, p_h)] \nonumber \\ 
		& \geq  \big( C_{\lambda_{1\ast}}\mu- 4\alpha C^2_{gen}(1 + \mu + \mathcal{B} + \mathcal{B}^2 )^2 \big)\|\nabla \mathbf{u}_h \|^2_{0,\Omega} + \big( C_{\lambda_{2\ast}}\gamma - 4\alpha C^2_{gen} \gamma^2  \big)\| \mathbf{u}_h \|^2_{0,\Omega}\, + \nonumber \\ &  \quad \big( \frac{\alpha \xi_0 \Theta_1}{2}- \frac{\alpha \xi^2_0}{2} \big) \|p_h\|^2_{0,\Omega} + \mathcal{L}_{1,h}(\mathbf{u}_h,\mathbf{u}_h) +\mathcal{L}_{2,h}(\mathbf{u}_h,\mathbf{u}_h)  + \big(1-  \frac{\alpha \xi_0 \Theta_2^2}{2 \Theta_1} \big)\mathcal{L}_{3,h}(p_h,p_h)\nonumber \\
		& \geq  \big( C_{\lambda_{1\ast}}\mu- 4\alpha C^2_{gen}(1 + \mu + \mathcal{B} + \mathcal{B}^2 )^2 \big)\|\nabla \mathbf{u}_h \|^2_{0,\Omega} + \big( C_{\lambda_{2\ast}}\gamma - 4\alpha C^2_{gen} \gamma^2  \big)\| \mathbf{u}_h \|^2_{0,\Omega}\, + \nonumber \\ &  \quad \big( \frac{\alpha \xi_0 \Theta_1}{2}- \frac{\alpha \xi_0 \Theta_1}{4} \big) \|p_h\|^2_{0,\Omega} + \mathcal{L}_{1,h}(\mathbf{u}_h,\mathbf{u}_h) +\mathcal{L}_{2,h}(\mathbf{u}_h,\mathbf{u}_h)  + \big(1-  \frac{\alpha \xi_0 \Theta_2^2}{2 \Theta_1} \big)\mathcal{L}_{3,h}(p_h,p_h),
	\end{align}
	where the last line is obtained using \eqref{wellpos-2}. Further, employing $\alpha = \min \Big\{ \dfrac{ C_{\lambda_{1\ast}} \mu}{8C^2_{gen} (1 + \mu + \mathcal{B} + \mathcal{B}^2)^2}, \dfrac{C_{\lambda_{2\ast}}}{8 C^2_{gen} \gamma}, \dfrac{ \Theta_1}{\Theta_2^2 \xi_0}  \Big\}$, we obtain  
	\begin{align}
		(A_h+\mathcal{L}_h)&[(\mathbf{u}_h, p_h), (\mathbf{u}_h-\alpha \mathbf{w}_h, p_h)] \nonumber \\& \geq  \Big( \dfrac{C_{\lambda_{1\ast}}\mu}{2} \|\nabla \mathbf{u}_h \|^2_{0,\Omega} + \dfrac{C_{\lambda_{2\ast}}\gamma}{2} \| \mathbf{u}_h \|^2_{0,\Omega} + \mathcal{L}_{1,h}(\mathbf{u}_h,\mathbf{u}_h) +\mathcal{L}_{2,h}(\mathbf{u}_h,\mathbf{u}_h)  \nonumber \\ & +  \dfrac{1}{2}\mathcal{L}_{3,h}(p_h,p_h) + \dfrac{\xi_0 \Theta_1}{4} \min \Big\{ \dfrac{ C_{\lambda_{1\ast}} \mu}{8C^2_{gen} (1 + \mu + \mathcal{B} + \mathcal{B}^2)^2}, \dfrac{C_{\lambda_{2\ast}}}{8 C^2_{gen} \gamma}, \dfrac{ \Theta_1}{\Theta_2^2 \xi_0}  \Big\} \|p_h\|^2_{0,\Omega} \Big) \nonumber \\
		& \geq  \min \Big\{\frac{1}{2}, \frac{C_{\lambda_{1\ast}}}{2}, \frac{C_{\lambda_{2\ast}}}{2}, \frac{ \xi_0 \Theta_1 C_{\lambda_{1\ast}} \mu}{32C^2_{gen} (1 + \mu + \mathcal{B} + \mathcal{B}^2)^2}, \dfrac{\xi_0 \Theta_1 C_{\lambda_{2\ast}}}{32 C^2_{gen} \gamma}, \dfrac{ \Theta_1^2}{4\Theta_2^2}  \Big\} \Big( \mu \|\nabla \mathbf{u}_h \|^2_{0,\Omega}  \nonumber \\ & \qquad  + \gamma \| \mathbf{u}_h \|^2_{0,\Omega}  + \mathcal{L}_{1,h}(\mathbf{u}_h,\mathbf{u}_h) +\mathcal{L}_{2,h}(\mathbf{u}_h,\mathbf{u}_h)  + \mathcal{L}_{3,h}(p_h,p_h) + \|p_h\|^2_{0,\Omega} \Big) \nonumber \\
		& \geq \mathbb N_1 \vertiii{(\mathbf{u}_h, p_h)}^2, \label{welllpos-9}
	\end{align}
	where $\mathbb N_1:= \min \Big\{\dfrac{1}{2}, \dfrac{C_{\lambda_{1\ast}}}{2}, \dfrac{C_{\lambda_{2\ast}}}{2}, \dfrac{ \xi_0 \Theta_1 C_{\lambda_{1\ast}} \mu}{32C^2_{gen} (1 + \mu + \mathcal{B} + \mathcal{B}^2)^2}, \dfrac{\xi_0 \Theta_1 C_{\lambda_{2\ast}}}{32 C^2_{gen} \gamma}, \dfrac{ \Theta_1^2}{4\Theta_2^2}  \Big\}$. \newline
	\textbf{Step 5.} This is our final step in order to achieve the estimate \eqref{wellpos-0}. We begin with the definition of the energy norm:
	\begin{align}
		\vertiii{(\mathbf{z}_h, q_h)}^2 &= \vertiii{(\mathbf{u}_h-\alpha \mathbf{w}_h, p_h)}^2 \leq 2\Big[ \vertiii{(\mathbf{u}_h, p_h)}^2  + \alpha^2	\vertiii{( \mathbf{w}_h, 0)}^2 \Big] \nonumber \\
		& \leq 2 \max\big\{1, \lambda_1^\ast \big\} \Big[ \mu \|\nabla \mathbf{u}_h \|^2_{0,\Omega} + \gamma \| \mathbf{u}_h \|^2_{0,\Omega}  + \mathcal{L}_{1,h}(\mathbf{u}_h,\mathbf{u}_h) +\mathcal{L}_{2,h}(\mathbf{u}_h,\mathbf{u}_h)  + \mathcal{L}_{3,h}(p_h,p_h) \nonumber \\ & \qquad  + \Big(1 + \alpha^2 \xi_0^2 \big( 1 + \mu + \mathcal{B}^2 \big) \Big) \|p_h\|^2_{0,\Omega} + \alpha^2 \gamma \| \mathbf{w}_h \|^2_{0,\Omega} \Big] \nonumber \\
		& \leq 2 \max\big\{1, \lambda_1^\ast,C^2_p \big\}\Big[ \mu \|\nabla \mathbf{u}_h \|^2_{0,\Omega} + \gamma  \| \mathbf{u}_h \|^2_{0,\Omega}  + \mathcal{L}_{1,h}(\mathbf{u}_h,\mathbf{u}_h) +\mathcal{L}_{2,h}(\mathbf{u}_h,\mathbf{u}_h)  + \mathcal{L}_{3,h}(p_h,p_h) \nonumber \\ & \qquad  + \Big(1 + \alpha^2 \xi_0^2 \big( 1 + \mu +  \gamma + \mathcal{B}^2 \big) \Big) \|p_h\|^2_{0,\Omega} \Big] \nonumber \\
		& \leq 2 \max\big\{1, \lambda_1^\ast,C^2_p \big\} \big(1 + \alpha^2 \xi_0^2 \big( 1 + \mu  + \gamma +  \mathcal{B}^2 \big) \big) \Big[ \mu \|\nabla \mathbf{u}_h \|^2_{0,\Omega} + \gamma  \| \mathbf{u}_h \|^2_{0,\Omega} \nonumber \\ & \qquad  + \mathcal{L}_{1,h}(\mathbf{u}_h,\mathbf{u}_h) +\mathcal{L}_{2,h}(\mathbf{u}_h,\mathbf{u}_h)  + \mathcal{L}_{3,h}(p_h,p_h)  +  \|p_h\|^2_{0,\Omega} \Big] \nonumber \\
		& \leq \mathbb N_2 \vertiii{(\mathbf{u}_h, p_h)}^2, \label{wellpos-9a}
	\end{align}
	where $\mathbb N_2:= 2 \max\big\{1, \lambda_1^\ast,C^2_p \big\} \big(1 + \alpha^2 \xi_0^2 \big( 1 + \mu  + \gamma +  \mathcal{B}^2 \big) \big)$. Combining \eqref{welllpos-9} and \eqref{wellpos-9a}, we easily obtain the result \eqref{wellpos-0} with $\Theta=\mathbb N_1/\sqrt{\mathbb{N}_2}$. \newline
	\textbf{Step 6.} In this step, we establish the uniqueness of the discrete solution. Since all the contributed discrete bilinear forms in $(A_h+\mathcal{L}_h)$ are continuous with respect to the energy norm $\vertiii{\cdot}$, obviously, we arrive at 
	\begin{align}
		|(A_h + \mathcal{L}_h)[(\mathbf{u}_h, p_h), (\mathbf{z}_h, q_h)]| \leq  C_\ast \vertiii{(\mathbf{u}_h,  p_h)}\, \vertiii{(\mathbf{z}_h,  q_h)} \qquad \text{for all } \,\,\, (\mathbf{z}_h,  q_h) \in \mathbf{V}^k_h \times Q^k_h.\label{wellpos-11}
	\end{align}
	Thus, combining \eqref{wellpos-0} and \eqref{wellpos-11} guarantees the well-posedness of the VE problem \eqref{nvem}.
	
\end{proof}
\begin{remark}
	The most noteworthy aspect of Theorem \ref{wellposed} is that its proof is established in a generalized form, which is robust with respect to $\boldsymbol{\mathcal{B}}$ and $\gamma$. The choice of $t_1$ and $\alpha$ ensures that the validity of the proof remains unaffected when either the reaction parameter $\gamma=0$ or the convective flow field $ \boldsymbol{\mathcal{B}}=\mathbf{0}$. Consequently, there is no need to introduce any supplementary conditions to confirm the validity of \eqref{wellpos-0}.
\end{remark}
\begin{remark}
	\texttt{The case $\boldsymbol{\mathcal{B}} = \mathbf{0}$ and $\gamma >0$ (Brinkman problem)}.
	The proof of Theorem \ref{wellposed} is valid for the parameters $\boldsymbol{\mathcal{B}} = \mathbf{0}$ and $\gamma >0$, thus extending to the stability of the Brinkman problem as a special case without imposing additional assumptions. In this case, the constants $\mathbb N_1$ and $\mathbb N_2$ are given by 
	\begin{align}
		\Scale[0.9]{	\mathbb N_1:= \min \Big\{\dfrac{1}{2}, \dfrac{C_{\lambda_{1\ast}}}{2}, \dfrac{C_{\lambda_{2\ast}}}{2}, \dfrac{ \xi_0 \Theta_1 C_{\lambda_{1\ast}} \mu}{32C^2_{gen} (1 + \mu)^2}, \dfrac{\xi_0 \Theta_1 C_{\lambda_{2\ast}}}{32 C^2_{gen} \gamma}, \dfrac{ \Theta_1^2}{4\Theta_2^2}  \Big\} \qquad \mathbb N_2:= 2 \max\big\{1, \lambda_1^\ast,C^2_p \big\} \big(1 + \alpha^2 \xi_0^2 \big( 1 + \mu  + \gamma \big) \big).} \nonumber
	\end{align}
\end{remark}

\begin{remark}
	\texttt{The case $\boldsymbol{\mathcal{B}} \neq \mathbf{0}$ and $\gamma =0$ (classic Oseen problem)}.
	Theorem \ref{wellposed} is valid with $\Theta= \mathbb N_1/\sqrt{\mathbb N_2}$, where the constants $\mathbb N_1$ and $\mathbb N_2$ are given by 
	\begin{align}
		\Scale[0.9]{\mathbb N_1:= \min \Big\{\dfrac{1}{2}, \dfrac{C_{\lambda_{1\ast}}}{2}, \dfrac{C_{\lambda_{2\ast}}}{2}, \dfrac{ \xi_0 \Theta_1 C_{\lambda_{1\ast}} \mu}{32C^2_{gen} (1 + \mu + \mathcal{B} + \mathcal{B}^2)^2},  \dfrac{ \Theta_1^2}{4\Theta_2^2}  \Big\} \qquad \mathbb N_2:= 2 \max\big\{1, \lambda_1^\ast \big\} \big(1 + \alpha^2 \xi_0^2 \big( 1 + \mu +  \mathcal{B}^2 \big) \big).} \nonumber
	\end{align}
	Thus, the well-posedness theorem demonstrates the stability of the classic Oseen problem. 
\end{remark}

\begin{remark}
	\texttt{The case $\boldsymbol{\mathcal{B}}= \mathbf{0}$ and $\gamma =0$ (Stokes problem)}.
	Theorem \ref{wellposed} holds for this case with $\Theta= \mathbb N_1/\sqrt{\mathbb N_2}$, where the constants $\mathbb N_1$ and $\mathbb N_2$ are given by
	\begin{align}
		\mathbb N_1:=\min \Big\{\dfrac{1}{2}, \dfrac{C_{\lambda_{2\ast}}}{2}, \dfrac{ \xi_0 \Theta_1 C_{\lambda_{1\ast}} \mu}{32C^2_{gen} (1 + \mu )^2}, \dfrac{ \Theta_1^2}{4\Theta_2^2}  \Big\} \qquad \mathbb N_2:= 2 \max\big\{1, \lambda_1^\ast \big\} \big(1 + \alpha^2 \xi_0^2 \big( 1 + \mu \big) \big). \nonumber
	\end{align}
	Thus, the well-posedness theorem is still applicable to the parameters $\boldsymbol{\mathcal{B}} = \mathbf{0}$ and $\gamma =0$, signifying the stability of the Stokes equation without requiring any additional assumptions, similar to \cite{vem028}.
\end{remark}

{Hereafter, we establish the stability of the discrete VE problem \eqref{nvem} in a more general way with vanishing diffusion $\mu \rightarrow 0$.
	\begin{theorem} \label{rob_stab}
		Under the mesh regularity assumption \textbf{(A1)}, there exists a positive constant $\Theta$ independent of mesh size and data of the problem such that for any $(\mathbf{u}_h, p_h) \in \mathbf{V}^k_{h} \times Q^k_h$, it holds the following  
		\begin{align}
			\sup \limits_{(\mathbf{0},0) \neq(\mathbf{z}_h, q_h) \in \mathbf{V}^k_{h} \times Q^k_h} \dfrac{(A_{h}+\mathcal{L}_h)[(\mathbf{u}_h, p_h), (\mathbf{z}_h, q_h)]}{\vertiii{(\mathbf{z}_h, q_h)}}  \geq \Theta \vertiii{(\mathbf{u}_h, p_h)}, \label{newwellpos-0}
		\end{align}
		where $\Theta \geq \frac{1}{4}$. Furthermore, the stabilized VE problem \eqref{nvem} has a unique solution.
	\end{theorem}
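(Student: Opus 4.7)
The plan would be to parallel Theorem \ref{wellposed} closely, reusing its three-step inf-sup combination argument but with the $\alpha$ appropriate to the convection-dominated regime (the second branch in the definition of $\alpha$ preceding Theorem \ref{wellposed}). First I would test $(A_h+\mathcal{L}_h)[(\mathbf{u}_h,p_h),(\cdot)]$ against $(\mathbf{u}_h,p_h)$, recovering exactly \eqref{wellpos-1} since $c_h^{skew,E}(\mathbf{u}_h,\mathbf{u}_h)=0$ by construction; this controls $\mu\|\nabla\mathbf{u}_h\|_{0,\Omega}^2$, $\gamma\|\mathbf{u}_h\|_{0,\Omega}^2$, and the three stabilization contributions. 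Next, Theorem \ref{infsup} supplies $\mathbf{w}_h\in\mathbf{V}^k_h$ with $\|\nabla\mathbf{w}_h\|_{0,\Omega}=\xi_0\|p_h\|_{0,\Omega}$ satisfying \eqref{wellpos-3}, so that testing against $(-\alpha\mathbf{w}_h,0)$ delivers $\alpha\xi_0\Theta_1\|p_h\|_{0,\Omega}^2$ modulo the six cross terms $s_1,\ldots,s_6$ of \eqref{well_b}, each of which must be bounded.

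The crucial difference from Theorem \ref{wellposed} is the treatment of the convective cross term $s_3=\alpha c_h^{skew}(\mathbf{u}_h,\mathbf{w}_h)$. Its previous bookkeeping absorbed $\|\nabla\mathbf{u}_h\|_{0,\Omega}$ only into the diffusion coercivity $C_{\lambda_{1\ast}}\mu\|\nabla\mathbf{u}_h\|_{0,\Omega}^2$, forcing $\alpha\sim\mu$ and ruining robustness as $\mu\to 0$. For uniformity I would rewrite the convective estimate \eqref{wellpos-6} in the mixed-norm form $|c_h^{skew}(\mathbf{u}_h,\mathbf{w}_h)|\le C_{gen}\mathcal{B}\,(\|\nabla\mathbf{u}_h\|_{0,\Omega}+\|\mathbf{u}_h\|_{0,\Omega})\,\|\nabla\mathbf{w}_h\|_{0,\Omega}$, use Poincar\'e to replace $\|\mathbf{w}_h\|_{0,\Omega}$ by $C_P\xi_0\|p_h\|_{0,\Omega}$ where needed, and invoke the combined Friedrichs-type bound
\[
\|\mathbf{u}_h\|_{0,\Omega}^2\le\frac{C_P^2}{\mu+C_P^2\gamma}\big(\mu\|\nabla\mathbf{u}_h\|_{0,\Omega}^2+\gamma\|\mathbf{u}_h\|_{0,\Omega}^2\big),
\]
which follows from $\|\mathbf{u}_h\|_{0,\Omega}^2\le C_P^2\|\nabla\mathbf{u}_h\|_{0,\Omega}^2$. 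A Young's inequality calibrated to absorb a quarter of $\mu\|\nabla\mathbf{u}_h\|_{0,\Omega}^2+\gamma\|\mathbf{u}_h\|_{0,\Omega}^2$ then leaves a pressure penalty of order $\alpha^2 C_P^2 C_{gen}^2\mathcal{B}^2/(\mu+C_P^2\gamma)\cdot\|p_h\|_{0,\Omega}^2$, reproducing precisely the $8C_P^2 C_{gen}^2\mathcal{B}^2/(\mu+C_P^2\gamma)$ summand in the denominator of the prescribed $\alpha$. The other five cross terms $s_1,s_2,s_4,s_5,s_6$ can be bounded verbatim as in Theorem \ref{wellposed} and, after similar Poincar\'e-weighted Young's inequalities, yield the $4(1+C_{gen})(\mu+C_P^2\gamma+C_{gen})^{1/2}$ summand.

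The hard part will be the arithmetic of balancing all these Young's inequalities so that the final constant is the clean bound $\Theta\ge 1/4$ rather than merely some positive constant: one must verify that substituting $\alpha=4\Theta_1/D$ (with $D$ the specified denominator) makes the surviving coefficient of $\|p_h\|_{0,\Omega}^2$ exactly $\alpha\xi_0\Theta_1/2$ after compensating the $\mathcal{L}_{3,h}$-coupling via the choice $t_1=2\Theta_2^2/\Theta_1$ of Step 4 of Theorem \ref{wellposed}, and that the analogue of \eqref{wellpos-9a} gives a uniform continuity bound $\vertiii{(\mathbf{u}_h-\alpha\mathbf{w}_h,p_h)}\le\sqrt{\mathbb{N}_2}\,\vertiii{(\mathbf{u}_h,p_h)}$ with $\mathbb{N}_2$ and the new $\mathbb{N}_1$ combining to produce $\Theta=\mathbb{N}_1/\sqrt{\mathbb{N}_2}\ge 1/4$ independently of $\mu$, $\gamma$, and $\mathcal{B}$ (provided $\mu+C_P^2\gamma>0$, so that the denominator stays finite). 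Existence and uniqueness of the discrete solution then follow verbatim from Step 6 of Theorem \ref{wellposed} using continuity of every contributed form in the norm $\vertiii{\cdot}$.
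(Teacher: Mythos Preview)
Your plan has a genuine gap in the treatment of the convective cross term. The mixed-norm bound you propose, $|c_h^{skew}(\mathbf{u}_h,\mathbf{w}_h)|\le C_{gen}\mathcal{B}\,(\|\nabla\mathbf{u}_h\|_{0,\Omega}+\|\mathbf{u}_h\|_{0,\Omega})\,\|\nabla\mathbf{w}_h\|_{0,\Omega}$, still carries the factor $\|\nabla\mathbf{u}_h\|_{0,\Omega}$. Your Friedrichs-type inequality controls only $\|\mathbf{u}_h\|_{0,\Omega}$ in terms of $\mu\|\nabla\mathbf{u}_h\|_{0,\Omega}^2+\gamma\|\mathbf{u}_h\|_{0,\Omega}^2$; there is no analogous bound for $\|\nabla\mathbf{u}_h\|_{0,\Omega}$ (the Poincar\'e inequality goes the wrong way). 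Hence the contribution $\alpha\mathcal{B}\,\|\nabla\mathbf{u}_h\|_{0,\Omega}\,\|\nabla\mathbf{w}_h\|_{0,\Omega}$ can only be absorbed into $\mu\|\nabla\mathbf{u}_h\|_{0,\Omega}^2$, which again forces $\alpha\sim\mu$ and destroys the robustness you are after. The paper removes this obstruction by first integrating by parts in $c_h^{skew,E}(\mathbf{u}_h,\mathbf{v}_h)$ (using $\nabla\cdot\boldsymbol{\mathcal{B}}=0$ and the boundary correction built into $c_h^E$) so that the derivative sits entirely on the test function; after the trace and inverse estimates the resulting bound is $C_{gen}\mathcal{B}\,\|\mathbf{u}_h\|_{0,\Omega}\|\nabla\mathbf{v}_h\|_{0,\Omega}$ with \emph{no} $\|\nabla\mathbf{u}_h\|_{0,\Omega}$ term, and then your Friedrichs bound applies cleanly.

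A second structural difference: the paper does not obtain $\Theta\ge\tfrac14$ by reworking the arithmetic of Theorem~\ref{wellposed}. Instead it splits $\vertiii{\cdot}^2=|\cdot|_a^2+|\cdot|_b^2$, introduces an \emph{additional} scaling $\Theta_3=\xi_0^{-1}(\mu+C_P^2\gamma+C_{gen})^{-1/2}$ in the test function $\widehat{\mathbf{x}}_h=(-\alpha\Theta_3\mathbf{w}_h,0)$ so that $|\widehat{\mathbf{x}}_h|_a\le|\widehat{\mathbf{y}}_h|_b$, and then invokes an abstract inf-sup lemma of Braack--Lube (Lemma~5 in \cite{braack2011equal}): once one has $(A_h+\mathcal{L}_h)[\widehat{\mathbf{y}}_h,\widehat{\mathbf{y}}_h]\ge\lambda_{0\ast}|\widehat{\mathbf{y}}_h|_a^2$ and $(A_h+\mathcal{L}_h)[\widehat{\mathbf{y}}_h,\widehat{\mathbf{x}}_h]\ge C_2|\widehat{\mathbf{y}}_h|_b^2-C_1|\widehat{\mathbf{y}}_h|_a^2$, that lemma delivers $\Theta\ge\tfrac14$ under the algebraic constraint $C_2\ge\max\{\tfrac58,(\lambda_{0\ast}+C_1)/(4\lambda_{0\ast}-1)\}$, and choosing $C_2=1+C_1$ with $\lambda_{0\ast}=\tfrac12$ is exactly what fixes the stated $\alpha$. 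Your direct-balancing program, even if the convective gap were repaired, would not naturally produce the clean $\tfrac14$ without this machinery; the $(\mu+C_P^2\gamma+C_{gen})^{1/2}$ summand in $\alpha$ comes from $\Theta_3^{-1}$, not from estimating $s_1,s_4,s_5,s_6$ separately.
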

	\begin{proof}
		See appendix. 
\end{proof}}
\subsection{Error analysis}
In this section, we analyze the error estimates for the proposed VE problem \eqref{nvem}. 
Let $(\mathbf{u}_I, \mathbf{p}_I) \in \mathbf{V}^k_h \times Q_h^k$ denote the virtual interpolant of $(\mathbf{u}, p) \in \mathbf{V} \times Q$. We do the following settings: 
\begin{align*}
	e^\mathbf{u}_I := \mathbf{u}-\mathbf{u}_I, \qquad e^\mathbf{u}_h := \mathbf{u}_h-\mathbf{u}_I, \qquad 
	e^p_I := p-p_I, \qquad e^p_h := p_h- p_I.
\end{align*}
Further, we introduce the following parameters useful in subsequent analysis:
\begin{align}
	\Lambda_E = \min \Big\{ \dfrac{1}{\sqrt{\mu}}, \dfrac{1}{\mathcal{B}_E \sqrt{\tau_{1,E}}}, \dfrac{1}{\sqrt{\tau_{2,E}}}\Big\}, \qquad \delta_E = \min \Big\{ \dfrac{h_E}{\sqrt{\mu}}, \dfrac{1}{\sqrt{\gamma}}\Big\} \qquad \,\, \text{for all} \,\,E \in \Omega_{h}. \label{param}
\end{align}
\noindent \textbf{(A2) Regularity assumptions for error analysis:} We impose the following conditions: 
\begin{align}
	\mathbf{u} \in \mathbf{V} \cap [H^{k+1}(\Omega_h)]^2, \qquad p \in Q \cap H^{k}(\Omega_h), \qquad
	\boldsymbol{\mathcal{B}} \in [W^{k}_{\infty}(\Omega_h)]^2, \qquad \mathbf{f} \in [H^{k+1}(\Omega_h)]^2.
\end{align}
\begin{lemma}   \label{ei}
	
	Under the assumptions \textbf{(A1)} and \textbf{(A2)}, let $(\mathbf{u}_I,p_I) \in \mathbf{V}^k_h \times Q^k_h$ denote the virtual interpolant of $(\mathbf{u},p) \in \mathbf{V}\times Q$. Then, we obtain the following result:
	\begin{align}
		\vertiii{(\mathbf{u} - \mathbf{u}_I, p-p_I)}^2 \leq C \sum_{E \in \Omega_{h}} h_E^{2k} \Big( (\mu + h_E^2 \gamma + \mathcal{B}^2_E h_E + 1) \|\mathbf{u}\|_{k+1,E}^2 +   \|p\|_{k,E}^2\Big). \label{ei-0}
	\end{align}
\end{lemma}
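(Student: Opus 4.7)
The plan is to simply bound each of the six contributions to $\vertiii{(\mathbf{u}-\mathbf{u}_I,p-p_I)}^2$ separately, using Lemma \ref{lemmaproj2} as the main workhorse together with the stability bounds for the local projectors and for the stabilizers $S_\nabla^E$, $S_0^E$, $S_p^E$. No interplay between terms is needed, so the argument reduces to six essentially independent routine estimates.

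First, for the two ``continuous'' velocity contributions I would apply Lemma \ref{lemmaproj2} directly: $\mu\|\nabla(\mathbf{u}-\mathbf{u}_I)\|_{0,E}^{2}\le C\mu h_E^{2k}\|\mathbf{u}\|_{k+1,E}^{2}$ and $\gamma\|\mathbf{u}-\mathbf{u}_I\|_{0,E}^{2}\le C\gamma h_E^{2k+2}\|\mathbf{u}\|_{k+1,E}^{2}$. For the pressure term $\alpha\|p-p_I\|_{0,\Omega}^{2}$, I would again invoke Lemma \ref{lemmaproj2} and absorb $\alpha$ into the generic constant (its definition shows $\alpha$ is bounded by a quantity depending only on the data, not on $h$), producing $C h_E^{2k}\|p\|_{k,E}^{2}$.

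Next, for the three stabilization contributions I would exploit the $H^1$-continuity of the stabilizers together with the fact that $\boldsymbol{\Pi}^{\nabla,E}_{k-1}$, $\boldsymbol{\Pi}^{\nabla,E}_k$ and $\Pi^{0,E}_{k-1}$ are all (semi)norm-contracting projections on $H^1(E)$. Concretely, for $\mathcal{L}_{1,h}$ I would use \eqref{vem-a} and $\|\nabla(\mathbf I-\boldsymbol\Pi^{\nabla,E}_{k-1})\mathbf v\|_{0,E}\le\|\nabla\mathbf v\|_{0,E}$ together with $\tau_{1,E}\sim h_E$ from Remark \ref{rem_tau} to obtain $\mathcal{L}_{1,h}(\mathbf{u}-\mathbf{u}_I,\mathbf{u}-\mathbf{u}_I)\le C\sum_E h_E\mathcal{B}_E^{2}\|\nabla(\mathbf{u}-\mathbf{u}_I)\|_{0,E}^{2}\le C\sum_E h_E^{2k+1}\mathcal{B}_E^{2}\|\mathbf{u}\|_{k+1,E}^{2}$. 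For $\mathcal{L}_{2,h}$, the divergence piece is controlled by $\|\nabla(\mathbf{u}-\mathbf{u}_I)\|_{0,E}$ (Cauchy--Schwarz plus projection stability) and the stabilizer piece by \eqref{vem-a} as above; with $\tau_{2,E}\sim\mathcal O(1)$ this yields $C\sum_E h_E^{2k}\|\mathbf{u}\|_{k+1,E}^{2}$. For $\mathcal{L}_{3,h}$, I would estimate $\widehat{\mathbf r}_h\nabla(p-p_I)$ by writing $\boldsymbol\Pi^{0,E}_k-\boldsymbol\Pi^{0,E}_{k-1}=(\boldsymbol\Pi^{0,E}_k-\mathbf I)+(\mathbf I-\boldsymbol\Pi^{0,E}_{k-1})$ and using the triangle inequality plus \textbf{(P0)} to bound it by $C\|\nabla(p-p_I)\|_{0,E}\le C h_E^{k-1}\|p\|_{k,E}$; the $S_p^E$ piece is bounded similarly via \eqref{vem-c}. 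Since $\tau_{3,E}\sim h_E^{2}$, both contributions yield $C h_E^{2k}\|p\|_{k,E}^{2}$.

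The only mildly delicate point, and probably the main obstacle for a careful write-up, is that the stabilizers $S_\nabla^E$, $S_p^E$ are designed on the virtual spaces whereas in $\mathcal{L}_{i,h}(\mathbf u-\mathbf u_I,\cdot)$ they are being evaluated on projected interpolation errors that do not a priori live in $\mathbf V_h^k(E)$ or $Q_h^k(E)$. I would handle this in the standard VEM fashion by splitting $\mathbf{u}-\mathbf{u}_I=(\mathbf{u}-\boldsymbol{\Pi}^{\nabla,E}_{k}\mathbf{u})+(\boldsymbol{\Pi}^{\nabla,E}_{k}\mathbf{u}-\mathbf{u}_I)$ (and analogously for $p-p_I$), noting that the second summand lies in the virtual space because $[\mathbb P_k(E)]^2\subset\mathbf V_h^k(E)$, and then using polynomial consistency of the stabilizers on the polynomial part and the bounds \eqref{vem-a}, \eqref{vem-c} together with Lemma \ref{lemmaproj1} and Lemma \ref{lemmaproj2} on the two pieces separately. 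Summing all six estimates over $E\in\Omega_h$ then produces precisely \eqref{ei-0}.
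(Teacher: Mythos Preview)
Your proposal is correct and follows essentially the same term-by-term approach as the paper, invoking Lemma \ref{lemmaproj2} for the continuous pieces and the stabilizer bounds \eqref{vem-a}, \eqref{vem-c} together with projector stability and Remark \ref{rem_tau} for the $\mathcal{L}_{i,h}$ pieces. Your final paragraph about the domain of the stabilizers is overcautious: the paper (like standard VEM practice) simply applies the upper bounds in \eqref{vem-a}--\eqref{vem-c} directly to $(\mathbf I-\boldsymbol\Pi^{\nabla,E}_{k-1})e_I^{\mathbf u}$, $(\mathbf I-\boldsymbol\Pi^{\nabla,E}_{k})e_I^{\mathbf u}$ and $(I-\Pi^{\nabla,E}_{k-1})e_I^{p}$ without any splitting, so your proposed decomposition is unnecessary for matching the argument.
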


\begin{proof} We begin with the definition of energy norm, the stability of the projectors and Lemma \ref{lemmaproj2}:
	\begin{align}
		\vertiii{(\mathbf{u} - \mathbf{u}_I, p-p_I)}^2 & = \mu \|\nabla e^\mathbf{u}_I\|^2_{0,\Omega} + \gamma \|e^\mathbf{u}_I\|^2_{0,\Omega} + {\alpha} \|e^p_I\|^2_{0,\Omega} + \mathcal{L}_{1,h}(e^\mathbf{u}_I, e^\mathbf{u}_I) + \mathcal{L}_{2,h}(e^\mathbf{u}_I, e^\mathbf{u}_I) + \mathcal{L}_{3,h}(e^p_I, e^p_I) \nonumber \\
		& \leq C \sum_{E \in \Omega_h} \Big( \mu h_E^{2k} \|\mathbf{u}\|^2_{k+1,E}  +  \gamma h^{2k+2}_E \|\mathbf{u}\|^2_{k+1,E} + h^{2k}_E {\|p\|^2_{k,E}} + \mathcal{L}^E_1 + \mathcal{L}^E_2 +\mathcal{L}^E_3\Big). \label{ei-1}
		\intertext{Employing the stability of the projectors, {Remark \ref{rem_tau}} and \eqref{vem-a}, we infer}
		\mathcal{L}^E_1 &\leq  \tau_{1,E}  \lambda^\ast _1 \mathcal{B}^2_E \|\nabla (\mathbf{I}-\boldsymbol{\Pi}^{\nabla,E}_{k-1}) e^\mathbf{u}_I\|^2_{0,E} \nonumber \\
		& \leq  \lambda_1^\ast \mathcal{B}^2_E h^{2k+1}_E \|\mathbf{u}\|^2_{k+1,E}. \label{ei-2}
		\intertext{We use the continuity of $\Pi^{0,E}_{k-1}$, bound \eqref{vem-a}, $\tau_{2,E} \sim \mathcal{O}(1)$ and Lemma \ref{lemmaproj2}:} 
		\mathcal{L}^E_2 & \leq  \Big( \tau_{2,E}\| \nabla e^\mathbf{u}_I\|^2_{0,E}  + \lambda_1^\ast \tau_{2,E} \| \nabla (\mathbf{I} - \boldsymbol{\Pi}^{\nabla,E}_k) e^\mathbf{u}_I \|^2_{0,E}  \Big) \nonumber \\
		& \leq  \max\{1, \lambda_1^\ast\} h_E^{2k} \|\mathbf{u}\|^2_{k+1,E}. \label{ei-3}
		\intertext{Following the estimation of $\mathcal{L}^E_1$ and $\mathcal{L}^E_2$, we achieve  $\mathcal{L}^E_3$ as follows}
		\mathcal{L}^E_3 & \leq  \max \{ 1, \lambda_3^\ast\} h^{2k}_E \|p\|^2_{k,E}. \label{ei-4} 
	\end{align}
	Thus, the result \eqref{ei-0} readily follows by substituting the bounds \eqref{ei-2}, \eqref{ei-3} and \eqref{ei-4} into \eqref{ei-1}. 	
\end{proof}

\begin{proposition} \label{prop}
	Let the assumptions \textbf{(A1)} and \textbf{(A2)} be valid. Further, we assume the pairs $(\mathbf{u}, p) \in \mathbf{V} \times Q$ and $(\mathbf{u}_h, p_h) \in \mathbf{V}^k_h \times Q^k_h$ are the solution of problems \eqref{discf} and \eqref{nvem}, respectively. Then, for $(\mathbf{v}_h,  q_h) \in \mathbf{V}^k_h \times Q^k_h$,  it holds that
	\begin{align}
		\vertiii{(\mathbf{u}_h - \mathbf{u}_I, p_h -p_I)}\, \vertiii{(\mathbf{v}_h,  q_h)} \leq C \Big(  \eta_F + \eta_\mathcal{A} + \eta_{\mathcal{L}} \Big),  \label{prop-0}
	\end{align}
	where 
	\begin{align}
		\eta_F&:= (\mathbf{f}_h, \mathbf{v}_h) - (\mathbf{f}, \mathbf{v}_h), \\
		\eta_A &:=  A [(\mathbf{u}, p), (\mathbf{v}_h, q_h)] -  A_h[(\mathbf{u}_I, p_I), (\mathbf{v}_h, q_h)],  \\
		\eta_{\mathcal{L}} &:=   -\mathcal{L}_h[(\mathbf{u}_I, p_I), (\mathbf{v}_h, q_h)].  
	\end{align}
\end{proposition}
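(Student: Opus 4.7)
The plan is to derive an error identity for the discrete error $(\mathbf{u}_h - \mathbf{u}_I, p_h - p_I)$ by comparing the continuous and discrete formulations, and then invoke the discrete inf-sup stability established in Theorem \ref{wellposed} (or the robust version in Theorem \ref{rob_stab}).

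First, I would test the continuous problem \eqref{discf} against a discrete pair $(\mathbf{v}_h, q_h) \in \mathbf{V}^k_h \times Q^k_h \subset \mathbf{V} \times Q$, which yields the consistency identity $A[(\mathbf{u}, p), (\mathbf{v}_h, q_h)] = (\mathbf{f}, \mathbf{v}_h)$. Simultaneously, from the discrete problem \eqref{nvem} we have $(A_h + \mathcal{L}_h)[(\mathbf{u}_h, p_h), (\mathbf{v}_h, q_h)] = (\mathbf{f}_h, \mathbf{v}_h)$. Subtracting $(A_h + \mathcal{L}_h)[(\mathbf{u}_I, p_I), (\mathbf{v}_h, q_h)]$ from the discrete equation, then adding and subtracting $A[(\mathbf{u}, p), (\mathbf{v}_h, q_h)]$ and $(\mathbf{f}, \mathbf{v}_h)$, a direct algebraic manipulation produces the error identity
\begin{equation*}
(A_h + \mathcal{L}_h)[(\mathbf{u}_h - \mathbf{u}_I, p_h - p_I), (\mathbf{v}_h, q_h)] = \eta_F + \eta_A + \eta_\mathcal{L},
\end{equation*}
valid for every $(\mathbf{v}_h, q_h) \in \mathbf{V}^k_h \times Q^k_h$, where the three terms on the right hand side are precisely the consistency defects defined in the statement.

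Next, I would apply the discrete inf-sup stability \eqref{wellpos-0} with the discrete error $(\mathbf{u}_h - \mathbf{u}_I, p_h - p_I) \in \mathbf{V}^k_h \times Q^k_h$ playing the role of the trial pair. Since the supremum in \eqref{wellpos-0} is taken over a finite-dimensional space, it is attained: there exists a nontrivial pair $(\mathbf{v}_h, q_h) \in \mathbf{V}^k_h \times Q^k_h$ such that
\begin{equation*}
\Theta \, \vertiii{(\mathbf{u}_h - \mathbf{u}_I, p_h - p_I)} \, \vertiii{(\mathbf{v}_h, q_h)} \leq (A_h + \mathcal{L}_h)[(\mathbf{u}_h - \mathbf{u}_I, p_h - p_I), (\mathbf{v}_h, q_h)].
\end{equation*}
Combining the two displays and absorbing $1/\Theta$ into the generic constant yields the desired inequality \eqref{prop-0}.

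The argument is a standard Strang-type lemma adapted to the VEM setting with perturbed bilinear forms and a perturbed load; no serious obstacle is expected, because the non-trivial work (namely, the inf-sup condition with constant $\Theta$ independent of $h$) has already been accomplished in Theorem \ref{wellposed}. The genuine analytical effort — bounding $\eta_F$, $\eta_A$ and $\eta_\mathcal{L}$ in terms of powers of $h$ and Sobolev norms of $(\mathbf{u}, p)$ using the polynomial consistency properties \textbf{(P3)}, \textbf{(P6)}, the stability/continuity bounds of $a_h^E, d_h^E, c_h^{skew,E}$, the interpolation estimate of Lemma \ref{lemmaproj2}, and the parameter choices in Remark \ref{rem_tau} — is deferred to the subsequent error theorem, for which the present proposition serves as the abstract scaffolding.
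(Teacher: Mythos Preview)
Your proposal is correct and follows essentially the same approach as the paper: derive the error identity $(A_h+\mathcal{L}_h)[(\mathbf{u}_h-\mathbf{u}_I,p_h-p_I),(\mathbf{v}_h,q_h)]=\eta_F+\eta_A+\eta_{\mathcal{L}}$ by combining the discrete equation \eqref{nvem} with the continuous one \eqref{discf}, and then invoke the discrete inf-sup stability of Theorem~\ref{wellposed} (or \ref{rob_stab}) for the specific test pair realizing the supremum. Your presentation is in fact slightly cleaner, as you make explicit both the identity and the existence of the maximizing $(\mathbf{v}_h,q_h)$, whereas the paper compresses these into a single chain of inequalities.
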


\begin{proof}
	Using Theorem \ref{wellposed} or \ref{rob_stab}, we have
	\begin{align}
		\Theta	\vertiii{(\mathbf{u}- \mathbf{u}_h, p-p_h)}\, \vertiii{(\mathbf{v}_h, q_h)} 
		& \leq (A_h + \mathcal{L}_h)[(\mathbf{u}_h - \mathbf{u}_I, p_h-p_I), (\mathbf{v}_h, q_h)]  \nonumber \\
		& \leq 	(\mathbf{f}_h, \mathbf{v}_h) - (A_h + \mathcal{L}_h)[( \mathbf{u}_I, p_I), (\mathbf{v}_h, q_h)]  \nonumber \\
		& \leq  (\mathbf{f}_h, \mathbf{v}_h) - (\mathbf{f}, \mathbf{v}_h) + A[( \mathbf{u}, p), (\mathbf{v}_h, q_h)] - A_h[( \mathbf{u}_I, p_I), (\mathbf{v}_h, q_h)] \nonumber \\ & \qquad - \mathcal{L}_h[( \mathbf{u}_I, p_I), (\mathbf{v}_h, q_h)], \nonumber
	\end{align}
	which completes the proof.
\end{proof}

\begin{lemma}(Estimation of $\eta_F$) \label{eta_f}
	Under the assumptions \textbf{(A1)} and \textbf{(A2)}, the term $\eta_F$ can be bounded as follows
	\begin{align}
		\eta_F \leq C \Big(\sum_{E \in \Omega_{h}} \Lambda_E^2 h_E^{2(k+2)} \|\mathbf{f}\|^2_{k+1,E}\Big)^{1/2} {\vertiii{(\mathbf{v}_h,q_h)}}.\label{ef-0}
	\end{align}
\end{lemma}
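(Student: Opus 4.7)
The proof is a consistency-style estimate for the discrete load term: I rewrite $\eta_F$ via the orthogonality of $\boldsymbol{\Pi}^{0,E}_k$ to expose polynomial approximation errors on both factors, apply an elementwise Cauchy--Schwarz, and then calibrate a weighted Cauchy--Schwarz over the mesh so that the weights $\Lambda_E$ appear on the source side while the remaining sum is absorbed into $\vertiii{(\mathbf{v}_h,q_h)}^2$.

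By the definition of $(\mathbf{f}_h,\mathbf{v}_h)$ and the fact that $\boldsymbol{\Pi}^{0,E}_k\mathbf{v}_h\in[\mathbb{P}_k(E)]^2$, the $L^2$-orthogonality of $\mathbf{f}-\boldsymbol{\Pi}^{0,E}_k\mathbf{f}$ against $[\mathbb{P}_k(E)]^2$ yields
\[
\eta_F = -\sum_{E\in\Omega_h}\bigl(\mathbf{f}-\boldsymbol{\Pi}^{0,E}_k\mathbf{f},\;(\mathbf{I}-\boldsymbol{\Pi}^{0,E}_k)\mathbf{v}_h\bigr)_E.
\]
An elementwise Cauchy--Schwarz combined with Lemma \ref{lemmaproj1} gives $\|\mathbf{f}-\boldsymbol{\Pi}^{0,E}_k\mathbf{f}\|_{0,E}\leq Ch_E^{k+1}|\mathbf{f}|_{k+1,E}$, while a Poincar\'e-type inequality for the zero-mean function $(\mathbf{I}-\boldsymbol{\Pi}^{0,E}_k)\mathbf{v}_h$ together with \eqref{est3} in Lemma \ref{estimate2} yields $\|(\mathbf{I}-\boldsymbol{\Pi}^{0,E}_k)\mathbf{v}_h\|_{0,E}\leq Ch_E\|\nabla(\mathbf{I}-\boldsymbol{\Pi}^{\nabla,E}_k)\mathbf{v}_h\|_{0,E}$. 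Hence
\[
|\eta_F|\;\leq\; C\sum_{E\in\Omega_h} h_E^{k+2}\,|\mathbf{f}|_{k+1,E}\,\|\nabla(\mathbf{I}-\boldsymbol{\Pi}^{\nabla,E}_k)\mathbf{v}_h\|_{0,E}.
\]

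The decisive step is a weighted Cauchy--Schwarz on the mesh, splitting the product as $\bigl(h_E^{k+2}\Lambda_E|\mathbf{f}|_{k+1,E}\bigr)\cdot\bigl(\Lambda_E^{-1}\|\nabla(\mathbf{I}-\boldsymbol{\Pi}^{\nabla,E}_k)\mathbf{v}_h\|_{0,E}\bigr)$. The first factor, squared and summed, is precisely the right-hand side of \eqref{ef-0}. For the second, I exploit
\[
\Lambda_E^{-2}=\max\{\mu,\,\tau_{1,E}\mathcal{B}_E^2,\,\tau_{2,E}\}\leq \mu+\tau_{1,E}\mathcal{B}_E^2+\tau_{2,E},
\]
and distribute the three resulting terms across distinct ingredients of $\vertiii{\cdot}$: the $\mu$-piece is absorbed by $\mu\|\nabla\mathbf{v}_h\|^2_{0,E}$; the $\tau_{1,E}\mathcal{B}_E^2$-piece is controlled by $\mathcal{L}^E_{1,h}(\mathbf{v}_h,\mathbf{v}_h)$ via Remark \ref{estimate} and the stability \eqref{vem-a}; and the $\tau_{2,E}$-piece is controlled by $\mathcal{L}^E_{2,h}(\mathbf{v}_h,\mathbf{v}_h)$ directly from its definition together with \eqref{vem-a}.

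The main obstacle is this pigeonholing: the parameter $\Lambda_E$ is designed as the minimum of three scales precisely so that $\Lambda_E^{-2}$, a maximum, can be dispatched over the three distinct pieces of the energy norm, each of which independently majorizes $\|\nabla(\mathbf{I}-\boldsymbol{\Pi}^{\nabla,E}_k)\mathbf{v}_h\|_{0,E}^2$ with the correct scaling. Once this matching is carried out, the second factor is bounded by $C\vertiii{(\mathbf{v}_h,q_h)}$ and \eqref{ef-0} follows; every other step is routine projection and Poincar\'e estimation.
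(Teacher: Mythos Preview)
Your proof is correct and follows essentially the same route as the paper: rewrite $\eta_F$ via the $L^2$-orthogonality of $\boldsymbol{\Pi}^{0,E}_k$, apply Cauchy--Schwarz together with Lemma \ref{lemmaproj1}, a Poincar\'e inequality, and \eqref{est3} to reach $C\sum_E h_E^{k+2}|\mathbf{f}|_{k+1,E}\|\nabla(\mathbf{I}-\boldsymbol{\Pi}^{\nabla,E}_k)\mathbf{v}_h\|_{0,E}$, and then absorb $\|\nabla(\mathbf{I}-\boldsymbol{\Pi}^{\nabla,E}_k)\mathbf{v}_h\|_{0,E}$ into $\Lambda_E^{-1}\vertiii{(\mathbf{v}_h,q_h)}_E$. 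The paper compresses the last step into a single line (``Recalling the definition of energy norm''), whereas you spell out the three-way pigeonholing of $\Lambda_E^{-2}$ against the $\mu$-, $\mathcal{L}_{1,h}$-, and $\mathcal{L}_{2,h}$-pieces of $\vertiii{\cdot}$; this is a welcome elaboration but not a different argument.
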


\begin{proof} Applying the orthogonality of the projectors, the Poincar$\acute{\text{e}}$ inequality, Lemmas \ref{lemmaproj1} and \ref{estimate2}, it holds that
	\begin{align}
		\eta_F&= \sum_{E \in \Omega_h} \Big((\mathbf{f}, \boldsymbol{\Pi}^{0,E}_k \mathbf{v}_h) - (\mathbf{f}, \mathbf{v}_h) \Big) \nonumber \\
		&= \sum_{E \in \Omega_h} \Big((\mathbf{f}-\boldsymbol{\Pi}^{0,E}_k\mathbf{f}, \boldsymbol{\Pi}^{0,E}_k \mathbf{v}_h - \mathbf{v}_h  )\Big)	\nonumber \\
		&\leq C \sum_{E \in \Omega_h} \|\mathbf{f}-\boldsymbol{\Pi}^{0,E}_k\mathbf{f}\|_{0,E} \|\mathbf{v}_h - \boldsymbol{\Pi}^{0,E}_k \mathbf{v}_h\|_{0,E}	\nonumber \\
		&\leq C \sum_{E \in \Omega_h} h^{k+2}_E\|\mathbf{f}\|_{k+1,E} \|\nabla(\mathbf{I} - \boldsymbol{\Pi}^{0,E}_k )\mathbf{v}_h\|_{0,E}	\nonumber \\
		&\leq C \sum_{E \in \Omega_h} h^{k+2}_E\|\mathbf{f}\|_{k+1,E} \|\nabla(\mathbf{I} - \boldsymbol{\Pi}^{\nabla,E}_k )\mathbf{v}_h\|_{0,E}	\nonumber
		\intertext{{Recalling the definition of energy norm (or see \cite{mvem16}), we infer}}
		\eta_F	&\leq C \sum_{E \in \Omega_h} \min \Big\{ \dfrac{1}{\sqrt{\mu}}, \dfrac{1}{\mathcal{B}_E \sqrt{\tau_{1,E}}}, \dfrac{1}{\sqrt{\tau_{2,E}}} \Big\} h^{k+2}_E\|\mathbf{f}\|_{k+1,E}  \vertiii{(\mathbf{v}_h, q_h)}_E.  \nonumber 
	\end{align}
\end{proof}
\begin{lemma} \label{etaA}
	Let the assumptions \textbf{(A1)} and \textbf{(A2)} be valid. Then the term $\eta_{A}$ can be bounded as follows
\end{lemma}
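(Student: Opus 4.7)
The plan is to split $\eta_A$ into the five contributions coming from $a$, $b$ (two copies with opposite sign), $c^{skew}$ and $d$, and estimate each one element-by-element. For every element $E\in\Omega_h$ and every continuous form $\star\in\{a,b,c^{skew},d\}$ I would insert a piecewise polynomial approximation $\mathbf{u}_\pi\in[\mathbb P_k(E)]^2$ (respectively $p_\pi\in\mathbb P_k(E)$) by writing, for the viscous term,
\begin{align*}
a^E(\mathbf{u},\mathbf{v}_h)-a^E_h(\mathbf{u}_I,\mathbf{v}_h)
&=a^E(\mathbf{u}-\mathbf{u}_\pi,\mathbf{v}_h)
 +\bigl[a^E(\mathbf{u}_\pi,\mathbf{v}_h)-a^E_h(\mathbf{u}_\pi,\mathbf{v}_h)\bigr]
 +a^E_h(\mathbf{u}_\pi-\mathbf{u}_I,\mathbf{v}_h),
\end{align*}
so that polynomial consistency \textbf{(P3)} kills the middle bracket, leaving two pieces to which continuity \textbf{(P5)}, Lemma \ref{lemmaproj1} (applied with the $L^2$-orthogonal polynomial $\mathbf{u}_\pi=\boldsymbol{\Pi}^{0,E}_k\mathbf{u}$ on star-shaped elements) and Lemma \ref{lemmaproj2} yield a bound of order $\mu^{1/2}h_E^{k}\|\mathbf{u}\|_{k+1,E}\cdot\mu^{1/2}\|\nabla\mathbf{v}_h\|_{0,E}$. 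The reaction term is handled identically via \textbf{(P6)}--\textbf{(P8)} producing a factor $\gamma^{1/2}h_E^{k+1}\|\mathbf{u}\|_{k+1,E}\cdot\gamma^{1/2}\|\mathbf{v}_h\|_{0,E}$.

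For the two pressure couplings I would split
\begin{align*}
b^E(\mathbf{v}_h,p)-b^E_h(\mathbf{v}_h,p_I)
&=\bigl((I-\Pi^{0,E}_{k-1})\nabla\!\cdot\!\mathbf{v}_h,\,p-p_\pi\bigr)_E
 +\bigl(\nabla\!\cdot\!\mathbf{v}_h,\,p_\pi-p\bigr)_E
 +\bigl(\Pi^{0,E}_{k-1}\nabla\!\cdot\!\mathbf{v}_h,\,p-\Pi^{0,E}_k p_I\bigr)_E,
\end{align*}
using Lemma \ref{estimate2} (specifically \eqref{est2}) to absorb the first piece into $\mathcal L_{2,h}^{1/2}$ (via $\tau_{2,E}\sim 1$) and the remaining Cauchy--Schwarz products into $\alpha^{1/2}\|q_h\|_{0,\Omega}$ or $\mu^{1/2}\|\nabla\mathbf{v}_h\|_{0,\Omega}$, giving a factor $h_E^{k}\|p\|_{k,E}$ paired against $\vertiii{(\mathbf{v}_h,q_h)}_E$. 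The companion term $b(\mathbf{u},q_h)-b_h(\mathbf{u}_I,q_h)$ is treated the same way, exchanging the roles of velocity and pressure and producing $h_E^{k}\|\mathbf{u}\|_{k+1,E}$.

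The real obstacle is the convective contribution $c^{skew,E}-c^{skew,E}_h$ because of the boundary integral in \eqref{form-c1} and the presence of $\boldsymbol{\mathcal B}$. My plan is to write
\begin{align*}
c^E(\mathbf{u},\mathbf{v}_h)-c^E_h(\mathbf{u}_I,\mathbf{v}_h)
&=\int_E(\nabla\mathbf{u})\boldsymbol{\mathcal B}\cdot\mathbf{v}_h\,dE
 -\int_E\bigl(\nabla\boldsymbol\Pi^{0,E}_k\mathbf{u}_I\bigr)\boldsymbol{\mathcal B}\cdot\boldsymbol\Pi^{0,E}_k\mathbf{v}_h\,dE
 -\int_{\partial E}\!\boldsymbol{\mathcal B}\!\cdot\!\mathbf{n}^E(\mathbf{I}-\boldsymbol\Pi^{0,E}_k)\mathbf{u}_I\cdot\boldsymbol\Pi^{0,E}_k\mathbf{v}_h\,ds,
\end{align*}
introduce $\boldsymbol\Pi^{0,E}_k\mathbf{u}$ between $\mathbf{u}$ and $\boldsymbol\Pi^{0,E}_k\mathbf{u}_I$, convert the continuous volume integral to a boundary form through integration by parts so that the two boundary contributions combine into $\int_{\partial E}\boldsymbol{\mathcal B}\!\cdot\!\mathbf{n}^E(\mathbf{I}-\boldsymbol\Pi^{0,E}_k)(\mathbf{u}-\mathbf{u}_I)\cdot\boldsymbol\Pi^{0,E}_k\mathbf{v}_h\,ds$, and then estimate using \eqref{bd1}, the trace inequality, $\mathcal B_E\le\mathcal B$, Lemma \ref{lemmaproj1} and Lemma \ref{lemmaproj2}. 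This is expected to yield a factor $\mathcal B_E h_E^{k+1/2}\|\mathbf{u}\|_{k+1,E}$ that pairs naturally with $\tau_{1,E}^{1/2}\mathcal B_E\|\nabla\mathbf{v}_h\|_{0,E}\le\mathcal L_{1,h}(\mathbf{v}_h,\mathbf{v}_h)^{1/2}$ once one uses $\tau_{1,E}\sim h_E$. The skew-symmetrization contributes the symmetric counterpart, which is bounded identically by exchanging $\mathbf{u}_I$ and $\mathbf{v}_h$.

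Collecting every piece and applying a discrete Cauchy--Schwarz across elements, I expect a bound of the shape
\begin{align*}
\eta_A\le C\Bigl(\sum_{E\in\Omega_h}h_E^{2k}\bigl(\mu+h_E^2\gamma+\mathcal B_E^2 h_E+1\bigr)\|\mathbf{u}\|_{k+1,E}^2+h_E^{2k}\|p\|_{k,E}^2\Bigr)^{1/2}\vertiii{(\mathbf{v}_h,q_h)},
\end{align*}
which matches the energy norm definition \eqref{tnorm} and the interpolation estimate \eqref{ei-0}. The main obstacle, as sketched above, is ensuring that the extra boundary term in $c^E_h$ exactly cancels the uncontrolled volume contribution after integration by parts so that the remaining jump can be absorbed into $\mathcal L_{1,h}^{1/2}$ rather than into the (possibly vanishing) $\mu$-part of the energy norm; this is precisely where the design of $c^E_h$ following \cite{vem04} pays off and where the stabilization parameter scaling $\tau_{1,E}\sim h_E$ from Remark \ref{rem_tau} becomes indispensable.
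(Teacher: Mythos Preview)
Your decomposition into five pieces matches the paper's, and your treatment of $\eta_{A,a}$ via polynomial consistency \textbf{(P3)} is a legitimate alternative to the paper's direct use of projector orthogonality. However, the bound you expect at the end does \emph{not} match the statement of the lemma: the actual estimate \eqref{eta0} carries the factors $1/\alpha$, $\Lambda_E^2 h_E^2(\|\boldsymbol{\mathcal B}\|_{k,\infty,E}+\mathcal B_E+\gamma)^2$, and, crucially, $\delta_E^2\mathcal B_E^2$ with $\delta_E=\min\{h_E/\sqrt{\mu},\,1/\sqrt{\gamma}\}$. None of these appear in your proposal, and they are not cosmetic---they are forced by the structure of the argument.

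The gap is clearest in the convective term. After the integration by parts you describe, the boundary integrals do partially recombine, but a volume residual of the form
\[
\int_E(\boldsymbol\Pi^{0,E}_k\mathbf u_I-\mathbf u)\cdot(\nabla\boldsymbol\Pi^{0,E}_k\mathbf v_h)\,\boldsymbol{\mathcal B}\,dE
\]
survives (this is the paper's $\eta_{A,c,2}$ in \eqref{etaA_c0}). Here the \emph{full} gradient $\nabla\boldsymbol\Pi^{0,E}_k\mathbf v_h$ appears, not a fluctuation $\nabla(\mathbf I-\boldsymbol\Pi^{\nabla,E}_{k-1})\mathbf v_h$, so it cannot be absorbed into $\mathcal L_{1,h}^{1/2}$ or $\mathcal L_{2,h}^{1/2}$ as you suggest: those stabilizations control only the kernel parts. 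The only routes back into $\vertiii{(\mathbf v_h,q_h)}$ are through $\mu^{1/2}\|\nabla\mathbf v_h\|_{0,E}$ or, via the polynomial inverse inequality, through $\gamma^{1/2}\|\mathbf v_h\|_{0,E}$; this is exactly what produces the $\delta_E\mathcal B_E$ factor in \eqref{etaA-c2}. Your coefficient $\mathcal B_E^2 h_E$ alone is therefore not attainable for this contribution. A parallel issue occurs in $b(\mathbf u,q_h)-b_h(\mathbf u_I,q_h)$: the piece $(\nabla\!\cdot(\mathbf u-\mathbf u_I),q_h)_E$ must be paired against $\|q_h\|_{0,E}$, and reaching the energy norm costs a factor $1/\sqrt{\alpha}$ (see \eqref{etaA-b2}), which is likewise absent from your bound.
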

\begin{align}
	\eta_A &\leq C \Big[ \Big( \sum_{E \in \Omega_h} \Big( \mu + {\dfrac{1}{\alpha}}+ \gamma h^2_E + \dfrac{h^2_E}{\tau_{3,E}} + \Lambda^2_E h^2_E\big(\|\boldsymbol{\mathcal{B}}\|_{k,\infty,E} + \mathcal{B}_E + \gamma\big)^2 + \delta^2_E \mathcal{B}^2_E \Big) h^{2k}_E \|\mathbf{u}\|^2_{k+1,E}\Big)^{1/2} +  \nonumber \\ & \qquad \qquad \Big( \sum_{E \in \Omega_h} \tau_{2,E}^{-1} h^{2k}_E \|p\|_{k,E}^2  \Big)^{1/2} \Big] {\vertiii{(\mathbf{v}_h, q_h)}}. \label{eta0}
\end{align}

\begin{proof}
	Concerning $\eta_{A}$, we proceed as follows
	\begin{align}
		\eta_A &= A[(\mathbf{u}, p), (\mathbf{v}_h, q_h)] - A_h [(\mathbf{u}_I,p_I), (\mathbf{v}_h, q_h)] \nonumber \\
		& =\Big( a(\mathbf{u}, \mathbf{v}_h)- a_h(\mathbf{u}_I, \mathbf{v}_h)\Big) - \Big(b(\mathbf{v}_h, p) - b_h( \mathbf{v}_h, p_I) \Big) + \Big( b(\mathbf{u}, q_h)- b_h( \mathbf{u}_I, q_h)\Big) \nonumber \\
		& \qquad + \Big(c^{skew}(\mathbf{u}, \mathbf{v}_h)- c^{skew}_h(\mathbf{u}_I, \mathbf{v}_h)\Big) + \Big(d(\mathbf{u}, \mathbf{v}_h)- d_h(\mathbf{u}_I, \mathbf{v}_h)\Big) \nonumber \\
		& =:  |\eta_{A,a} - \eta_{A,b1} +\eta_{A,b2} + \eta_{A,c} + \eta_{A,d}|.  \label{etaA-1} 
	\end{align}
	The estimation of the above terms can be given in the following steps: \newline
	\textbf{Step 1.} Estimation of $\eta_{A,a}$: We recall the definition of the projectors, Remark \ref{estimate} and Lemmas \ref{lemmaproj1} and \ref{lemmaproj2}: 
	\begin{align}
		\eta_{A,a} &= \sum_{E \in \Omega_h} \Big( \mu\Big( \nabla \mathbf{u}, \nabla \mathbf{v}_h \Big) - \mu \Big( \boldsymbol{\Pi}^{0,E}_{k-1} \nabla \mathbf{u}_I, \boldsymbol{\Pi}^{0,E}_{k-1} \nabla \mathbf{v}_h  \Big) - \mu S^E_\nabla \Big( (\mathbf{I} - \boldsymbol{\Pi}^{\nabla,E}_k) \mathbf{u}_I, (\mathbf{I} - \boldsymbol{\Pi}^{\nabla,E}_k) \mathbf{v}_h\Big) \Big) \nonumber \\
		&= \sum_{E \in \Omega_h} \Big( \mu\Big( \nabla \mathbf{u} -\nabla \mathbf{u}_I, \nabla \mathbf{v}_h \Big) + \mu \Big( \nabla \mathbf{u}_I - \boldsymbol{\Pi}^{0,E}_{k-1} \nabla \mathbf{u}_I, \nabla \mathbf{v}_h  - \boldsymbol{\Pi}^{0,E}_{k-1} \nabla \mathbf{v}_h  \Big) - \nonumber \\& \qquad \mu S^E_\nabla \Big( (\mathbf{I} - \boldsymbol{\Pi}^{\nabla,E}_k) \mathbf{u}_I, (\mathbf{I} - \boldsymbol{\Pi}^{\nabla,E}_k) \mathbf{v}_h\Big) \Big) \nonumber \\
		&\leq C \sum_{E \in \Omega_h} \Big( \mu \| \nabla \mathbf{u} -\nabla \mathbf{u}_I\|_{0,E} \|\nabla \mathbf{v}_h \|_{0,E} + \mu \| \nabla \mathbf{u}_I - \boldsymbol{\Pi}^{0,E}_{k-1} \nabla \mathbf{u}_I\|_{0,E} \|\nabla \mathbf{v}_h  - \boldsymbol{\Pi}^{0,E}_{k-1} \nabla \mathbf{v}_h \|_{0,E}  \nonumber \\ & \qquad + \lambda_1^\ast \mu \|\nabla (\mathbf{I} - \boldsymbol{\Pi}^{\nabla,E}_k) \mathbf{u}_I\|_{0,E} \|\nabla (\mathbf{I} - \boldsymbol{\Pi}^{\nabla,E}_k) \mathbf{v}_h\|_{0,E} \Big) \nonumber \\
		&\leq C \sum_{E \in \Omega_h} \Big( \mu \| \nabla \mathbf{u} -\nabla \mathbf{u}_I\|_{0,E} \|\nabla \mathbf{v}_h \|_{0,E} + (1+\lambda_1^\ast) \mu \|\nabla (\mathbf{I} - \boldsymbol{\Pi}^{\nabla,E}_k) \mathbf{u}_I\|_{0,E} \|\nabla (\mathbf{I} - \boldsymbol{\Pi}^{\nabla,E}_k) \mathbf{v}_h\|_{0,E} \Big) \nonumber \\
		&\leq C \sum_{E \in \Omega_h} \Big( \sqrt{\mu} h_E^k \| \mathbf{u} \|_{k+1,E}  + (1+\lambda_1^\ast)\sqrt{\mu} h_E^k \| \mathbf{u} \|_{k+1,E}  \Big) \vertiii{(\mathbf{v}_h, q_h)}_E \nonumber \\ 
		&\leq C \sum_{E \in \Omega_h}  \sqrt{\mu} h_E^k \| \mathbf{u} \|_{k+1,E}  \vertiii{(\mathbf{v}_h, q_h)}_E. \label{etaA-2}
	\end{align}
	\textbf{Step 2.} Estimation of $\eta_{A,b1}$: Using the property of the projectors and Lemmas \ref{lemmaproj1} and \ref{lemmaproj2}, we obtain
	\begin{align}
		\eta_{A,b1} &= \sum_{E \in \Omega_h} \Big( \Big( \nabla \cdot \mathbf{v}_h, p\Big) - \Big( {\Pi}^{0,E}_{k-1} \nabla \cdot \mathbf{v}_h, \Pi^{0,E}_k p_I \Big)  \Big) \nonumber \\
		&= \sum_{E \in \Omega_h} \Big( \Big( \nabla \cdot \mathbf{v}_h - {\Pi}^{0,E}_{k-1} \nabla \cdot \mathbf{v}_h, p \Big) + \Big(  \Pi^{0,E}_{k-1} \nabla \cdot \mathbf{v}_h,  p - \Pi^{0,E}_{k-1} p_I \Big)  \Big) \nonumber \\  	
		&= \sum_{E \in \Omega_h} \Big( \Big( ({I} - {\Pi}^{0,E}_{k-1})\nabla \cdot \mathbf{v}_h, p - \Pi^{0,E}_{k-1} p\Big) + \Big(  \Pi^{0,E}_{k-1} \nabla \cdot \mathbf{v}_h,  p - \Pi^{0,E}_{k-1} p_I \Big) \Big) \nonumber \\  
		&\leq C \sum_{E \in \Omega_h} \Big( \|({I} - {\Pi}^{0,E}_{k-1})\nabla \cdot \mathbf{v}_h\|_{0,E} \|p - \Pi^{0,E}_{k-1} p\|_{0,E} + \| \Pi^{0,E}_{k-1} \nabla \cdot \mathbf{v}_h\|_{0,E} \|p-\Pi^{0,E}_{k-1} p_I\|_{0,E} \Big) \nonumber \\  
		&\leq C \sum_{E \in \Omega_h} \Big( h^k_E \|p\|_{k,E} \|\nabla(\boldsymbol{I} -\boldsymbol{\Pi}^{\nabla,E}_{k}) \mathbf{v}_h\|_{0,E} + h^k_E \|\Pi^{0,E}_{k-1} \nabla \cdot \mathbf{v}_h\|_{0,E} \|p\|_{k,E} \Big) \qquad \text{{\big(using \eqref{est2}\big)}}\nonumber \\
		&\leq C \sum_{E \in \Omega_h} \dfrac{h^{k}_E}{\sqrt{\tau_{2,E}}}   \|p\|_{k,E} \vertiii{ (\mathbf{v}_h, q_h)}_E. \label{etaA-b1}
	\end{align}
	\textbf{Step 3.} Estimation of $\eta_{A,b2}$: We use the orthogonal property of the projection operator, Lemma \ref{lemmaproj1} and Lemma \ref{lemmaproj2}:  
	\begin{align}
		\eta_{A,b2} &= \sum_{E \in \Omega_h} \Big( \Big( \nabla \cdot \mathbf{u}, q_h\Big) - \Big(  {\Pi}^{0,E}_{k-1} \nabla \cdot \mathbf{u}_I, \Pi^{0,E}_k q_h  \Big) \Big) \nonumber \\
		&= \sum_{E \in \Omega_h} \Big( \Big( \nabla \cdot \mathbf{u}, q_h\Big) - \Big( \nabla \cdot \mathbf{u}_I, {\Pi^{0,E}_{k-1} q_h}  \Big) \Big) \nonumber \\
		&= \sum_{E \in \Omega_h} \Big( \Big( \nabla \cdot (\mathbf{u}-\mathbf{u}_I), q_h\Big) + \Big(\nabla \cdot \mathbf{u}_I -  {\Pi}^{0,E}_{k-1} \nabla \cdot \mathbf{u}_I, q_h - \Pi^{0,E}_{k-1} q_h  \Big) \Big) \nonumber \\
		& \leq \sum_{E \in \Omega_h} \Big(  \|\nabla\cdot( \mathbf{u}-\mathbf{u}_I) \|_{0,E} \| q_h\|_{0,E} +  \| (I - {\Pi}^{0,E}_{k-1}) \nabla \cdot \mathbf{u}_I \|_{0,E} \|(I- \Pi^{0,E}_{k-1} )q_h \|_{0,E}  \Big) \nonumber 
		\intertext{ {Recalling the estimate \eqref{est2}, Lemmas \ref{lemmaproj1} and \ref{lemmaproj2}, and  the property $\big(I- \Pi^{0,E}_{t}\big)q_h= \big(I- \Pi^{0,E}_{t}\big) \big(I- \Pi^{\nabla,E}_{t} \big)q_h$ for any $t\in \mathbb N \cup \{0\}$, it holds  }}
		\eta_{A,b2} & \leq C \sum_{E \in \Omega_h} \Big(  \| \nabla(\mathbf{u}-\mathbf{u}_I) \|_{0,E} \| q_h\|_{0,E} + \|\nabla (\mathbf{I} - \boldsymbol{\Pi}^{\nabla,E}_{k}) \mathbf{u}_I \|_{0,E} \|(I- \Pi^{0,E}_{k-1})(I- \Pi^{\nabla,E}_{k-1} )q_h \|_{0,E}  \Big) \nonumber \\
		& \leq C \sum_{E \in \Omega_h} \Big(  h^k_E \|\mathbf{u} \|_{k+1,E} \| q_h\|_{0,E} + h^{k+1}_E\| \mathbf{u} \|_{k+1,E} \|\nabla(I- \Pi^{\nabla,E}_{k-1} )q_h \|_{0,E}  \Big)  \qquad \text{\big(using Lemma \ref{lemmaproj1}\big)}\nonumber \\
		& \leq C \sum_{E \in \Omega_h} \Big( {\dfrac{1}{\sqrt{\alpha}}} +  \dfrac{h_E}{\sqrt{\tau_{3,E}}} \Big) h^{k}_E \|\mathbf{u}\|_{k+1,E} \vertiii{(\mathbf{v}_h, q_h)}_E. \label{etaA-b2}
	\end{align}
	\textbf{Step 4.} Estimation of $\eta_{A,c}$:  We proceed as follows
	\begin{align}
		\eta_{A,c} &= c^{skew}(\mathbf{u}, \mathbf{v}_h) - c^{skew}_h(\mathbf{u}_I, \mathbf{v}_h) \nonumber \\
		&= \dfrac{1}{2}\sum_{E \in \Omega_h } \Big[   c^E(\mathbf{u}, \mathbf{v}_h) - c^{E}_h(\mathbf{u}_I, \mathbf{v}_h)+c^{E}_h(\mathbf{v}_h, \mathbf{u}_I) -c^E(\mathbf{v}_h, \mathbf{u}) \Big] \nonumber \\
		&= \dfrac{1}{2} \sum_{E \in \Omega_h} \Big[ \Big( (\nabla \mathbf{u}) \boldsymbol{\mathcal{B}}, \mathbf{v}_h \Big) - \Big( (\nabla  \boldsymbol{\Pi}^{0,E}_k \mathbf{u}_I)\boldsymbol{\mathcal{B}},  \boldsymbol{\Pi}^{0,E}_k \mathbf{v}_h\Big) - \int_{\partial E} \boldsymbol{\mathcal{B}} \cdot \mathbf{n}^E (\mathbf{I}- \boldsymbol{\Pi}^{0,E}_k) \mathbf{u}_I\cdot  \boldsymbol{\Pi}^{0,E}_k  \mathbf{v}_h ds \nonumber \\ & \qquad + \Big( (\nabla  \boldsymbol{\Pi}^{0,E}_k \mathbf{v}_h)\boldsymbol{\mathcal{B}},  \boldsymbol{\Pi}^{0,E}_k \mathbf{u}_I\Big) -\Big((\nabla \mathbf{v}_h) \boldsymbol{\mathcal{B}}, \mathbf{u} \Big) + \int_{\partial E} \boldsymbol{\mathcal{B}} \cdot \mathbf{n}^E (\mathbf{I}- \boldsymbol{\Pi}^{0,E}_k) \mathbf{v}_h\cdot  \boldsymbol{\Pi}^{0,E}_k  \mathbf{u}_I ds \Big]   \nonumber \\ 
		&= \dfrac{1}{2}  \sum_{E \in \Omega_h} \Big[ \Big( (\nabla \mathbf{u}) \boldsymbol{\mathcal{B}}, \mathbf{v}_h - \boldsymbol{\Pi}^{0,E}_k \mathbf{v}_h\Big) + \Big( \big(\nabla (\mathbf{u} -\boldsymbol{\Pi}^{0,E}_k \mathbf{u}_I) \big)\boldsymbol{\mathcal{B}},  \boldsymbol{\Pi}^{0,E}_k \mathbf{v}_h\Big) \nonumber \\ & \qquad - \int_{\partial E} \boldsymbol{\mathcal{B}} \cdot \mathbf{n}^E (\mathbf{I}- \boldsymbol{\Pi}^{0,E}_k) \mathbf{u}_I\cdot  \boldsymbol{\Pi}^{0,E}_k  \mathbf{v}_h ds  + \Big( (\nabla  \boldsymbol{\Pi}^{0,E}_k \mathbf{v}_h)\boldsymbol{\mathcal{B}},  \boldsymbol{\Pi}^{0,E}_k \mathbf{u}_I - \mathbf{u} \Big)\, + \nonumber \\ & \qquad \quad \Big( \big(\nabla (\boldsymbol{\Pi}^{0,E}_k \mathbf{v}_h - \mathbf{v}_h)\big) \boldsymbol{\mathcal{B}}, \mathbf{u} \Big) + \int_{\partial E} \boldsymbol{\mathcal{B}} \cdot \mathbf{n}^E (\mathbf{I}- \boldsymbol{\Pi}^{0,E}_k) \mathbf{v}_h\cdot  \boldsymbol{\Pi}^{0,E}_k  \mathbf{u}_I ds \Big]. \label{ac1}
	\end{align}
	Employing $\nabla \cdot \boldsymbol{\mathcal{B}}=0$ and integration by parts for the second and fifth terms, we infer
	\begin{align}
		\Big( \big(\nabla (\mathbf{u} -\boldsymbol{\Pi}^{0,E}_k \mathbf{u}_I)\big)\boldsymbol{\mathcal{B}},  \boldsymbol{\Pi}^{0,E}_k \mathbf{v}_h\Big) & = -\int_E (\mathbf{u} -\boldsymbol{\Pi}^{0,E}_k \mathbf{u}_I) \cdot ( \nabla \boldsymbol{\Pi}^{0,E}_k \mathbf{v}_h) \boldsymbol{\mathcal{B}} \, dE \nonumber \\ &\qquad +  \int_{\partial E} \boldsymbol{\mathcal{B}} \cdot \mathbf{n}^E (\mathbf{u} -\boldsymbol{\Pi}^{0,E}_k \mathbf{u}_I) \cdot \boldsymbol{\Pi}^{0,E}_k \mathbf{v}_h ds,\label{ac2} \\
		\Big( \big(\nabla (\boldsymbol{\Pi}^{0,E}_k \mathbf{v}_h - \mathbf{v}_h)\big) \boldsymbol{\mathcal{B}}, \mathbf{u} \Big) &= -\int_E  (\boldsymbol{\Pi}^{0,E}_k \mathbf{v}_h - \mathbf{v}_h) \cdot (\nabla \mathbf{u}) \boldsymbol{\mathcal{B}} \, dE - \int_{\partial E} \boldsymbol{\mathcal{B}} \cdot \mathbf{n}^E (\mathbf{I}- \boldsymbol{\Pi}^{0,E}_k) \mathbf{v}_h\cdot \mathbf{u}\, ds. \label{ac3}
	\end{align}
	Combining \eqref{ac1}, \eqref{ac2} and \eqref{ac3}, we arrive at
	\begin{align}
		\eta_{A,c} &= \dfrac{1}{2} \sum_{E \in \Omega_h} \Big[ 2 \Big( (\nabla \mathbf{u}) \boldsymbol{\mathcal{B}}, \mathbf{v}_h - \boldsymbol{\Pi}^{0,E}_k \mathbf{v}_h\Big) - 2\Big( (\mathbf{u} -\boldsymbol{\Pi}^{0,E}_k \mathbf{u}_I),  (\nabla\boldsymbol{\Pi}^{0,E}_k \mathbf{v}_h) \boldsymbol{\mathcal{B}} \Big) \nonumber \\ & \qquad + \int_{\partial E} \boldsymbol{\mathcal{B}} \cdot \mathbf{n}^E  e^\mathbf{u}_I \cdot  \boldsymbol{\Pi}^{0,E}_k  \mathbf{v}_h ds  + \int_{\partial E} \boldsymbol{\mathcal{B}} \cdot \mathbf{n}^E (\mathbf{I}- \boldsymbol{\Pi}^{0,E}_k) \mathbf{v}_h\cdot  (\boldsymbol{\Pi}^{0,E}_k  \mathbf{u}_I- \mathbf{u}) ds \Big] \nonumber \\
		&= \sum_{E \in \Omega_h} \Big[ \Big( (\nabla \mathbf{u}) \boldsymbol{\mathcal{B}}-\boldsymbol{\Pi}^{0,E}_{k-1}((\nabla \mathbf{u}) \boldsymbol{\mathcal{B}}), \mathbf{v}_h - \boldsymbol{\Pi}^{0,E}_k \mathbf{v}_h\Big) + \Big(\boldsymbol{\Pi}^{0,E}_k \mathbf{u}_I - \mathbf{u},  (\nabla\boldsymbol{\Pi}^{0,E}_k \mathbf{v}_h) \boldsymbol{\mathcal{B}} \Big) \nonumber \\ & \qquad +  \dfrac{1}{2} \Big( \int_{\partial E} \boldsymbol{\mathcal{B}} \cdot \mathbf{n}^E  e^\mathbf{u}_I \cdot  \boldsymbol{\Pi}^{0,E}_k  \mathbf{v}_h ds  + \int_{\partial E} \boldsymbol{\mathcal{B}} \cdot \mathbf{n}^E (\mathbf{I}- \boldsymbol{\Pi}^{0,E}_k) \mathbf{v}_h\cdot  (\boldsymbol{\Pi}^{0,E}_k  \mathbf{u}_I- \mathbf{u}) ds \Big) \Big] \nonumber \\
		& =: \eta_{A,c,1} + \eta_{A,c,2} + \eta_{A,c,3}. \label{etaA_c0}
	\end{align}
	$\bullet$ Recalling the Cauchy-Schwarz and Poincar$\acute{\text{e}}$ inequalities, and Lemmas \ref{lemmaproj1} and \ref{estimate2}, we get
	\begin{align}
		\eta_{A,c,1} &\leq C \sum_{E \in \Omega_h} \| (\mathbf{I}-\boldsymbol{\Pi}^{0,E}_{k-1})(\nabla \mathbf{u}) \boldsymbol{\mathcal{B}}\|_{0,E} \|(\mathbf{I}- \boldsymbol{\Pi}^{0,E}_k) \mathbf{v}_h\|_{0,E} \nonumber \\
		&\leq C \sum_{E \in \Omega_h} h^{k+1}_E\| (\nabla \mathbf{u})\boldsymbol{\mathcal{B}}\|_{k,E} \|\nabla(\mathbf{I}- \boldsymbol{\Pi}^{0,E}_k) \mathbf{v}_h\|_{0,E} \nonumber \\
		&\leq C \sum_{E \in \Omega_h} h^{k+1}_E\| (\nabla \mathbf{u})\boldsymbol{\mathcal{B}}\|_{k,E} \|\nabla(\mathbf{I}- \boldsymbol{\Pi}^{\nabla,E}_k) \mathbf{v}_h\|_{0,E} \nonumber \\
		&\leq C \sum_{E \in \Omega_h}  h^{k+1}_E \min \Big\{  \dfrac{1}{\sqrt{\mu}}, \dfrac{1}{\mathcal{B}_E \sqrt{\tau_{1,E}}}, \dfrac{1}{\sqrt{\tau_{2,E}}} \Big\} \|\boldsymbol{\mathcal{B}}\|_{k,\infty,E}  \|\mathbf{u}\|_{k+1,E} \vertiii{ (\mathbf{v}_h, q_h)}_E . \label{etaA-c1}
	\end{align}
	$\bullet$ Using Lemmas \ref{lemmaproj1} and \ref{lemmaproj2}, it holds that
	\begin{align}
		\eta_{A,c,2} &\leq C \sum_{E \in \Omega_h}  h^{k+1}_E \mathcal{B}_E \|\mathbf{u}\|_{k+1,E} \|\nabla\boldsymbol{\Pi}^{0,E}_k \mathbf{v}_h \|_{0,E}. \nonumber
		\intertext{We use  Lemma \ref{inverse}, the stability of the projection operator and Lemma \ref{estimate2}:}
		\eta_{A,c,2}&\leq C \sum_{E \in \Omega_h} \min \Big\{ \dfrac{ h_E}{\sqrt{\mu}}, \dfrac{1}{\sqrt{\gamma}} \Big \} \mathcal{B}_E h^{k}_E \|\mathbf{u}\|_{k+1,E} \vertiii{ (\mathbf{v}_h, q_h)}_E. \label{etaA-c2}
	\end{align} 
	$\bullet$ Concerning $	\eta_{A,c,3}$, we proceed as follows: 
	\begin{align}
		\eta_{A,c,3} &= \sum_{E \in \Omega_h} \dfrac{1}{2} \Big( \int_{\partial E} \boldsymbol{\mathcal{B}} \cdot \mathbf{n}^E  (\boldsymbol{\Pi}^{0,E}_k  \mathbf{u}_I- \mathbf{u}- e^\mathbf{u}_I ) \cdot  (\mathbf{I}- \boldsymbol{\Pi}^{0,E}_k)  \mathbf{v}_h ds  + \int_{\partial E} \boldsymbol{\mathcal{B}} \cdot \mathbf{n}^E  \mathbf{v}_h\cdot e^\mathbf{u}_I ds \Big). \nonumber 
	\end{align}
	Recalling $\boldsymbol{\mathcal{B}} \in [W^{1}_{\infty}(\Omega_h)]^2$ and $\mathbf{v}_h, e^\mathbf{u}_I \in \mathbf{V}$, we obtain
	\begin{align*}
		\sum_{E \in \Omega_h } \int_{\partial E} \boldsymbol{\mathcal{B}} \cdot \mathbf{n}^E  \mathbf{v}_h\cdot e^\mathbf{u}_I ds = 0.
	\end{align*}
	{Therefore, using Lemma \ref{trace}, the estimation of bound \eqref{bd1} and Lemma \ref{lemmaproj1}, yields}
	\begin{align}
		\eta_{A,c,3}& \leq C \sum_{E \in \Omega_h} \mathcal{B}_E \|(\mathbf{I} - \boldsymbol{\Pi}^{0,E}_k) \mathbf{v}_h\|_{0,\partial E}  \|\boldsymbol{\Pi}^{0,E}_k  \mathbf{u}_I- \boldsymbol{\Pi}^{0,E}_k  \mathbf{u} + \boldsymbol{\Pi}^{0,E}_k  \mathbf{u} - \mathbf{u}- e^\mathbf{u}_I\|_{0, \partial E}  \nonumber \\
		& \leq C \sum_{E \in \Omega_h} \mathcal{B}_E \|(\mathbf{I} - \boldsymbol{\Pi}^{0,E}_k) \mathbf{v}_h\|_{0,\partial E} \Big(  \| \boldsymbol{\Pi}^{0,E}_k e^\mathbf{u}_I\|_{0, \partial E} + \|(\mathbf{I} - \boldsymbol{\Pi}^{0,E}_k)\mathbf{u}\|_{0,\partial E} + \| e^\mathbf{u}_I\|_{0, \partial E}\Big) \nonumber \\
		& \leq C \sum_{E \in \Omega_h} \mathcal{B}_E h^{-1/2}_E \Big(h^{-1/2}_E  \|\boldsymbol{\Pi}^{0,E}_k e^\mathbf{u}_I\|_{0, E} + h^{1/2}_E \|\nabla\boldsymbol{\Pi}^{0,E}_k e^\mathbf{u}_I\|_{0, E} + h^{k+1/2}_E \|\mathbf{u}\|_{k+1, E}\, + \nonumber \\ & \qquad \qquad  h^{-1/2}_E  \|e^\mathbf{u}_I\|_{0, E} + h^{1/2}_E \|\nabla e^\mathbf{u}_I\|_{0, E}\Big) \|(\mathbf{I} - \boldsymbol{\Pi}^{0,E}_k) \mathbf{v}_h\|_{0, E}\nonumber \\
		\intertext{ {Recalling the Poincar$\acute{\text{e}}$ inequality, the stability of the projectors, Lemma \ref{lemmaproj2} and \eqref{est1}, we obtain}}
		\eta_{A,c,3} & \leq C \sum_{E \in \Omega_h} \mathcal{B}_E h^{-1/2}_E \Big( h^{-1/2}_E  \|e^\mathbf{u}_I\|_{0, E} + h^{1/2}_E \|\nabla e^\mathbf{u}_I\|_{0, E} + h^{k+1/2}_E \|\mathbf{u}\|_{k+1, E} \Big) h_E \|\nabla(\mathbf{I} - \boldsymbol{\Pi}^{0,E}_k) \mathbf{v}_h\|_{0, E}\nonumber \\
		& \leq C \sum_{E \in \Omega_h} \mathcal{B}_E h^{k+1}_E \| \mathbf{u} \|_{k+1,E} \|\nabla (\mathbf{I} - \boldsymbol{\Pi}^{\nabla,E}_k) \mathbf{v}_h\|_{0, E} \qquad \text{ {\big(using \eqref{est3}\big)}} \nonumber \\
		&\leq C \sum_{E \in \Omega_h}  h^{k+1}_E \min \Big\{ \dfrac{1}{\sqrt{\mu}}, \dfrac{1}{\mathcal{B}_E \sqrt{\tau_{1,E}}}, \dfrac{1}{\sqrt{\tau_{2,E}}} \Big\} \mathcal{B}_E \|\mathbf{u}\|_{k+1,E} \vertiii{ (\mathbf{v}_h, q_h)}_E. \label{etaA-c3}
	\end{align}
	\textbf{Step 5.} Estimation of $\eta_{A,d}$: Employing \eqref{vem-b} and Lemmas \ref{lemmaproj1} and \ref{lemmaproj2}, it holds that  
	\begin{align}
		\eta_{A,d} &= \sum_{E \in \Omega_h} \Big( \Big( \gamma \mathbf{u}, \mathbf{v}_h \Big) - \Big( \gamma \boldsymbol{\Pi}^{0,E}_k \mathbf{u}_I, \boldsymbol{\Pi}^{0,E}_k \mathbf{v}_h \Big) - \gamma S^E_0 \Big( (\mathbf{I} -  \boldsymbol{\Pi}^{0,E}_k) \mathbf{u}_I, (\mathbf{I} -  \boldsymbol{\Pi}^{0,E}_k) \mathbf{v}_h\Big) \Big) \nonumber \\
		&= \sum_{E \in \Omega_h} \Big(  \gamma \Big( \mathbf{u} -  \boldsymbol{\Pi}^{0,E}_k\mathbf{u}, \mathbf{v}_h -  \boldsymbol{\Pi}^{0,E}_k \mathbf{v}_h \Big) + \gamma \Big(\mathbf{u}- \boldsymbol{\Pi}^{0,E}_k \mathbf{u}_I, \boldsymbol{\Pi}^{0,E}_k \mathbf{v}_h \Big) \nonumber \\ & \qquad - \gamma S^E_0 \Big( (\mathbf{I} -  \boldsymbol{\Pi}^{0,E}_k) \mathbf{u}_I, (\mathbf{I} -  \boldsymbol{\Pi}^{0,E}_k) \mathbf{v}_h\Big) \Big) \nonumber \\  
		&\leq C \sum_{E \in \Omega_h} \Big(  \gamma \| \mathbf{u} -  \boldsymbol{\Pi}^{0,E}_k\mathbf{u}\|_{0,E} \|(\boldsymbol{I}-  \boldsymbol{\Pi}^{0,E}_k) \mathbf{v}_h\|_{0,E} + \gamma \|\mathbf{u}- \boldsymbol{\Pi}^{0,E}_k \mathbf{u}_I\|_{0,E} \|\boldsymbol{\Pi}^{0,E}_k \mathbf{v}_h \|_{0,E} \nonumber \\ & \qquad + \lambda_2^\ast \gamma \| (\mathbf{I} -  \boldsymbol{\Pi}^{0,E}_k) \mathbf{u}_I\|_{0,E}  \| (\mathbf{I} -  \boldsymbol{\Pi}^{0,E}_k) \mathbf{v}_h\|_{0,E} \Big) \nonumber 
		\intertext{{We use the Poincar$\acute{\text{e}}$ inequality $\| (\mathbf{I} -  \boldsymbol{\Pi}^{0,E}_k) \mathbf{v}_h\|_{0,E} \leq h_E \| \nabla (\mathbf{I} -  \boldsymbol{\Pi}^{0,E}_k) \mathbf{v}_h\|_{0,E} $ (vector form) and the bound \eqref{est3}:} }
		& \leq C\sum_{E \in \Omega_h} \Big(  \gamma h^{k+2} \| \mathbf{u}\|_{k+1,E} \|\nabla(\boldsymbol{I}-  \boldsymbol{\Pi}^{\nabla,E}_{k}) \mathbf{v}_h\|_{0,E} + \gamma h^{k+1}_E \|\mathbf{u}\|_{k+1,E} \| \mathbf{v}_h \|_{0,E} \nonumber \\ & \qquad +  \lambda_2^\ast\gamma h^2_E\| \nabla(\mathbf{I} -  \boldsymbol{\Pi}^{\nabla,E}_{k}) \mathbf{u}_I\|_{0,E}  \| \nabla(\mathbf{I} -  \boldsymbol{\Pi}^{\nabla,E}_{k}) \mathbf{v}_h\|_{0,E} \Big) \nonumber \\  
		&\leq C \sum_{E \in \Omega_h}  h^{k+1}_E \Big( \min \Big\{   \dfrac{\gamma}{\sqrt{\mu}}, \dfrac{\gamma}{\mathcal{B} \sqrt{\tau_{1,E}}}, \dfrac{\gamma}{\sqrt{\tau_{2,E}}} \Big\}  + \sqrt{\gamma} \Big) \|\mathbf{u}\|_{k+1,E} \vertiii{ (\mathbf{v}_h, q_h)}_E.   \label{etaA-d} 
	\end{align}
	Finally, the estimate \eqref{eta0} readily follows from combining the estimates \eqref{etaA-2}, \eqref{etaA-b1}, \eqref{etaA-b2}, \eqref{etaA-c1}, \eqref{etaA-c2}, \eqref{etaA-c3} and \eqref{etaA-d}. 
\end{proof}
\begin{lemma} \label{etaL}
	Under the assumptions \textbf{(A1)} and \textbf{(A2)}, the term $\mathcal{L}_h[\cdot, \cdot]$ can be estimated as given below
	\begin{align}
		\eta_{\mathcal{L}} &\leq C \Big[\Big( \sum_{E \in \Omega_h} \Big(\tau_{1,E} \mathcal{B}_E^2 + \tau_{2,E} \Big) h^{2k}_E \|\mathbf{u}\|_{k+1,E}^2 \Big)^{1/2}   +  \Big( \sum_{E \in \Omega_h}\tau_{3,E} h^{2k-2}_E \|p\|^2_{k,E} \Big)^{1/2} \Big] {\vertiii{ (\mathbf{v}_h, q_h)}}.      \label{etaL-0}
	\end{align}
\end{lemma}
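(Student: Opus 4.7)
The plan is to decompose $\eta_{\mathcal{L}}$ according to its three constituent stabilization terms,
\[
\eta_{\mathcal{L}} = -\mathcal{L}_{1,h}(\mathbf{u}_I,\mathbf{v}_h) - \mathcal{L}_{2,h}(\mathbf{u}_I,\mathbf{v}_h) - \mathcal{L}_{3,h}(p_I,q_h),
\]
and to treat each piece separately. The key observation is that each $\mathcal{L}_{i,h}(\cdot,\cdot)$ is a symmetric positive semidefinite bilinear form, since $S^E_\nabla$ and $S^E_p$ are themselves SPD by \eqref{vem-a} and \eqref{vem-c} and the remaining parts of $\mathcal{L}_{2,h}$ and $\mathcal{L}_{3,h}$ are $L^2$-products. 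Hence the Cauchy--Schwarz inequality at the bilinear form level gives
\[
|\mathcal{L}_{i,h}(\mathbf{w}_1,\mathbf{w}_2)| \leq \mathcal{L}_{i,h}(\mathbf{w}_1,\mathbf{w}_1)^{1/2}\,\mathcal{L}_{i,h}(\mathbf{w}_2,\mathbf{w}_2)^{1/2},
\]
and by the very definition of the energy norm \eqref{tnorm}, each factor $\mathcal{L}_{i,h}(\mathbf{v}_h,\mathbf{v}_h)^{1/2}$ (or $\mathcal{L}_{3,h}(q_h,q_h)^{1/2}$) is controlled by $\vertiii{(\mathbf{v}_h,q_h)}$. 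It then only remains to bound $\mathcal{L}_{1,h}(\mathbf{u}_I,\mathbf{u}_I)$, $\mathcal{L}_{2,h}(\mathbf{u}_I,\mathbf{u}_I)$, and $\mathcal{L}_{3,h}(p_I,p_I)$.

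For $\mathcal{L}_{1,h}(\mathbf{u}_I,\mathbf{u}_I)$, I would apply the upper bound in \eqref{vem-a} to majorize $S^E_\nabla$ by $\|\nabla(\mathbf{I}-\boldsymbol{\Pi}^{\nabla,E}_{k-1})\mathbf{u}_I\|^2_{0,E}$, then insert $\pm\mathbf{u}$ to obtain the bound $\|\nabla(\mathbf{I}-\boldsymbol{\Pi}^{\nabla,E}_{k-1})\mathbf{u}_I\|_{0,E} \leq C h_E^k\|\mathbf{u}\|_{k+1,E}$ by combining Lemma \ref{lemmaproj2} on $\mathbf{u}-\mathbf{u}_I$ with Lemma \ref{lemmaproj1} on $(\mathbf{I}-\boldsymbol{\Pi}^{\nabla,E}_{k-1})\mathbf{u}$ (and the $H^1$-stability of $\boldsymbol{\Pi}^{\nabla,E}_{k-1}$). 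The stabilization piece of $\mathcal{L}_{2,h}$ is handled identically with $\boldsymbol{\Pi}^{\nabla,E}_k$ in place of $\boldsymbol{\Pi}^{\nabla,E}_{k-1}$. For the divergence piece of $\mathcal{L}_{2,h}$, I would exploit the continuous incompressibility $\nabla\cdot\mathbf{u}=0$ together with the exactness of $\Pi^{0,E}_{k-1}$ on constants to write $\Pi^{0,E}_{k-1}\nabla\cdot\mathbf{u}_I = \Pi^{0,E}_{k-1}\nabla\cdot(\mathbf{u}_I-\mathbf{u})$, so that continuity of the $L^2$-projection plus Lemma \ref{lemmaproj2} yields $\|\Pi^{0,E}_{k-1}\nabla\cdot\mathbf{u}_I\|_{0,E} \leq C h_E^k \|\mathbf{u}\|_{k+1,E}$.

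For $\mathcal{L}_{3,h}(p_I,p_I)$, the $S^E_p$ contribution is again reduced by \eqref{vem-c} to $\|\nabla(I-\Pi^{\nabla,E}_{k-1})p_I\|_{0,E} \leq C h_E^{k-1}\|p\|_{k,E}$ via the same $\pm p$ trick using Lemmas \ref{lemmaproj1} and \ref{lemmaproj2}. For the commutator-like term $\widehat{\mathbf{r}}_h\nabla p_I = (\boldsymbol{\Pi}^{0,E}_k - \boldsymbol{\Pi}^{0,E}_{k-1})\nabla p_I$, I would use that both projectors agree on $\mathbb{P}_{k-1}(E)$, insert $\pm\nabla p$, and apply the $L^q$-continuity (P0) of the $L^2$-projection together with Lemma \ref{lemmaproj1} to obtain $\|\widehat{\mathbf{r}}_h\nabla p_I\|_{0,E} \leq C h_E^{k-1}\|p\|_{k,E}$. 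Collecting these three element-wise bounds, taking square roots after summation, and multiplying by $\vertiii{(\mathbf{v}_h,q_h)}$ as above produces exactly the right-hand side of \eqref{etaL-0}.

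The main obstacle I anticipate is the divergence term inside $\mathcal{L}_{2,h}(\mathbf{u}_I,\mathbf{u}_I)$: the interpolant $\mathbf{u}_I$ is not discretely divergence-free, so one must invoke the continuous identity $\nabla\cdot\mathbf{u}=0$ and absorb the discrepancy into an interpolation error, rather than estimating $\Pi^{0,E}_{k-1}\nabla\cdot\mathbf{u}_I$ directly. A secondary subtlety is the commutator $\widehat{\mathbf{r}}_h$ in $\mathcal{L}_{3,h}$: naively bounding each projector separately only gives $h_E^k$-convergence of $\nabla p$, but here the difference structure is needed to keep the scaling $h_E^{k-1}\|p\|_{k,E}$ compatible with the prescribed $\tau_{3,E}\sim h_E^2$, i.e.\ $\tau_{3,E}h_E^{2k-2}\sim h_E^{2k}$, matching the expected error order.
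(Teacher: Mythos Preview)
Your proposal is correct and follows essentially the same route as the paper: split $\eta_{\mathcal{L}}$ into its three pieces, apply Cauchy--Schwarz (using the SPD structure of each $\mathcal{L}_{i,h}$) to factor out $\vertiii{(\mathbf{v}_h,q_h)}$, and reduce each remaining factor to $h_E^k\|\mathbf{u}\|_{k+1,E}$ or $h_E^{k-1}\|p\|_{k,E}$ via \eqref{vem-a}/\eqref{vem-c}, Lemmas~\ref{lemmaproj1}--\ref{lemmaproj2}, and the identity $\nabla\cdot\mathbf{u}=0$ for the divergence part of $\mathcal{L}_{2,h}$. The paper handles $\eta_{\mathcal{L}_3}$ by citing an external lemma, whereas you spell out the argument for both $S^E_p$ and the commutator $\widehat{\mathbf{r}}_h$; your treatment there is correct and in fact more self-contained.
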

\begin{proof} Concerning \eqref{etaL-0}, we proceed as follows
	\begin{align}
		\eta_{\mathcal{L}} &= \mathcal{L}_h[(\mathbf{u}_I, p_I), (\mathbf{v}_h, q_h)]=  \mathcal{L}_{1,h}(\mathbf{u}_I, \mathbf{v}_h) + \mathcal{L}_{2,h}(\mathbf{u}_I, \mathbf{v}_h) +\mathcal{L}_{3,h}(p_I, q_h) \nonumber \\
		&=: \eta_{\mathcal{L}_1} + \eta_{\mathcal{L}_2} + \eta_{\mathcal{L}_3}. \label{eta-L}
	\end{align} 
	\textbf{Step 1.} Estimation of $\eta_{\mathcal{L}_1}$: Employing the estimate \eqref{vem-a}, the triangle inequality and Lemmas \ref{lemmaproj1} and \ref{lemmaproj2}, we infer 
	\begin{align}
		\eta_{\mathcal{L}_1} &= \sum_{E \in \Omega_h} \tau_{1,E}  \mathcal{B}^2_E S^E_\nabla \Big((\mathbf{I}-  \boldsymbol{\Pi}^{\nabla,E}_{k}) \mathbf{u}_I, (\mathbf{I}-  \boldsymbol{\Pi}^{\nabla,E}_{k}) \mathbf{v}_h\Big) \nonumber \\
		&\leq  C \sum_{E \in \Omega_h} \tau_{1,E}^{1/2} \lambda_1^\ast \mathcal{B}_E \|\nabla(\mathbf{I}-  \boldsymbol{\Pi}^{\nabla,E}_{k}) \mathbf{u}_I\|_{0,E} \vertiii{(\mathbf{v}_h, q_h)}_E \nonumber \\
		&\leq C \sum_{E \in \Omega_h} \tau_{1,E}^{1/2} h^k_E \mathcal{B}_E  \| \mathbf{u} \|_{k+1,E}  \vertiii{(\mathbf{v}_h, q_h)}_E. \label{etaL-1}
	\end{align}
	\textbf{Step 2.} Estimation of $\eta_{\mathcal{L}_2}$: Concerning $\eta_{\mathcal{L}_2}$, we use the property $\nabla \cdot \mathbf{u}=0$ and Lemmas \ref{lemmaproj1} and \ref{lemmaproj2}:
	\begin{align}
		\eta_{\mathcal{L}_2} &= \sum_{E \in \Omega_h} \tau_{2,E} \Big( \Pi^{0,E}_{k-1}\nabla \cdot\mathbf{u}_I,  \Pi^{0,E}_{k-1} \nabla \cdot \mathbf{v}_h \Big) + S^E_\nabla \Big((\mathbf{I}-  \boldsymbol{\Pi}^{\nabla,E}_{k}) \mathbf{u}_I, (\mathbf{I}-  \boldsymbol{\Pi}^{\nabla,E}_{k}) \mathbf{v}_h\Big) \Big)\nonumber \\
		&= \sum_{E \in \Omega_h} \tau_{2,E} \Big( \Pi^{0,E}_{k-1} (\nabla \cdot\mathbf{u}_I-\nabla \cdot\mathbf{u}),  \Pi^{0,E}_{k-1} \nabla \cdot \mathbf{v}_h \Big) + S^E_\nabla \Big((\mathbf{I}-  \boldsymbol{\Pi}^{\nabla,E}_{k}) \mathbf{u}_I, (\mathbf{I}-  \boldsymbol{\Pi}^{\nabla,E}_{k}) \mathbf{v}_h\Big) \Big)\nonumber \\
		&\leq C \sum_{E \in \Omega_h} \tau^{1/2}_{2,E}\Big( \|  \Pi^{0,E}_{k-1} (\nabla \cdot\mathbf{u}_I-\nabla \cdot\mathbf{u})\|_{0,E} +  \lambda_1^\ast \|\nabla(\mathbf{I}-  \boldsymbol{\Pi}^{\nabla,E}_{k}) \mathbf{u}_I\|_{0,E} \Big)\vertiii{(\mathbf{v}_h, q_h)}_{E}\nonumber \\
		&\leq C \sum_{E \in \Omega_h} \tau^{1/2}_{2,E}  h^k_E \| \mathbf{u}\|_{k+1,E} \vertiii{(\mathbf{v}_h, q_h)}_{E}. \label{etaL-2}
	\end{align}
	\textbf{Step 3.} Estimate of $\eta_{\mathcal{L}_3}$: Following \cite[Lemma 5.10]{vem28m}, we arrive at  
	\begin{align}
		\eta_{\mathcal{L}_3} &\leq C \sum_{E \in \Omega_h} \tau^{1/2}_{3,E} h^{k-1}_E \|p\|_{k,E} \vertiii{(\mathbf{v}_h, q_h)}_{E}. \label{etaL-3}    
	\end{align}
	Thus combining \eqref{etaL-1}, \eqref{etaL-2} and \eqref{etaL-3}, we obtain the result \eqref{etaL-0}. 
\end{proof}
Following the above analysis, the convergence theorem is given as follows:
\begin{theorem} \label{convergence}
	Under the assumptions \textbf{(A1)} and \textbf{(A2)}, let $(\mathbf{u}, p) \in \mathbf{V} \times Q$  and $(\mathbf{u}_h, p_h) \in \mathbf{V}^k_h \times Q^k_h$  be the solution of problem \eqref{discf} and VE problem \eqref{nvem}, respectively. Further we consider $\tau_{1,E} \sim h_E$, $\tau_{2,E} \sim \mathcal{O}(1)$ and $\tau_{3,E} \sim h^2_E$ for all $E\in \Omega_h$, then it holds that
	\begin{align}
		\vertiii{(\mathbf{u}- \mathbf{u}_h, p-p_h)}^2 &\lesssim \sum_{E \in \Omega_h} h^{2k}_E \Big(1 + \mu + {\dfrac{1}{\alpha}} + \gamma h^2_E  + \Lambda^2_E h^2_E(\|\boldsymbol{\mathcal{B}}\|_{k,\infty,E} + \mathcal{B}_E + \gamma)^2  + \delta^2_E \mathcal{B}^2_E \, +  \nonumber \\ & \qquad  h_E \mathcal{B}_E^2 \Big) \|\mathbf{u}\|^2_{k+1,E} + \sum_{E \in \Omega_h} h^{2k}_E \|p\|^2_{k,E} +  \sum_{E \in \Omega_h} h^{2k+2}_E  \Lambda_E^2 h^2_E \|\mathbf{f}\|^2_{k+1,E}. \label{main_est}
	\end{align}
	
\end{theorem}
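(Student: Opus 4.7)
The plan is to combine the triangle inequality with the interpolation bound of Lemma \ref{ei} and the algebraic error bound of Proposition \ref{prop}. First I would split
\begin{equation*}
\vertiii{(\mathbf{u}-\mathbf{u}_h,\, p-p_h)} \;\leq\; \vertiii{(\mathbf{u}-\mathbf{u}_I,\, p-p_I)} \;+\; \vertiii{(\mathbf{u}_I-\mathbf{u}_h,\, p_I-p_h)},
\end{equation*}
and estimate the first piece directly by Lemma \ref{ei}. For the second piece I would invoke Proposition \ref{prop}: since each of the terms $\eta_F$, $\eta_A$, $\eta_{\mathcal{L}}$ has been bounded in Lemmas \ref{eta_f}, \ref{etaA}, \ref{etaL} by an expression of the form $(\text{data factor})\cdot \vertiii{(\mathbf{v}_h,q_h)}$, dividing by $\vertiii{(\mathbf{v}_h,q_h)}$ and taking the supremum over test pairs yields a clean bound on $\vertiii{(\mathbf{u}_I-\mathbf{u}_h, p_I-p_h)}$ purely in terms of the exact data $\mathbf{u}$, $p$, $\mathbf{f}$.

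The next step is to specialize the general bounds to the chosen stabilization parameters $\tau_{1,E}\sim h_E$, $\tau_{2,E}\sim \mathcal{O}(1)$, $\tau_{3,E}\sim h_E^2$. Under these choices several prefactors simplify: the term $\tau_{2,E}^{-1} h_E^{2k}\|p\|_{k,E}^2$ from Lemma \ref{etaA} collapses to $h_E^{2k}\|p\|_{k,E}^2$; the factor $\tau_{3,E} h_E^{2k-2}\|p\|_{k,E}^2$ from Lemma \ref{etaL} likewise yields $h_E^{2k}\|p\|_{k,E}^2$; the ratio $h_E^2/\tau_{3,E}$ appearing in Lemma \ref{etaA} becomes an $\mathcal{O}(1)$ constant absorbed into the leading $1$ of the right-hand side; and $\tau_{1,E}\mathcal{B}_E^2 + \tau_{2,E}$ from Lemma \ref{etaL} matches the advertised $h_E\mathcal{B}_E^2$ contribution together with a bounded additive piece that is absorbed by the interpolation term.

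Finally I would assemble the estimates. Squaring the triangle inequality and using $(a+b+c+d)^2 \leq 4(a^2+b^2+c^2+d^2)$, the contribution from Lemma \ref{ei} furnishes the $\mu + \gamma h_E^2 + \mathcal{B}_E^2 h_E + 1$ prefactor, Lemma \ref{etaA} contributes the Oseen terms $\Lambda_E^2 h_E^2(\|\boldsymbol{\mathcal{B}}\|_{k,\infty,E} + \mathcal{B}_E + \gamma)^2$, $\delta_E^2\mathcal{B}_E^2$ and $1/\alpha$, Lemma \ref{etaL} contributes $h_E\mathcal{B}_E^2$ and the pressure term $h_E^{2k}\|p\|_{k,E}^2$, and Lemma \ref{eta_f} contributes the load term $\Lambda_E^2 h_E^{2k+4}\|\mathbf{f}\|_{k+1,E}^2$. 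These are precisely the summands in \eqref{main_est}, so the stated bound follows.

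The main obstacle is mostly bookkeeping: confirming that every occurrence of $\tau_{i,E}$ is substituted consistently and that the skew-symmetric convective form plus the boundary projection terms in $c_h^E(\cdot,\cdot)$ do not leave residual contributions beyond what is written. The delicate point is the coupled role of $\tau_{3,E}\sim h_E^2$, which must simultaneously cancel the $h_E^2/\tau_{3,E}$ nuisance factor from $\eta_A$ (preventing an $h^{-2}$ blow-up) and produce optimal $h^{2k}\|p\|_{k,E}^2$ decay from $\eta_{\mathcal{L}}$; verifying that these two requirements are consistent, and that the constant $\alpha$ from the energy norm balances correctly with the pressure mass-norm contribution inside $\eta_{A,b2}$, is the step that requires the most care.
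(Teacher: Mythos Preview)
Your proposal is correct and matches the paper's own proof, which simply states that the theorem follows from Lemma \ref{ei}, Proposition \ref{prop}, and Lemmas \ref{eta_f}, \ref{etaA}, \ref{etaL}. Your write-up in fact supplies more of the bookkeeping details (the parameter substitutions $\tau_{1,E}\sim h_E$, $\tau_{2,E}\sim\mathcal{O}(1)$, $\tau_{3,E}\sim h_E^2$ and the term-by-term matching) than the paper itself spells out.
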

\begin{proof}
	The proof of Theorem \ref{convergence} follows from Lemma \ref{ei}, Proposition \ref{prop}, and Lemmas \ref{eta_f}, \ref{etaA} and \ref{etaL}. 
\end{proof}
\begin{remark} \label{vel_first}
	The validity of the convergence Theorem \ref{convergence} depends on the parameters $\Lambda_E$ and $\delta_E$ as defined in \eqref{param}. Notably, the convergence theorem \ref{convergence} shows \texttt{uniform convergence in the energy norm as $\mu \rightarrow 0$ for the generalized Oseen problem \eqref{modeleq}.} Furthermore, Theorem \ref{convergence} is not restrictive to $\mathcal{B}_E >0$ since $\mathcal{B}_E =0 $ gives $\boldsymbol{\mathcal{B}}_{|E}=\mathbf{0}$ and thus the corresponding terms vanish. Therefore, when  $\boldsymbol{\mathcal{B}}= \mathbf{0} $ and $\gamma > 0$, the convergence theorem guarantees \texttt{uniform convergence in the energy norm for the Brinkman equation}. 
	Additionally, the proposed method effectively represents the diffusion-dominated regimes. The inclusion of $\mathcal{L}_{1,h}$ does not affect the convergence behavior of the considered problem. 
\end{remark}

\begin{remark} \label{last1}
	\texttt{The case $\gamma=0$ and $\boldsymbol{\mathcal{B}} \neq \mathbf{0}$ (classic Oseen problem)}: From \eqref{main_est}, we obtain
	\begin{align*}
		\vertiii{(\mathbf{u}- \mathbf{u}_h, p-p_h)}^2 & \lesssim \sum \limits_{E \in \Omega_h} h^{2k}_E \Big( 1 + \mu  + {\dfrac{1}{\alpha}} + \Lambda^2_E h^2_E(\|\boldsymbol{\mathcal{B}}\|_{k,\infty,E} + \mathcal{B}_E)^2  + \frac {h^2_E\mathcal{B}^2_E}{{\mu}} +  h_{E} \mathcal{B}_E^2 \Big)\|\mathbf{u}\|^2_{k+1,E}\nonumber \\ & \qquad + \sum_{E \in \Omega_h} h^{2k}_E \big(\|p\|^2_{k,E} +  \Lambda_E^2 h^4_E \|\mathbf{f}\|^2_{k,E}\big).
	\end{align*}
	In the convection-dominated regimes, we achieve uniform convergence in the $L^2$-norm of pressure, whereas the error estimate in energy norm contains a degenerative term $\frac {\mathcal{B}^2_E}{\mu} $ that is weighted by a factor of $h^2_E$, which effectively reduces the influence of the diffusion coefficient. {Additionally, when $\gamma=0$, we obtain $\alpha \sim \mu$ as $\mu \rightarrow 0$. Therefore, we obtain a quasi-uniform convergence in the energy norm when $\mu\rightarrow 0$. For this case, the uniform convergence in the energy norm is an open problem, and will be considered as future work.}
\end{remark}

\begin{remark} \label{last}
	\texttt{The case $\gamma=0$ and $\boldsymbol{\mathcal{B}} = \mathbf{0}$ (Stokes problem)}: From \eqref{main_est}, we obtain the following estimate: 
	\begin{align}
		\vertiii{(\mathbf{u}- \mathbf{u}_h, p-p_h)}^2 & \lesssim \sum_{E \in \Omega_h} h^{2k}_E \Big(1+ {\mu} +  \frac{2 + 5 \Theta_2 + 4(1+ C_{gen}) (\mu + C_{gen})^{1/2} }{4 \Theta_1}  \Big) \|\mathbf{u}\|^2_{k+1,E} \nonumber \\ & \qquad  + \sum_{E \in \Omega_h} h^{2k}_E \big(\|p\|^2_{k,E} +  \tilde{\Lambda}_E^2 h^4_E \|\mathbf{f}\|^2_{k,E}\big),\label{main_stk}
	\end{align}
	where $\tilde{\Lambda}_E:=\min\{ \frac{1}{\sqrt{\mu}}, \frac{1}{\sqrt{\tau_{2,E}}}\}$ with $\tau_{2,E} \sim \mathcal{O}(1)$.
	
	Thus, the error estimate  \eqref{main_stk} converges uniformly in the energy norm as $\mu \rightarrow 0$, whereas in \cite{vem028,mvem21} the $L^2$-error in pressure becomes quasi-uniform.
\end{remark}

\subsection{Alternative discrete convective form} \label{sec:5b}
In this section we introduce an alternative discrete convective form to be used in the place of $c^{skew,E}_h(\cdot,\cdot)$ in the stabilized VE problem \eqref{nvem}. Following \cite{vem26,mvem15}, we consider the following:
\begin{align}
	\widehat{c}^E_h(\mathbf{w}_h, \mathbf{z}_h):&= \int_{E}  (\boldsymbol{\Pi}^{0,E}_{k-1} \nabla \mathbf{w}_h) \boldsymbol{\mathcal{B}} \cdot \boldsymbol{\Pi}^{0,E}_{k} \mathbf{z}_h\, dE  \qquad \,\, \text{for all} \,\, \mathbf{w}_h, \mathbf{z}_h \in \mathbf{V}_h^k(E),\nonumber
	\intertext{with the associated skew-symmetric part}
	\widehat{c}^{skew, E}_h(\mathbf{w}_h, \mathbf{z}_h):&= \dfrac{1}{2} \Big[\widehat{c}^E_h(\mathbf{w}_h, \mathbf{z}_h) - \widehat{c}^E_h(\mathbf{z}_h, \mathbf{w}_h) \Big].\label{newc}
\end{align}
Notably, the global bilinear form $\widehat{c}^{skew}_h(\cdot, \cdot)$ associated to \eqref{newc} satisfies the following: 

\noindent$\bullet$ $\widehat{c}^{skew}_h(\mathbf{w}_h, \mathbf{w}_h)=0 \qquad \,\, \text{for all} \,\, \mathbf{w}_h \in \mathbf{V}^k_h$. \\
\noindent$\bullet$ 	$\widehat{c}^{skew}_h(\mathbf{w}_h, \mathbf{z}_h) \leq C \mathcal{B} \|\nabla \mathbf{w}_h\|_{0,\Omega} \|\nabla \mathbf{z}_h\|_{0,\Omega} \qquad \,\, \text{for all} \,\, \mathbf{w}_h, \mathbf{z}_h \in \mathbf{V}^k_h$.\\

We will now investigate the well-posedness of the VE problem \eqref{nvem} along with the discrete convective form $\widehat{c}^{skew,E}_h(\cdot,\cdot)$. In this sequel, we first bound $s_3$ (see Step 3 in  proof of Theorem \ref{wellposed}) as follows:
\begin{align}
	s_3 := \alpha \widehat{c}^{skew, E}_h(\mathbf{u}_h, \mathbf{w}_h) &\leq C \alpha \mathcal{B} \|\nabla \mathbf{u}_h\|_{0,\Omega} \|\nabla \mathbf{w}_h\|_{0,\Omega}, \label{well-c}
\end{align}
provided that $\|\nabla \mathbf{w}_h\|_{0,\Omega} = \xi_0 \|p_h\|_{0,\Omega}$.

We now present the well-posedness theorem for the VE problem \eqref{nvem} along with the discrete convective
form $\widehat{c}^{skew}_h$:
\begin{theorem}{(Well-posedness)} \label{wellposed-c}
	Under the mesh regularity assumption \textbf{(A1)}, there exists a positive constant $\Theta$ independent of the mesh size $h$, such that for any $(\mathbf{u}_h, p_h) \in \mathbf{V}^k_{h} \times Q^k_h$, it holds that 
	\begin{align}
		\sup \limits_{(\mathbf{0},0) \neq(\mathbf{z}_h, q_h) \in \mathbf{V}^k_{h} \times Q^k_h} \dfrac{(A_{h}+\mathcal{L}_h)[(\mathbf{u}_h, p_h), (\mathbf{z}_h, q_h)]}{\vertiii{(\mathbf{z}_h, q_h)}}  \geq \Theta \vertiii{(\mathbf{u}_h, p_h)}, \label{wellpos-0c}
	\end{align}
	Moreover, the stabilized VE problem \eqref{nvem} along with the discrete convective
	form $\widehat{c}^{skew}_h$ has a unique solution.
\end{theorem}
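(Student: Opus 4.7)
The plan is to mirror the five-step strategy used in the proof of Theorem \ref{wellposed}, exploiting the fact that the alternative convective form $\widehat{c}^{skew}_h$ is structurally simpler than $c^{skew}_h$ on two counts: (i) it is exactly skew-symmetric on $\mathbf{V}^k_h$, so $\widehat{c}^{skew}_h(\mathbf{u}_h,\mathbf{u}_h)=0$ and the diagonal test contributes nothing; and (ii) its off-diagonal bound \eqref{well-c} involves only $\mathcal{B}$ linearly rather than producing the $\mathcal{B}^2$ boundary penalty that appeared in $s_3$ of Theorem \ref{wellposed}. Consequently the entire scaffolding of that proof transfers almost verbatim, with only the definition of $\alpha$ and the constants $\mathbb{N}_1,\mathbb{N}_2$ needing adjustment.

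First I would test with $(\mathbf{z}_h,q_h)=(\mathbf{u}_h,p_h)$ and, using $\widehat{c}^{skew}_h(\mathbf{u}_h,\mathbf{u}_h)=0$ together with \eqref{vem-a}, \eqref{vem-b} and the definitions of $\mathcal{L}_{i,h}$, I would recover inequality \eqref{wellpos-1} exactly as before. Next I would invoke Theorem \ref{infsup} to produce $\mathbf{w}_h\in\mathbf{V}^k_h$ satisfying $\|\nabla\mathbf{w}_h\|_{0,\Omega}=\xi_0\|p_h\|_{0,\Omega}$ together with the pressure--control estimate \eqref{wellpos-3}. Then I would test with $(-\alpha\mathbf{w}_h,0)$: the bounds on $s_1$, $s_2$, $s_4$, $s_5$, $s_6$ remain unchanged, while $s_3$ is now controlled directly by \eqref{well-c}. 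Collecting terms, the analogue of \eqref{wellpos-10} holds with the coefficient $C^2_{gen}(1+\mu+\mathcal{B}+\mathcal{B}^2)^2$ replaced by the smaller quantity $C^2_{gen}(1+\mu+\mathcal{B})^2$ (the $\mathcal{B}^2$ contribution from $s_5$ remains but is absorbed into the same bracket after the Young inequality with weight $\xi_0^2/4$ on $\|p_h\|^2_{0,\Omega}$).

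In Step~4 I would combine the two test directions via $(\mathbf{z}_h,q_h)=(\mathbf{u}_h-\alpha\mathbf{w}_h,p_h)$, choose $t_1=2\Theta_2^2/\Theta_1$ to absorb the $\mathcal{L}_{3,h}$ penalty, and pick
\begin{equation*}
\alpha=\min\Bigl\{\tfrac{C_{\lambda_{1\ast}}\mu}{8C^2_{gen}(1+\mu+\mathcal{B})^2},\;\tfrac{C_{\lambda_{2\ast}}}{8C^2_{gen}\gamma},\;\tfrac{\Theta_1}{\Theta_2^2\xi_0}\Bigr\}
\end{equation*}
which guarantees a lower bound of the form $\mathbb{N}_1\,\vertiii{(\mathbf{u}_h,p_h)}^2$ for a positive constant $\mathbb{N}_1$ with the obvious updated expression. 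Finally, Step~5 proceeds exactly as in \eqref{wellpos-9a}: the energy norm of $(\mathbf{u}_h-\alpha\mathbf{w}_h,p_h)$ is bounded by $\mathbb{N}_2\,\vertiii{(\mathbf{u}_h,p_h)}^2$, yielding \eqref{wellpos-0c} with $\Theta=\mathbb{N}_1/\sqrt{\mathbb{N}_2}$.

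Uniqueness follows from continuity of $A_h+\mathcal{L}_h$ with respect to $\vertiii{\cdot}$, which is immediate from the properties (P5), (P8), the Cauchy--Schwarz inequality applied to the $\mathcal{L}_{i,h}$ forms, and the second bullet following \eqref{newc} bounding $\widehat{c}^{skew}_h$. I do not foresee a genuine obstacle here: the only subtle point is verifying that the coefficient $(1+\mu+\mathcal{B})^2$ (rather than $(1+\mu+\mathcal{B}+\mathcal{B}^2)^2$) is enough to absorb the $\mathcal{B}^2$ surviving from $s_5$, which is handled cleanly by the Young inequality with the same weight as before. All degenerate limits ($\mathcal{B}\to\mathbf{0}$ or $\gamma\to 0$) are again covered by the same choice of $\alpha$, so the Brinkman/Stokes remarks carry over verbatim.
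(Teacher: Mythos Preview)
Your proposal is essentially the paper's own proof, which reads in full: ``The proof of Theorem \ref{wellposed-c} follows from Theorem \ref{wellposed} and the bound \eqref{well-c} with $\Theta=\mathbb{N}_1/\sqrt{\mathbb{N}_2}$.'' One small correction: your claim that the coefficient can be sharpened from $(1+\mu+\mathcal{B}+\mathcal{B}^2)^2$ to $(1+\mu+\mathcal{B})^2$ is not right, since the $\mathcal{B}^2$ term from $s_5$ (the bound on $\mathcal{L}_{1,h}$) still enters additively into the bracket multiplying $\|\nabla\mathbf{u}_h\|_{0,\Omega}\|\nabla\mathbf{w}_h\|_{0,\Omega}$ and cannot be ``absorbed'' away by Young's inequality; the paper simply keeps the original constants $\mathbb{N}_1,\mathbb{N}_2$ from Theorem \ref{wellposed} unchanged, which is cleaner and avoids this issue.
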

\begin{proof}
	The proof of Theorem \ref{wellposed-c} follows from Theorem \ref{wellposed} and the bound \eqref{well-c} with $\Theta=  \mathbb N_1/\sqrt{\mathbb N_2}$. 
\end{proof}

\begin{remark}
	Well-posedness Theorem \ref{wellposed-c} is robust with respect to parameters $\boldsymbol{\mathcal{B}}$ and $\gamma$, as we observed in Theorem \ref{wellposed}. Hence, Theorem \ref{wellposed-c} also guarantees the well-posedness of the Brinkman and Stokes problems.
\end{remark}

Hereafter, we derive the error estimate in the energy norm for the VE problem \eqref{nvem} along with the discrete form $\widehat{c}^{skew}_h$. We first establish the classic error estimate for the term:
\begin{align}
	\eta_{\widehat{c}}:&= \widehat{c}^{skew}(\mathbf{u}, \mathbf{v}_h) - \widehat{c}^{skew}_h(\mathbf{u}_I, \mathbf{v}_h).
\end{align}

\begin{lemma} \label{hatc}
	(Estimation of $\eta_{\widehat{c}}$) Let the assumptions \textbf{(A1)} and \textbf{(A2)} be valid. Then the term $\eta_{\widehat{c}}$ can be bounded as follows
	\begin{align}
		\Scale[0.99]{	|\eta_{\widehat{c}}| \leq C \Big(\sum \limits_{E \in \Omega_h } h^{2k}_E\big( \Lambda_E^2 \|\boldsymbol{\mathcal{B}}\|_{k,\infty,E}^2 + \frac{\mathcal{B}^2_E}{\gamma} + \delta_E^2(\mathcal{B}_E + \|\boldsymbol{\mathcal{B}}\|_{1,\infty,E} + \|\boldsymbol{\mathcal{B}}\|_{k,\infty,E} )^2 \big) \|\mathbf{u}\|^2_{k+1,E}\Big)^{1/2} \vertiii{(\mathbf{v}_h,q_h)}. }\label{hatc0}
	\end{align}
\end{lemma}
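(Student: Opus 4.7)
The plan is to adapt the argument used for $\eta_{A,c}$ in Lemma \ref{etaA} to the alternative discrete form $\widehat c^E_h$, whose principal structural differences are the absence of boundary contributions and the use of $\boldsymbol{\Pi}^{0,E}_{k-1}\nabla$ in place of $\nabla\boldsymbol{\Pi}^{0,E}_k$. First, I would unfold the skew parts to write $\eta_{\widehat c}=\tfrac12\sum_{E\in\Omega_h}(T_1-T_2)$, where $T_1:=\int_E(\nabla\mathbf u)\boldsymbol{\mathcal B}\cdot\mathbf v_h\,dE-\int_E(\boldsymbol{\Pi}^{0,E}_{k-1}\nabla\mathbf u_I)\boldsymbol{\mathcal B}\cdot\boldsymbol{\Pi}^{0,E}_k\mathbf v_h\,dE$ and $T_2$ is obtained by interchanging the roles of $\mathbf u$ and $\mathbf v_h$ in the respective arguments. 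This mirrors the splitting used between \eqref{ac1} and \eqref{etaA_c0}.

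For $T_1$ I would add and subtract $\int_E(\nabla\mathbf u)\boldsymbol{\mathcal B}\cdot\boldsymbol{\Pi}^{0,E}_k\mathbf v_h\,dE$ and $\int_E\boldsymbol{\Pi}^{0,E}_{k-1}(\nabla\mathbf u)\boldsymbol{\mathcal B}\cdot\boldsymbol{\Pi}^{0,E}_k\mathbf v_h\,dE$, producing three residuals: (a) the consistency piece $\bigl((I-\boldsymbol{\Pi}^{0,E}_{k-1})((\nabla\mathbf u)\boldsymbol{\mathcal B}),(I-\boldsymbol{\Pi}^{0,E}_k)\mathbf v_h\bigr)_E$, bounded via Lemma \ref{lemmaproj1}, the Poincar\'e inequality, and \eqref{est3}, contributing $Ch_E^{k+1}\Lambda_E\|\boldsymbol{\mathcal B}\|_{k,\infty,E}\|\mathbf u\|_{k+1,E}\vertiii{(\mathbf v_h,q_h)}_E$; (b) the gradient-projection defect $\bigl((I-\boldsymbol{\Pi}^{0,E}_{k-1})\nabla\mathbf u,\boldsymbol{\mathcal B}\otimes\boldsymbol{\Pi}^{0,E}_k\mathbf v_h\bigr)_E$, to be recast using the $L^2$-orthogonality of $\boldsymbol{\Pi}^{0,E}_{k-1}$ on the full tensor argument (see below); and (c) the interpolation remainder $\bigl(\boldsymbol{\Pi}^{0,E}_{k-1}\nabla(\mathbf u-\mathbf u_I)\boldsymbol{\mathcal B},\boldsymbol{\Pi}^{0,E}_k\mathbf v_h\bigr)_E$, controlled by the $L^2$-continuity of the projectors, Lemma \ref{lemmaproj2}, and $\|\mathbf v_h\|_{0,E}\leq\gamma^{-1/2}\vertiii{(\mathbf v_h,q_h)}_E$, which supplies the $\mathcal B_E^2/\gamma$ factor.

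For $T_2$ an analogous three-way decomposition yields $\bigl((I-\boldsymbol{\Pi}^{0,E}_{k-1})\nabla\mathbf v_h,\boldsymbol{\mathcal B}\otimes\mathbf u\bigr)_E$, bounded via orthogonality and Lemma \ref{lemmaproj1} by $Ch_E^k\|\boldsymbol{\mathcal B}\|_{k,\infty,E}\|\mathbf u\|_{k,E}\|\nabla\mathbf v_h\|_{0,E}$ (which furnishes the remaining $\Lambda_E^2\|\boldsymbol{\mathcal B}\|_{k,\infty,E}^2$ contribution), together with $\bigl(\boldsymbol{\Pi}^{0,E}_{k-1}\nabla\mathbf v_h\cdot\boldsymbol{\mathcal B},\mathbf u-\boldsymbol{\Pi}^{0,E}_k\mathbf u_I\bigr)_E\leq Ch_E^{k+1}\mathcal B_E\|\mathbf u\|_{k+1,E}\|\nabla\mathbf v_h\|_{0,E}$. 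In every term the factor $\|\nabla\mathbf v_h\|_{0,E}$ is controlled either directly by $\Lambda_E\vertiii{(\mathbf v_h,q_h)}_E$ or, after using Lemma \ref{inverse} to exchange $h_E\|\nabla\mathbf v_h\|_{0,E}$ for $\|\mathbf v_h\|_{0,E}$, by $\gamma^{-1/2}\vertiii{(\mathbf v_h,q_h)}_E$; taking the minimum recovers the $\delta_E$ parameter from \eqref{param}. A final Cauchy--Schwarz over the elements then assembles \eqref{hatc0}.

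The main obstacle is expected to be the gradient-projection defect in step (b) of $T_1$: the key is to bound $\|(I-\boldsymbol{\Pi}^{0,E}_{k-1})(\boldsymbol{\mathcal B}\otimes\boldsymbol{\Pi}^{0,E}_k\mathbf v_h)\|_{0,E}$ in two complementary ways --- via the $H^1$-seminorm of the tensor, producing $Ch_E(\|\boldsymbol{\mathcal B}\|_{1,\infty,E}\|\mathbf v_h\|_{0,E}+\mathcal B_E\|\nabla\mathbf v_h\|_{0,E})$ together with Lemma \ref{inverse}, and via the $H^k$-seminorm followed by inverse inequalities on the polynomial $\boldsymbol{\Pi}^{0,E}_k\mathbf v_h$, producing $C\|\boldsymbol{\mathcal B}\|_{k,\infty,E}\|\mathbf v_h\|_{0,E}$. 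Balancing these two choices against the two available energy-norm controls on $\mathbf v_h$ should yield precisely the $\delta_E\bigl(\mathcal B_E+\|\boldsymbol{\mathcal B}\|_{1,\infty,E}+\|\boldsymbol{\mathcal B}\|_{k,\infty,E}\bigr)$ prefactor, rather than an over-estimated $\Lambda_E$-weighted prefactor, that is required in \eqref{hatc0}.
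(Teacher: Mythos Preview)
Your proposal is correct and follows essentially the same route as the paper: the paper first inserts $\widehat c^{skew}_h(\mathbf u,\mathbf v_h)$ to split $\eta_{\widehat c}=\eta_{\widehat c,1}+\eta_{\widehat c,2}$ (consistency plus interpolation) and then decomposes $\eta_{\widehat c,1}$ into four pieces $\widehat c_1,\dots,\widehat c_4$ that coincide with your (a), (b), and the two $T_2$-terms, while $\eta_{\widehat c,2}$ absorbs your (c) and the remaining $T_2$-remainder; your ``main obstacle'' (b) is exactly the paper's $\widehat c_2$, handled by the same add/subtract of $\mathbf v_h\boldsymbol{\mathcal B}^T$ and its $\boldsymbol\Pi^{0,E}_{k-1}$-projection. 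One small correction: to extract the $\Lambda_E$ factor from your first $T_2$-piece you must retain $\|(I-\boldsymbol\Pi^{0,E}_{k-1})\nabla\mathbf v_h\|_{0,E}$ (rather than bounding it by $\|\nabla\mathbf v_h\|_{0,E}$) and then use \eqref{est-a} to pass to $\|\nabla(\mathbf I-\boldsymbol\Pi^{\nabla,E}_k)\mathbf v_h\|_{0,E}$, exactly as in the paper's treatment of $\widehat c_4$.
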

\begin{proof}
	Concerning $\eta_{\widehat{c}}$, we proceed as follows:
	\begin{align}
		\eta_{\widehat{c}}:&= \widehat{c}^{skew}(\mathbf{u}, \mathbf{v}_h) - \widehat{c}^{skew}_h(\mathbf{u}_I, \mathbf{v}_h) \nonumber \\
		&= \widehat{c}^{skew}(\mathbf{u}, \mathbf{v}_h) - \widehat{c}^{skew}_h(\mathbf{u}, \mathbf{v}_h) + \widehat{c}^{skew}_h(\mathbf{u}-\mathbf{u}_I, \mathbf{v}_h) \nonumber \\
		&=: \eta_{\widehat{c},1} + \eta_{\widehat{c},2}. \label{hatc1}
	\end{align}
	$\bullet$ Estimation of $\eta_{\widehat{c},1}$: We proceed as follows
	\begin{align}
		\eta_{\widehat{c},1} &= \widehat{c}^{skew}(\mathbf{u}, \mathbf{v}_h) - \widehat{c}^{skew}_h(\mathbf{u}, \mathbf{v}_h) \nonumber \\
		&= \dfrac{1}{2} \sum_{E \in \Omega_h} \Big[ \Big( (\nabla \mathbf{u}) \boldsymbol{\mathcal{B}}, \mathbf{v}_h \Big) - \Big( ( \boldsymbol{\Pi}^{0,E}_{k-1} \nabla \mathbf{u})\boldsymbol{\mathcal{B}},  \boldsymbol{\Pi}^{0,E}_k \mathbf{v}_h\Big) + \Big( (\boldsymbol{\Pi}^{0,E}_{k-1} \nabla   \mathbf{v}_h)\boldsymbol{\mathcal{B}},  \boldsymbol{\Pi}^{0,E}_k \mathbf{u}\Big) -\Big((\nabla \mathbf{v}_h) \boldsymbol{\mathcal{B}}, \mathbf{u} \Big)  \Big]   \nonumber \\ 
		&= \dfrac{1}{2} \sum_{E \in \Omega_h} \Big[ \Big( (\nabla \mathbf{u}) \boldsymbol{\mathcal{B}}, \mathbf{v}_h - \boldsymbol{\Pi}^{0,E}_k \mathbf{v}_h\Big) + \Big( (\nabla \mathbf{u} - \boldsymbol{\Pi}^{0,E}_{k-1} \nabla \mathbf{u})\boldsymbol{\mathcal{B}},  \boldsymbol{\Pi}^{0,E}_k \mathbf{v}_h\Big) \,+ \nonumber \\ & \qquad \quad \Big( ( \boldsymbol{\Pi}^{0,E}_{k-1} \nabla \mathbf{v}_h)\boldsymbol{\mathcal{B}},  \boldsymbol{\Pi}^{0,E}_k \mathbf{u} - \mathbf{u} \Big) + \Big( \big( \boldsymbol{\Pi}^{0,E}_{k-1} \nabla \mathbf{v}_h -  \nabla \mathbf{v}_h\big) \boldsymbol{\mathcal{B}}, \mathbf{u} \Big) \Big] \nonumber \\
		&= \dfrac{1}{2} \sum_{E \in \Omega_h} \Big[ \Big( (\nabla \mathbf{u}) \boldsymbol{\mathcal{B}} - \boldsymbol{\Pi}^{0,E}_{k-1}\big((\nabla \mathbf{u}) \boldsymbol{\mathcal{B}} \big), \mathbf{v}_h - \boldsymbol{\Pi}^{0,E}_k \mathbf{v}_h\Big) + \Big( (\nabla \mathbf{u} - \boldsymbol{\Pi}^{0,E}_{k-1} \nabla \mathbf{u}),  (\boldsymbol{\Pi}^{0,E}_k \mathbf{v}_h)\boldsymbol{\mathcal{B}}^T - \boldsymbol{\Pi}^{0,E}_{k-1}(\mathbf{v}_h \boldsymbol{\mathcal{B}}^T)\Big) \nonumber \\ & \qquad \quad + \Big( ( \boldsymbol{\Pi}^{0,E}_{k-1} \nabla \mathbf{v}_h)\boldsymbol{\mathcal{B}},  \boldsymbol{\Pi}^{0,E}_k \mathbf{u} - \mathbf{u} \Big) + \Big( \big( \boldsymbol{\Pi}^{0,E}_{k-1} \nabla \mathbf{v}_h -  \nabla \mathbf{v}_h\big), \mathbf{u}\boldsymbol{\mathcal{B}}^T - \boldsymbol{\Pi}^{0,E}_{k-1}(\mathbf{u}\boldsymbol{\mathcal{B}}^T) \Big) \Big] \nonumber\\
		&=: \widehat{c}_1 +  \widehat{c}_2 +  \widehat{c}_3+  \widehat{c}_4. \label{hatc3}
	\end{align}
	{	Using Lemma \ref{lemmaproj1}, we obtain
		\begin{align}
			\widehat{c}_1 +  \widehat{c}_3 &\leq C \sum_{E \in \Omega_h } \Big[h^{k+1}_E \|\boldsymbol{\mathcal{B}}\|_{k,\infty,E} \|\mathbf{u}\|_{k+1,E} \|\nabla \mathbf{v}_h\|_{0,E} + h^{k+1}_E \mathcal{B}_E \|\mathbf{u}\|_{k+1,E} \|\nabla \mathbf{v}_h\|_{0,E} \Big] \nonumber \\
			&\leq C \sum_{E \in \Omega_h } h^{k}_E \delta_E (\|\boldsymbol{\mathcal{B}}\|_{k,\infty,E} + \mathcal{B}_E) \|\mathbf{u}\|_{k+1,E}  \vertiii{(\mathbf{v}_h,q_h)}_{E}. \label{hatc4}
	\end{align}}
	Concerning the second term $\widehat{c}_2$, we proceed as follows:
	\begin{align}
		\widehat{c}_2 & \leq C \sum_{E \in \Omega_h }h^k_E \|\mathbf{u}\|_{k+1,E} \|(\boldsymbol{\Pi}^{0,E}_k \mathbf{v}_h)\boldsymbol{\mathcal{B}}^T - \boldsymbol{\Pi}^{0,E}_{k-1}(\mathbf{v}_h \boldsymbol{\mathcal{B}}^T)\|_{0,E} \nonumber \\
		& \leq C \sum_{E \in \Omega_h }h^k_E \|\mathbf{u}\|_{k+1,E} \Big[\|(\boldsymbol{\Pi}^{0,E}_k \mathbf{v}_h - \mathbf{v}_h) \boldsymbol{\mathcal{B}}^T \|_{0,E} + \| \mathbf{v}_h\boldsymbol{\mathcal{B}}^T - \boldsymbol{\Pi}^{0,E}_{k-1}(\mathbf{v}_h \boldsymbol{\mathcal{B}}^T)\|_{0,E}\Big] \nonumber \\
		& \leq C \sum_{E \in \Omega_h }h^k_E \|\mathbf{u}\|_{k+1,E} \Big[ h_E\mathcal{B}_E\|\nabla \mathbf{v}_h \|_{0,E} + h_E \| \boldsymbol{\mathcal{B}}\|_{1,\infty,E} \|\nabla \mathbf{v}_h\|_{0,E} \Big] \nonumber \\
		& \leq C \sum_{E \in \Omega_h } h^k_E  \delta_E (\mathcal{B}_E + \| \boldsymbol{\mathcal{B}}\|_{1,\infty,E})\|\mathbf{u}\|_{k+1,E} \vertiii{(\mathbf{v}_h,q_h)}_E. \label{hatc5}
	\end{align}
	{Using the assumption \textbf{(A2)}, orthogonality of $L^2$-projector and Lemma \ref{lemmaproj1}, we obtain
		\begin{align}
			\widehat{c}_4 &\leq C \sum_{E \in \Omega_h }  \|\mathbf{u}\boldsymbol{\mathcal{B}}^T - \boldsymbol{\Pi}^{0,E}_{k-1}(\mathbf{u}\boldsymbol{\mathcal{B}}^T) \|_{0,E} \| \boldsymbol{\Pi}^{0,E}_{k-1} \nabla \mathbf{v}_h -  \nabla \mathbf{v}_h\|_{0,E} \nonumber \\
			& \leq C \sum_{E \in \Omega_h} h^{k}_E |\mathbf{u}\boldsymbol{\mathcal{B}}^T|_{k,E} \| \nabla (\mathbf{I}- \boldsymbol{\Pi}^{0,E}_k)\mathbf{v}_h\|_{0,E} \nonumber \\
			& \leq C \sum_{E \in \Omega_h} h^{k}_E \|\boldsymbol{\mathcal{B}}\|_{k,\infty,E} \|\mathbf{u}\|_{k,E}\| \nabla (\mathbf{I}- \boldsymbol{\Pi}^{\nabla,E}_k)\mathbf{v}_h\|_{0,E} \qquad \text{ \big(applying \eqref{est3} \big)}\nonumber \\
			&\leq C \sum_{E \in \Omega_h } h ^{k}_E \Lambda_E \|\boldsymbol{\mathcal{B}}\|_{k,\infty,E} \|\mathbf{u}\|_{k+1,E} \vertiii{(\mathbf{v}_h,q_h)}_{E}. \label{hatc6}
	\end{align}}
	$\bullet$ Estimation of $\eta_{\widehat{c},2}$: Employing the continuity of the projection operator and Lemma \ref{lemmaproj2}, yields
	\begin{align}
		\eta_{\widehat{c},2} &\leq C\sum_{E \in \Omega_h } \mathcal{B}_E \Big( \|\nabla(\mathbf{u} - \mathbf{u}_I)\|_{0,E} \|\mathbf{v}_h\|_{0,E} + \|\nabla \mathbf{v}_h\|_{0,E} \|\mathbf{u} - \mathbf{u}_I\|_{0,E}\Big) \nonumber \\
		& \leq C\sum_{E \in \Omega_h } \mathcal{B}_E \Big( h^{k}_E \|\mathbf{u} \|_{k+1,E} \|\mathbf{v}_h\|_{0,E} + h^{k+1}_E \|\mathbf{u} \|_{k+1,E} \|\nabla \mathbf{v}_h\|_{0,E} \Big) \nonumber \\
		& \leq C\sum_{E \in \Omega_h } \Big(\frac{1}{\sqrt{\gamma}} + \delta_E \Big) \mathcal{B}_E  h^{k}_E \|\mathbf{u} \|_{k+1,E} \vertiii{(\mathbf{v}_h, q_h)}_E. \label{hatc02}
	\end{align}
	Finally, combining  \eqref{hatc4}, \eqref{hatc5}, \eqref{hatc6}  and  \eqref{hatc02}, we obtain the result \eqref{hatc0}. 
\end{proof}

\begin{theorem} \label{conver:hatc}
	Under the assumptions \textbf{(A1)} and \textbf{(A2)}, let $(\mathbf{u}, p) \in \mathbf{V} \times Q$  and $(\mathbf{u}_h, p_h) \in \mathbf{V}^k_h \times Q^k_h$  be the solution of problem \eqref{discf} and VE problem \eqref{nvem}, respectively. Further we consider $\tau_{1,E} \sim h_E$, $\tau_{2,E} \sim \mathcal{O}(1)$ and $\tau_{3,E} \sim h^2_E$, then it holds that
	\begin{align}
		\vertiii{(\mathbf{u}- \mathbf{u}_h, p-p_h)}^2 &\lesssim \sum_{E \in \Omega_h} h^{2k}_E \Big[ \mathscr{R}\big({h_E}, \mu, \boldsymbol{\mathcal{B}}, \gamma, \delta_E, \Lambda_E\big) + \frac{\mathcal{B}_E^2}{\gamma} \Big]\|\mathbf{u}\|^2_{k+1,E} + \sum_{E \in \Omega_h} h^{2k}_E \|p\|^2_{k,E} \, + \nonumber \\
		& \qquad  \sum_{E \in \Omega_h} h^{2k+2}_E  \Lambda_E^2 h^2_E \|\mathbf{f}\|^2_{k,E}, \label{main_hatc} 	
	\end{align}
	{where $\mathscr{R}\big(h_E,\mu, \boldsymbol{\mathcal{B}}, \gamma, \delta_E, \Lambda_E\big):= 1 + \mu+ \frac{1}{\alpha} + h_E \mathcal{B}_E^2 + \gamma h^2_E +  \gamma^2 h_E^2 \Lambda^2_E + \Lambda_E^2\|\boldsymbol{\mathcal{B}}\|_{k,\infty,E}^2 + \delta_E^2\big( \|\boldsymbol{\mathcal{B}}\|_{k,\infty,E} + \mathcal{B}\|_{1,\infty,E} + \mathcal{B}_E \big)^2 $.}
\end{theorem}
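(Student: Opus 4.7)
The plan is to mirror the structure of the proof of Theorem \ref{convergence}, with the single substantive modification that the contribution coming from the discrete convective form must be re-estimated using Lemma \ref{hatc} instead of the $\eta_{A,c}$ piece embedded inside Lemma \ref{etaA}. First, I would apply the triangle inequality
\begin{align*}
\vertiii{(\mathbf{u}-\mathbf{u}_h,\, p-p_h)} \leq \vertiii{(\mathbf{u}-\mathbf{u}_I,\, p-p_I)} + \vertiii{(\mathbf{u}_h-\mathbf{u}_I,\, p_h-p_I)},
\end{align*}
and bound the interpolation error directly via Lemma \ref{ei}, which already supplies the baseline contributions $(\mu + h_E^2\gamma + \mathcal{B}_E^2 h_E + 1)h_E^{2k}\|\mathbf{u}\|^2_{k+1,E} + h_E^{2k}\|p\|^2_{k,E}$. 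These are all absorbed into $\mathscr{R}\big(h_E,\mu,\boldsymbol{\mathcal{B}},\gamma,\delta_E,\Lambda_E\big)$ together with the pressure term of the statement.

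Next, I would invoke the well-posedness Theorem \ref{wellposed-c} for the modified discrete formulation to control the discrete difference by the consistency error. Following exactly the manipulation of Proposition \ref{prop} with $c^{skew}_h$ replaced by $\widehat{c}^{skew}_h$, I arrive at
\begin{align*}
\vertiii{(\mathbf{u}_h-\mathbf{u}_I,\, p_h-p_I)}\,\vertiii{(\mathbf{v}_h,q_h)} \;\leq\; C\bigl(\eta_F + \widetilde{\eta}_A + \eta_{\mathcal{L}}\bigr),
\end{align*}
where $\widetilde{\eta}_A$ is obtained from $\eta_A$ by swapping its $\eta_{A,c}$ component for $\eta_{\widehat{c}}$. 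The forms $\eta_F$ and $\eta_{\mathcal{L}}$ are unaffected by the choice of convective discretization, so Lemmas \ref{eta_f} and \ref{etaL} apply verbatim, and the four other components $\eta_{A,a}$, $\eta_{A,b1}$, $\eta_{A,b2}$, $\eta_{A,d}$ of $\widetilde{\eta}_A$ are controlled exactly as in the corresponding steps of Lemma \ref{etaA}.

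Then I would substitute Lemma \ref{hatc} for the convective contribution, which supplies precisely the additional terms $\Lambda_E^2\|\boldsymbol{\mathcal{B}}\|_{k,\infty,E}^2$, $\tfrac{\mathcal{B}_E^2}{\gamma}$, and $\delta_E^2(\mathcal{B}_E+\|\boldsymbol{\mathcal{B}}\|_{1,\infty,E}+\|\boldsymbol{\mathcal{B}}\|_{k,\infty,E})^2$ that appear in the statement. Taking the supremum over $(\mathbf{v}_h,q_h)\in\mathbf{V}^k_h\times Q^k_h$, squaring, and specialising the stabilisation parameters to $\tau_{1,E}\sim h_E$, $\tau_{2,E}\sim\mathcal{O}(1)$, $\tau_{3,E}\sim h_E^2$, one collects the load contribution $h_E^{2k+2}\Lambda_E^2 h_E^2\|\mathbf{f}\|^2_{k,E}$ from $\eta_F$, absorbs $h_E\mathcal{B}_E^2$ and $\gamma h_E^2$ from the other consistency pieces, and recovers the bound \eqref{main_hatc}.

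The main obstacle is not analytical but bookkeeping: one must verify that every summand in $\mathscr{R}\big(h_E,\mu,\boldsymbol{\mathcal{B}},\gamma,\delta_E,\Lambda_E\big)$ is matched by a contribution from Lemmas \ref{ei}, \ref{eta_f}, \ref{etaA} (excluding its $c$-part), \ref{hatc}, and \ref{etaL}, and in particular that the dependence on $\gamma$ introduced by the $\tfrac{\mathcal{B}_E^2}{\gamma}$ term of Lemma \ref{hatc} is properly tracked rather than being pushed into $\mathscr{R}$. No additional projection estimate or inverse inequality is required beyond those already used in the excerpt.
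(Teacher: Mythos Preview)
Your proposal is correct and follows essentially the same route as the paper, which simply states that the result follows from Theorem \ref{convergence} and Lemma \ref{hatc}. You have merely unpacked this one-line justification: reuse Lemma \ref{ei}, Lemmas \ref{eta_f}, \ref{etaL}, and the non-convective parts of Lemma \ref{etaA}, invoke the well-posedness Theorem \ref{wellposed-c} in place of Theorem \ref{wellposed}, and substitute Lemma \ref{hatc} for the $\eta_{A,c}$ estimate.
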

\begin{proof}
	The proof of Theorem \ref{conver:hatc} follows from Theorem \ref{convergence}  and Lemma \ref{hatc}. 
\end{proof}

\begin{remark} \label{vel_c}
	Notably, the convergence Theorem \ref{convergence} shows \texttt{uniform convergence in the energy norm as $\mu \rightarrow 0$ for the generalized Oseen problem \eqref{modeleq}.} Furthermore, for the case $\boldsymbol{\mathcal{B}} \neq \mathbf{0}$ and $\gamma \rightarrow 0$, the estimate \eqref{main_hatc} become quasi-uniform in the energy norm. The above estimate is not applicable for $\gamma=0$, i.e., for the Stokes problem, similar to \cite{mvem15}.
\end{remark}

\section{Numerical results} 
\label{sec-5}
This section is dedicated to validating the stability and robustness of the proposed stabilized virtual element scheme  \eqref{nvem} from the practical standpoint. To achieve this, we conduct a series of four numerical examples. The first two examples focus on scalar and variable coefficient problems to validate the theoretical results. In the last two examples, we discuss the classic benchmark Navier-Stokes problem and an Oseen problem characterized by layers. The computational domain is the unit square  $\Omega=(0, 1)^2$, and the sample of computational meshes is illustrated in Figure \ref{samp}. Furthermore, for simplicity, we will follow the same definitions of stabilization parameters defined in Remark \ref{rem_tau}. We present our numerical results for VEM orders $k=1$ and $k=2$. We remark that the numerical behavior of the VE problem \eqref{nvem} along with the bilinear forms $c^{skew}_h(\cdot, \cdot)$ and $\widehat{c}^{skew}_h(\cdot, \cdot)$ is observed almost similar, as found in \cite{mvem16}. Therefore, we will show the numerical results for
the VE problem \eqref{nvem} along with the form $c^{skew}_h(\cdot, \cdot)$.
\begin{figure}[h]
	\centering
	\subfloat[$\Omega_1$]{\includegraphics[height=3cm, width=4cm]{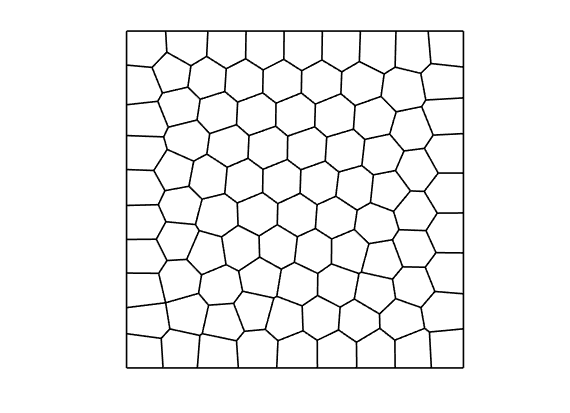}}
	\subfloat[$\Omega_2$]{\includegraphics[height=3cm, width=3cm]{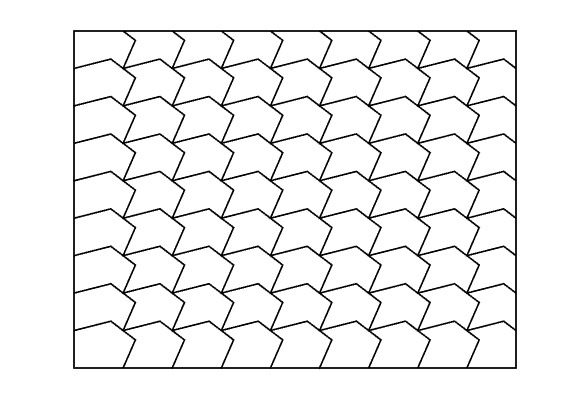}}~~
	\subfloat[$\Omega_3$]{\includegraphics[height=3cm, width=3cm]{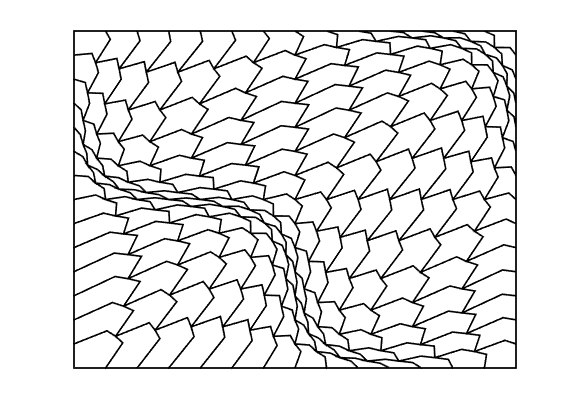}}~~
	\subfloat[$\Omega_4$]{\includegraphics[height=3cm, width=3cm]{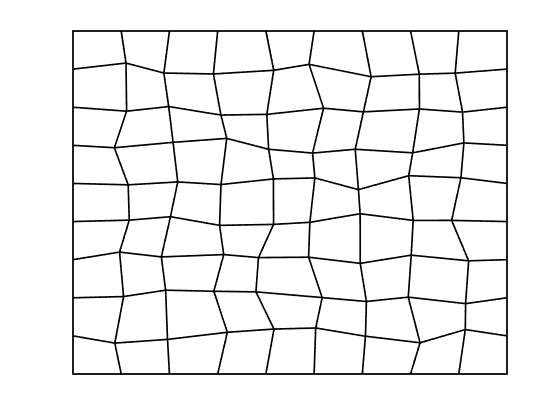}}
	\caption{ Computational meshes.}
	\label{samp} 
\end{figure}

The absolute computational errors for the numerical examples are computed as follows:  \newline

$\bullet$ The velocity error in $H^1$-semi norm \qquad	$E^{\mathbf{u}}_{H^1_0} := \sqrt{\sum\limits_{E\in\Omega_h}  \| \nabla (\mathbf{u} - \boldsymbol{\Pi}^{\nabla,E}_k \mathbf{u}_h ) \|_E^2}$. 

$\bullet$ The velocity error in $L^2$-norm \qquad $ E^\mathbf{u}_{L^2} := \sqrt{\sum\limits_{E\in\Omega_h} \| \mathbf{u}-\boldsymbol{\Pi}^{0,E}_k \mathbf{u}_h\|^2_E}$.

$\bullet$ The pressure error in $L^2$-norm \qquad  $ E^p_{L^2} := \sqrt{\sum\limits_{E\in\Omega_h} \| p-{\Pi}^{0,E}_k p_h\|^2_E}$.

\begin{figure}[h]
	\centering{ \Large \textbf{Diffusion-dominated regime: $\mathbf{\boldsymbol{\mu}=1}, \boldsymbol{\gamma=1}$.}}
	\centering
	\subfloat[$\Omega_1$]{\includegraphics[width=8cm]{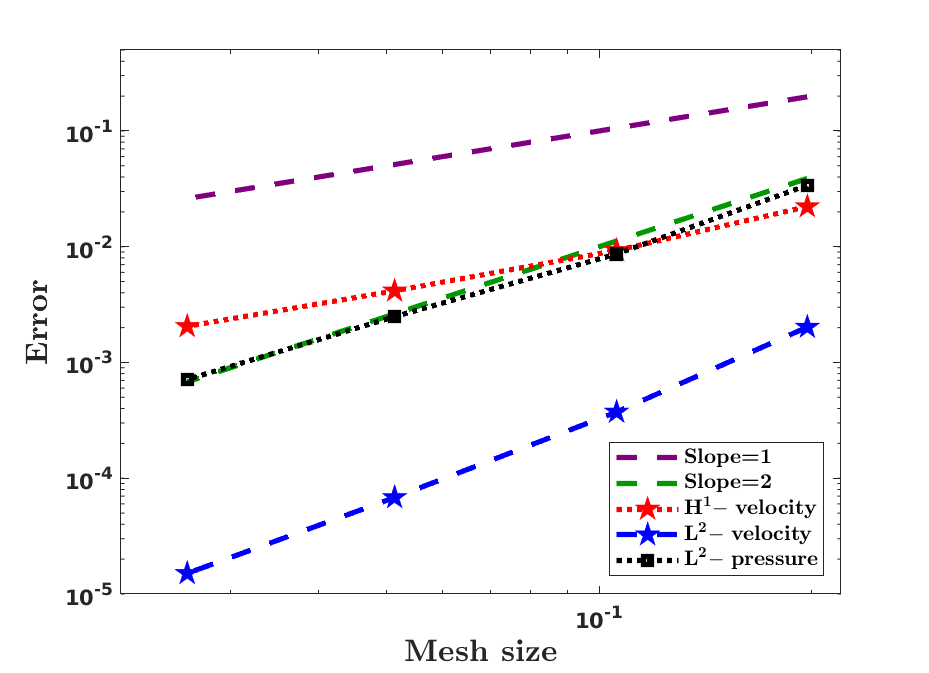}}
	\subfloat[$\Omega_1$]{\includegraphics[width=8cm]{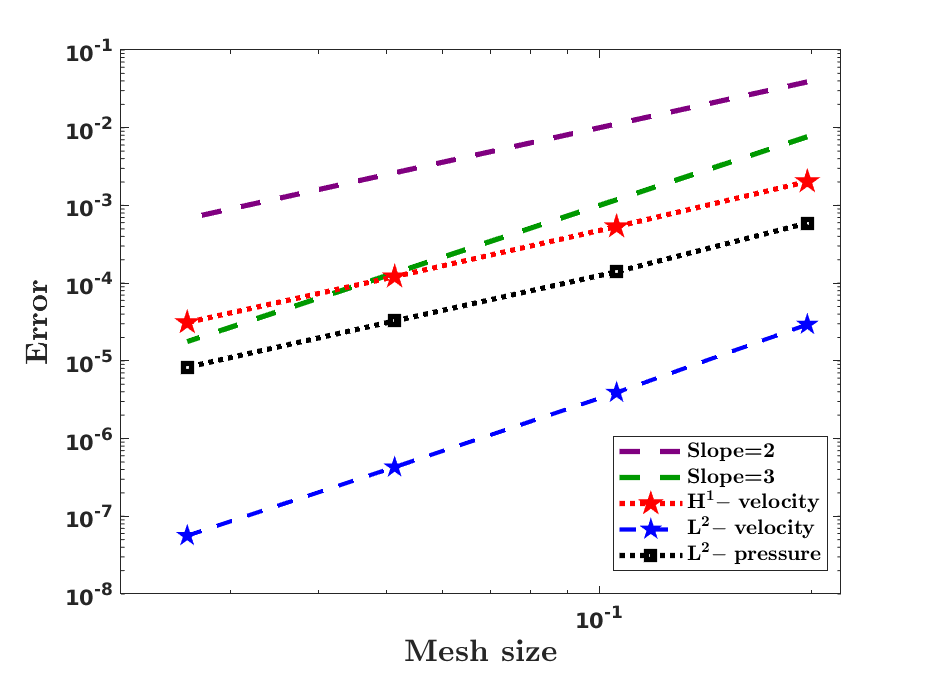}}\\
	\subfloat[$\Omega_3$]{\includegraphics[width=8cm]{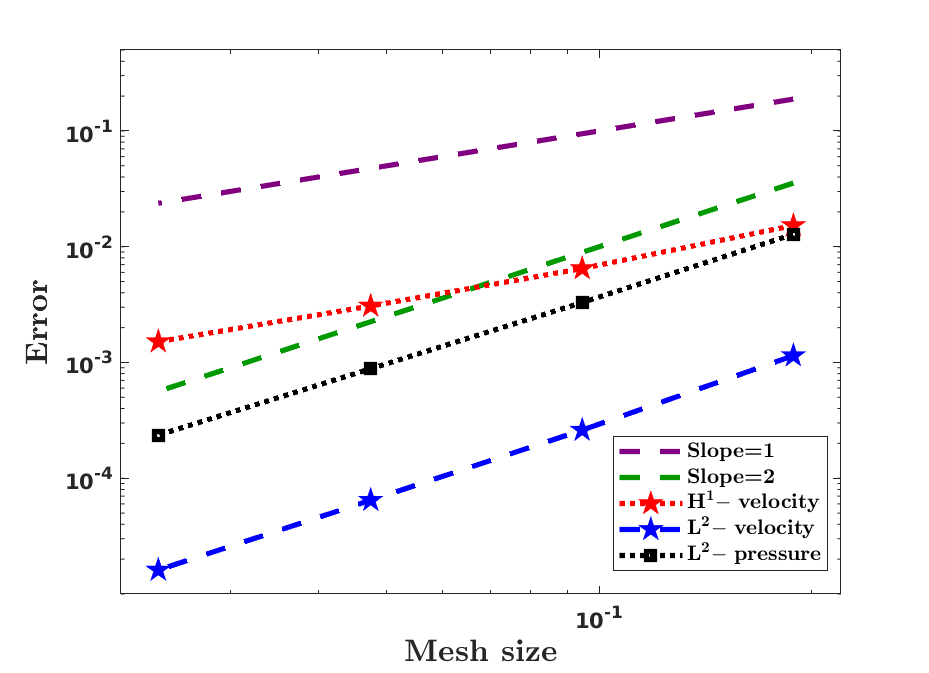}}
	\subfloat[$\Omega_3$]{\includegraphics[width=8cm]{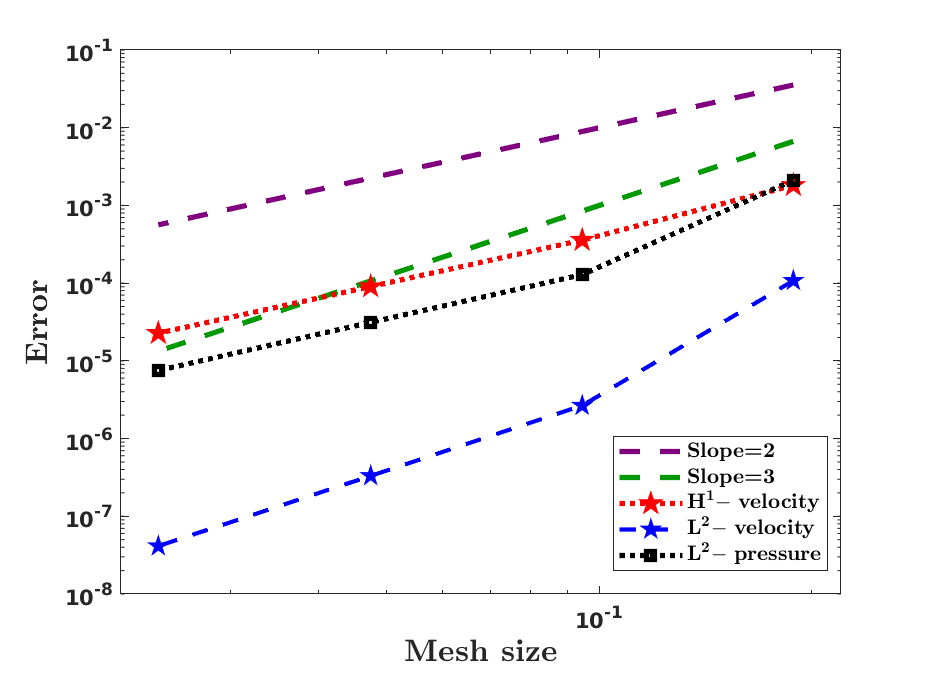}}\\
	\caption{Example 1. Convergence curves for VEM order $k=1$ (left) and $k=2$ (right).}
	\label{ex1_cng1} 
\end{figure}

\subsection{Example 1}
\label{case1}
In this example, our attention is directed towards a scalar-coefficient Oseen problem aimed at validating error estimates outlined in Section \ref{sec-4}. We consider the scalar convective flow field $\boldsymbol{\mathcal{B}}=(1, 1)^T$, and the load term $\mathbf{f}$ is chosen such that its exact solution is given by: 

\begin{align}
	\mathbf{u}(x,y):=\begin{bmatrix}
		2x^2y(2y-1)(x-1)^2(y-1) \\
		-2xy^2(2x-1)(x-1)(y-1)^2
	\end{bmatrix}, 
	\qquad p(x,y)=-2xy^2(2x-1)(x-1)(y-1)^2. \nonumber
\end{align}

We employ computational meshes $\Omega_1$ and $\Omega_3$ to investigate the convergence behavior of the proposed method for different cases, considering VEM order $k=1$ and $k=2$. \newline
\noindent $\bullet$ \textbf{Diffusion-dominated regime:}  We use $\mu=1$ and $\gamma=1$ to represent the diffusion-dominated case. In this context, our focus lies in validating that the proposed stabilized scheme aligns with theoretical expectations. The expected optimal rates for a regular problem, are $\mathcal{O}(h^k)$ for $H^1$-norm of velocity and $\mathcal{O}(h^{k})$ for the $L^2$-norm of the pressure. Although we have not derived the error estimates for velocity in the $L^2$-norm, the interpolation estimates suggest the best possibility of the optimal rates in the $L^2$-norm is $\mathcal{O}(h^{k+1})$. This also holds for convection-dominated problems.

The convergence curves depicted in Figure \ref{ex1_cng1} confirm the theoretical findings for VEM orders $k=1$ and $k=2$. Notably, on meshes $\Omega_1$ and $\Omega_3$, we observe a convergence rate of order $\mathcal{O}(h^2)$ for the $L^2$-norm of pressure with the lowest equal-order VEM. This shows the superconvergence in pressure, although we have not derived superconvergence estimates for pressure, which is consistent with FEM literature \cite{mfem31,fem11}.

\noindent $\bullet$ \textbf{Convection-dominated regime:} Here, we will discuss the two different cases: in the first, we consider $\mu=10^{-8}$ and $\gamma=1$, while in the second case, we set  $\mu=0$ and $\gamma=0$, considering this as a critical case. 

The convergence curves shown in Figure \ref{ex1_cng2} conclude that the proposed stabilized VEM method exhibits a strong agreement with theoretical expectations for the convection-dominated regime. Notably, the superconvergence in pressure is observed for the meshes $\Omega_1$ and $\Omega_3$ with VEM order $k=1$. Furthermore, we have investigated the second case $\mu=0$ and $\gamma=0$ to address Remark \ref{last1}. In this critical case, stabilized VEM seems to handle computational demands effectively, confirming optimal convergence rates with the superconvergence in $L^2$-rate of pressure for the lowest equal-order VEM.  

\begin{figure}[h]
	\centering{ \Large \textbf{Convection-dominated regime: $\mathbf{\boldsymbol{\mu}=10^{-8}}, \boldsymbol{\gamma=0}$.}}
	\subfloat[$\Omega_1$]{\includegraphics[width=8cm]{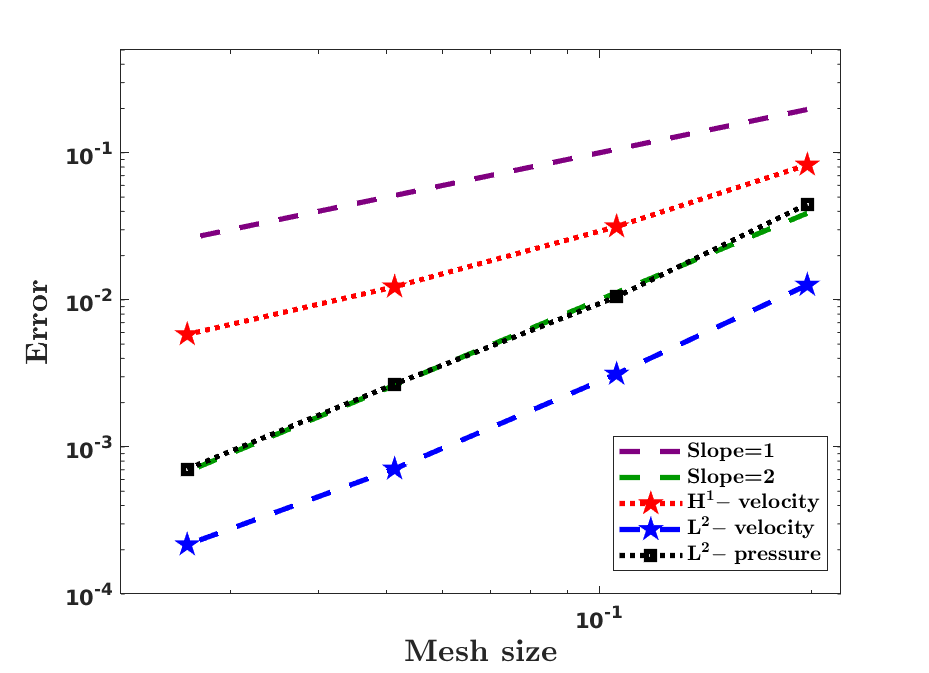}}
	\subfloat[$\Omega_1$]{\includegraphics[width=8cm]{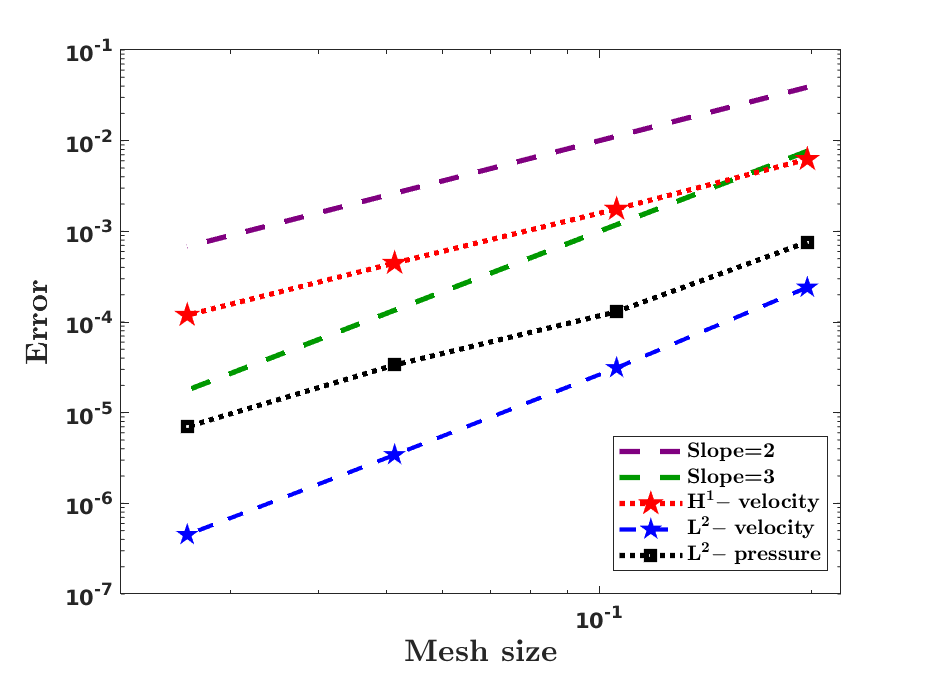}}\\
	\subfloat[$\Omega_3$]{\includegraphics[width=8cm]{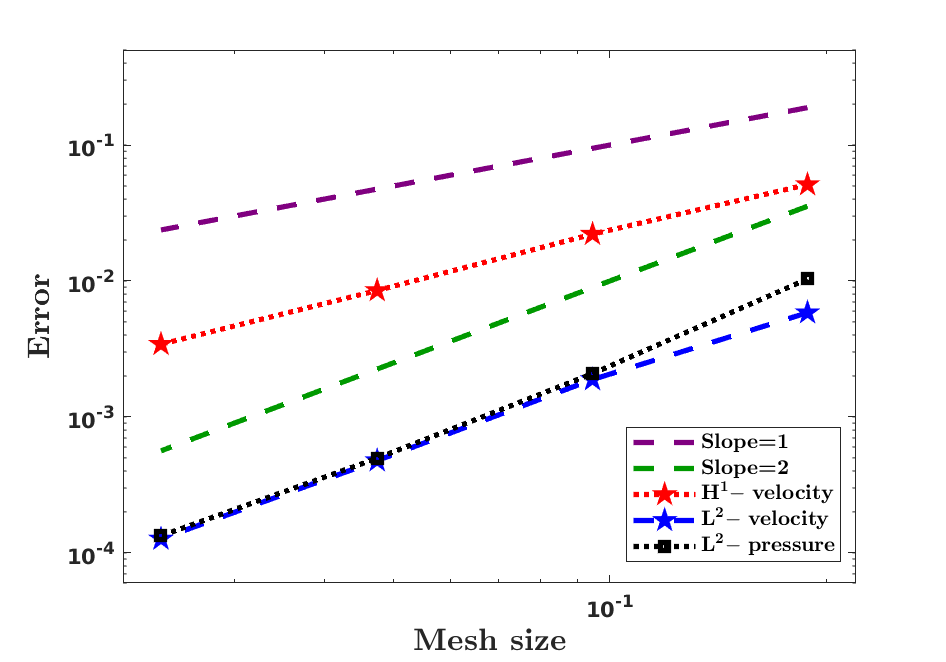}}
	\subfloat[$\Omega_3$]{\includegraphics[width=8cm]{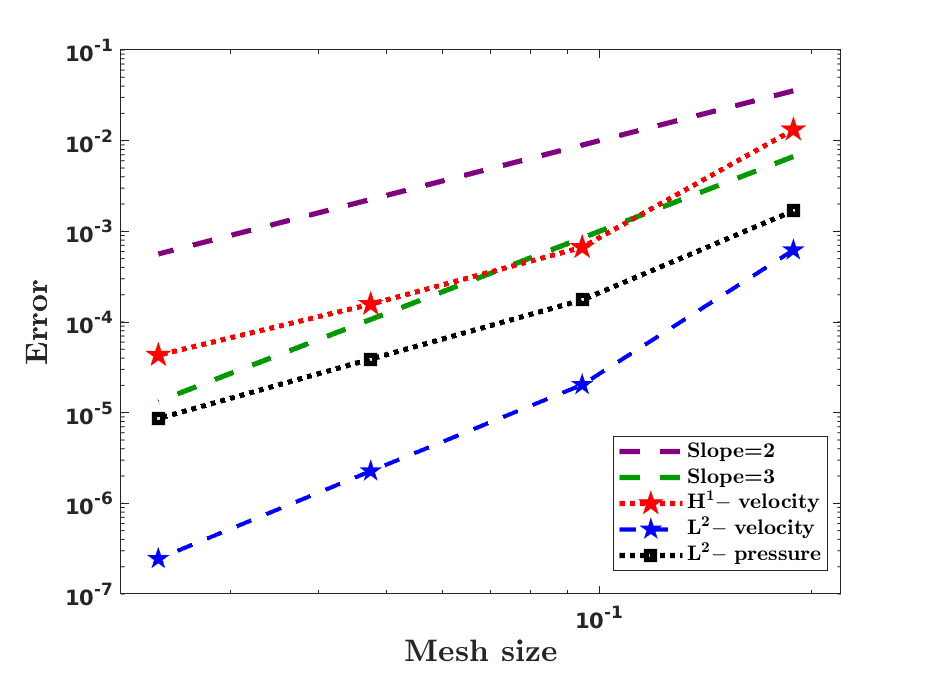}}\\
	\caption{Example 1. Convergence curves for VEM order $k=1$ (left) and $k=2$ (right).}
	\label{ex1_cng2} 
\end{figure}

\begin{figure}[h]
	\centering{ \Large \textbf{No diffusivity parameter: $\mathbf{\boldsymbol{\mu}=0, \boldsymbol{\gamma=0}}$.}}
	\subfloat[$\Omega_1$]{\includegraphics[width=8cm]{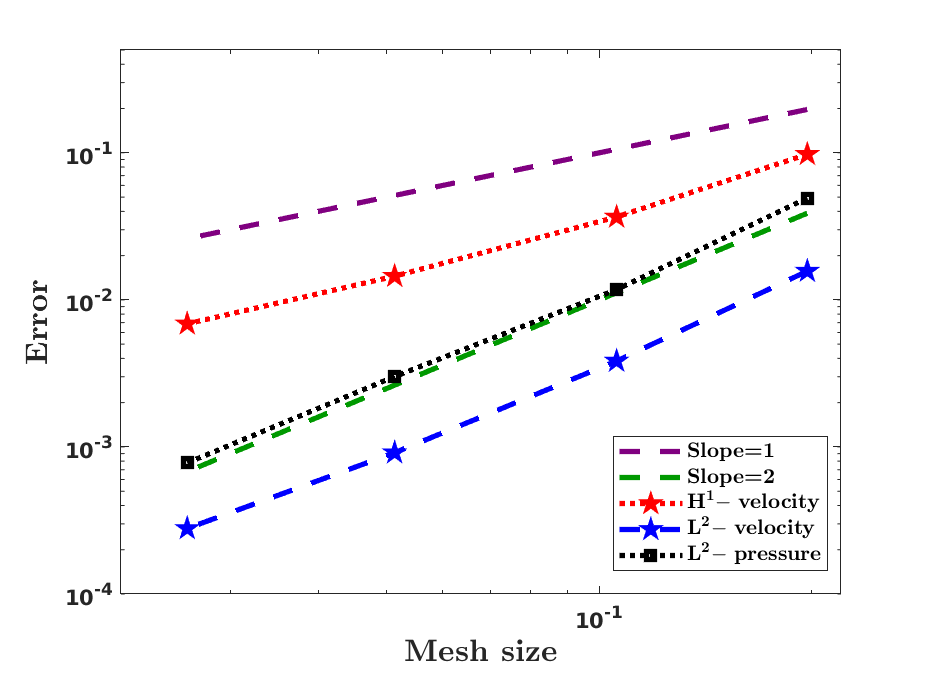}}
	\subfloat[$\Omega_1$]{\includegraphics[width=8cm]{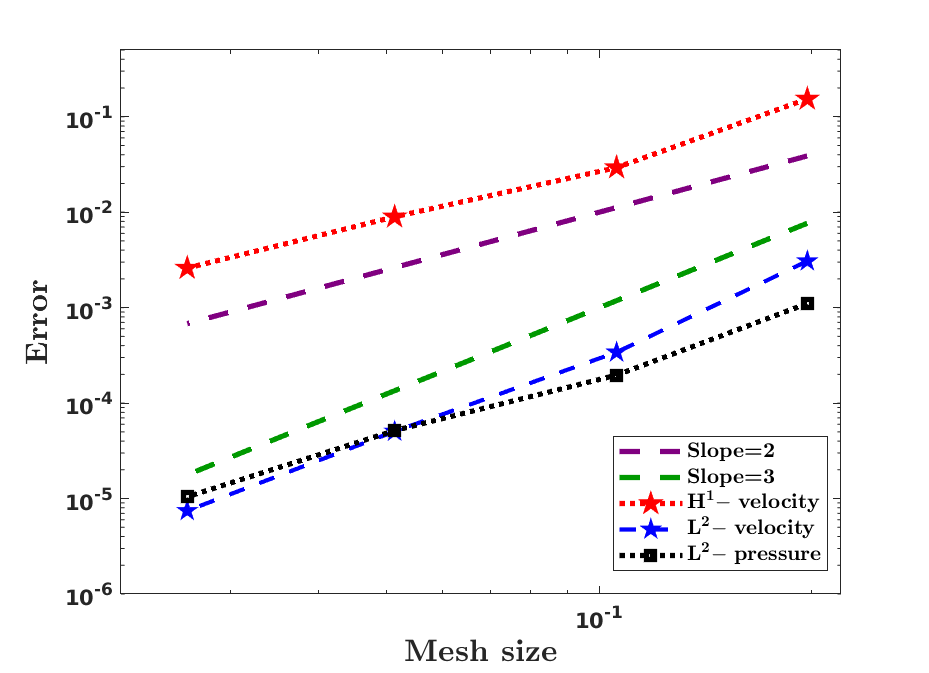}}\\
	\subfloat[$\Omega_3$]{\includegraphics[width=8cm]{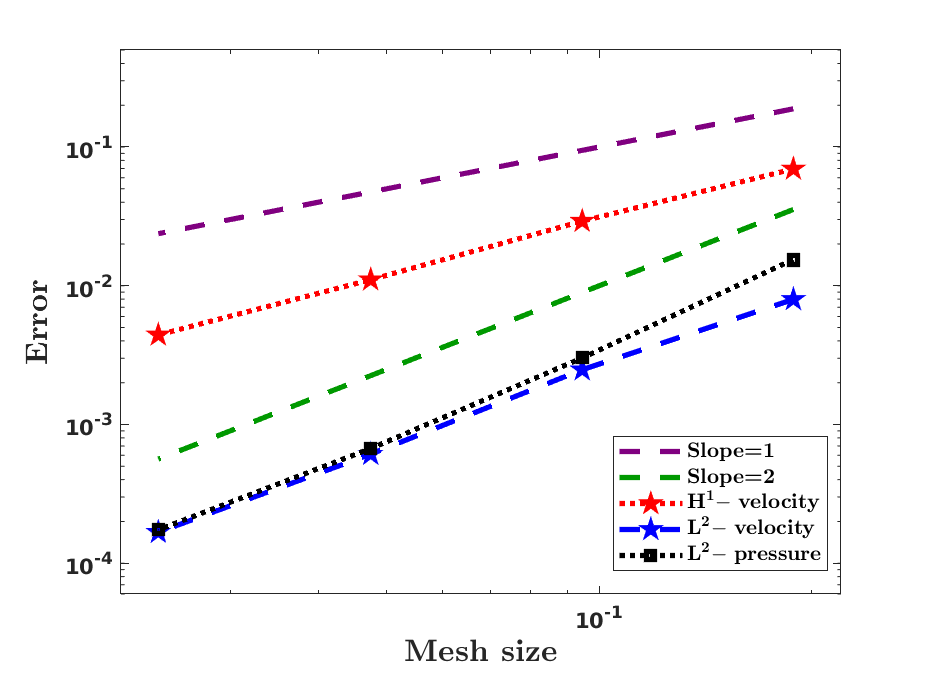}}
	\subfloat[$\Omega_3$]{\includegraphics[width=8cm]{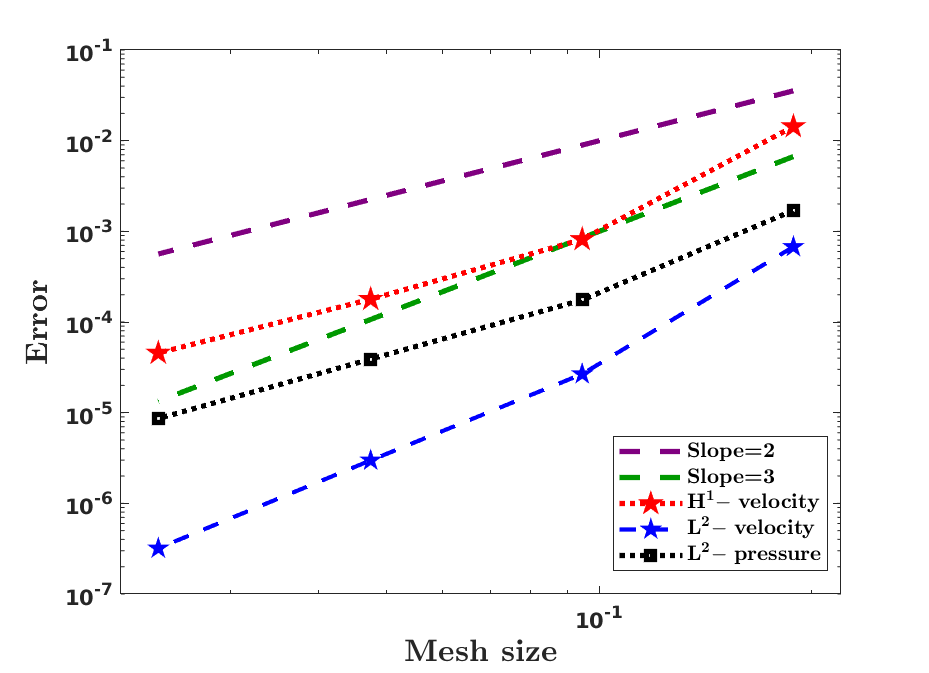}}\\
	\caption{Example 1. Convergence curves for VEM order $k=1$ (left) and $k=2$ (right).}
	\label{ex1_cng3} 
\end{figure}

\begin{figure}[h]
	\centering
	\subfloat[Velocity field]{\includegraphics[height=5cm, width=6cm]{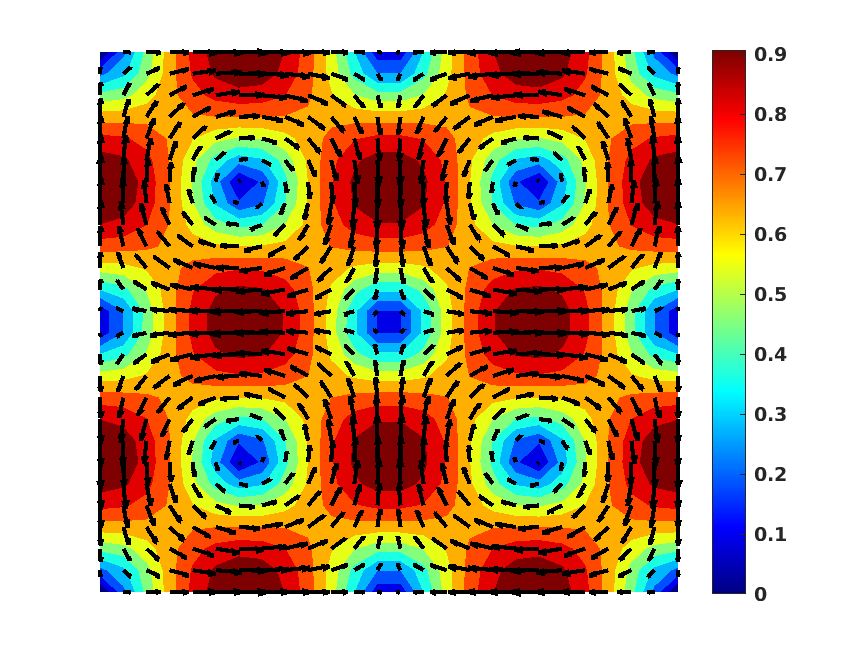}}
	\subfloat[Pressure.]{\includegraphics[height=5cm, width=6cm]{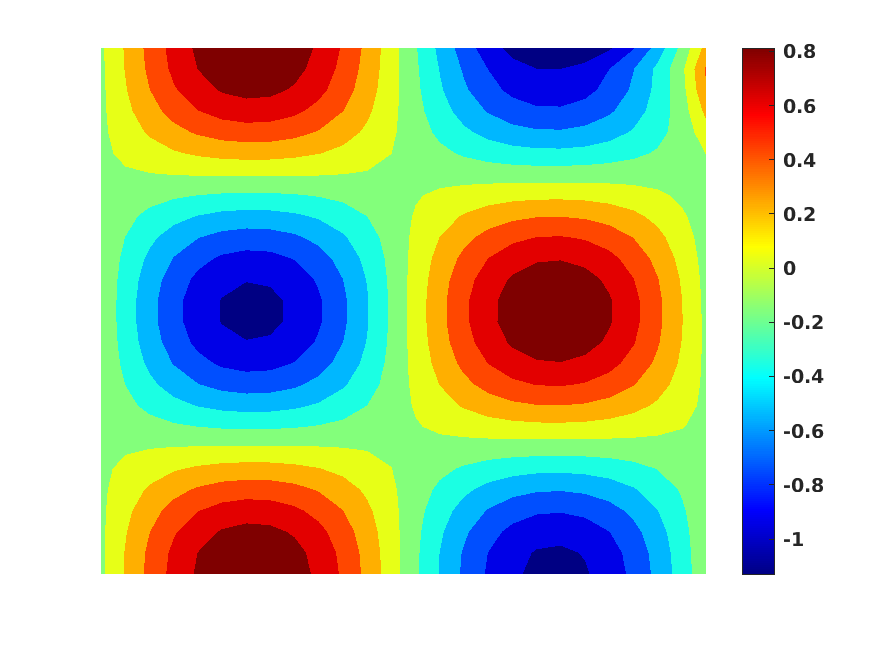}}
	\caption{Example 2. Stabilized velocity field and pressure for diffusion parameter $\mu=10^{-8}$.}
	\label{ex2_surf} 
\end{figure}
\begin{figure}[h]
	\centering{ \Large \textbf{Diffusion-dominated regime: $\mathbf{\boldsymbol{\mu}=1}$.}}
	\subfloat[$\Omega_2$]{\includegraphics[width=6.5cm]{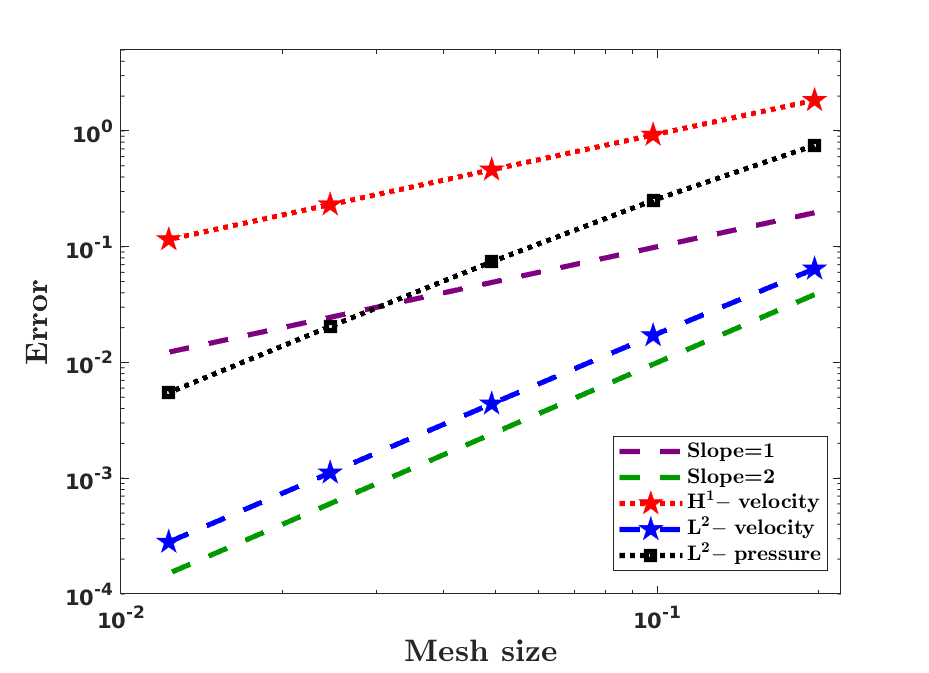}}
	\subfloat[$\Omega_2$]{\includegraphics[width=6.5cm]{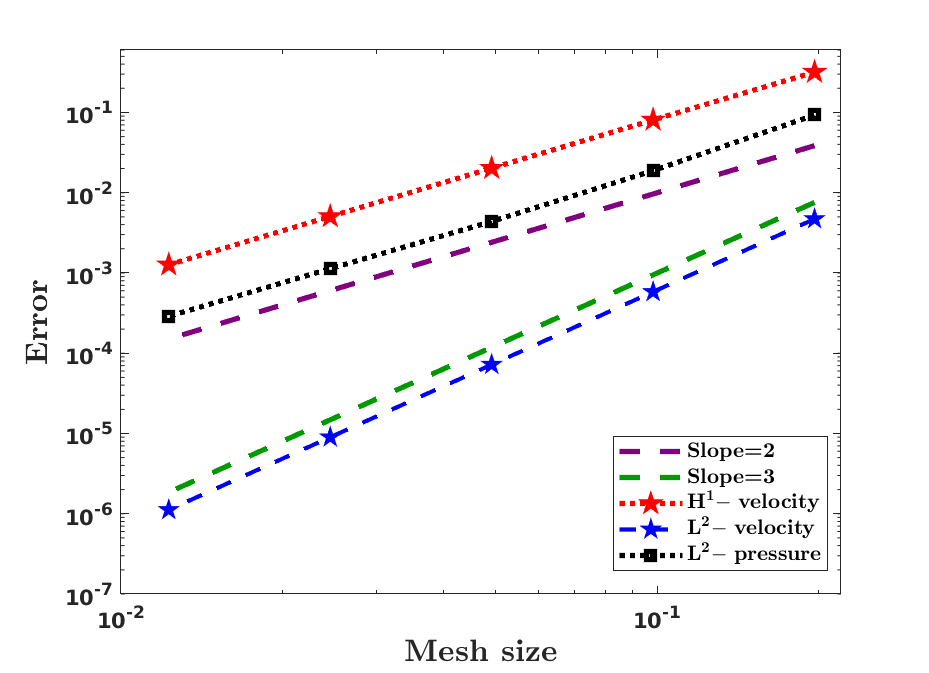}}\\
	\subfloat[$\Omega_4$]{\includegraphics[width=6.5cm]{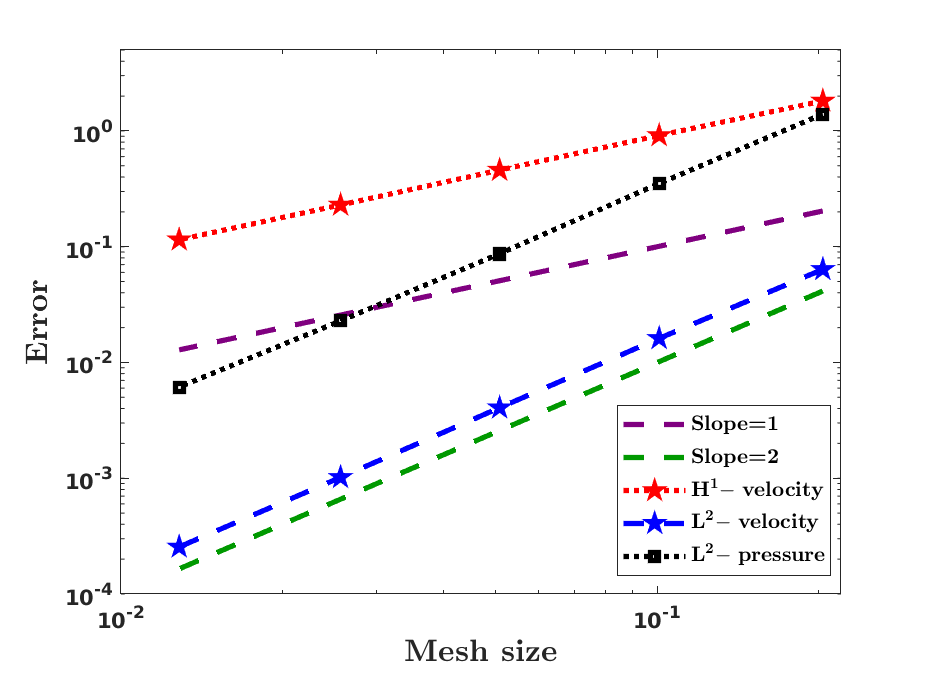}}
	\subfloat[$\Omega_4$]{\includegraphics[width=6.5cm]{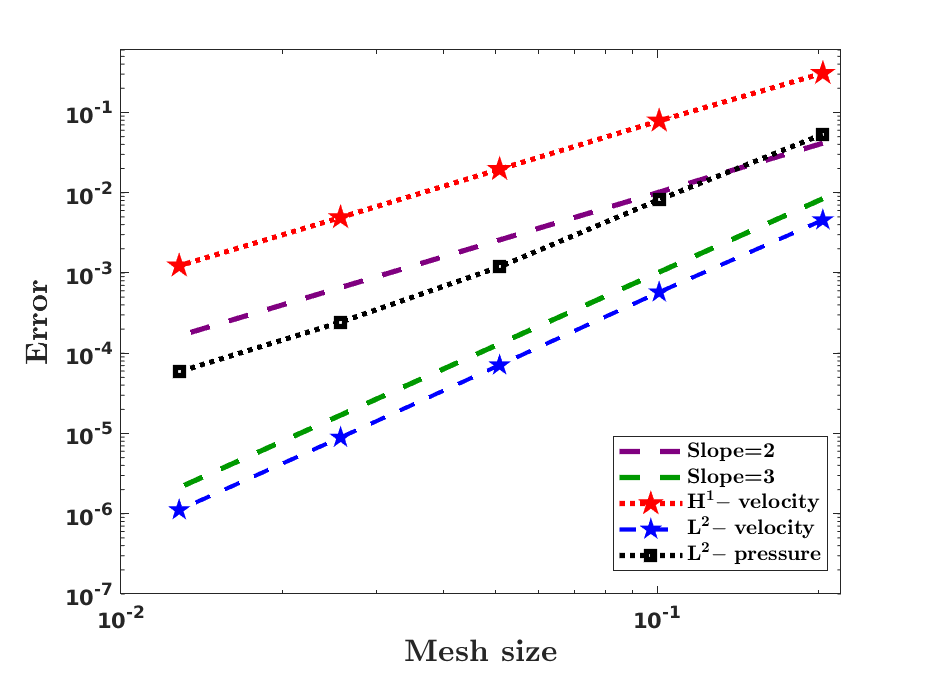}}
	\caption{Example 2. Convergence curves for VEM order $k=1$ (left) and $k=2$ (right).}
	\label{ex2_cng1} 
\end{figure}

\subsection{Example 2}
\label{case2}
In the second example, we introduce a variable coefficient-based problem \eqref{modeleq} to validate the theoretical estimates for both diffusion-dominated and convection-dominated regimes. Initially, we set $\gamma=0$ to emphasize this as a critical case, as highlighted in Remark \ref{last1}. We consider a variable convective flow field $\boldsymbol{\mathcal{B}}$, defined by

\begin{align}
	\boldsymbol{\mathcal{B}}(x,y):=\begin{bmatrix}
		10y(x+y^2)^4 + \frac{1}{3}\\
		-5(x+y^2)^4-\frac{1}{2}
	\end{bmatrix} \nonumber
\end{align}
Furthermore, the right-hand side source/load term and the boundary conditions are determined by the exact solution, as outlined below.
\begin{align}
	\mathbf{u}(x,y)=\begin{bmatrix}
		\sin(2\pi x) \cos(2\pi y) \\
		{-\cos(2\pi x) \sin(2\pi y)}
	\end{bmatrix}, 
	\qquad p(x,y)=\sin(2\pi x) \cos(2\pi y). \nonumber
\end{align}

\begin{figure}[h]
	\centering{ \Large \textbf{Convection-dominated: $\mathbf{\boldsymbol{\mu}=10^{-8}}$.}}
	\subfloat[$\Omega_2$]{\includegraphics[width=7.25cm]{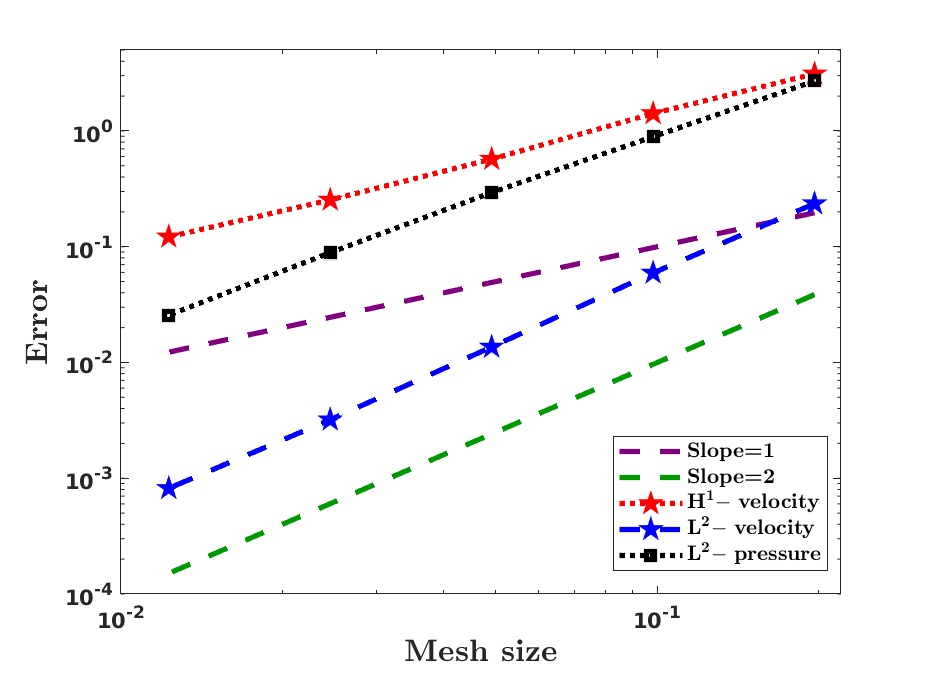}}
	\subfloat[$\Omega_2$]{\includegraphics[width=7cm]{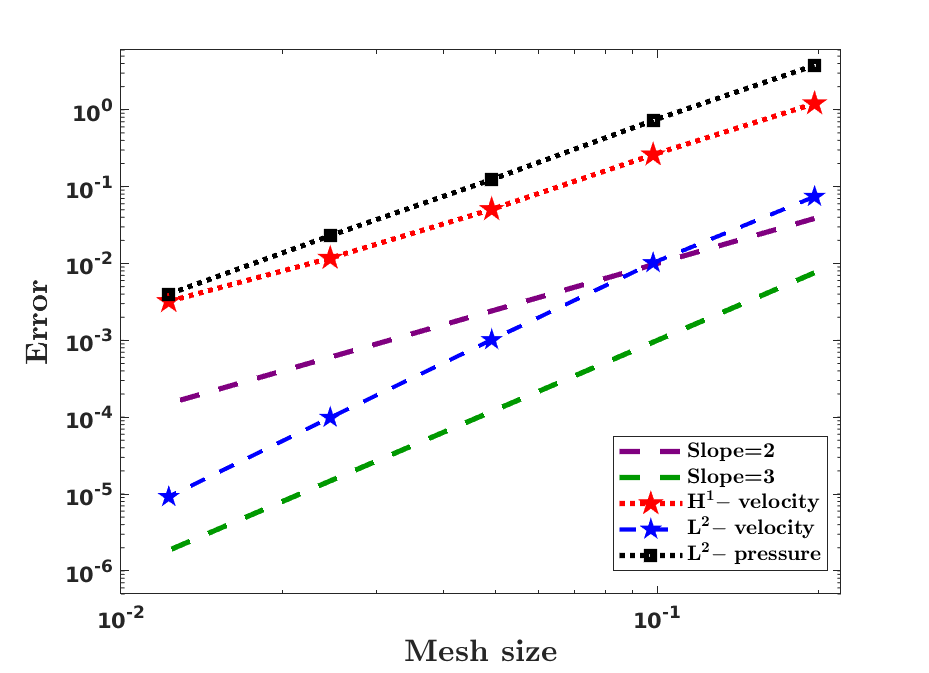}}\\
	\subfloat[$\Omega_4$]{\includegraphics[width=7.25cm]{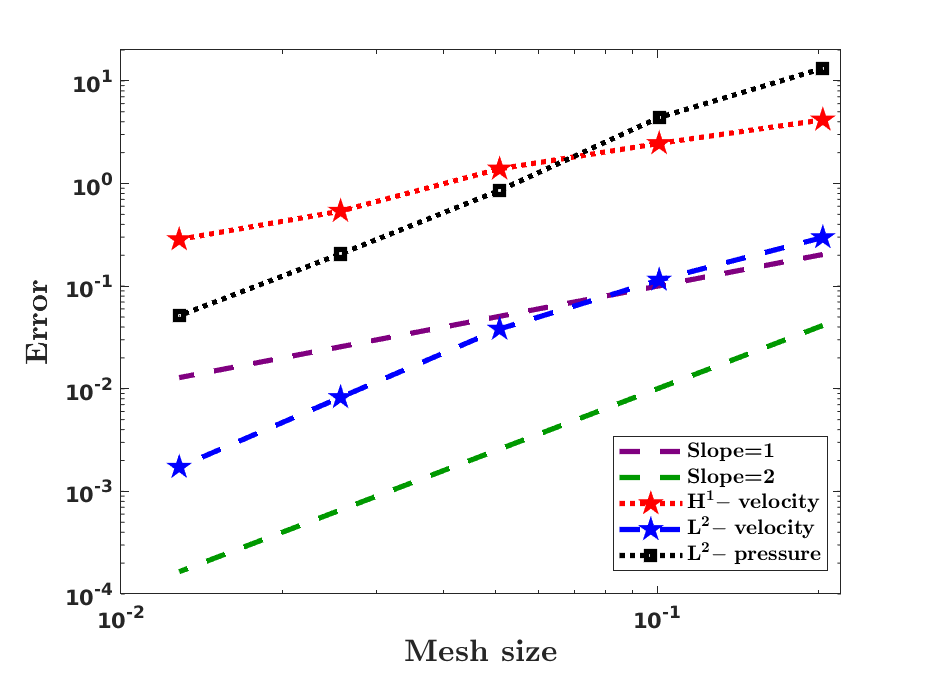}}
	\subfloat[$\Omega_4$]{\includegraphics[width=7.25cm]{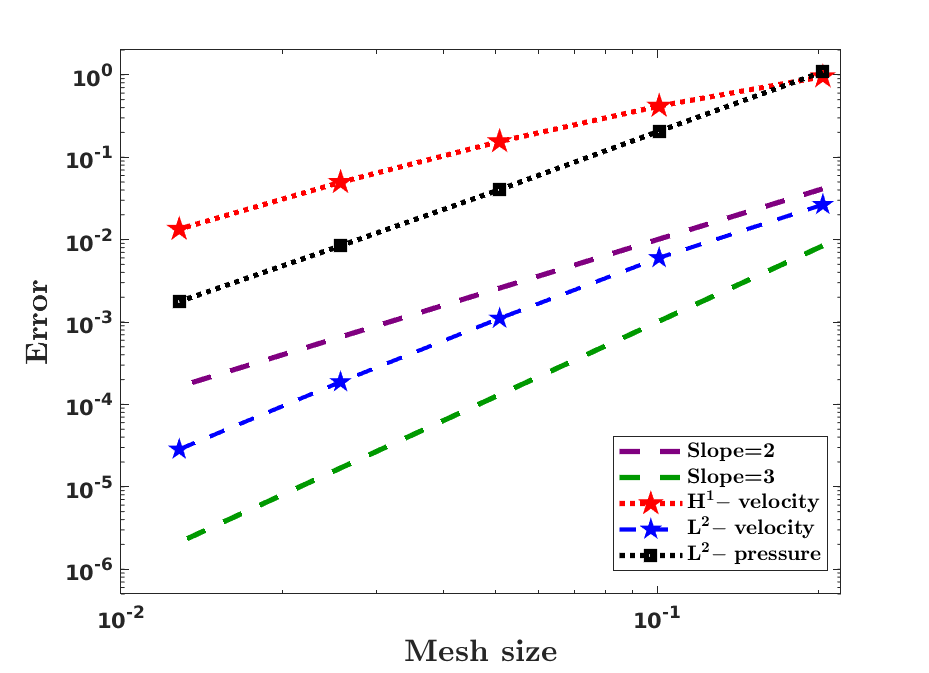}}
	\caption{Example 2. Convergence curves for VEM order $k=1$ (left) and $k=2$ (right).}
	\label{ex2_cng2} 
\end{figure}

We investigate the convergence behavior of the proposed stabilized VEM for variable flow field employing the meshes $\Omega_2$ and $\Omega_4$ for VEM order $k=1$ and $k=2$. The magnitude of the discrete velocity and discrete pressure are depicted in Figure \ref{ex2_surf}. The convergence study is given as follows: \newline
\noindent $\bullet$ \textbf{Diffusion-dominated regime:} For this case, we consider $\mu=1$. In Figure \ref{ex2_cng1}, we depict the convergence rates for velocity and pressure in $H^1$ and $L^2$-norms. The convergence plots show the optimal rates for velocity in $H^1$ and $L^2$-norms, and for the pressure in $L^2$-norm, as expected. Notably, we again observe the superconvergence of pressure in the $L^2$-norm for the lowest equal-order VEM.

\begin{figure}[h]
	\centering
	\subfloat[$r_1=r_2=0.1$.]{\includegraphics[width=6.5cm]{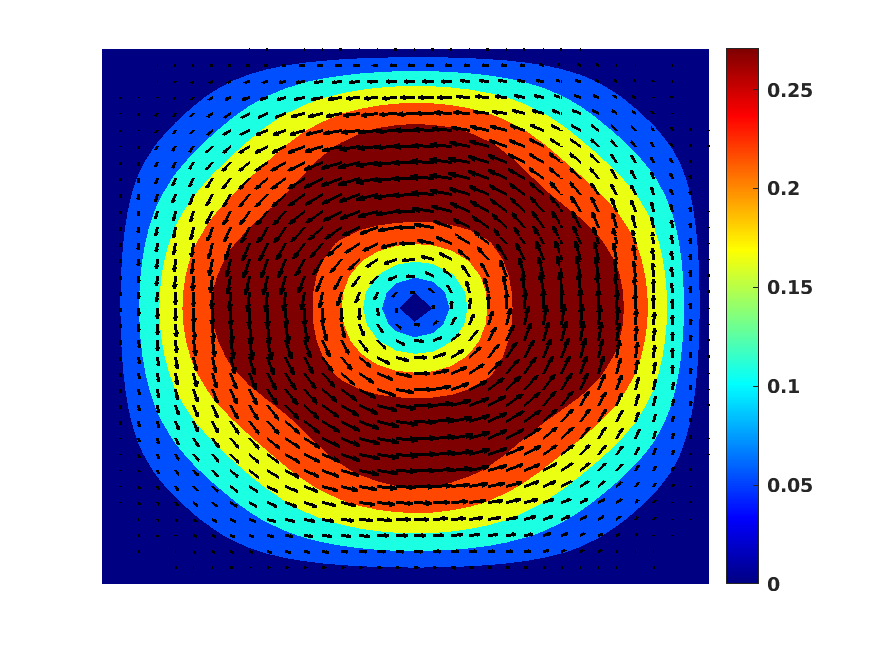}}
	\subfloat[$r_1=r_2=0.1$.]{\includegraphics[width=6.5cm]{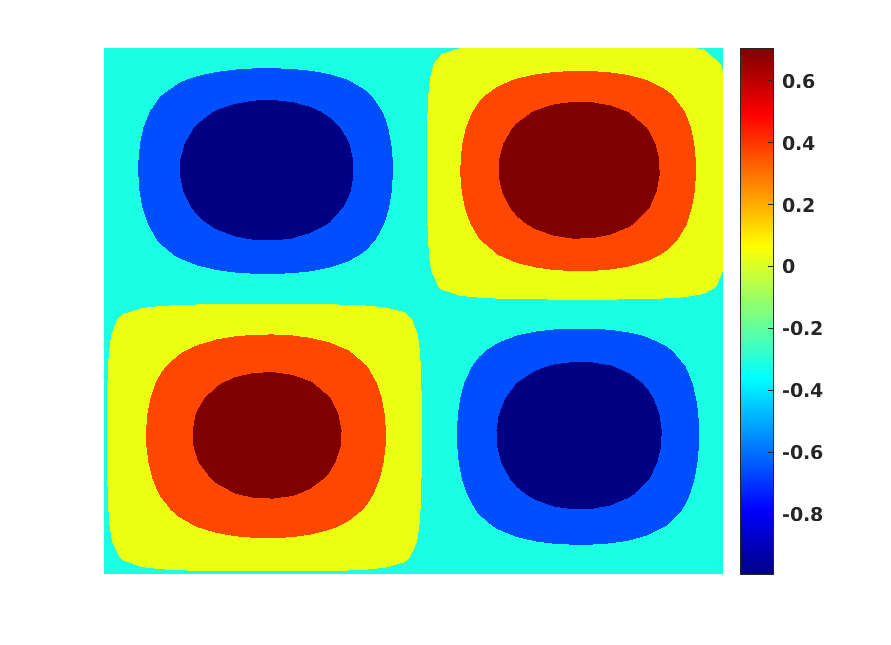}}\\
	\subfloat[$r_1=r_2=1.1$.]{\includegraphics[width=6.5cm]{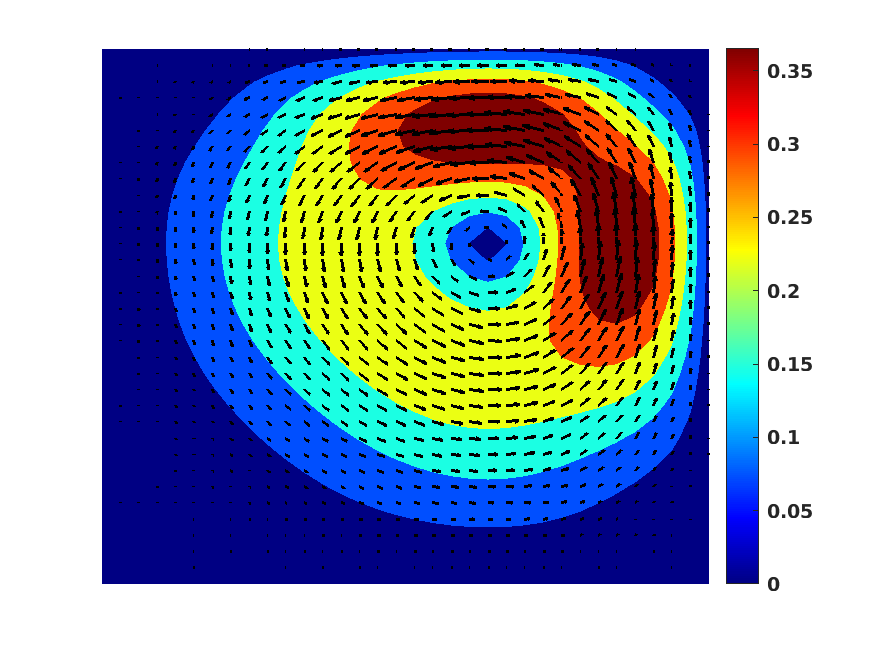}}
	\subfloat[$r_1=r_2=1.1$.]{\includegraphics[width=6.5cm]{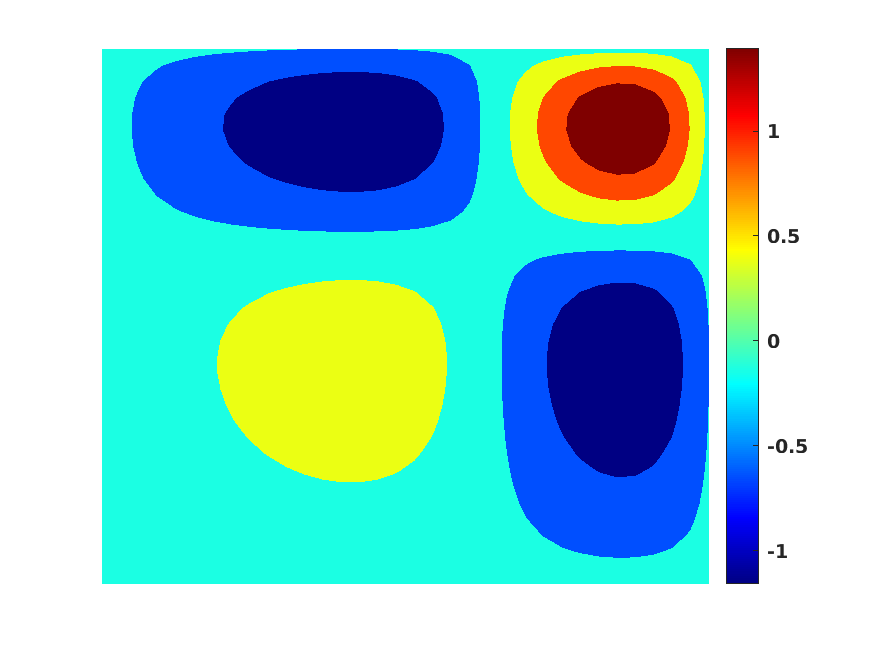}}\\
	\subfloat[$r_1=r_2=4.1$.]{\includegraphics[width=6.5cm]{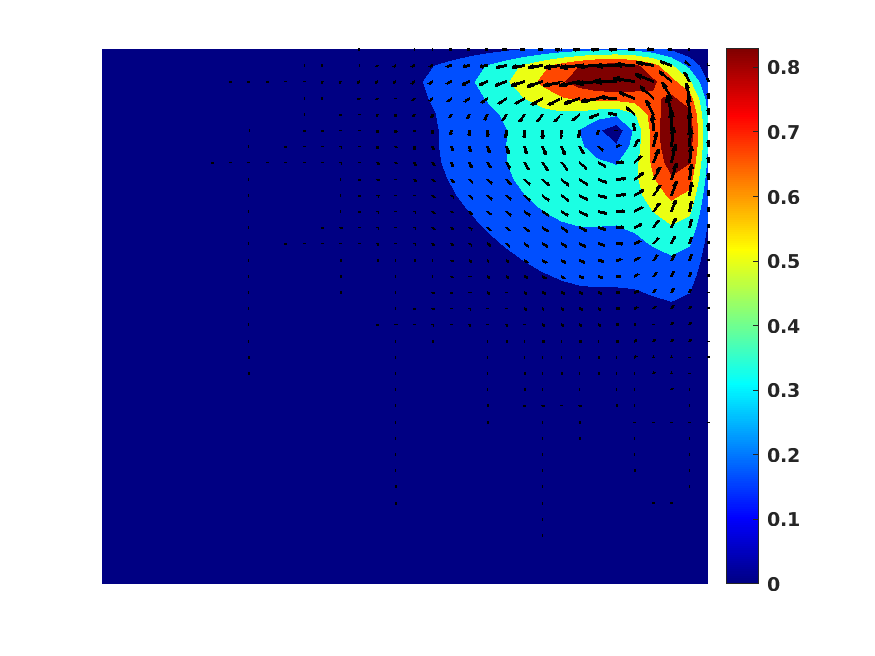}}
	\subfloat[$r_1=r_2=4.1$.]{\includegraphics[width=6.5cm]{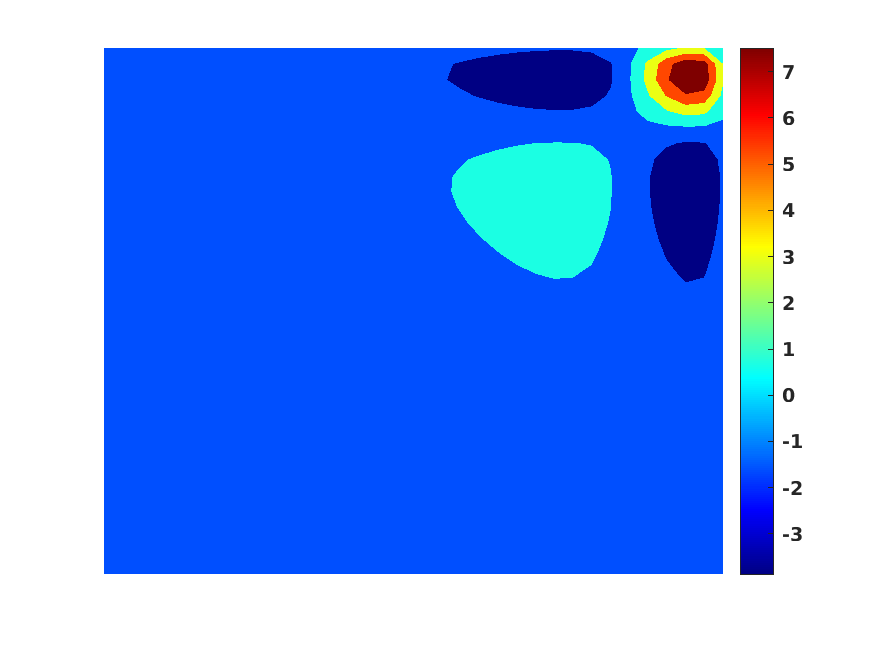}}
	\caption{Example 3. The stabilized velocity and pressure for different values of $r_1$ and $r_2$ employing $\Omega_3$.}
	\label{ex3_vor} 
\end{figure}

\noindent $\bullet$ \textbf{Convection-dominated regime:} In this case, {we will discuss the convection-dominated case  $\mu=10^{-8}$. We remark that the performance of the proposed stabilized method remains almost identical for $\mu=0$ as in the case $\mu=10^{-8}$. Therefore, we show the convergence plots only for the convection-dominated case $\mu=10^{-8}$ in Figure \ref{ex2_cng2}.}  It is worth noting that the proposed method once again confirms the derived convergence rates in Theorem \ref{convergence}, particularly highlighting the superconvergence of pressure in the $L^2$-norm for the lowest equal-order. This underscores the robustness of the proposed stabilized method.

\subsection{ Example 3 }
\label{case3}
In the third example, we introduce a classic benchmark linearized Navier-Stokes problem \cite{mfem38}, aiming to investigate the robustness and efficiency of the proposed method for the convection-dominated regime. In this example, we consider $\gamma=1$ and the convective flow field $\boldsymbol{\mathcal{B}}$ is defined as
\begin{align}
	\boldsymbol{\mathcal{B}}(x,y):=\begin{bmatrix}
		\dfrac{r_2}{2 \pi}\Big(1-\cos\Big( \dfrac{2\pi(e^{r_1x}-1)}{e^{r_1}-1}\Big) \Big) \sin\Big( \dfrac{2\pi(e^{r_2y}-1)}{e^{r_2}-1}\Big) \dfrac{e^{r_2y}}{e^{r_2}-1}\\ \\
		- \dfrac{r_1}{2 \pi} \sin\Big( \dfrac{2\pi(e^{r_1x}-1)}{e^{r_1}-1}\Big) \Big(1-\cos\Big( \dfrac{2\pi(e^{r_2y}-1)}{e^{r_2}-1}\Big) \Big) \dfrac{e^{r_1x}}{e^{r_1}-1}
	\end{bmatrix}, \nonumber
\end{align}
where $r_1>0$ and $r_2>0$ are two constants. Furthermore, the load term and boundary condition are {prescribed by the exact solution of the Navier-Stokes equation, given as follows:}
\begin{align*}
	\mathbf{u}(x,y)=\boldsymbol{\mathcal{B}}(x,y), \qquad 	p(x,y)= r_1r_2 \sin\Big( \dfrac{2\pi(e^{r_1x}-1)}{e^{r_1}-1}\Big) \sin\Big( \dfrac{2\pi(e^{r_2y}-1)}{e^{r_2}-1}\Big) \Big) \dfrac{e^{r_1x}e^{r_2y}}{(e^{r_1}-1)(e^{r_2}-1)}.
\end{align*}

{Additionally, the velocity field $\mathbf{u}$ exhibits a counterclockwise vortex confined within a unit-box, as depicted in Figure \ref{ex3_vor}, depending on the constants $r_1$ and $r_2$.} The center of this vertex is denoted by $(x_c,y_c)$, defined as follows:
\begin{align*}
	(x_c,y_c)= \Big( \dfrac{1}{r_1} \log\Big(\dfrac{e^{r_1}+1}{2} \Big),  \dfrac{1}{r_2} \log\Big(\dfrac{e^{r_2}+1}{2}\Big)\Big).
\end{align*}
As $r_1$ increases, the center rapidly shifts towards the right-hand vertical side, while increasing $r_2$ causes it to converge towards the top edge, for instance, see Figure \ref{ex3_vor}. We remark that we have performed this experiment on distorted/skewed non-convex mesh $\Omega_3$.

\begin{table}[t!]
	\setlength{\tabcolsep}{15pt}
	\centering
	\caption{Example 3. The measures for computational errors and rates for $\mu=10^{-8}$ and $r_1=r_2=0.1$ with VEM order $k=1$.}
	\label{ex3_tb1}
	\begin{tabular}{ccccccc}
		\toprule
		{$h$}&  {$E^\mathbf{u}_{H^1}$} & {rate} & {$E^{\mathbf{u}}_{L^2}$} & {rate} & {$E^p_{L^2}$} & {rate}  \\
		\midrule
		$1/5$  & 6.188046e-01 & --   & 6.073925e-02 & --   & 2.463143e-02 & -- \\ 
		$1/10$ & 3.241310e-01 & 0.93 & 2.177106e-02 & 1.48 & 6.056155e-03 & 2.02 \\ 
		$1/20$ & 1.353110e-01 & 1.26 & 6.694913e-03 & 1.70 & 1.468586e-03 & 2.04  \\ 
		$1/40$ & 5.235536e-02 & 1.37 & 1.712536e-03 & 1.97 & 3.531445e-04 & 2.07  \\ 
		$1/80$ & 2.444840e-02 & 1.10 & 4.259555e-04 & 2.01 & 8.906175e-05 & 1.99 \\ 
		\bottomrule		
	\end{tabular}
\smallskip
\end{table}

\begin{table}[t!]
\setlength{\tabcolsep}{15pt}
\centering
	\caption{ Example 3. The measures for computational errors and rates for $\mu=10^{-8}$ and $r_1=r_2=1.1$ with VEM order $k=1$.}
	\label{ex3_tb2}
	\begin{tabular}{ccccccc}
		\toprule
		{$h$}&  {$E^\mathbf{u}_{H^1}$} & {rate} & {$E^{\mathbf{u}}_{L^2}$} & {rate} & {$E^p_{L^2}$} & {rate}  \\
		\midrule
		$1/5$  & 9.263157e-01 & --   & 8.330560e-02 & --   & 3.441664e-02   & -- \\ 
		$1/10$ & 4.798103e-01 & 0.95 & 3.278720e-02 & 1.35 & 9.307929e-03 & 1.89 \\ 
		$1/20$ & 2.070410e-01 & 1.21 & 9.811696e-03 & 1.74 & 2.467906e-03 & 1.92  \\ 
		$1/40$ & 8.266392e-02 & 1.32 & 2.435389e-03 & 2.01 & 5.968625e-04 & 2.05  \\ 
		$1/80$ & 4.033414e-02 & 1.04 & 6.437663e-04 & 1.92 & 1.481498e-04 & 2.01 \\ 
		\bottomrule		
	\end{tabular}
\smallskip
\end{table}

\begin{table}[t!]
	\setlength{\tabcolsep}{15pt}
	\centering
	\caption{Example 3. The measures for computational errors and rates for $\mu=0$ and $r_1=r_2=0.1$ with VEM order $k=1$.}
	\label{ex3_tbc1}
	\begin{tabular}{ccccccc }
		\toprule
		{$h$}&  {$E^\mathbf{u}_{H^1}$} & {rate} & {$E^{\mathbf{u}}_{L^2}$} & {rate} & {$E^p_{L^2}$} & {rate}  \\
		\midrule
		$1/5$  & 6.122127e-01 & --   & 5.692209e-02 & --   & 2.383631e-02 & -- \\ 
		$1/10$ & 3.153532e-01 & 0.95 & 2.033043e-02 & 1.49 & 5.826189e-03 & 2.03 \\ 
		$1/20$ & 1.305485e-01 & 1.27 & 6.229335e-03 & 1.71 & 1.408812e-03 & 2.05  \\ 
		$1/40$ & 5.128066e-02 & 1.35 & 1.606711e-03 & 1.96 & 3.424596e-04 & 2.04  \\ 
		$1/80$ & 2.418057e-02 & 1.09 & 4.122650e-04 & 1.96 & 8.758062e-05 & 1.97 \\ 
		\bottomrule		
	\end{tabular}
\smallskip
\end{table}

\begin{table}[t!]
	\setlength{\tabcolsep}{15pt}
	\centering
	\caption{Example 3. The measures for computational errors and rates for $\mu=0$ and $r_1=r_2=1.1$ with VEM order $k=1$.}
	\label{ex3_tbc2}
	\begin{tabular}{ccccccc}
		\toprule
		{$h$}&  {$E^\mathbf{u}_{H^1}$} & {rate} & {$E^{\mathbf{u}}_{L^2}$} & {rate} & {$E^p_{L^2}$} & {rate}  \\
		\midrule
		$1/5$  & 9.263157e-01 & --   & 8.330560e-02 & --   & 3.441664e-02 & -- \\ 
		$1/10$ & 4.798103e-01 & 0.95 & 3.278720e-02 & 1.35 & 9.307929e-03 & 1.89 \\ 
		$1/20$ & 2.070410e-01 & 1.21 & 9.811696e-03 & 1.74 & 2.467906e-03 & 1.92  \\ 
		$1/40$ & 8.266392e-02 & 1.32 & 2.435389e-03 & 2.01 & 5.968625e-04 & 2.05  \\ 
		$1/80$ & 4.033414e-02 & 1.04 & 6.437663e-04 & 1.92 & 1.481498e-04 & 2.01 \\ 
		\bottomrule		
	\end{tabular}
\smallskip
\end{table}

\begin{table}[t!]
	\setlength{\tabcolsep}{15pt}
	\centering
	\caption{Example 3. The measures for computational errors and rates for $\mu=0$ and $r_1=r_2=0.1$ with VEM order $k=2$.}
	\label{ex3_tbc3}
	\begin{tabular}{ccccccc}
		\toprule
		{$h$}&  {$E^\mathbf{u}_{H^1}$} & {rate} & {$E^{\mathbf{u}}_{L^2}$} & {rate} & {$E^p_{L^2}$} & {rate}  \\
		\midrule
		$1/5$  & 5.604041e-01 & --   & 3.174523e-02 & --   & 1.164492e-02 & -- \\ 
		$1/10$ & 2.054330e-02 & 4.78 & 5.605955e-04 & 5.82 & 1.857552e-03 & 2.64 \\ 
		$1/20$ & 5.015119e-03 & 2.03 & 5.315684e-05 & 3.39 & 4.862285e-04 & 1.93 \\ 
		$1/40$ & 1.535043e-03 & 1.71 & 7.888760e-06 & 2.75 & 1.252258e-04 & 1.96  \\ 
		$1/80$ & 4.298564e-04 & 1.83 &  1.07476e-06 & 2.88 & 3.054348e-05 & 2.04 \\ 
		\bottomrule		
	\end{tabular}
\smallskip
\end{table}

\begin{table}[t!]
	\setlength{\tabcolsep}{15pt}
	\centering 
	\caption{Example 3. The measures for computational errors and rates for $\mu=0$ and $r_1=r_2=1.1$ with VEM order $k=2$.}
	\label{ex3_tbc4}
	\begin{tabular}{ccccccc}
		\toprule
		{$h$}&  {$E^\mathbf{u}_{H^1}$} & {rate} & {$E^{\mathbf{u}}_{L^2}$} & {rate} & {$E^p_{L^2}$} & {rate}  \\
		\midrule
		$1/5$  & 5.987862e-01 & --   & 2.772020e-02 & --   & 1.751584e-02 & -- \\ 
		$1/10$ & 3.745935e-02 & 3.99 & 9.875557e-04 & 4.81 & 4.230723e-03 & 2.05 \\ 
		$1/20$ & 8.778334e-03 & 2.09 & 1.006252e-04 & 3.30 & 1.127445e-03 & 1.91 \\ 
		$1/40$ & 2.578734e-03 & 1.78 & 1.370100e-05 & 2.88 & 2.745577e-04 & 2.04  \\ 
		$1/80$ & 7.203278e-04 & 1.84 & 1.895303e-06 & 2.85 & 7.076284e-05 & 1.96 \\ 
		\bottomrule		
	\end{tabular}
\smallskip
\end{table}

\noindent $\bullet$ \textbf{Convection-dominated regime:} In Tables \ref{ex3_tb1} and \ref{ex3_tb2}, the magnitudes of computational errors $E^\mathbf{u}_{H^1}$,  $E^{\mathbf{u}}_{L^2}$ and $E^p_{L^2}$ are shown along with the convergence rates for convection-dominated regime $\mu=10^{-8}$, employing the lowest equal-order element pairs. It is noteworthy that the proposed method demonstrates excellent agreement across varying values of constants $r_1$ and $r_2$, achieving optimal convergence rates with the superconvergence of pressure in the  $L^2$-norm for VEM order $k=1$, consistent with the findings in \cite{mfem38}.  \newline
\noindent $\bullet$ \textbf{Critical case (zero diffusivity):} In Tables \ref{ex3_tbc1}--\ref{ex3_tbc4}, we have investigated the computational error and convergence behavior of the proposed method with the negligible diffusion {parameter} $\mu=0$ for VEM order $k=1$ and $k=2$. The numerical results presented in Tables \ref{ex3_tbc1}--\ref{ex3_tbc4} demonstrate the optimal convergence of the proposed method for both VEM orders $k=1$ and $k=2$, as observed in their respective norms. Additionally, the pressure is observed to be superconverging for VEM order $k=1$. {These results indicate the robustness and efficiency of the proposed method in handling the {negligible} diffusion parameter without paying much cost compared to residual-based stabilization techniques.}

\subsection{Example 4}
\label{case4}
In this last example, we focus on investigating the efficiency and robustness of the proposed method for a classic benchmark problem characterized by boundary layers. In order to achieve this, we consider the convective flow field $\boldsymbol{\mathcal{B}}=(1,1)^T$ and $\gamma=0$. Additionally, the load term and boundary condition are determined by the exact solution
\begin{align}
	\mathbf{u}(x,y)=\begin{bmatrix}
		y - \dfrac{1-e^{y/\mu}}{1-e^{1/\mu}} \\ \\
		x - \dfrac{1-e^{x/\mu}}{1-e^{1/\mu}}
	\end{bmatrix}, 
	\qquad p(x,y)= x-y. \nonumber
\end{align}

\begin{figure}[h]
	\centering
	\subfloat[First component $u_1$, Order $1$.]{\includegraphics[width=6.5cm]{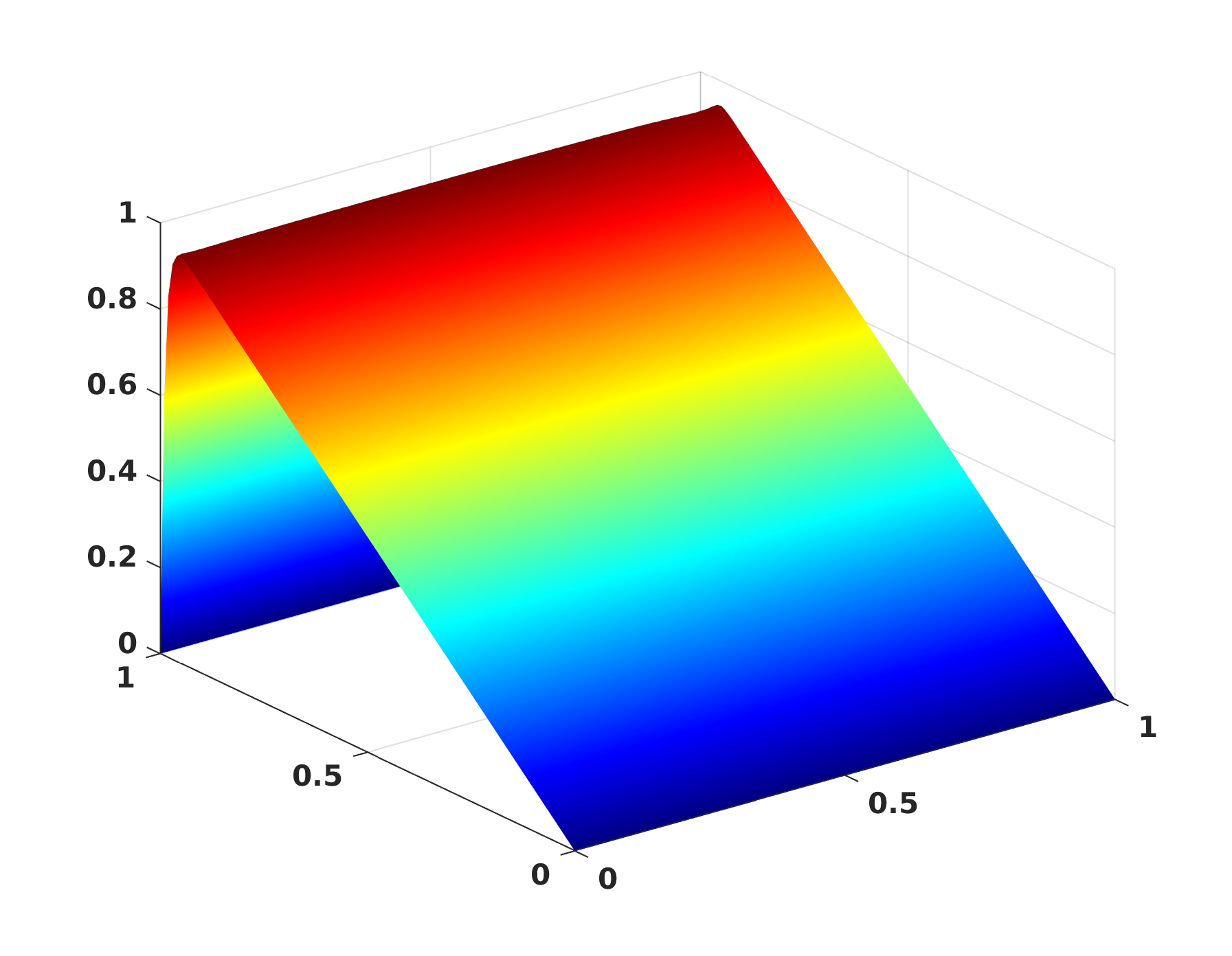}}
	\subfloat[Second component $u_2$, Order $1$.]{\includegraphics[width=6.5cm]{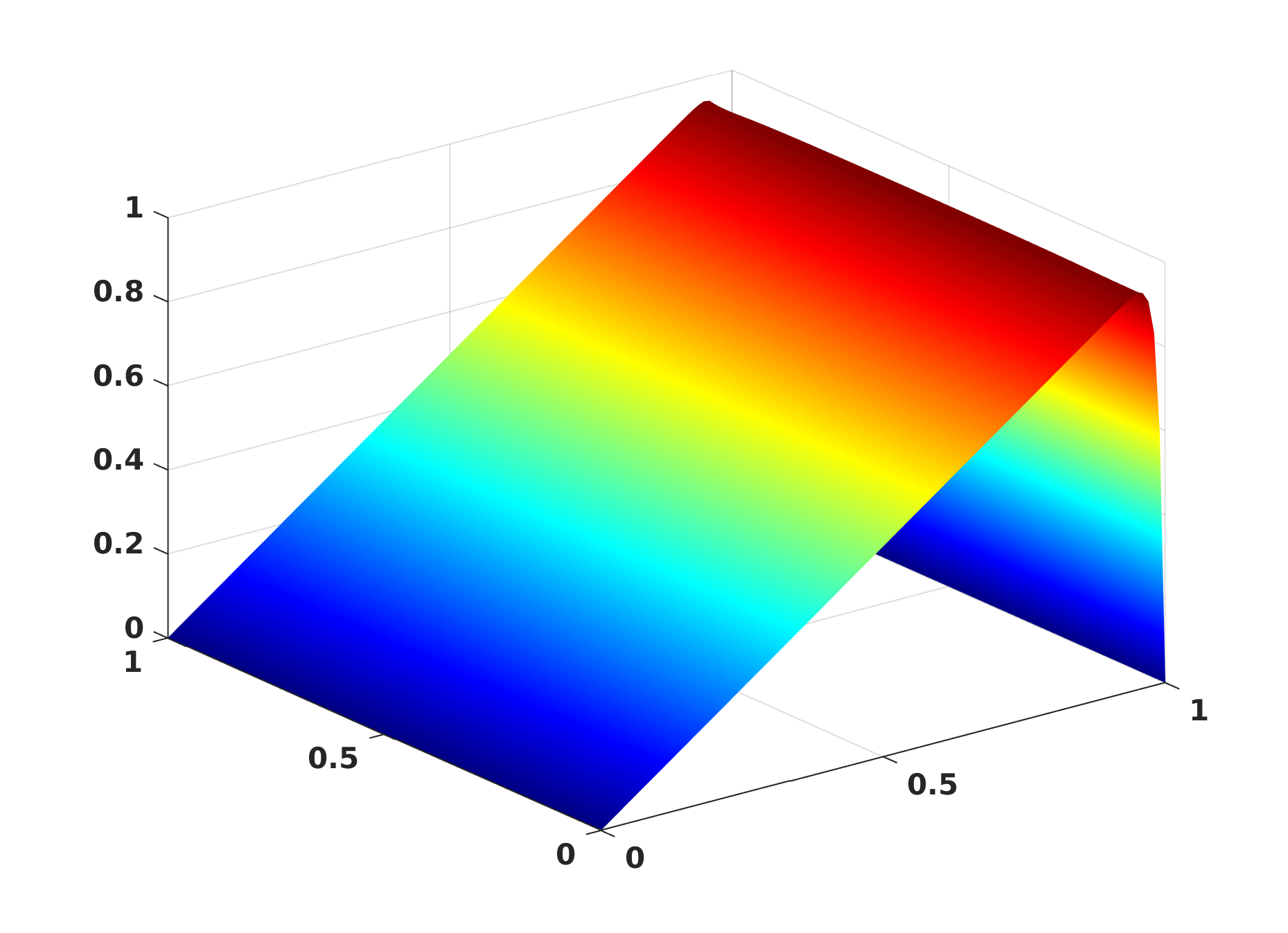}}
	\caption{Example 4. Discrete velocities obtained from stabilized VEM for diffusion parameter $\mu=10^{-2}$.}
	\label{ex4_vel0}
	\subfloat[Order $1$, $\mu=10^{-4}$.]{\includegraphics[width=6.5cm]{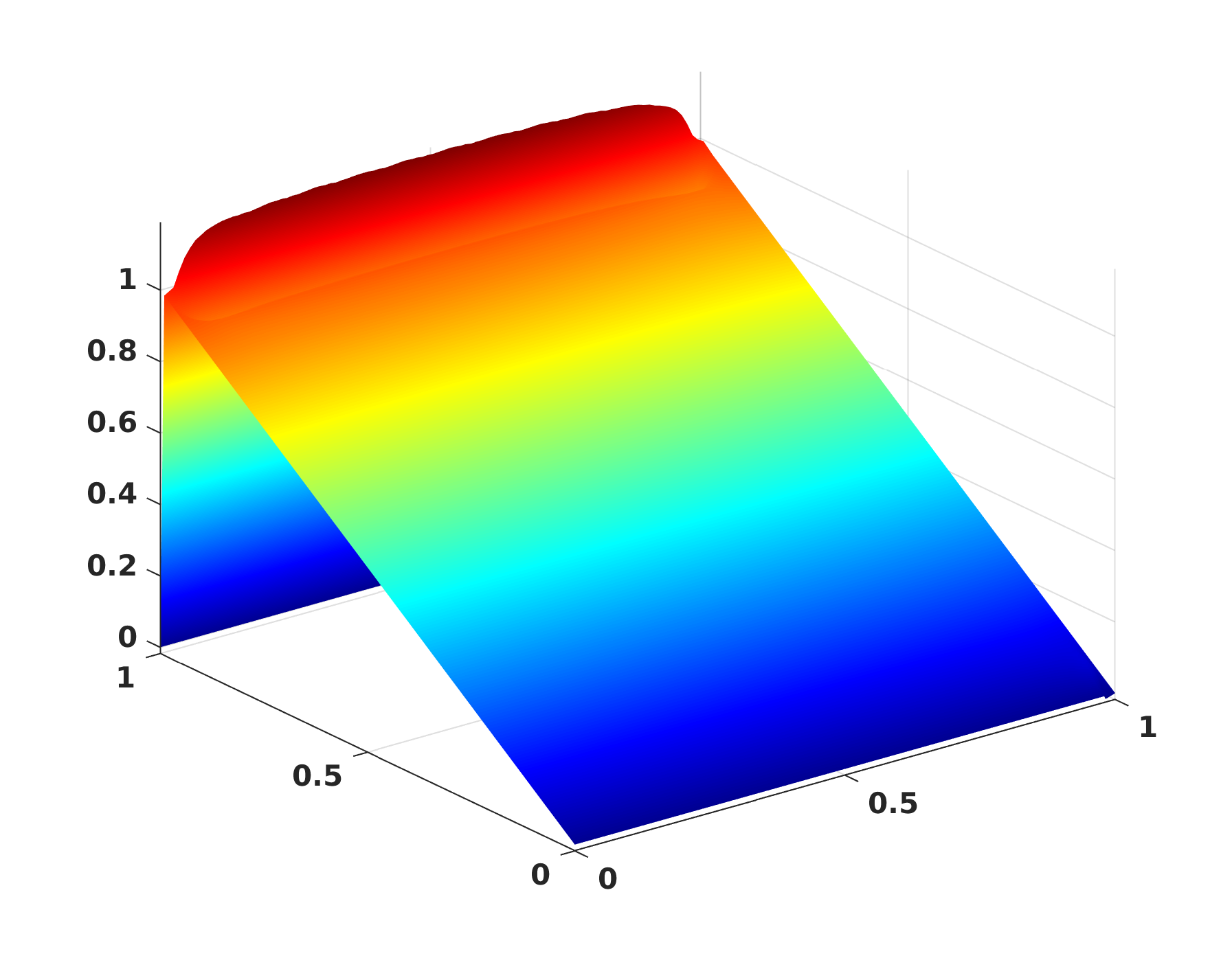}}
	\subfloat[Order $2$, $\mu=10^{-4}$.]{\includegraphics[width=6.5cm]{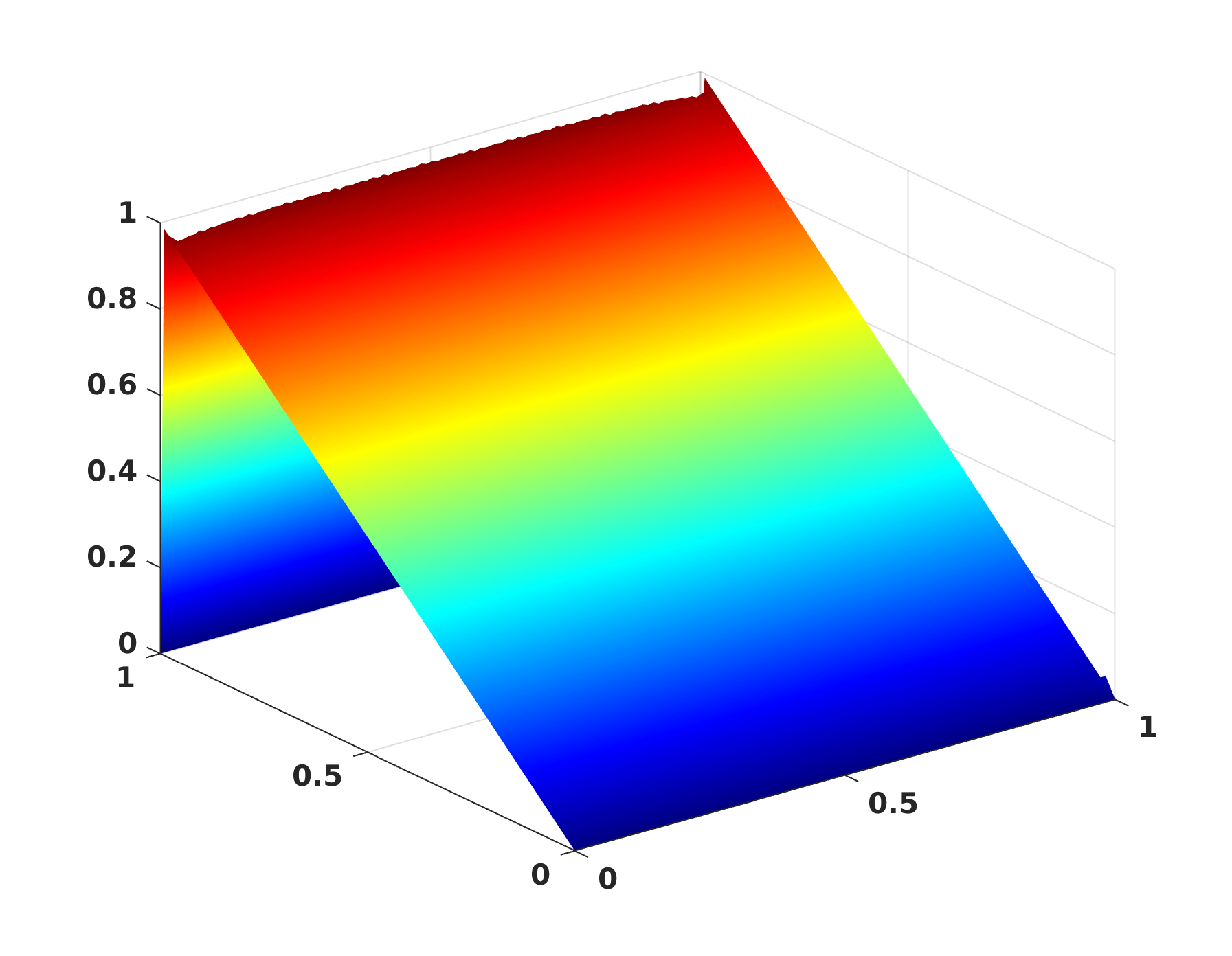}}\\
	\subfloat[Order $1$, $\mu=10^{-6}$.]{\includegraphics[width=6.5cm]{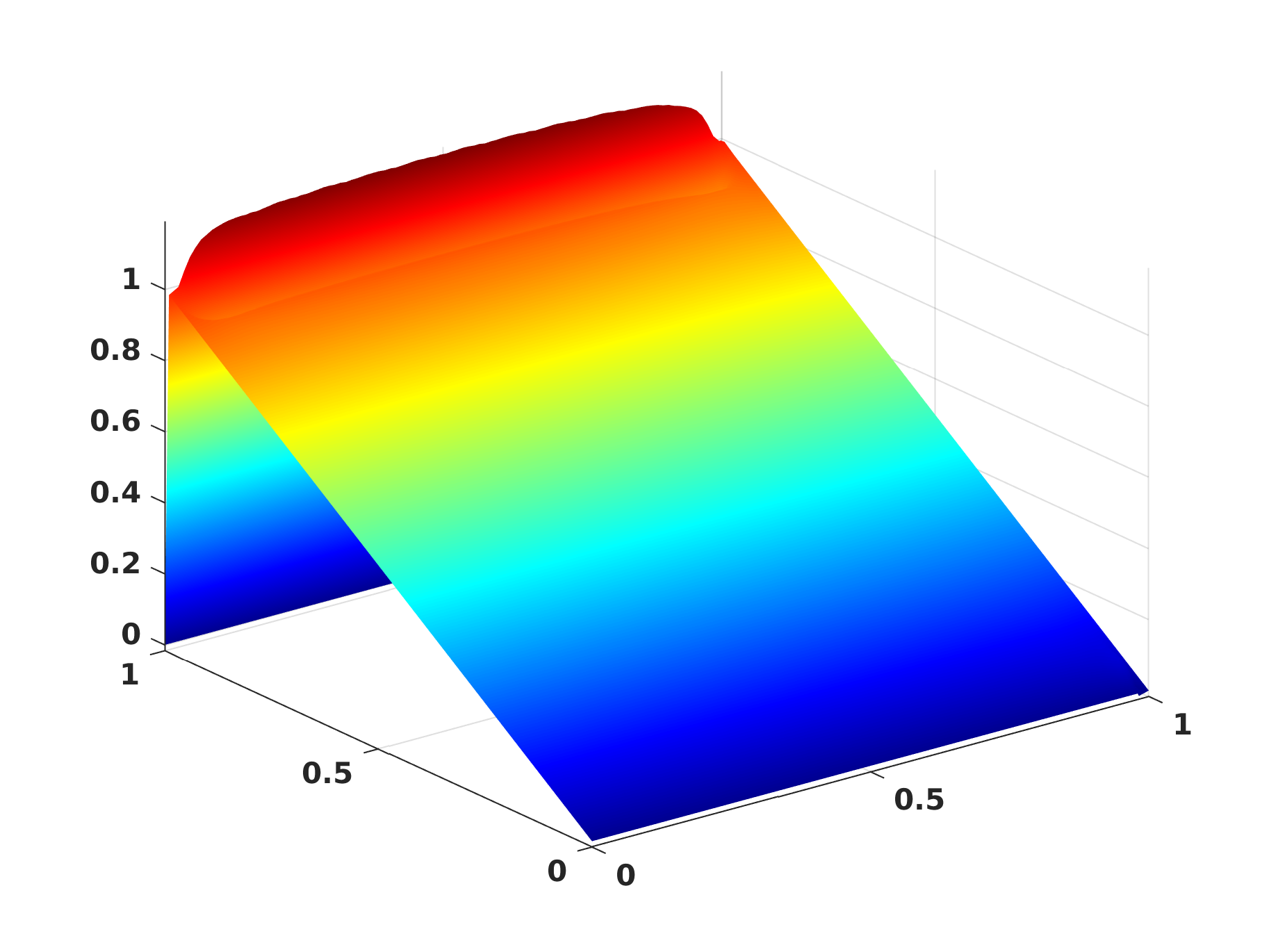}}
	\subfloat[Order $2$, $\mu=10^{-6}$.]{\includegraphics[width=6.5cm]{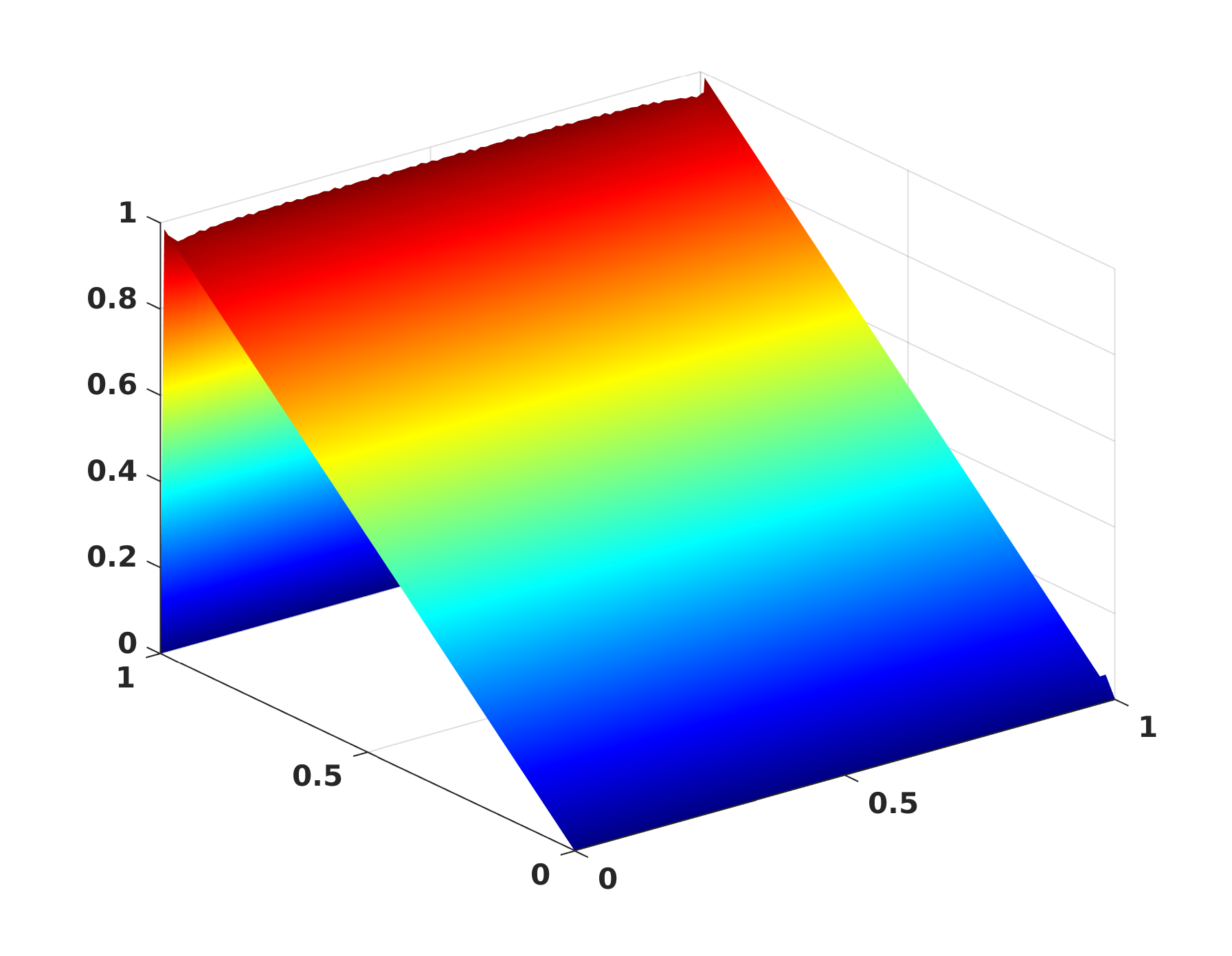}}
	\caption{ Example 4. First component of stabilized discrete velocity for different values of diffusion parameter $\mu$.}
	\label{ex4_vel} 
\end{figure}

\begin{figure}[h]
	\centering
	\subfloat[$\mu=10^{-2}$.]{\includegraphics[width=7cm]{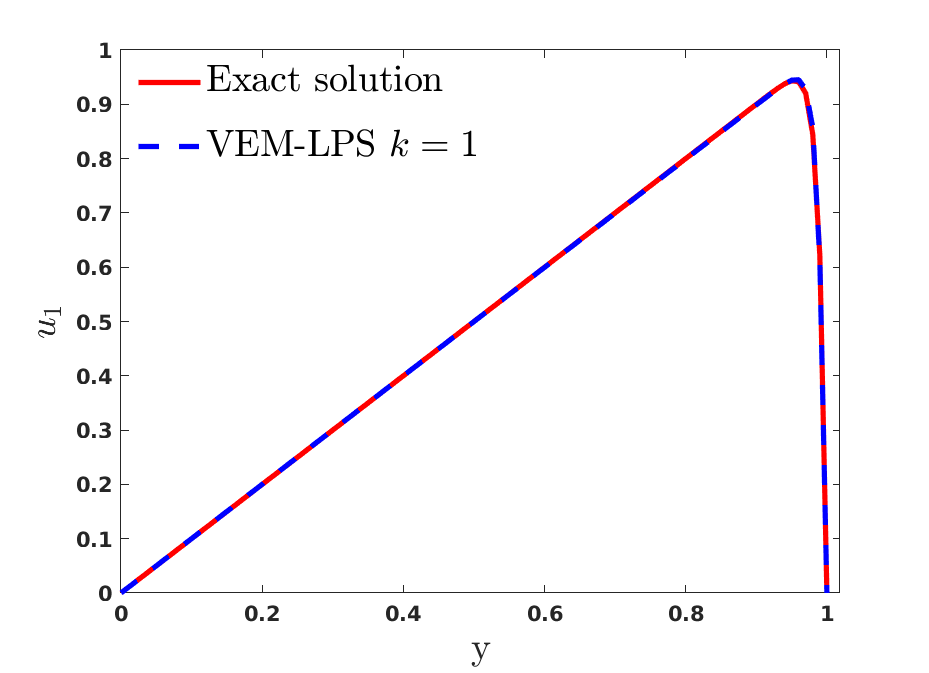}}
	\subfloat[$\mu=10^{-4}$.]{\includegraphics[width=7cm]{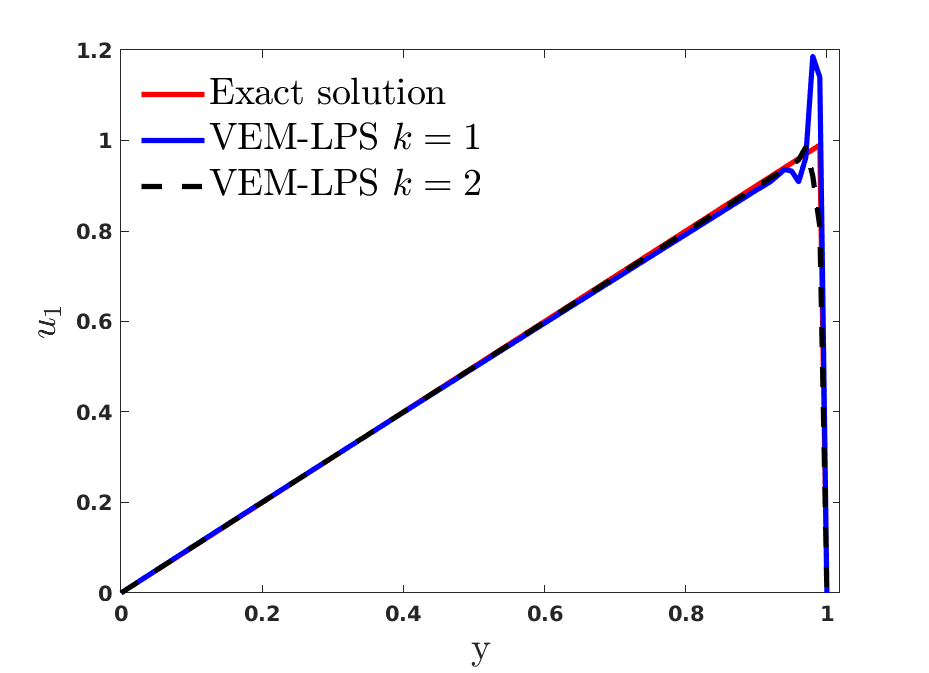}}\\
	\subfloat[$\mu=10^{-6}$.]{\includegraphics[width=7cm]{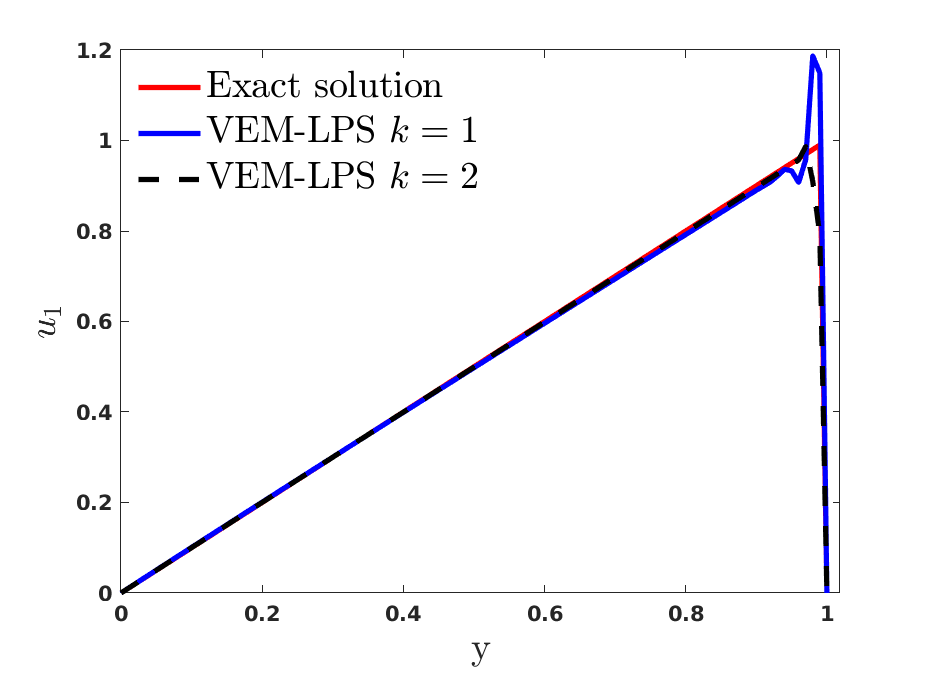}}
	\caption{Example 4. Comparison of the cross-section of the first component of the exact and discrete velocities about $x=0.5$ for VEM order $k=1,2.$}
	\label{ex4_cross} 
\end{figure}

We have conducted this simulation for diffusivity parameters $\mu=10^{-2}, 10^{-4}$ and $10^{-6}$, considering VEM order $k=1$ and $k=2$ using $\Omega_2$. The stabilized discrete velocity is depicted in Figures \ref{ex4_vel0} and \ref{ex4_vel} for VEM order $k=1$ and $k=2$. We observed from Figures \ref{ex4_vel0} and \ref{ex4_vel} that the proposed method exhibits almost oscillation-free solutions near the boundary layers or away from the boundary layers for VEM order $k=1$ and $k=2$. For $\mu=10^{-2}$, the stabilized discrete velocities precisely coincide with the exact velocities of the problem for the lowest equal-order element pairs. Hence, we have omitted showing the discrete velocity for $k=2$ in Figure \ref{ex4_vel0}. 

For $\mu=10^{-4}$, the stabilized discrete velocity is fully oscillation-free, and the boundary layers are visible at $y=1$, which is effectively captured by the proposed stabilized VEM, see Figure \ref{ex4_vel}(a). Additionally, the visibility of the boundary layers for $k=1$ is almost removed by employing the higher order VEM, i.e., $k=2$, for instant review see Figure \ref{ex4_vel}(b). We achieve precisely identical numerical findings for $\mu=10^{-6}$ as well. In Figure \ref{ex4_cross}, we have shown the behavior of the cross-section of the stabilized discrete velocity about $x=0.5$ for various diffusion parameters $\mu$. Thus, we conclude that the most remarkable feature of the proposed VEM is that its accuracy and efficiency increase as we increase the order of VEM, and stabilized discrete solutions seem to converge exactly towards their exact counterparts, as illustrated in Figure \ref{ex4_cross}.  

\section{Conclusion} \label{sec-6}
We propose and analyze a new stabilized virtual elements for addressing the generalized Oseen equation for different cases, including the convection-dominated regime, employing equal-order element pairs. The stabilization techniques are based on local projection-based stabilizing terms. The stabilizing terms are free from higher-order derivative terms, easier to implement compared to residual-based stabilization methods, and do not allow coupling between the element pairs. Further, we have established the stability for the proposed stabilized method, which also aligns with the stability of the Stokes and Brinkman equations, without imposing additional conditions on the parameters.  Moreover, we have derived the error estimates in the energy norm, which is {quasi-robust} with respect to parameters.  Additionally, we have performed several numerical examples validating the optimal convergence rates for different cases, including convection-dominated regimes and zero diffusivity parameter.

\subsection*{Data Availability}
Data sharing is not applicable to this article as no datasets were generated or analyzed.

\section*{Declarations}
\subsection*{Conflict of interest} The authors declare no conflict of interest during the current study.


\section*{Appendix}
	\noindent \textbf{Proof of Theorem \ref{rob_stab}.} To derive the inf-sup constant $ \Theta \geq \frac{1}{4}$ especially for the case $\mu \rightarrow 0$, we follow \cite[Lemma 5]{braack2011equal}. The proof of Theorem \ref{rob_stab} follows in the following steps: \newline
	\noindent \textbf{Step 1.} We use $\widehat{\mathbf{y}}_h:=(\mathbf{u}_h,p_h) \in \mathbf{V}^k_h \times Q^k_h$ with $\vertiii{\widehat{\mathbf{y}}_h}^2= |\widehat{\mathbf{y}}_h|^2_a + |\widehat{\mathbf{y}}_h|^2_b$, where
	\begin{align}
		|\widehat{\mathbf{y}}_h|^2_a &= \mu \|\nabla \mathbf{u}_h\|^2_{0, \Omega} +  \gamma \| \mathbf{u}_h\|^2_{0, \Omega} + + \mathcal{L}_{1,h}(\mathbf{u}_h,\mathbf{u}_h) + \mathcal{L}_{2,h}(\mathbf{u}_h,\mathbf{u}_h) + \mathcal{L}_{3,h}(p_h,p_h), \nonumber \\
		|\widehat{\mathbf{y}}_h|^2_b&= \alpha ^2 \|p_h\|^2_{0, \Omega}. \nonumber 
	\end{align}
	Recalling the estimation of \eqref{wellpos-1}, we infer
	\begin{align}
		(A_h + \mathcal{L}_h)[\widehat{\mathbf{y}}_h, \widehat{\mathbf{y}}_h]& \geq  \min \big\{ 1, \lambda_{1\ast}, \lambda_{2\ast} \big\} \Big( \mu \| \nabla \mathbf{u}_h\|^2_{0, \Omega}  + \gamma \|\mathbf{u}_h\|^2_{0,\Omega} + \mathcal{L}_{1,h}(\mathbf{u}_h,\mathbf{u}_h) + \mathcal{L}_{2,h}(\mathbf{u}_h,\mathbf{u}_h)+ \mathcal{L}_{3,h}(p_h,p_h) \Big) \nonumber \\
		& \geq \lambda_{0\ast} |\widehat{\mathbf{y}}_h|^2_a, \label{newwellpos-1}
	\end{align}
	where $\lambda_{0\ast}:= \min \big\{ 1, \lambda_{1\ast}, \lambda_{2\ast} \big\}$. \newline
	\noindent \textbf{Step 2.} We choose $\mathbf{w}_h =\dfrac{ \xi_0\|p_h\|_{0,\Omega}}{\| \nabla \mathbf{r}_h\|_{0,\Omega}} \mathbf{r}_h$, then we obtain the following:
	\begin{align}
		\|\nabla \mathbf{w}_h\|_{0,\Omega} = \xi_0 \|p_h\|_{0,\Omega}, \qquad \,\,\text{with} \,\, \xi_0=\min \big\{ 1, \dfrac{\Theta_1}{2} \big\}. \label{newwellpos-2}
	\end{align}
	Employing Eq. \eqref{newwellpos-2} and Lemma \ref{infsup} , we infer 
	\begin{align}
		b_{h}(\mathbf{w}_h, p_h) \geq \xi_0\|p_h\|_{0,\Omega} \Big(\Theta_1 \|p_h\|_{0,\Omega} - \Theta_2 [\mathcal{L}_{3,h}(p_h,p_h)]^{\frac{1}{2}} \Big). \label{newwellpos-3}
	\end{align}
	\noindent \textbf{Step 3.} Let $\widehat{\mathbf{x}}_h:=(\mathbf{v}_h,0)=(-\alpha \Theta_3 \mathbf{w}_h,0)$ with positive constant $\Theta_3$, and its value will be determined later. Employing the definition of $|\cdot|_a$, it gives
	\begin{align}
		|\widehat{\mathbf{x}}_h|_a^2=\alpha^2\Theta_3^2\big( \mu \| \nabla \mathbf{w}_h\|^2_{0, \Omega}  + \gamma \|\mathbf{w}_h\|^2_{0,\Omega} + \mathcal{L}_{1,h}(\mathbf{w}_h,\mathbf{w}_h) + \mathcal{L}_{2,h}(\mathbf{w}_h,\mathbf{w}_h) \big). \nonumber 
	\end{align}
	Using the stability property of the projection operator, \eqref{vem-a}, \eqref{reg1} and \eqref{newwellpos-2}, yields
	\begin{align}
		|\widehat{\mathbf{x}}_h|_a^2&=\alpha^2\Theta_3^2\big( \mu \| \nabla \mathbf{w}_h\|^2_{0, \Omega}  + \gamma C_P^2 \|\nabla \mathbf{w}_h\|^2_{0,\Omega} + C_{gen} \|\nabla \mathbf{w}_h\|^2_{0,\Omega} \big) \nonumber \\
		& \leq \xi_0^2\alpha^2\Theta_3^2\big( \mu  + \gamma C_P^2 + C_{gen}  \big) \|p_h\|^2_{0,\Omega} \nonumber \\ 
		& \leq \xi_0^2\Theta_3^2\big( \mu  + \gamma C_P^2 + C_{gen}  \big) |\widehat{\mathbf{y}}_h|^2_b. \nonumber 
	\end{align}
	We fix $\Theta_3= \xi_0^{-1}\big( \mu  + \gamma C_P^2 + C_{gen}  \big)^{-1/2}$. Thus, we obtain
	\begin{align}
		\vertiii{\widehat{\mathbf{x}}_h}=|\widehat{\mathbf{x}}_h|_a \leq |\widehat{\mathbf{y}}_h|_b \leq \vertiii{\widehat{\mathbf{y}}_h}. \label{newwellpos-4}
	\end{align}
	\noindent \textbf{Step 4.} For given $\widehat{\mathbf{y}}_h, \widehat{\mathbf{x}}_h \in \mathbf{V}^k_h \times Q^k_h$, we proceed as follows:
	\begin{align}
		(A_h+\mathcal{L}_h)[\widehat{\mathbf{y}}_h,\widehat{\mathbf{x}}_h]&= a_{h}(\mathbf{u}_h,\mathbf{v}_h) - b_{h}(\mathbf{v}_h,p_h)+ c^{skew}_{h}(\mathbf{u}_h,\mathbf{v}_h) +  d_{h}(\mathbf{u}_h,\mathbf{v}_h) + \mathcal{L}_{1,h}(\mathbf{u}_h,\mathbf{v}_h) +  \mathcal{L}_{2,h}(\mathbf{u}_h,\mathbf{v}_h)\nonumber \\ 
		&=: s_{11} + s_{12} +s_{13} +s_{14} +s_{15} +s_{16}. \label{newwell_b}
	\end{align}
	Recalling the estimation of $s_1$, we have 
	\begin{align}
		s_{11} \leq \max\big\{ 1, \lambda_1^\ast\} \mu \|\nabla \mathbf{u}_h\|_{0,\Omega} \|\nabla \mathbf{v}_h\|_{0,\Omega} \leq C_{gen} |\widehat{\mathbf{y}}_h|_a |\widehat{\mathbf{x}}_h|_a. \label{newwell1}
	\end{align}
	To estimate $s_{12}$, we use \eqref{newwellpos-2} and \eqref{newwellpos-3}:
	\begin{align}
		s_{12}&= \alpha \Theta_3 b_h(\mathbf{w}_h,p_h) \nonumber \\
		& \geq \xi_0 \alpha \Theta_3 \|p_h\|_{0,\Omega} \Big(\Theta_1 \|p_h\|_{0,\Omega} - \Theta_2 [\mathcal{L}_{3,h}(p_h,p_h)]^{\frac{1}{2}} \Big) \nonumber \\
		& \geq \xi_0 \Theta_3 \Big( \big(\Theta_1 \alpha^{-1} - \frac{\Theta_2}{4} \big) |\widehat{\mathbf{y}}_h|^2_b - \Theta_2 |\widehat{\mathbf{y}}_h|^2_a \Big). \label{newwell2}	
	\end{align}
	Using the integration by parts, gives
	\begin{align}
		s_{13} &= \frac{1}{2}\sum_{E \in \Omega_h} \Big( -2 \int_E (\nabla \boldsymbol{\Pi}^{0,E}_k \mathbf{v}_h) \boldsymbol{\mathcal{B}} \cdot \boldsymbol{\Pi}^{0,E}_k \mathbf{u}_h dE +  \int_{\partial E} \boldsymbol{\mathcal{B}} \cdot\mathbf{n}^E \mathbf{u}_h \cdot(\boldsymbol{\Pi}^{0,E}_k \mathbf{v}_h - \mathbf{v}_h) ds + \int_{\partial E} \boldsymbol{\mathcal{B}} \cdot\mathbf{n}^E \mathbf{u}_h \cdot \mathbf{v}_h ds \nonumber \\
		& \qquad \int_{\partial E} \boldsymbol{\mathcal{B}} \cdot \mathbf{n}^E ( \mathbf{v}_h - \boldsymbol{\Pi}^{0,E}_k \mathbf{v}_h) \cdot \boldsymbol{\Pi}^{0,E}_k \mathbf{u}_h ds \Big). \label{newwell2a} 	
	\end{align}
	Applying the fact $\boldsymbol{\mathcal{B}} \in W^{1,\infty}(\Omega)$ and $\mathbf{u}_h, \mathbf{v}_h \in \mathbf{V}$, then it holds that $\sum_{E \in \Omega_h} \int_{\partial E} \boldsymbol{\mathcal{B}} \cdot\mathbf{n}^E \mathbf{u}_h \cdot \mathbf{v}_h ds =0 $. Recalling Lemmas \ref{inverse} and \ref{trace} and the bound \eqref{bd1}, it gives
	\begin{align}
		\Big|\int_{\partial E} \boldsymbol{\mathcal{B}} \cdot\mathbf{n}^E \mathbf{u}_h \cdot(\boldsymbol{\Pi}^{0,E}_k \mathbf{v}_h - \mathbf{v}_h) ds \Big| &\leq  C \boldsymbol{\mathcal{B}}_E \big(h^{-1/2}_E \|\mathbf{u}_h\|_{0,E} + h^{1/2}_E  \|\nabla \mathbf{u}_h\|_{0,E}\big) h^{1/2}_E \|\nabla \mathbf{v}_h\|_{0,E}  \nonumber \\
		& \leq C \boldsymbol{\mathcal{B}}_E \|\mathbf{u}_h\|_{0,E} \|\nabla \mathbf{v}_h\|_{0,E}. \label{newwell3}
	\end{align}
	Following the above, there holds
	\begin{align}
		\Big|\int_{\partial E} \boldsymbol{\mathcal{B}} \cdot\mathbf{n}^E ( \mathbf{v}_h - \boldsymbol{\Pi}^{0,E}_k \mathbf{v}_h) \cdot \boldsymbol{\Pi}^{0,E}_k \mathbf{u}_h ds \Big| 
		& \leq C \boldsymbol{\mathcal{B}}_E \|\mathbf{u}_h\|_{0,E} \|\nabla \mathbf{v}_h\|_{0,E}. \label{newwell4}
	\end{align}
	Combining \eqref{newwell2a}, \eqref{newwell3} and \eqref{newwell4}, we arrive at
	\begin{align}
		s_{13} &\leq C_{gen} \mathcal{B} \|\mathbf{u}_h\|_{0,\Omega} \|\nabla \mathbf{v}_h\|_{0,\Omega} \nonumber \\
		&\leq C_{gen} \alpha \Theta_3 \xi_0 \min \Big\{ \frac{C_P}{\sqrt{\mu}}, \frac{1}{\sqrt{\gamma}}\Big\}  \mathcal{B} \ |\widehat{\mathbf{y}}_h|_a \|p_h\|_{0,\Omega} \nonumber \\
		& \leq \frac{2C_P C_{gen} \Theta_3 \xi_0  \mathcal{B}}{\sqrt{\mu} + C_P \sqrt{\gamma} } |\widehat{\mathbf{y}}_h|_a |\widehat{\mathbf{y}}_h|_b \nonumber \\
		& \leq \frac{2C^2_P C^2_{gen} \Theta_3 \xi_0  \mathcal{B}^2}{\mu + C^2_P \gamma}  |\widehat{\mathbf{y}}_h|^2_a + \frac{\xi_0 \Theta_3}{2} |\widehat{\mathbf{y}}_h|^2_b. \label{newwell5}
	\end{align}
	Concerning $s_{14}$, $s_{15}$ and $s_{16}$, we proceed as follows
	\begin{align}
		s_{14} + s_{15} + s_{16} &\leq C\max\big\{1, \lambda_2^\ast \big\} \gamma \|\mathbf{u}_h\|_{0,\Omega} \|\mathbf{v}_h\|_{0,\Omega} + \mathcal{L}^{1/2}_{1,h}(\mathbf{u}_h,\mathbf{u}_h) \mathcal{L}^{1/2}_{1,h}(\mathbf{v}_h,\mathbf{v}_h) + \mathcal{L}^{1/2}_{2,h}(\mathbf{u}_h,\mathbf{u}_h) \mathcal{L}^{1/2}_{2,h}(\mathbf{v}_h,\mathbf{v}_h) \nonumber \\
		& \leq  C_{gen} |\widehat{\mathbf{y}}_h|_a |\widehat{\mathbf{x}}_h|_a. \label{newwell6}
	\end{align}
	Substituting \eqref{newwell1}, \eqref{newwell2}, \eqref{newwell5} and \eqref{newwell6} into \eqref{newwell_b}, yields
	\begin{align}
		(A_h+\mathcal{L}_h)[\widehat{\mathbf{y}}_h,\widehat{\mathbf{x}}_h] &\geq - C_{gen} |\widehat{\mathbf{y}}_h|_a |\widehat{\mathbf{x}}_h|_a + \xi_0 \Theta_3 \Big( \Theta_1 \alpha^{-1} - \frac{\Theta_2}{4} - \frac{1}{2}\Big) |\widehat{\mathbf{y}}_h|^2_b - \xi_0 \Theta_3 \Big(\Theta_2 + \frac{2C^2_P C^2_{gen} \mathcal{B}^2}{\mu + C^2_P \gamma} \Big) |\widehat{\mathbf{y}}_h|^2_a \nonumber \\
		&\geq C_2 |\widehat{\mathbf{y}}_h|^2_b - C_1 |\widehat{\mathbf{y}}_h|^2_a,
	\end{align}
	where last line is obtained using the Young inequality and \eqref{newwellpos-4}, and the constants $C_2$ and $C_1$ are given by
	\begin{align}
		C_2:&= \xi_0 \Theta_3 \Big( \Theta_1 \alpha^{-1} - \frac{\Theta_2}{4} - \frac{1}{2}\Big) - \frac{C_{gen}}{2}, \nonumber \\
		C_1:&= \xi_0 \Theta_3 \Big(\Theta_2 +  \frac{2C^2_P C^2_{gen} \mathcal{B}^2}{\mu + C^2_P \gamma} \Big) + \frac{C_{gen}}{2}. \nonumber
	\end{align}
	\noindent \textbf{Step 5.} Following \cite[Lemma 5]{braack2011equal} with $\rho=3/2$ and $C_0=\lambda_{0\ast}$, we stress that the inf-sup constant $\Theta\geq \frac{1}{4}$ under the condition 
	\begin{align*}
		C_2 \geq \max \Big\{\frac{5}{8}, \frac{\lambda_{0\ast} +C_1}{4\lambda_{0\ast}-1}\Big\} \qquad \text{for any}\,\, \lambda_{0\ast} > \frac{1}{4}. 
	\end{align*}
	To satisfy this condition, we first fix $\lambda_{0\ast} = \frac{1}{2}$ and then choose $C_2=1+C_1$, which gives the following definition of $\alpha$ appears in \eqref{tnorm}:
	\begin{align}
		\alpha = \frac{4 \Theta_1}{2 + 5 \Theta_2 + 4(1+ C_{gen}) (\mu + C_P^2 \gamma + C_{gen})^{1/2} + \frac{8C_P^2 C_{gen}^2 \mathcal{B}^2}{\mu + C_P^2 \gamma}}. \label{alpha2}
	\end{align}
	Hereafter, the well-posedness follows from \textbf{Step 6} of Theorem \ref{wellposed}. \qed

		\bibliographystyle{plain}
		\bibliography{references}

\begin{thebibliography}{10}

\bibitem{adak2023nonconforming}
D~Adak and G~Manzini.
\newblock {The nonconforming virtual element method for {O}seen’s equation
  using a stream-function formulation}.
\newblock {\em ESAIM: Math. Model. Numer. Anal.}, 57(6):3303--3334, 2023.

\bibitem{vem25}
B~Ahmad, A~Alsaedi, F~Brezzi, LD~Marini, and A~Russo.
\newblock Equivalent projectors for virtual element methods.
\newblock {\em Comput. Math. Appl.}, 66(3):376--391, 2013.

\bibitem{vemnew6}
PF~Antonietti, S~Giani, and P~Houston.
\newblock hp-version composite discontinuous {{G}alerkin} methods for elliptic
  problems on complicated domains.
\newblock {\em SIAM J. Sci. Comput.}, 35(3):A1417--A1439, 2013.

\bibitem{mvem3}
PF~Antonietti, G~Vacca, and M~Verani.
\newblock Virtual element method for the {N}avier-{S}tokes equation coupled
  with the heat equation.
\newblock {\em IMA J. Numer. Anal.}, 43(6):1--34, 2022.

\bibitem{mfem31}
GR~Barrenechea and F~Valentin.
\newblock A residual local projection method for the {O}seen equation.
\newblock {\em Comput. Methods Appl. Mech. Eng.}, 199(29-32):1906--1921, 2010.

\bibitem{vem1}
L~Beirao Da~Veiga, F~Brezzi, A~Cangiani, G~Manzini, LD~Marini, and A~Russo.
\newblock Basic principles of {V}irtual {E}lement {M}ethods.
\newblock {\em Math. Models Methods Appl. Sci.}, 23:199--214, 2013.

\bibitem{vem2}
L~Beirao Da~Veiga, F~Brezzi, and LD~Marini.
\newblock Virtual elements for linear elasticity problems.
\newblock {\em SIAM J. Numer. Anal.}, 51:794--812, 2013.

\bibitem{vem22}
L~Beirao Da~Veiga, F~Brezzi, LD~Marini, and A~Russo.
\newblock Virtual element method for general second-order elliptic problems on
  polygonal meshes.
\newblock {\em Math. Models Methods Appl. Sci.}, 26(04):729--750, 2016.

\bibitem{vemnew12}
L~Beirao~da Veiga, C~Lovadina, and G~Vacca.
\newblock Divergence free virtual elements for the {{S}tokes} problem on
  polygonal meshes.
\newblock {\em ESAIM: Math. Model. Numer. Anal.}, 51(2):509--535, 2017.

\bibitem{mfem38}
S~Berrone.
\newblock Adaptive discretization of stationary and incompressible
  {N}avier-{S}tokes equations by stabilized finite element methods.
\newblock {\em Comput. Methods Appl. Mech. Eng.}, 190(34):4435--4455, 2001.

\bibitem{boffi_mixed}
D~Boffi, F~Brezzi, and M~Fortin.
\newblock {\em Mixed finite element methods and applications}, volume~44.
\newblock Springer, 2013.

\bibitem{fem11}
M~Braack and E~Burman.
\newblock Local projection stabilization for the {O}seen problem and its
  interpretation as a variational multiscale method.
\newblock {\em SIAM J. Numer. Anal.}, 43(6):2544, 2006.

\bibitem{vemnew3}
M~Braack, E~Burman, V~John, and G~Lube.
\newblock Stabilized finite element methods for the generalized {{O}seen}
  problem.
\newblock {\em Comput. Methods Appl. Mech. Eng.}, 196:853--866, 2007.

\bibitem{braack2011equal}
M~Braack and F~Schieweck.
\newblock {Equal-order finite elements with local projection stabilization for
  the {D}arcy--{B}rinkman equations}.
\newblock {\em Comput. Methods Appl. Mech. Eng.}, 200(9-12):1126--1136, 2011.

\bibitem{scott}
SC~Brenner and LR~Scott.
\newblock {\em The Mathematical Theory of Finite Element Methods}.
\newblock 3rd ed., Springer, New York, 2008.

\bibitem{mfem29}
AN~Brooks and TJR Hughes.
\newblock Streamline upwind/{P}etrov-{G}alerkin formulations for convection
  dominated flows with particular emphasis on the incompressible
  {N}avier-{S}tokes equations.
\newblock {\em Comput. Methods Appl. Mech. Eng.}, 32(1-3):199--259, 1982.

\bibitem{smfem1}
E~Burman and MA~Fernandez.
\newblock Galerkin finite element methods with symmetric pressure stabilization
  for the transient {S}tokes equations: stability and convergence analysis.
\newblock {\em SIAM J. Numer. Anal.}, 47(1):409--439, 2009.

\bibitem{cangiani2017posteriori}
A~Cangiani, EH~Georgoulis, T~Pryer, and OJ~Sutton.
\newblock A posteriori error estimates for the virtual element method.
\newblock {\em Numer. Math.}, 137(4):857--893, 2017.

\bibitem{vem26}
A~Cangiani, G~Manzini, and OJ~Sutton.
\newblock Conforming and nonconforming virtual element methods for elliptic
  problems.
\newblock {\em IMA J. Numer. Anal}, 37(3):1317--1354, 2017.

\bibitem{vem28}
L~Chen and J~Huang.
\newblock Some error analysis on virtual element methods.
\newblock {\em Calcolo}, 55, 2017.

\bibitem{vemnew14}
L~Chen and F~Wang.
\newblock A divergence free weak virtual element method for the {{S}tokes}
  problem on polytopal meshes.
\newblock {\em J. Sci. Comput.}, 78(2):864--886, 2019.

\bibitem{vemnew7}
B~Cockburn, J~Gopalakrishnan, and R~Lazarov.
\newblock Unified hybridization of discontinuous {{G}alerkin}, mixed, and
  continuous {{G}alerkin} methods for second order elliptic problems.
\newblock {\em SIAM J. Numer. Anal.}, 47(2):1319--1365, 2009.

\bibitem{vemnew8}
B~Cockburn, J~Gopalakrishnan, and FJ~Sayas.
\newblock A projection-based error analysis of {HDG} methods.
\newblock {\em Math. Comp.}, 79:1351--1367, 2010.

\bibitem{vem04}
L~Beirao Da~Veiga, F~Dassi, C~Lovadina, and G~Vacca.
\newblock {SUPG}-stabilized virtual elements for diffusion-convection problems:
  a robustness analysis.
\newblock {\em ESAIM: Math. Model. Numer. Anal.}, 55(5):2233--2258, 2021.

\bibitem{mvem16}
L~Beirao Da~Veiga, F~Dassi, and G~Vacca.
\newblock Vorticity-stabilized virtual elements for the {O}seen equation.
\newblock {\em Math. Models Methods Appl. Sci.}, 31(14):3009--3052, 2021.

\bibitem{mvem9}
L~Beirao Da~Veiga, C~Lovadina, and G~Vacca.
\newblock Virtual elements for the {N}avier-{S}tokes problem on polygonal
  meshes.
\newblock {\em SIAM J. Numer. Anal.}, 56(3):1210--1242, 2018.

\bibitem{vemnew9}
DA~Di~Pietro and A~Ern.
\newblock A hybrid high-order locking-free method for linear elasticity on
  general meshes.
\newblock {\em Comput. Methods Appl. Mech. Eng.}, 283:1--21, 2015.

\bibitem{vemnew10}
DA~Di~Pietro, A~Ern, and S~Lemaire.
\newblock An arbitrary-order and compact-stencil discretization of diffusion on
  general meshes based on local reconstruction operators.
\newblock {\em Comput. Methods Appl. Math.}, 14(4):461--472, 2014.

\bibitem{vemnew5}
A~Ern and F~Schieweck.
\newblock Discontinuous {{G}alerkin} method in time combined with a stabilized
  finite element method in space for linear first-order pdes.
\newblock {\em Math. Comp.}, 85:2099--2129, 2016.

\bibitem{gatica2014simple}
Gabriel~N Gatica.
\newblock A simple introduction to the mixed finite element method.
\newblock {\em Theory and Applications. Springer Briefs in Mathematics.
  Springer, London}, 2014.

\bibitem{mfem22}
V~Girault and B~Rivi{\`e}re.
\newblock D{G} approximation of coupled {N}avier-{S}tokes and {D}arcy equations
  by {B}eaver-{J}oseph-{S}affman interface condition.
\newblock {\em SIAM J. Numer. Anal.}, 47(3):2052--2089, 2009.

\bibitem{vem028}
J~Guo and M~Feng.
\newblock A new projection-based stabilized virtual element method for the
  {S}tokes problem.
\newblock {\em J. Sci. Comput.}, 85(1):16, 2020.

\bibitem{mfem28}
TJR Hughes, LP~Franca, and M~Balestra.
\newblock A new finite element formulation for computational fluid dynamics:
  {V}. {C}ircumventing the {B}abu{\v{s}}ka-{B}rezzi condition: {A} stable
  {P}etrov-{G}alerkin formulation of the {S}tokes problem accommodating
  equal-order interpolations.
\newblock {\em Comput. Methods Appl. Mech. Eng.}, 59(1):85--99, 1986.

\bibitem{mvem14}
D~Irisarri and G~Hauke.
\newblock Stabilized virtual element methods for the unsteady incompressible
  {N}avier-{S}tokes equations.
\newblock {\em Calcolo}, 56(4):38, 2019.

\bibitem{li2024stabilized}
Y~Li, Y~Bai, and M~Feng.
\newblock {A stabilized Crank-Nicolson virtual element method for the unsteady
  {N}avier-{S}tokes problems with high Reynolds number}.
\newblock {\em Numerical Algorithms}, 96(4):1779--1817, 2024.

\bibitem{mvem15}
Y~Li, M~Feng, and Y~Luo.
\newblock A new local projection stabilization virtual element method for the
  {O}seen problem on polygonal meshes.
\newblock {\em Adv. Comput. Math.}, 48(3):30, 2022.

\bibitem{mvem10}
X~Liu and Z~Chen.
\newblock The nonconforming virtual element method for the {N}avier-{S}tokes
  equations.
\newblock {\em Adv. Comput. Math.}, 45:51--74, 2019.

\bibitem{vemnew11}
X~Liu, J~Li, and Z~Chen.
\newblock A nonconforming virtual element method for the {{S}tokes} problem on
  general meshes.
\newblock {\em Comput. Methods Appl. Mech. Eng.}, 320:694--711, 2017.

\bibitem{vemnew4}
G~Lube, G~Rapin, and J~Lowe.
\newblock Local projection stabilization for incompressible flows: equal-order
  vs. inf-sup stable interpolation.
\newblock {\em Electron. Trans. Numer. Anal.}, 32:106--122, 2008.

\bibitem{vemnew2}
G~Matthies, G~Lube, and L~Rohe.
\newblock Some remarks on residual-based stabilisation of inf-sup stable
  discretisations of the generalized {{O}seen} problem.
\newblock {\em Comput. Methods Appl. Math.}, 9:368--390, 2009.

\bibitem{vem28m}
S~Mishra and E~Natarajan.
\newblock Local projection stabilization virtual element method for the
  convection-diffusion equation with nonlinear reaction term.
\newblock {\em Comput. Math. Appl.}, 152:181--198, 2023.

\bibitem{vem08}
S~Mishra and E~Natarajan.
\newblock A streamline-derivative-based local projection stabilization virtual
  element method for nonlinear convection-diffusion-reaction equation.
\newblock {\em Calcolo}, 60(4):46, 2023.

\bibitem{mvem21}
S~Mishra and E~Natarajan.
\newblock A unified local projection-based stabilized virtual element method
  for the coupled {S}tokes-{D}arcy problem.
\newblock {\em Adv. Comput. Math.}, 50(6):106, 2024.

\bibitem{mishra2025equal}
S~Mishra and E~Natarajan.
\newblock An equal-order virtual element framework for the coupled
  {{S}tokes-{T}emperature} equation with nonlinear viscosity.
\newblock {\em J. Sci. Comput.}, 104(2):73, 2025.

\bibitem{vemnew1}
L~Tobiska and G~Lube.
\newblock A modified streamline diffusion method for solving the stationary
  {N}avier–{S}tokes equations.
\newblock {\em Numer. Math.}, 59:13--29, 1991.

\bibitem{mvem12}
G~Vacca.
\newblock An ${H}^1$-conforming virtual element for {D}arcy and {B}rinkman
  equations.
\newblock {\em Math. Models Methods Appl. Sci.}, 28(01):159--194, 2018.

\bibitem{mvem2}
G~Wang, F~Wang, L~Chen, and Y~He.
\newblock A divergence free weak virtual element method for the
  {S}tokes-{D}arcy problem on general meshes.
\newblock {\em Comput. Methods Appl. Mech. Eng.}, 344:998--1020, 2019.

\end{thebibliography}
		
	\end{document}